\newcommand\rank{\operatorname{rank}}
\newcommand\R{{\mathbb{R}}}
\newcommand\C{{\mathbb{C}}}
\newcommand\Z{{\mathbb{Z}}}
\renewcommand\P{{\mathbb{P}}}
\newcommand\E{{\mathbb{E}}}
\newcommand\F{{\mathbb{F}}}
\newcommand\Var{\operatorname{Var}}
\newcommand\eps{{\varepsilon}}
\newcommand\dist{\operatorname{dist}}
\newcommand\Bx{{\mathbf x}}
\newcommand\BX{{\mathbf X}}
\newcommand\BMa{{\bm a}}
\newcommand\BMb{{\bm b}}
\newcommand\BMe{{\bm e}}
\newcommand\BMh{{\bm h}}
\newcommand\BML{{\bm L}}
\newcommand\BMr{{\bm r}}
\newcommand\BMS{{\bm S}}
\newcommand\BMt{{\bm t}}
\newcommand\BMu{{\bm u}}
\newcommand\BMv{{\bm v}}
\newcommand\BMw{{\bm w}}
\newcommand\BMx{{\bm x}}
\newcommand\BMX{{\bm X}}
\newcommand\CE{{\mathcal E}}
\newcommand\CG{{\mathcal G}}
\newcommand\CN{{\mathcal N}}
\newcommand\CR{{\mathcal R}}
\newcommand\BBN{{\mathbb N}}
\newcommand\LCD{\mathbf{LCD}}
\newcommand\al{\alpha}
\def\perm{\operatorname{perm}}
\def\Vol{\operatorname{Vol}}
\theoremstyle{plain}
 \newtheorem{theorem}{Theorem}[section]
 \newtheorem{problem}[theorem]{Problem}
 \newtheorem{proposition}[theorem]{Proposition}
 \newtheorem{fact}[theorem]{Fact}
 \newtheorem{lemma}[theorem]{Lemma}
 \newtheorem{corollary}[theorem]{Corollary}
 \newtheorem{claim}[theorem]{Claim}
 \newtheorem{condition}[theorem]{Condition}
\newtheorem{remark}[theorem]{Remark}
\newtheorem{example}[theorem]{Example}
\theoremstyle{definition}
\newtheorem{definition}[theorem]{Definition}
\begin{document}
\include{psfig}

\title{The anti-concentration phenomenon with respect to random permutations}

\author{Viet H. Do}
\address{Department of Mathematics, Yale University, New Haven, CT 06511}
\email{viet.do@yale.edu}

\author{Hoi H. Nguyen}
\address{Department of Mathematics, The Ohio State University, Columbus OH 43210}
\email{nguyen.1261@math.osu.edu}
\thanks{H. Nguyen is supported by a Simons Travel Grant TSM-00013318. T. Tran is supported by the Excellent Young Talents Program
(Overseas) of the National Natural Science Foundation of China GG0010007003. A substantial part of this work was carried out during the ADM program 2024 supported by VinIF foundation; the authors are grateful to the VinUni BigData Research Insitute  for its hospitality.}

\author{Kiet  H. Phan}
\address{School of Mathematics and Statistics, UNSW Sydney, Sydney NSW 2052, Australia}
\email{kevin.phan4@student.unsw.edu.au}

\author{Tuan Tran}
\address{School of Mathematical Sciences, University of Science and Technology of China, Hefei, Anhui 230026, China}
\email{trantuan@ustc.edu.cn}
\thanks{}

\author{Van H. Vu}
\address{Department of Mathematics, Hong Kong University, Hong Kong}
\email{vanvu@hku.hk}

 \begin{abstract}   
 
 The anti-concentration phenomenon in probability theory has been intensively
studied in recent years, with applications across many areas of mathematics. In
most existing works, the ambient probability space is a product space generated
by independent random variables.

In this paper, we initiate a systematic study of anti-concentration when the
ambient space is the symmetric group, equipped with the uniform measure.
Concretely, we focus on the random sum
\(
S_{\pi} = \sum_{i=1}^{n} w_i\, v_{\pi(i)},
\)
where $\BMw=(w_1,\dots,w_n)$ and $\BMv=(v_1,\dots,v_n)$ are fixed
vectors and $\pi$ is a uniformly random permutation.

\vskip2mm

The paper contains several new results, addressing both discrete and continuous
anti-concentration phenomena. On the discrete side, we establish a near-optimal
structural characterization of the vectors $\BMw$ and $\BMv$ under
the assumption that the concentration probability
\(
\sup_x \P(S_{\pi}=x)
\)
is polynomially large.  This is an ``inverse theorem" and can be seen as an analog of earlier results 
by Tao--Vu and Nguyen--Vu in the product space setting. 
In fact, our technique allows to obtain an intervese theorem for random subsums of entries of a matrix, 
which contains the result concerning random permutation as the special case where the matrix has rank one. 

\vskip2mm

On the continuous side, we study the small-ball event $|S_{\pi}-L|\le \delta$. Our results exhibit
sub-gaussian decay in $L$,  answering an open question of S{\"o}ze~\cite{Soze2}. With additional effort, we are also able to treat the joint distribution of these events. We also extend the inverse theorem obtained in the discrete case to the continuous case.

\vskip2mm

 Our results have applications in various areas. 
First, we use our inverse theorems to derive and strengthen a number of
previous anti-concentration bounds. In particular, we show that if both $\BMw$ and
$\BMv$ have distinct entries, then
\(
\sup_x \P(S_{\pi}=x) \le n^{-5/2+o(1)}.
\)
This bound serves as an analog of the classical
Erd\H{o}s--Moser bound in the product-space setting and answers a question posed
by Alon--Pohoata--Zhu~\cite{APZ}. Next, we apply our new results to study random polynomials, and  prove that the number of extremal points of random permutation polynomials is bounded by $O(\log n)$, extending results of S{\"o}ze~\cite{Soze1, Soze2} on the number of real roots. In the final application, we prove that random matrices whose rows are independent random permutations of a fixed non-degenerate vector are nonsingular with high probability, strengthening  earlier  results of  Nguyen and Nguyen--Vu  \cite{Nguyen,NgV-Comb}. 

  \end{abstract}

\maketitle


\section{Introduction}

\subsection{Anti-concentration in product spaces} \label{sect:intro}
Let $\BMw=(w_1,\dots,w_n)$ be a real vector. Consider the random sum
\begin{equation}\label{eqn:S}
S = \sum_{i=1}^{n} w_i \xi_i,
\end{equation}
where the $\xi_i$ are i.i.d.\ copies of a real-valued random variable $\xi$ with
mean zero and variance one. This sum can be viewed as the inner product of
$\BMw$ and the random vector $(\xi_1,\dots,\xi_n)$.

A typical anti-concentration bound asserts that, under suitable assumptions, the
probability that $S$ lies in a small interval is small. We consider two
settings: \emph{discrete} and \emph{continuous}. In the discrete setting, we
study the probability of the event $S=x$ for a fixed value $x$. In the
continuous setting, we consider small-ball events of the form
$|S-L|\le \delta$ for some $L\in\R$ and $\delta>0$.

For ease of exposition, we begin with the discrete setting. In the 1940s,
Littlewood--Offord~\cite{LO} and Erd\H{o}s~\cite{Erdos} proved the following
fundamental result.

\begin{theorem}\label{thm:ELO}
Assume that $w_i$, $1\le i\le n$, are nonzero and that the $\xi_i$ are i.i.d.\
Rademacher random variables (that is, $\xi_i$ takes values $\pm1$ with
probability $1/2$, independently). Then
\[
\sup_x \P(S=x) = O\Big(\frac{1}{\sqrt{n}}\Big).
\]
\end{theorem}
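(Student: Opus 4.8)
The plan is to reduce the claim to a purely combinatorial statement about antichains in the Boolean lattice $2^{[n]}$ and then apply Sperner's theorem. First I would normalize the weights: since each $\xi_i$ is symmetric about $0$, replacing $w_i$ by $|w_i|$ and $\xi_i$ by $\sgn(w_i)\,\xi_i$ leaves the distribution of $S$ unchanged, so without loss of generality $w_i>0$ for every $i$. Each of the $2^n$ equally likely sign vectors $(\xi_1,\dots,\xi_n)\in\{-1,+1\}^n$ corresponds to a subset $A=\{i:\xi_i=+1\}\subseteq[n]$, and in this encoding $S=\sum_{i\in A}w_i-\sum_{i\notin A}w_i=2\sum_{i\in A}w_i-\sum_{i=1}^n w_i=:S(A)$.

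Fix $x\in\R$ and set $\mathcal F_x=\{A\subseteq[n]:S(A)=x\}$. The key point is that $\mathcal F_x$ is an \emph{antichain}: if $A\subsetneq B$ with $S(A)=S(B)=x$, then subtracting the two identities yields $2\sum_{i\in B\setminus A}w_i=0$, which is impossible because $B\setminus A\neq\emptyset$ and all $w_i>0$. Sperner's theorem then gives $|\mathcal F_x|\le\binom{n}{\lfloor n/2\rfloor}$, uniformly in $x$.

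Consequently $\P(S=x)=2^{-n}|\mathcal F_x|\le 2^{-n}\binom{n}{\lfloor n/2\rfloor}$, and Stirling's approximation gives $2^{-n}\binom{n}{\lfloor n/2\rfloor}=\Theta(1/\sqrt n)$. Taking the supremum over $x$ yields $\sup_x\P(S=x)=O(1/\sqrt n)$, as desired.

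I do not expect a genuine obstacle in this argument: the entire content lies in recognizing that each level set $\mathcal F_x$ is an antichain, after which Sperner's theorem is a black box. The only steps requiring (routine) care are the reduction to positive weights and the asymptotic evaluation of the central binomial coefficient. One could alternatively invoke Kleitman's strengthening of Sperner's theorem, which is the natural route if one later wants the small-ball version $|S-x|\le\delta$ in place of the single-point version treated here.
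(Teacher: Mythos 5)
Your argument is correct and complete; it is precisely Erd\H{o}s's original 1945 proof of the Littlewood--Offord bound. The reduction to positive weights, the bijection with subsets of $[n]$, the observation that each level set is an antichain, Sperner's theorem, and the Stirling estimate for the central binomial coefficient all go through exactly as you write. Note that the paper itself does not prove Theorem~\ref{thm:ELO}: it is quoted as a classical result with references to Littlewood--Offord and Erd\H{o}s, so there is no in-paper proof to compare against. Your final remark about Kleitman's theorem for the small-ball variant $|S-x|\le\delta$ is also accurate.
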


\noindent {\it Notation.}\label{notation-sec}
Here and throughout, asymptotic notation is taken in the limit $n\to\infty$.
For real quantities $X,Y$, we write $X = O(Y)$, or equivalently $Y = \Omega(X)$, if $|X| \le C |Y|$ for some constant $C>0$ independent of $n$.
This constant $C$ may depend on other fixed quantities. If $X=O(Y)$ and $Y=O(X)$, we write $X=\Theta(Y)$. We also write $X=o(Y)$ if
$|X|\le c(n)|Y|$ for some function $c(n)\to 0$ as $n\to\infty$. Again, the function $c(\cdot)$ may depend on fixed quantities. We denote by $\|x\|_{\R/\Z}$ the distance from a real number $x$ to the nearest integer. 
For $x \in \R$, we set $e(x) := e^{2\pi i x}$. 
For a positive integer $n$, we write $[n] = \{1,2,\ldots,n\}$. For a vector $\BMv\in\R^n$ and a permutation $\pi$ of $[n]$, we write $\pi(\BMv)$ for the vector $(v_{\pi(1)},\dots,v_{\pi(n)})$.

\vskip2mm 

When the coefficients $w_i$ are distinct, this bound can be significantly
improved. This was shown independently by Erd\H{o}s--Moser~\cite{EM},
S\'ark\H{o}zy--Szemer\'edi~\cite{SS}, and Stanley~\cite{Stan}.

\begin{theorem}\label{thm:SSz}
Assume that the $w_i$ are distinct real numbers and that the $\xi_i$ are i.i.d.\
Rademacher random variables. Then
\[
\sup_x \P(S=x) = O(n^{-3/2}).
\]
\end{theorem}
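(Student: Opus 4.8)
\emph{Proof proposal.} The plan is to convert the problem into a question about the largest fibre of a subset-sum function, to invoke Stanley's theorem identifying the arithmetic progression $1,\dots,n$ as the extremal configuration, and then to estimate that extremal quantity by a routine Fourier (saddle-point) computation. Essentially all of the difficulty sits in the extremal step.

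First I would reduce to the case $0<w_1<w_2<\dots<w_n$ (a standard reduction: discard a zero coefficient and induct on $n$; coefficients with $w_i=-w_j$ become equal after the harmless sign change $\xi_i\mapsto\sgn(w_i)\xi_i$ and can then be handled separately, which does not affect the order of the bound). Identifying $\xi\in\{-1,1\}^n$ with the set $A=\{i:\xi_i=1\}\subseteq[n]$ and writing $W=\sum_{i=1}^n w_i$, one has $S=2\sum_{i\in A}w_i-W$, so that for every $x$,
\[
\P(S=x)=2^{-n}N_t,\qquad N_t:=\#\Bigl\{A\subseteq[n]:\ \textstyle\sum_{i\in A}w_i=t\Bigr\},\qquad t:=\tfrac12(x+W),
\]
and it suffices to show $\sup_{t}N_t=O(2^n n^{-3/2})$.

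The crux is the following extremal inequality of Stanley: for \emph{every} choice of distinct positive reals $w_1,\dots,w_n$ and every $t\in\R$,
\[
N_t\ \le\ M_n:=\max_{m\in\Z}\ \#\Bigl\{A\subseteq[n]:\ \textstyle\sum_{i\in A}i=m\Bigr\},
\]
i.e.\ no configuration of distinct coefficients produces a fibre larger than the balanced configuration $w_i=i$ does. Stanley's argument runs through the hard Lefschetz theorem (equivalently, the $\mathfrak{sl}_2(\C)$-action it supplies): one encodes the multiset of subset sums as the graded dimensions of a suitable Artinian algebra carrying a Lefschetz operator whose powers inject each graded piece into the symmetric one, which forces the fibre sizes to be unimodal and pins down the extremizer. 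This is the step I expect to be the genuine obstacle. The cheap substitute --- noting that $\{A:\sum_{i\in A}w_i=t\}$ is an antichain in the Boolean lattice (if $A\subsetneq B$ had equal sums, then $\sum_{i\in B\setminus A}w_i=0$, impossible), so $N_t\le\binom{n}{\lfloor n/2\rfloor}$ by Sperner's theorem --- only yields the weaker $O(n^{-1/2})$, and extracting the extra factor $n^{-1}$ genuinely requires a new idea, either Stanley's algebra or the delicate combinatorial ``dispersion'' argument of S\'ark\H{o}zy--Szemer\'edi.

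It then remains to estimate $M_n$, the largest coefficient of $P(q)=\prod_{k=1}^{n}(1+q^k)$. Writing $e(\theta):=e^{2\pi\mathrm i\theta}$, orthogonality gives that the coefficient of $q^m$ in $P$ equals $\int_0^1\prod_{k=1}^{n}\bigl(1+e(k\theta)\bigr)\,e(-m\theta)\,d\theta$, whence
\[
M_n\ \le\ \int_0^1\prod_{k=1}^{n}\bigl|1+e(k\theta)\bigr|\,d\theta\ =\ 2^n\int_0^1\prod_{k=1}^{n}\bigl|\cos(\pi k\theta)\bigr|\,d\theta .
\]
Splitting this integral in the standard way finishes the job: for $|\theta|\lesssim n^{-1}$ the product equals $\exp\!\bigl(-\Theta(n^3\theta^2)\bigr)$ and contributes $\Theta(n^{-3/2})$, while for $n^{-1}\lesssim\theta\le\frac12$ the mean-value identity $\int_0^1\log|\cos\pi u|\,du=-\log 2$ forces the product to be exponentially small; hence $M_n=O(2^n n^{-3/2})$. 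Combined with the reduction and Stanley's inequality, $\sup_x\P(S=x)=2^{-n}\sup_t N_t=O(n^{-3/2})$, as desired.
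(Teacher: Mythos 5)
This theorem appears in the paper only as a quoted classical result, cited to Erd\H{o}s--Moser, S\'ark\H{o}zy--Szemer\'edi, and Stanley; the paper gives no proof, so there is no ``paper argument'' to compare against. Your proposal is essentially Stanley's proof, and the outline is correct: convert to subset sums, invoke Stanley's Sperner-property theorem (the fibre $\{A\subseteq[n]:\sum_{i\in A}w_i=t\}$ is an antichain in the poset $M(n)$, which has the Sperner property by hard Lefschetz, so $N_t\le M_n$), then estimate $M_n$ by the standard saddle-point analysis of the largest coefficient of $\prod_{k=1}^n(1+q^k)$. The split of the Fourier integral is fine: for $|\theta|\lesssim 1/n$ one uses $\log|\cos x|\le -x^2/2$ to get $\prod_k|\cos(\pi k\theta)|\le\exp(-\Theta(n^3\theta^2))$, contributing $O(n^{-3/2})$, and for $\theta$ bounded away from $0$ the product is exponentially small.

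The one genuine soft spot is the reduction to distinct \emph{positive} $w_i$. After the sign change $\xi_i\mapsto\sgn(w_i)\xi_i$ the new coefficients $|w_i|$ collide exactly when $w_i=-w_j$, so Stanley's theorem no longer applies directly, and ``handled separately'' is doing real work. One way to close the gap: for each collided pair $\{i,j\}$ with $w_i=-w_j=:a$, condition on $\eta=\xi_i+\xi_j\in\{-2,0,2\}$. Conditional on the random set $T$ of pairs/singletons with nonzero contribution, the sum is a Rademacher sum with \emph{distinct} nonzero coefficients, so the fibre probability is $O(|T|^{-3/2})$ by the case already handled. Since the conditioning weight is exactly $2^{-m}$ per configuration and $\sum_{t}\binom{m}{t}t^{-3/2}=O(2^m m^{-3/2})$, summing over $T$ recovers $O(n^{-3/2})$ (the small-$|T|$ tail is negligible). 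This is routine, but it should be said rather than waved at, since without it the extremal step is being applied to a configuration outside its hypotheses.
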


\begin{remark}\label{remark:scaling}
In this paper, we focus on orders of magnitude, and all bounds are stated in
big-Oh form $O(\cdot)$. Bounds of this type remain unchanged (in both the
discrete and continuous settings) if we replace $w_i$ by $\alpha w_i$, where
$\alpha = O(1)$. This observation allows us to normalize many of our
assumptions.
\end{remark}

These results initiated a substantial body of work known as
\emph{Littlewood--Offord theory}, which has developed over several decades; see
\cite{NVsur} for a comprehensive survey. The guiding principle of this theory
is that stronger structural assumptions on the coefficients $w_i$ lead to
stronger anti-concentration bounds. Results of this type are commonly referred
to as \emph{forward} theorems.

In the early 2000s, Tao and the last author initiated a new line of research,
known as the \emph{inverse Littlewood--Offord theory}. The goal is to
characterize the additive structure of the coefficients $w_i$ under the
assumption that the concentration probability
\[
\sup_{x\in\R} \P(S=x)
\]
is \emph{relatively large}. Results of this kind are called \emph{inverse}
theorems. In this paper, we focus on the polynomial regime, where “relatively
large” means at least $n^{-C}$ for some constant $C>0$.

Let $W$ denotes the multi-set $\{w_1, \dots, w_n \}$, and define 
  \[ \rho(W) := \sup_{x \in \R} \P(S = x).\]
  Assume that  \( \rho(W)\ge n^{-C} \)  for some constant $C>0$. Then at least \( 2^n n^{-C} \) of the \( 2^n \) subset sums of $W$ coincide, suggesting
  that \( W \) must possess substantial additive structure.  Tao and the last author formalized this intuition quantitatively via
the notion of \emph{generalized arithmetic progressions} (GAPs).

\begin{definition}\label{def:GAP}
A subset \( P \subset \R \) is a \emph{generalized arithmetic progression (GAP) of rank \( r \)} if it can be expressed as
\[
P = \Big\{ g_0 + m_1 g_1 + \dots + m_r g_r \ \Big| \ m_i \in \Z,\ N_i \le m_i \le N_i' \Big\},
\]
where \( g_1, \dots, g_r \in \R \) are called the \emph{generators} of \( P \), and the integers \( N_i, N_i' \) are its \emph{dimensions}. The \emph{volume} of \( P \) is defined as \( \Vol(P) := \prod_{i=1}^r (N_i' - N_i + 1) \). We say that \( P \) is \emph{proper} if every element of \( P \) has a unique representation in the above form; this is equivalent to \( |P| = \Vol(P) \). If \( N_i = -N_i' \) for all \( i \) and \( g_0 = 0 \), we say that \( P \) is \emph{symmetric}.
\end{definition}

For two sets \( A, B \subset \R \), their (Minkowski) sum is defined by
\[
A + B := \{ a + b \mid a \in A,\ b \in B \}.
\]
For \( n \in \Z^+ \), we define
\[
nA := \{ a_1 + \dots + a_n \mid a_i \in A \}.
\]
For example, if \( P \) is as in Definition~\ref{def:GAP}, then
\[
nP = \Big\{ ng_0 + m_1 g_1 + \dots + m_r g_r \mid nN_i \le m_i \le nN_i' \Big\}.
\]

\begin{example}
Assume \( P \) is a proper symmetric GAP of rank \( r = O(1) \) and cardinality \( n^{O(1)} \), and all elements of \( W = \{w_1, \dots, w_n\} \) lie in \( P \). 
Then since \( |nP| \le n^r |P| \), by the pigeonhole principle, we obtain \( \rho(W) = \Omega(n^{-O(1)}) \).
\end{example}
This example shows that if \( W \) lies inside a proper symmetric GAP with small rank and cardinality, then \( \rho(W) \) is necessarily large. In a series of works, 
Tao--Vu \cite{TVstrong,TVinverse}, 
Nguyen--Vu \cite{NgV-Adv}, and more recently Tao \cite{Tao}, demonstrated that these are essentially the only configurations for which \( \rho(W) \) is polynomially large. One may also consider the sub-exponential (\( \rho \ge \exp(-n^c) \)) or exponential (\( \rho \ge \exp(-cn) \)) regimes, but we do not address them in this paper.

\begin{theorem}[Inverse Littlewood--Offord result for \( \rho \)]\cite[Theorem 2.1]{NgV-Adv}\label{thm:ILO}
Let \( C>0\) and \( \eps \in (0,1) \) be constants. Suppose \( W = \{w_1, \dots, w_n\} \) is a multiset of real numbers such that
\[
\rho(W) \ge n^{-C},
\]
where $\xi_{i}$ are iid copies of a random variable $\xi$ of mean zero, variance one, and bounded $(2+\eps)$-moment. Then for any \( n^\eps \le n' \le n \), there exists a proper symmetric GAP \( P \subset \R \) of rank \( r = O_{C, \eps}(1) \) such that \( P \) contains all but \( n' \) elements of \( W \) (counting multiplicity), and
\[
|P| = \max\Big\{ 1,\ O_{C, \eps}\Big(\rho(W)^{-1} (n)^{-r/2} \Big) \Big\}.
\]
\end{theorem}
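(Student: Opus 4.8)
The plan is to follow the Fourier-analytic route that underlies all optimal inverse Littlewood--Offord theorems: turn the hypothesis $\rho(W)\ge n^{-C}$ into the assertion that a ``dual'' level set of frequencies is large, extract additive structure from that level set by a Freiman-type theorem, and then dualize to produce a GAP containing almost all of $W$. The converse direction --- a GAP of small rank and volume forces $\rho(W)$ to be large --- is exactly the Example preceding the statement, and it already predicts the sharp volume.

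\emph{Step 1: Fourier reduction to a level set.} We may assume $\xi$ takes values in $\Z$ with no common divisor (this is where the discreteness of the problem enters). Writing $\phi(\theta):=\E e^{2\pi i\theta\xi}$, the function $\phi$ is $1$-periodic with $|\phi|\le 1$, and the hypotheses on $\xi$ give $|\phi(\theta)|\le\exp(-c\,\|\theta\|^{2})$, where $\|\cdot\|$ is distance to the nearest integer and $c>0$ depends only on the law of $\xi$. Esseen's concentration inequality then yields
\[
\rho(W)\;\ll\;\int_{-1}^{1}\prod_{j=1}^{n}\bigl|\phi(w_j\theta)\bigr|\,d\theta\;\le\;\int_{-1}^{1}\exp\!\Bigl(-c\sum_{j=1}^{n}\|w_j\theta\|^{2}\Bigr)\,d\theta .
\]
Since the integrand is at most $1$ and the integral is $\gg n^{-C}$, the pigeonhole principle produces a symmetric set $\CT\subseteq[-1,1]$ with $|\CT|\gg\rho(W)$ on which
\[
\sum_{j=1}^{n}\|w_j\theta\|^{2}\;\le\;K:=O_{C,\eps}(\log n).
\]

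\emph{Step 2 (structure of the level set) and Step 3 (dualization).} For $\theta,\theta'\in\CT$ the triangle inequality gives $\sum_j\|w_j(\theta-\theta')\|^{2}\le 4K$, so $\CT$ is symmetric and essentially closed under subtraction; hence it has bounded doubling, and a Freiman-type structure theorem places (the bulk of) $\CT$ inside a proper GAP $Q$ of rank $O_{C,\eps}(1)$ and volume comparable to $|\CT|$. We then dualize: since $\|w_j\theta\|$ is small for every $\theta$ throughout $Q$, each generator $g$ of $Q$ satisfies $\|w_j g\|\ll 1$ for all but a few indices $j$, and collecting these constraints forces all but $n'$ of the $w_j$ (for any prescribed $n^{\eps}\le n'\le n$, after discarding the genuinely unstructured coefficients) to lie in the GAP ``dual'' to $Q$, which is again a proper symmetric GAP $P$ of rank $r=O_{C,\eps}(1)$ whose volume is inversely proportional to $|Q|\asymp|\CT|\gg\rho(W)$. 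Tracking the parameters --- $|\CT|\gg\rho(W)$, $K=O(\log n)$, and the fact that $\Var S=\sum_j w_j^{2}$ makes $S$ concentrate within $\asymp\sqrt n$ generator-steps of its mean in each of the $r$ directions --- then delivers $|P|=\max\{1,\,O_{C,\eps}(\rho(W)^{-1}n^{-r/2})\}$: the extra $n^{-r/2}$ beyond the trivial $n^{-r}$ reflects precisely that, with constant probability, $S$ occupies only about $n^{r/2}$ (namely $\sqrt n$ in each generating direction) rather than $n^{r}$ translates of the generating box of $P$.

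\emph{Main obstacle.} The real difficulty is the simultaneous control of the \emph{rank} and the \emph{volume}. A black-box application of Freiman's theorem to a set of doubling $D$ yields rank only polynomial in $D$, and here the effective doubling of the level set is governed by $K=O(\log n)$, so one obtains rank $O(\log n)$ a priori rather than the required $O_{C,\eps}(1)$; removing this loss demands an iterative rank-reduction scheme (this is the technical heart of the Nguyen--Vu argument), carried out while keeping $\Vol(P)$ pinned at the optimal value $\rho(W)^{-1}n^{-r/2}$ and respecting the prescribed size $n'$ of the exceptional set. A secondary but nontrivial point is the Fourier input of Step 1: establishing the decay bound $|\phi(\theta)|\le\exp(-c\|\theta\|^{2})$ uniformly, not merely near the origin, is what consumes the hypotheses on $\xi$ together with the reduction to a lattice-supported summand, and this is also what confines the method to the polynomial regime $\rho(W)\ge n^{-C}$.
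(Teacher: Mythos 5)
The theorem you are proving is not actually proved in this paper; it is imported verbatim from Nguyen--Vu's \emph{Optimal Littlewood--Offord theorems}. However, the paper's own proof of Theorem~\ref{thm:ILO:array} (the permutation analogue) follows the Nguyen--Vu template step by step, so it can be used as a benchmark, and against it your proposal has two genuine gaps.

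First, the Fourier input is overstated. The global bound $|\phi(\theta)|\le\exp(-c\|\theta\|_{\R/\Z}^{2})$ simply fails for a general $\xi$ with mean zero, variance one and a bounded $(2+\eps)$-moment: take $\xi$ Gaussian and $\phi$ is a Gaussian, with no periodicity whatsoever. Your remark that ``we may assume $\xi$ takes values in $\Z$'' is not a free reduction; the actual argument (both in Nguyen--Vu and, in the permutation version, via Corollary~\ref{cor:Roos}) gets a bound of the shape $\exp(-c\sum_{j}\|\cdot\|_{\R/\Z}^{2})$ only after an embedding into $\F_{p}$ of a \emph{finite} configuration and a symmetrization step that replaces $\xi$ by $\xi-\xi'$; this discretization is what gives meaning to $\|\cdot\|_{\R/\Z}$, and it is also what confines the method to the polynomial regime. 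Skipping it is not cosmetic.

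Second and more seriously, the mechanism you propose for extracting the GAP does not close. You want to apply a Freiman-type theorem to the level set $\CT\subset[-1,1]$ and then ``dualize.'' Already as stated this is ill-posed ($\CT$ is a set of positive measure, not a finite set), and after the unavoidable discretization, the doubling of the level set is controlled only by $K=O(\log n)$, so any Freiman-type theorem returns rank $O(\log n)$ --- the very obstacle you name. Your proposed fix, ``iterative rank reduction,'' is the Tao--Vu route, not the Nguyen--Vu route, and in either case you do not actually execute it; the dualization step that follows is therefore resting on a GAP of unbounded rank, so the conclusion $r=O_{C,\eps}(1)$ never materializes. What Nguyen--Vu (and the paper, in the proof of Theorem~\ref{thm:ILO:array}) actually do is structurally different: a double-counting/dual-set argument shows directly that the weight set $A$ itself satisfies $|kA''|=O(\rho^{-1})$ for a \emph{single large} $k\asymp\sqrt{n/\log n}$, and then the long-range inverse theorem (Theorem~\ref{thm:Freiman-inverse} in the paper: $|kX|\le k^{\gamma}|X|$ implies $X$ lies in a proper GAP of rank $O(\gamma)$ and size $O(k^{-r}|kX|)$) is applied to $A$, not to the level set. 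The constant rank falls out because $\rho^{-1}\le n^{C}\le k^{2C+1}$, so $\gamma=O(C)$. This transfer of structure from the frequency side to the weight side happens in one shot through the dual set, never through a GAP containing $\CT$; the $n^{-r/2}$ volume saving comes from dividing by $k^{r}$ at that final step. Without the dual-set argument and the long-range inverse theorem your outline does not reach the stated bound.
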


There are many extensions to other settings, such as the extension from linear forms \(S\) of the \(\xi_i\) to quadratic and higher-order multilinear forms by Costello and Nguyen \cite{Cost,Ng-Duke}, and Meka et al. \cite{Meka}, as well as extensions to non-abelian groups by Tiep–Vu \cite{TiepV} and Juskevicius--Semetulskis \cite{JS} (see also \cite{KNgP,Ng-unpublished,Tao}). 
Other notable contributions include the works of Tao–Vu \cite{TVcir}, Rudelson--Vershynin \cite{RV}, and, more recently, Fox et al.\ \cite{Fox} and Kwan et al.\ \cite{KwanSSS,KwanS}. 
A detailed discussion of these papers is beyond the scope of this work. 

The continuous case proceeds in parallel with the discrete case, and we refer
the reader to the survey~\cite{NVsur} for further discussion. In particular, the
continuous analog of Theorem~\ref{thm:ILO} asserts that the elements of $W$
lie close to a small GAP.

\subsection{New setting: the space of random permutations (discrete setting)}

In this paper, we investigate the anti-concentration behavior of random sums in
a fundamentally different setting, where the ambient probability space is the
symmetric group equipped with the uniform distribution. Let
$\BMw=(w_1,\dots,w_n)$ and $\BMv=(v_1,\dots,v_n)$ be two fixed
vectors. We consider the random sum
\begin{equation}\label{eqn:S_p}
S_{\pi} (\BMw, \BMv)  := \sum_{i=1}^n w_{i}\, v_{\pi(i)},
\end{equation}  
where $\pi$ is a uniformly random permutation of $\{1,\dots,n\}$. In most of
the paper, when there is no danger of confusion, we use the shorthand
$S_{\pi}$ for  $S_{\pi} (\BMw, \BMv)$. 

\subsubsection {Recent forward theorems}\label{subsection:discrete} 
In this subsection, we survey recent results on bounding  the probability that $S_{\pi}=x$, for any fixed
value $x$.  In~\cite{Soze2}, S{\"o}ze considered the special case
$\BMv=(1,2,\dots,n)$ and proved the following result.

\begin{theorem}\label{thm:soze_main}
Let $\BMv=(1,2,\dots,n)$ and let $\BMw\in\R^n$ be a nonzero vector such that
$\BMw\cdot\mathbf{1}=0$, where $\mathbf{1}$ denotes the all-ones vector. Then
\[
\sup_x \P(S_{\pi}=x)=O\Big(\frac{1}{n}\Big).
\]
\end{theorem}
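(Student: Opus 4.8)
The plan is to split according to the maximum multiplicity $m$ of a value among the entries of $\BMw$. Since $\BMw\neq 0$ and $\BMw\cdot\mathbf 1=0$, the vector $\BMw$ is non-constant, so $m<n$ — this is the only role of the hypotheses. The bound $O(1/n)$ is sharp, and the regime of large $m$ is genuinely present: for $\BMw=(1,\dots,1,-(n-1))$ one has $S_\pi=\tfrac{n(n+1)}{2}-n\,\pi(n)$, so $\sup_x\P(S_\pi=x)=1/n$, with $m=n-1$ here.

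Suppose first $m\ge n/2$. Let $c$ be a most frequent entry, put $\BMw'=\BMw-c\mathbf 1$, and note $S_\pi=\sum_{i\in I}w'_i\,\pi(i)+c\,\tfrac{n(n+1)}{2}$ where $I=\operatorname{supp}(\BMw')$ has size $n-m$. Fix $i_0\in I$ (possible as $m<n$). Conditioning on $(\pi(i))_{i\in I\setminus\{i_0\}}$, the equation $S_\pi=x$ forces $\pi(i_0)$ uniquely (since $w'_{i_0}\neq0$); hence of the $\frac{n!}{m!}$ injections $I\hookrightarrow[n]$ at most $\frac{n!}{(m+1)!}$ are compatible with $\{S_\pi=x\}$, so $\P(S_\pi=x)\le \frac1{m+1}=O(1/n)$.

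The substantial case is $m<n/2$. Then $\{1,\dots,n\}$ partitions into $n/2$ pairs $\{a_\ell,b_\ell\}$ with $w_{a_\ell}\neq w_{b_\ell}$ for every $\ell$. Exposing $\pi$ only through the unordered images $\{\pi(a_\ell),\pi(b_\ell)\}$ leaves exactly $n/2$ i.i.d.\ fair bits $\eta_\ell$ (which endpoint of the $\ell$-th pair receives $a_\ell$), and the collection of these images is a uniformly random perfect matching $M$ of $\{1,\dots,n\}$. Writing $e_\ell:=w_{a_\ell}-w_{b_\ell}\neq0$ and $g_\ell:=|\pi(a_\ell)-\pi(b_\ell)|\in\{1,\dots,n-1\}$ (the gap of the $\ell$-th edge of $M$), we get $S_\pi=(\text{const})+\sum_\ell \eta_\ell e_\ell g_\ell$, hence
\[
\P(S_\pi=x)\ \le\ \E_M\,\rho\!\left(\{e_\ell g_\ell\}_{\ell=1}^{n/2}\right),
\]
where $\rho$ is the classical Littlewood--Offord concentration function. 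This is exactly where $\BMv=(1,2,\dots,n)$ enters: the multipliers $g_\ell$ lie in $\{1,\dots,n-1\}$. Now by Theorem~\ref{thm:SSz} and monotonicity of $\rho$ under appending terms, $\rho(\{e_\ell g_\ell\})\le O(D^{-3/2})$ where $D$ is the number of distinct values in the multiset $\{e_\ell g_\ell\}_\ell$, so it suffices to prove $D\ge n^{2/3}$ with probability $1-o(1/n)$.

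The main obstacle is this last step for a general real $\BMw$. Two ingredients are clear: (a) for a uniform random matching of $\{1,\dots,n\}$ one has $\E\,\#\{\ell:g_\ell=u\}\le1$ for each $u$, and since the edge indicators are negatively associated, Chernoff plus a union bound give $\max_u\#\{\ell:g_\ell=u\}=O(\log n)$ off an event of probability $o(1/n)$; (b) a divisor-type bound on the number of representations $v=e_\ell g_\ell$ of a given value $v$, so that $\#\{\ell:e_\ell g_\ell=v\}$ exceeds the bound in (a) only by a factor $n^{o(1)}$. When the differences $\{e_\ell\}$ are numerically rich (many distinct values, or many small rational ratios among them) a single matching need not make $D$ large, and the crude pairing bound only produces $\rho=O(n^{-1/2})$; I would handle this by a secondary dichotomy on the choice of pairing — if one difference value is realized by $\gtrsim n^{0.7}$ disjoint pairs, restrict to those (then $e_\ell$ is constant and (a) alone suffices), otherwise select $\gtrsim n^{0.7}$ disjoint pairs with pairwise distinct $e_\ell$ and show their products $e_\ell g_\ell$ are almost all distinct — after first reducing, by a routine scaling/rounding, to the case in which $\BMw$ has integer entries of size $n^{O(1)}$ so the divisor estimates apply literally. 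Getting every regime to beat $1/n$ (rather than just $n^{-1/2}$) is the delicate point.
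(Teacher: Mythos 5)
Your Case~1 ($m\ge n/2$) is correct and self-contained: conditioning on $\pi$ restricted to $I\setminus\{i_0\}$ pins $\pi(i_0)$ to one of $m+1$ values at most once, so $\P(S_\pi=x)\le 1/(m+1)\le 2/n$. The opening reduction in Case~2 is also sound: with a fixed pairing of $[n]$ into pairs $\{a_\ell,b_\ell\}$ with $w_{a_\ell}\neq w_{b_\ell}$ (which exists once $m<n/2$), conditioning on everything except the orientation of each pair does leave $n/2$ independent fair bits, yielding $\P(S_\pi=x)\le\E\,\rho\bigl(\{e_\ell g_\ell\}_\ell\bigr)$ with every $e_\ell g_\ell$ nonzero, and the Erd\H{o}s bound gives $O(n^{-1/2})$ unconditionally. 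The genuine gap is exactly the step you yourself flag as the crux: you never prove that, with probability $1-o(1/n)$ over the random matching, the number $D$ of distinct products $e_\ell g_\ell$ is at least $n^{2/3}$, which is what would let you invoke Theorem~\ref{thm:SSz} and upgrade $n^{-1/2}$ to $n^{-1}$. Two concrete obstructions stand in the way. First, the "routine scaling/rounding to the case in which $\BMw$ has integer entries of size $n^{O(1)}$" is not routine for the point-mass event $\{S_\pi=x\}$: rounding changes the event, and the Freiman-type embeddings that do put everything in $\Z$ or $\F_p$ (as in the paper's proof of Theorem~\ref{thm:ILO:array}) give no control whatsoever on absolute sizes, so divisor bounds such as $d(m)=m^{o(1)}$ are not directly available. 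Second, in the branch of your dichotomy with many distinct $e_\ell$'s, you only assert, without argument, that the products $e_\ell g_\ell$ are almost all distinct; this is precisely where the arithmetic structure of $\BMw$ enters, and there is no proof.

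For context, the paper does not reprove Theorem~\ref{thm:soze_main}: it cites S{\"o}ze, cites the sharp version Theorem~\ref{thm:discrete:1} due to Pawlowski and Alon--Pohoata--Zhu (proved via Bruhat-order antichains and the Sperner property), and separately derives the stronger $O(n^{-3/2})$ rate of Corollary~\ref{thm:discrete:3/2} only under the extra hypothesis of bounded multiplicity, using the characteristic-function/permanent estimate and the GAP inverse theorem. Your pairing-into-a-Rademacher-sum route is genuinely different from any of these, and Case~1 is a clean elementary observation, but as written Case~2 is an incomplete sketch with a real, acknowledged gap rather than a proof.
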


We remark that the assumption $\BMw\cdot\mathbf{1}=0$ may be imposed without
loss of generality. Indeed, if all coordinates of $\BMw$ are equal, then
$S_{\pi}$ is constant and no nontrivial anti-concentration statement can hold.
Thus, from the perspective of anti-concentration, it is natural to decompose
\[
\BMw=\alpha\mathbf{1}+\BMw',
\qquad \BMw'\cdot\mathbf{1}=0,
\]
and to focus on $S_{\pi}(\BMw',\BMv)$.

The bound $O(1/n)$ is sharp, as shown by taking $w_1=1$, $w_2=-1$, and
$w_i=0$ for $i\ge3$. In~\cite{Vuetal}, Berger et al. extended this
result to an arbitrary vector $\BMv$ with distinct coordinates.

\begin{theorem}\label{thm:Ber}
If the $v_i$ are all distinct and $\BMw$ is a nonzero vector such that
$\BMw\cdot\mathbf{1}=0$, then
\[
\sup_x \P(S_{\pi}=x)=O\Big(\frac{1}{n}\Big).
\]
\end{theorem}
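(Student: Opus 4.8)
The plan is to prove the bound by splitting on whether $\BMw$ has a \emph{dominant value}. Since $\BMw\neq 0$ and $\BMw\cdot\mathbf 1=0$ the vector $\BMw$ is non‑constant; let $n_{\max}\in\{1,\dots,n-1\}$ denote the largest multiplicity of a value among $w_1,\dots,w_n$. Observe first that replacing $\BMw$ by $\BMw-\alpha\mathbf 1$ changes $S_\pi$ only by the constant $\alpha(\BMv\cdot\mathbf 1)$, so we are free to subtract off any value of $\BMw$ without affecting $\sup_x\P(S_\pi=x)$.

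\emph{Case 1: $n_{\max}>n/2$.} Let $c$ be the most frequent value of $\BMw$, put $J=\{i:w_i=c\}$ and $K=[n]\setminus J$, so that $1\le |K|=n-n_{\max}<n/2$. Using $\sum_{i\in J}v_{\pi(i)}=\BMv\cdot\mathbf 1-\sum_{i\in K}v_{\pi(i)}$ one rewrites
\[
S_\pi=c\,(\BMv\cdot\mathbf 1)+\sum_{i\in K}(w_i-c)\,v_{\pi(i)},
\]
where every coefficient $w_i-c$ with $i\in K$ is nonzero. Fix $i_0\in K$ and condition on the $|K|-1$ values $(\pi(i))_{i\in K\setminus\{i_0\}}$: conditionally, $\pi(i_0)$ is uniform over the $n-|K|+1=n_{\max}+1$ remaining value‑indices and $S_\pi$ equals a constant plus $(w_{i_0}-c)v_{\pi(i_0)}$, which takes $n_{\max}+1$ distinct values because the $v_j$ are distinct. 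Hence $\P(S_\pi=x\mid (\pi(i))_{i\in K\setminus\{i_0\}})\le (n_{\max}+1)^{-1}<2/n$, and averaging settles this case.

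\emph{Case 2: $n_{\max}\le n/2$.} Here I would exhibit a signed Littlewood--Offord structure inside $S_\pi$. Since $n_{\max}\le n/2$, the positions $\{1,\dots,n\}$ can be partitioned into $\lfloor n/2\rfloor$ pairs $\{a_\ell,b_\ell\}$ with $w_{a_\ell}\neq w_{b_\ell}$ for every $\ell$ (if $n$ is odd, one position is left over; condition on its image, which merely shifts $S_\pi$ by a constant). Generate $\pi$ in two stages: first pick a uniformly random ordered matching $\mathcal Q=(Q_1,\dots,Q_{\lfloor n/2\rfloor})$ of the value‑indices and assign $Q_\ell$ to the pair $\{a_\ell,b_\ell\}$; then, independently over $\ell$, pick a uniform orientation $\eta_\ell\in\{0,1\}$ deciding which element of $Q_\ell=\{c_\ell,d_\ell\}$ is $\pi(a_\ell)$. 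A direct computation shows that conditionally on $\mathcal Q$,
\[
S_\pi=C(\mathcal Q)+\sum_\ell \eta_\ell\,u_\ell,\qquad u_\ell=(w_{a_\ell}-w_{b_\ell})\bigl(v_{d_\ell}-v_{c_\ell}\bigr),
\]
and each $u_\ell\neq 0$ since $w_{a_\ell}\neq w_{b_\ell}$ and the $v_j$ are distinct. The $\eta_\ell$ are i.i.d., so $\sum_\ell\eta_\ell u_\ell$ is an affine image of a Rademacher sum; Theorem~\ref{thm:ELO} already gives $\P(S_\pi=x\mid\mathcal Q)=O(n^{-1/2})$, but to reach $O(n^{-1})$ I would pass to Theorem~\ref{thm:SSz}: keeping one index $\ell$ per distinct value of $u_\ell$ and conditioning on the remaining $\eta_\ell$'s leaves a Rademacher combination of $d(\mathcal Q)$ \emph{distinct} nonzero coefficients, where $d(\mathcal Q)$ is the number of distinct values among $u_1,\dots,u_{\lfloor n/2\rfloor}$; thus $\P(S_\pi=x\mid\mathcal Q)=O\bigl(d(\mathcal Q)^{-3/2}\bigr)$.

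It remains — and this is where I expect the real work to be — to show $d(\mathcal Q)=\Omega\!\bigl(n/\mathrm{polylog}\,n\bigr)$ for all but an $O(n^{-2})$ fraction of the matchings $\mathcal Q$. For a fixed real $u$, the multiplicity $\#\{\ell:u_\ell=u\}$ counts those $\ell$ for which the pair $Q_\ell$ realizes the $\BMv$‑gap $u/(w_{a_\ell}-w_{b_\ell})$; since any real is the gap of at most $n$ pairs of the distinct numbers $v_1,\dots,v_n$, this is an expectation‑$O(1)$ count whose factorial moments are bounded by $C^{k}$ (using the elementary formula for edge‑inclusion probabilities of a uniformly random matching), hence it has a super‑geometric upper tail. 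A union bound over the at most $n^{3}$ reals $u$ that can occur as some $u_\ell$ then gives, with probability $1-O(n^{-2})$, that every value has multiplicity $O(\log n/\log\log n)$, so $d(\mathcal Q)=\Omega\!\bigl(n\log\log n/\log n\bigr)$. Combined with the previous step this yields
\[
\sup_x\P(S_\pi=x)\le O(n^{-2})+O\!\bigl((n/\log n)^{-3/2}\bigr)=O(1/n).
\]
The only genuinely delicate ingredient is this tail estimate for gap‑multiplicities of a random perfect matching of $\{v_1,\dots,v_n\}$ — equivalently, controlling how often a fixed sparse auxiliary graph is hit by a random matching — while Cases 1 and 2 otherwise reduce to bookkeeping together with the classical Littlewood--Offord and Erd\H{o}s--Moser bounds.
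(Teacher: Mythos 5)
The paper does not reprove Theorem~\ref{thm:Ber}; it cites it to Berger et al.~\cite{Vuetal}. Its own machinery reaches only neighboring statements (Corollary~\ref{thm:discrete:3/2}, which needs $\sup_y\#\{i:w_i=y\}\le cn$, and Theorem~\ref{thm:cont:1}, which needs Condition~\ref{cond:separation}), so your proposal is not being measured against an in-paper argument — it is a genuinely independent and far more elementary route, and on its merits it is correct. Case~1 is airtight. In Case~2 the two-stage generation of $\pi$ (uniform labeled matching $\CQ$ of value-indices, then i.i.d.\ orientations $\eta_\ell$) is a correct decomposition of a uniform permutation, the increments $u_\ell=(w_{a_\ell}-w_{b_\ell})(v_{d_\ell}-v_{c_\ell})$ are nonzero, and keeping one representative per distinct $u$-value before invoking Erd\H{o}s--Moser (Theorem~\ref{thm:SSz}) is the right move. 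The step you flag as delicate does close: for a fixed $u$, the $k$-th factorial moment of $\#\{\ell:u_\ell=u\}$ over $\CQ$ is at most $(n/2)^k\cdot n^k\cdot(n-2k)!\,2^k/n!\le 4^k$ for $k\le n/4$, since each $\BMv$-gap is realized by at most $n-1$ pairs of the distinct $v_i$ and a uniform labeled matching carries $k$ prescribed disjoint edges to $k$ prescribed labels with probability exactly $(n-2k)!\,2^k/n!$; this gives a Poisson-type tail, and the union bound over the $O(n^3)$ achievable values of $u$ yields $d(\CQ)=\Omega(n\log\log n/\log n)$ except on an event of probability $O(n^{-2})$, whence the Case~2 bound $O\big(\log^{3/2}n/n^{3/2}\big)=o(1/n)$. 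Compared to the inverse Littlewood--Offord route the paper takes for Corollary~\ref{thm:discrete:3/2}, your argument is much more elementary (Erd\H{o}s--Moser plus matching combinatorics instead of Fourier analysis and a Freiman-type structure theorem) and it covers the full multiplicity range; the trade-off is a polylogarithmic loss in Case~2 and no control of the constant, which is why the optimal Theorem~\ref{thm:discrete:1} still requires the Sperner/Bruhat machinery of~\cite{Paw,APZ}.
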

Both Theorems~\ref{thm:soze_main} and~\ref{thm:Ber} are special cases of
continuous theorems that we will discuss later. Theorem~\ref{thm:Ber} may be
viewed as a permutation analog of the Littlewood--Offord--Erd\H{o}s result
from the product-space setting. Note that there is a $\sqrt{n}$ improvement
from the bound $O(n^{-1/2})$ to $O(n^{-1})$.

One may also interpret the above problems in terms of orbits of the symmetric
group. Given a vector $\BMv=(v_1,\dots,v_n)$ with distinct coordinates,
consider the orbit $\pi(\BMv)$ of $\BMv$ under $S_n$. How many of these orbit
points can lie in a fixed hyperplane $H\subset\R^n$? A more precise result was conjectured in~\cite{HMS} (see also~\cite{HGy}) and
later verified in~\cite{Paw} (and independently in~\cite{APZ}).

\begin{theorem}\label{thm:discrete:1}
Under the assumptions of Theorem~\ref{thm:Ber},
\begin{equation}\label{eqn:1}
\sup_x \P(S_{\pi}=x)
\le \frac{2\lfloor n/2\rfloor}{n(n-1)}.
\end{equation}
\end{theorem}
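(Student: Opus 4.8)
The plan is to prove the sharp bound $\sup_x \P(S_\pi = x) \le \frac{2\lfloor n/2\rfloor}{n(n-1)}$ by reducing the anti-concentration statement to a combinatorial counting problem about pairs of permutations, and then exploiting the transposition structure of $S_n$. Fix the value $x$ maximizing $\P(S_\pi = x)$ and let $\mathcal{E} = \{\pi \in S_n : S_\pi = x\}$; we want to show $|\mathcal{E}| \le \frac{2\lfloor n/2\rfloor}{n(n-1)} \cdot n!$. The key observation is that if $\pi \in \mathcal{E}$ and $\sigma = (a\, b)$ is a transposition, then $S_{\pi\sigma} - S_\pi = (w_a - w_b)(v_{\pi(b)} - v_{\pi(a)})$. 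Since the $v_i$ are distinct, $v_{\pi(b)} \ne v_{\pi(a)}$, so $S_{\pi\sigma} = S_\pi$ forces $w_a = w_b$. Thus, if all entries of $\BMw$ were also distinct, no two elements of $\mathcal{E}$ could differ by a single transposition, and more generally the structure of $\mathcal{E}$ is tightly constrained by the partition of coordinates into $\BMw$-level-sets.

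First I would set up the double-counting: count pairs $(\pi, \sigma)$ where $\pi \in \mathcal{E}$, $\sigma$ is a transposition, and $\pi\sigma \in \mathcal{E}$. On one hand this is $\sum_{\pi \in \mathcal{E}} \#\{\text{transpositions } \sigma : \pi\sigma \in \mathcal{E}\}$. On the other hand, by the relation above, $\pi\sigma \in \mathcal{E}$ (given $\pi \in \mathcal{E}$) happens exactly when $(w_a - w_b)(v_{\pi(b)} - v_{\pi(a)}) = 0$, i.e.\ $w_a = w_b$. So the count of such $\sigma$ for a fixed $\pi$ is exactly the number of pairs $\{a,b\}$ with $w_a = w_b$, which is $\sum_j \binom{n_j}{2}$ where $n_j$ are the multiplicities of the distinct values of $\BMw$ — crucially \emph{independent of $\pi$}. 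The plan is then to run a Cauchy--Schwarz or direct graph-theoretic argument on the Cayley graph of $S_n$ with transposition generators: $\mathcal{E}$ spans an induced subgraph in which every vertex has degree exactly $\sum_j \binom{n_j}{2}$, while the whole graph is $\binom{n}{2}$-regular. An isoperimetric/eigenvalue bound (the transposition Cayley graph has a well-understood spectrum via representation theory of $S_n$) then forces $|\mathcal{E}|/n!$ to be small unless the $n_j$ are large, and one optimizes over the partition subject to $\BMw \ne \mathbf{0}$ and $\BMw \cdot \mathbf{1} = 0$ (which rules out $\BMw$ constant, so not all of $\{1,\dots,n\}$ is one level set).

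To pin down the \emph{exact} constant $\frac{2\lfloor n/2\rfloor}{n(n-1)}$, I expect the extremal configuration to be $w_1 = 1$, $w_2 = -1$, $w_i = 0$ otherwise (matching the sharpness example already noted in the text), where $\mathcal{E}$ consists of permutations with $v_{\pi(1)} = v_{\pi(2)}$ — impossible — so actually the extremal $\BMw$ giving the stated bound is the one where the level sets are $\{1,\dots,n-1\}$ of the zero value plus... hmm, rather I would look for $\BMw$ supported on two coordinates with opposite signs and think about which $x$ is hit: $S_\pi = w_1 v_{\pi(1)} + w_2 v_{\pi(2)} = v_{\pi(1)} - v_{\pi(2)}$, and the most popular difference among distinct $v_i$'s can be achieved by $\lfloor n/2 \rfloor$ ordered pairs (e.g.\ when $\BMv$ is an arithmetic progression), giving exactly $\frac{2\lfloor n/2\rfloor}{n(n-1)}$ after dividing by $n(n-1)$ free choices and noting the remaining $n-2$ positions are arbitrary. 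So the clean argument is likely: condition on the two positions $i < j$ carrying the (up to sign) largest-magnitude behavior, reduce via an averaging/convexity argument to the two-coordinate case, and there invoke the sharp Littlewood--Offord-type count for a single difference $v_a - v_b$ among distinct reals — which is at most $\lfloor n/2 \rfloor$ by a standard extremal argument (pairing up, a value can be a difference in at most $\lfloor n/2\rfloor$ ways).

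The main obstacle I anticipate is the reduction from general $\BMw$ to the two-coordinate (or bounded-support) case while preserving the exact constant rather than just an $O(1/n^2)$ bound. The transposition-Cayley-graph spectral argument readily gives the correct \emph{order} $O(1/n^2)$ once some $n_j$ is bounded (which $\BMw \cdot \mathbf 1 = 0$, $\BMw \ne 0$ guarantees — at least two distinct values, so the largest level set has size $\le n-1$, hence at least one more level set is nonempty and there exist at most two coordinates outside the big set... this still needs care). Extracting the \emph{sharp} constant will instead require a clean "compression"/exchange argument directly on $\mathcal{E}$, or a careful analysis of the second-largest eigenvalue with exact constants; I would try to formalize the heuristic that the worst case concentrates all the "action" of $\BMw$ onto a single $\pm$ pair, reducing the problem to: among distinct reals $v_1,\dots,v_n$, how often can $v_a - v_b$ equal a fixed value? — whose answer is exactly $\lfloor n/2 \rfloor$.
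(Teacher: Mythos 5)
Your approach is genuinely different from the paper's (which, via Pawlowski and Alon--Pohoata--Zhu, proceeds through antichains in a weak Bruhat order on $S_n/S_\alpha$, the Sperner property of those posets, and bounds on coefficients of $q$-multinomial coefficients). The basic exchange identity $S_{\pi\sigma}-S_\pi=(w_a-w_b)(v_{\pi(b)}-v_{\pi(a)})$ and the observation that the induced subgraph on $\mathcal E$ in the transposition Cayley graph is exactly $\sum_j\binom{n_j}{2}$-regular are both correct and are a good starting point. However, there are several concrete gaps that prevent this from becoming a proof, and at least one step is simply false.

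\textbf{The spectral argument cannot work as stated.} The transposition Cayley graph of $S_n$ is bipartite (transpositions are odd, so they connect $A_n$ to its complement). Its smallest eigenvalue is therefore $-\binom{n}{2}=-d$, so any Hoffman-type independence bound is vacuous. Moreover, $\mathcal E$ is typically not close to an independent set: with $\BMw$ having a level set of size $n-1$, the induced degree is $\binom{n-1}{2}$, nearly the full degree $\binom{n}{2}$, so expander-mixing gives no useful upper bound on $|\mathcal E|$. (Also the nontrivial positive spectral gap is only $d-\lambda_2 = n$, far too small to isolate a constant of the form $2\lfloor n/2\rfloor/(n(n-1))$.) The object that actually \emph{is} an independent set is not $\mathcal E$ in $S_n$ but its image in the quotient: since $S_\pi$ is invariant under right multiplication by the Young subgroup $Y=\prod_j S_{n_j}$ stabilizing $\BMw$, $\mathcal E$ is a union of $Y$-cosets, and the set $\mathcal E/Y$ is an independent set in the Schreier graph on $S_n/Y$ generated by transpositions outside $Y$. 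That quotient picture is where the paper's Sperner/Bruhat machinery lives; bounding the independence number there sharply is precisely the hard step you have not addressed.

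\textbf{The claimed counting lemma is false, and it points at the wrong extremal case.} You invoke that ``a value can be a difference in at most $\lfloor n/2\rfloor$ ways'' among distinct reals. For $\BMv=(1,\dots,n)$ the difference $1$ is achieved by $n-1$ ordered pairs. The correct statement of this type is for \emph{sums}: for distinct $v_1,\dots,v_n$ and fixed $c$, the unordered pairs $\{a,b\}$ with $v_a+v_b=c$ are pairwise disjoint, hence at most $\lfloor n/2\rfloor$. And indeed, the extremal $\BMw$ (after translating by a multiple of $\mathbf 1$, which does not change $\rho$) is $w_1=w_2=1$, $w_i=0$ otherwise — two coordinates with the \emph{same} $w$-value, not opposite signs — giving $S_\pi = v_{\pi(1)}+v_{\pi(2)}$ up to an additive constant, and $\rho = \tfrac{2\lfloor n/2\rfloor (n-2)!}{n!}$. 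Your heuristic with $w_1=1$, $w_2=-1$ gives $S_\pi=v_{\pi(1)}-v_{\pi(2)}$, whose most popular value in an AP yields $\rho=1/n$, which is strictly below the bound for even $n$, so the ``difference'' case is not extremal.

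\textbf{The reduction to two coordinates is unaddressed.} You acknowledge this yourself, but it is the entire content of the theorem. Even after identifying the correct extremal configuration, one needs an argument that general $\BMw$ (equivalently, a general partition of $[n]$ into level sets and a general choice of multipliers $c_j$) cannot beat the two-level-set case; this is what the antichain/Sperner argument achieves, by showing $\mathcal E/Y$ is an antichain in the Bruhat order on $S_n/Y$ and bounding its size by the largest coefficient of the $q$-multinomial $\binom{n}{n_1,\dots,n_k}_q$, then maximizing that coefficient over all compositions with at least two parts. A compression or convexity argument that does the same thing directly is not supplied and is not obvious.
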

This bound is optimal. Indeed, take
\(
\BMv=(1,2,\dots,n)
\quad\text{and}\quad
\BMw=\bigl(-\!\sum_{i=2}^{n-1} i,\,
-\!\sum_{i=2}^{n-1} i,\,
n+1,\dots,n+1\bigr).
\)
In this case, $S_{\pi}=0$ if and only if
$v_{\pi(1)}+v_{\pi(2)}=n+1$.

The proof of Theorem~\ref{thm:discrete:1} is quite involved and relies on tools from algebraic combinatorics. In spirit, it is similar to Stanley’s proof \cite{Stan} of an optimal variant of Theorem~\ref{thm:SSz}. Roughly speaking, if $\BMv$ has distinct
coordinates and $S_{\pi}=x$, then the set of such permutations $\pi$ forms an
antichain (in a suitable weakening of the Bruhat order on $S_n$). This
antichain can be identified with a disjoint union of Bruhat orders on
$S_n/S_{\alpha}$ for certain parabolic subgroups $S_{\alpha}$. These posets
have the Sperner property, which implies that their size is bounded by the
largest rank of $S_n/S_{\alpha}$. These ranks correspond to coefficients of the
$q$-multinomial coefficient $\binom{n}{\alpha}_q$, and
Theorem~\ref{thm:discrete:1} follows from appropriate bounds on those
coefficients.

While Theorem~\ref{thm:discrete:1} is elegant—being optimal and valid for all
$n$—the above methods do not appear to extend easily to more general settings
where additional structure is imposed on $\BMv$ and $\BMw$. As observed by
Alon et al.~\cite{APZ}, if both the $v_i$ and the $w_i$ are distinct,
then $\P(S_{\pi}=x)$ must lie between $n^{-3/2}$ and $n^{-5/2}$. A sharp bound
in this regime would serve as a permutation-space analog of the
Erd\H{o}s--Moser result in the product-space setting. However, the techniques
of~\cite{APZ} and \cite{Paw} do not resolve this problem. We will address it in
Section~\ref{section:applications}.

\section {New results in the space of random permutations}

We start with highlights of our new results in this paper:

\begin{itemize}  
\item In the discrete setting, we prove an analog of Theorem \ref{thm:ILO}, namely Theorem \ref{thm:ILO:prod}, in the space of random permutations. In fact, we can prove a more general inverse result concerning two-dimensional arrays (matrices), Theorem \ref{thm:ILO:array}. The permutation setting is the special case when the array (matrix) has rank~1.

\vskip2mm 

\item In the continuous setting, 
we deal with the 
small-ball probability $\P (|S_{\pi} -L| \le \delta) $.  A remarkable feature here (compared to discrete case) is the dependence of the bound on the parameter $L$. 
Our new results Theorems \ref{thm:cont:1}, \ref{thm:cont:3/2}, and \ref{thm:cont:poly:1} establish  sub-gaussian behavior with respect to $L$. This is optimal and  answers a question posed by  S{\"o}ze \cite{Soze2}. Another key result in this direction, Theorem \ref{thm:comparison:2vs1}, sheds light on the behavior of joint distribution, showing that under certain conditions, two small ball events behave almost independently. 

\vskip2mm 

\item As direct  applications of Theorem \ref{thm:ILO:prod}, we obtain 
 several  {\it forward} results,  such as Theorems \ref{thm:discrete:1/2} , \ref{thm:discrete:3/2} and \ref{thm:discrete:5/2}.  In particular, we 
 succesfully address the range $n^{-3/2}$ and $n^{-5/2}$ discussed earlier, answering a question of Alon et al.~\cite{APZ}.

 \vskip2mm 

\item Anti-concentration results in the product setting have a large number of applications in different fields; see \cite{NVsur} for a survey. We expect that our new results in the random permutation setting will have a similar impact. We present two illustrations concerning random polynomials and random matrices.

\vskip2mm

\item We consider random polynomials of the form 
$P_{\pi}(x)= \sum_{i=1}^n w_{\pi(i) } x^{i-1}$, where $\BMw=(w_1,\dots,w_n)$ is a deterministic real vector and $\pi$ is a random permutation. A well known theorem of S{\"o}ze~\cite{Soze2} shows, under a mild assumption on $\BMw$,  that the number of real roots of $P_{\pi}$ is of order 
$O(\log n)$. We strengthen this result by showing that the number of 
extremal points (real roots of any fixed order derivative) of $P_{\pi}(x)$
is also of order $O(\log n)$; see Theorem \ref{thm:poly}.

\vskip2mm 

\item Regarding random matrices, we consider the matrix $Q_{n\times n}$ whose rows are $\pi_1(\BMw),\dots,\pi_n(\BMw)$, where $\BMw$ is a deterministic vector and $\pi_1,\dots,\pi_n$ are independent random permutations. We show, under a mild assumption on $\BMw$, that $Q_{n\times n}$ is non-singular with high probability; see Theorem~\ref{thm:singularity}. This theorem can be viewed as the permutation analog of the classical matrix singularity problem in product spaces.

\end{itemize}

\subsection{A new inverse theorem}
One of the main results of our paper is the following  analog of Theorem \ref{thm:ILO},  providing  a characterization of  those vectors $\BMw$ and $\BMv$ for which $S_{\pi}$ admits polynomially large concentration. 

\begin{theorem}[Inverse result for permutation sums]\label{thm:ILO:prod}
Let \( C>0\) and \( \eps \in (0,1) \) be constants. Assume that
\[
\rho := \sup_{x} \P_\pi\Big( \sum_{i=1}^n w_{i}\, v_{\pi(i)} = x \Big) \ge n^{-C}.
\]
Then for any $n^{\eps} \le n' \le n$ there exists a {proper symmetric} GAP of rank $r = O_{C, \eps}(1)$ and size $O_{\eps}(\rho^{-1}(n')^{-r/2})$ that contains $(w_i-w_j)(v_k-v_l)$  for at least $(1-n'/n)n^4$ quadruples $(i,j,k,l)$. In particular, there exists a {proper symmetric} GAP of rank $r = O_{C, \eps}(1)$ and size $O_{\eps}(\rho^{-1}n^{-r/2})$ that contains $(w_i-w_j)(v_k-v_l)$ for at least 
$(1-\eps)n^4$ quadruples.
\end{theorem}

The bound is sharp; we refer the reader to Lemma~\ref{lemma:optimal} for a proof. In Section~\ref{section:applications}, we will use this theorem to derive and
refine various \emph{forward} results. In particular, we will resolve the
question posed by Alon et al. discussed in the previous subsection.

\subsection {A more general theorem on two dimensional arrays} 
 Given an $n \times n$ array (matrix) with real entries $(a_{ij})_{1 \le i,j \le n}$, define the random sum
\[
S_\pi = \sum_{i=1}^n a_{i\pi(i)},
\]
where $\pi$ is a uniformly random permutation.

Such sums are referred to as {\it random combinatorial sums} in probability theory. 
The behavior of this sum, including CLT, has been studied by many authors \cite{Bolt, ChenFang, HoChen, CGS,  Hoeffding, Roos-CLT}. However, as far as we know, no anti-concentration result has been proved.   We can generalize Theorem \ref{thm:ILO:prod} as follows

\begin{theorem}[Inverse result for $2$D arrays]\label{thm:ILO:array}
Let $C>0$ and $0<\eps<1$ be constants. Let $(a_{ij})_{1 \le i,j \le n}$ be an $n \times n$ array of real-valued entries. Assume that
\[
\rho := \sup_{x} \P_\pi\Big( \sum_i a_{i\pi(i)} = x \Big) \ge n^{-C}.
\]
Then for any $n^{\eps} \le n' \le n$ there exists a {proper symmetric} GAP of rank $r = O_{C, \eps}(1)$ and size $O_{\eps}(\rho^{-1}(n')^{-r/2})$ that contains $a_{i k} - a_{j k} - a_{i l} + a_{j l}$ for at least
$(1-n'/n)n^4$ quadruples $(i,j,k,l)$. 
\end{theorem}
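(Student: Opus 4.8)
The plan is to reduce Theorem~\ref{thm:ILO:array} to the inverse Littlewood--Offord result in the product space, namely Theorem~\ref{thm:ILO}. The key observation is that a uniformly random permutation $\pi$ can be coupled with a sequence of independent "swap" choices, and the increments it produces are governed by the second-order differences $a_{ik}-a_{jk}-a_{il}+a_{jl}$. More precisely, I would first construct a random permutation by a chain of transpositions. Fix a ground permutation $\pi_0$ (say the identity), and sample $\pi$ via the Fisher--Yates shuffle: for $t=n, n-1, \dots, 2$ choose $\sigma_t$ to be the transposition swapping position $t$ with a uniformly random position in $\{1,\dots,t\}$, independently. Writing $S_\pi$ along this chain, the difference between consecutive states when we apply the transposition $(t\ j)$ to a current permutation $\tau$ is exactly $a_{t,\tau(j)} + a_{j,\tau(t)} - a_{t,\tau(t)} - a_{j,\tau(j)}$, a $2\times 2$ "alternating" difference of the array. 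This is the mechanism by which the second-order differences enter.

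The cleaner route, which I expect to be the actual backbone of the proof, is to bypass the full shuffle and instead condition on all but a single random swap. Pick $i\ne j$ and consider the (random) pair of values $\pi^{-1}(\text{undetermined})$; concretely, let $\pi$ be uniform, let $(k,l)$ be the pair $(\pi^{-1}(i),\pi^{-1}(j))$ reading off which input positions map to the two "active" columns, and compare $S_\pi$ with $S_{\pi'}$ where $\pi'=\pi\circ(k\ l)$. Then $S_\pi - S_{\pi'} = (a_{ik}-a_{jk}) - (a_{il}-a_{jl})$, which is precisely the quantity named in the theorem. Since $\rho\ge n^{-C}$, a standard averaging/decoupling argument (in the spirit of Halász-type or tensorization inequalities, already used in the product-space inverse literature) shows that a polynomially large fraction of these conditional one-dimensional "random $\pm$ swap" sums must themselves have concentration probability $\ge n^{-C'}$; for those, the event $S_\pi = S_{\pi'}$ forces a linear relation among a batch of the alternating differences, and one applies Theorem~\ref{thm:ILO} (or, more efficiently, iterates it) to the multiset of such differences to extract a proper symmetric GAP of bounded rank containing almost all of them. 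Combining the GAPs obtained as $(i,j)$ ranges over all pairs — and invoking a GAP-merging lemma (intersections and unions of bounded-rank GAPs of polynomial size are again bounded-rank GAPs of polynomial size, up to losing an $\eps$-fraction of elements) — yields a single GAP $Q$ of rank $O_{C,\eps}(1)$ and size $O_\eps(\rho^{-1}n^{-r/2})$ containing $a_{ik}-a_{jk}-a_{il}+a_{jl}$ for $(1-\eps)n^4$ quadruples. Note that Theorem~\ref{thm:ILO:prod} is the special case $a_{ij}=w_i v_j$, for which $a_{ik}-a_{jk}-a_{il}+a_{jl}=(w_i-w_j)(v_k-v_l)$.

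The size bound requires care: the naive application of Theorem~\ref{thm:ILO} gives $|P| = O(\rho_0^{-1} (n')^{-r/2})$ where $\rho_0$ is the conditional concentration probability and $n'$ the number of differences we feed in, so I would want to feed in as large a batch as possible (close to $n$ alternating differences sharing a fixed row-pair $(i,j)$, obtained by letting the column-pair vary) and track that $\rho_0$ stays $\ge \rho \cdot n^{-O(1)}$ after conditioning, so that $\rho_0^{-1} n^{-r/2} = O(\rho^{-1} n^{-r/2})$ up to the $O(\eps)$ slack absorbed into the rank constant. The main obstacle, I expect, is precisely this conditioning step: the conditional law of the "active swap" sum given the rest of $\pi$ is not literally a product-space Rademacher sum with i.i.d.\ entries, so one cannot quote Theorem~\ref{thm:ILO} verbatim. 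I would handle this by a Fubini/averaging argument — showing that for a positive fraction of the conditioning, the residual sum is a genuine sum of independent (not identically distributed, but bounded-second-moment, mean-zero) terms of the form "$\pm$(alternating difference)", a setting the inverse theorem still covers after the standard symmetrization — together with a pigeonhole to locate the polynomially-large conditional concentration. A secondary technical point is ensuring the output GAP is \emph{proper and symmetric}; this is routine given the statement of Theorem~\ref{thm:ILO}, which already produces a proper symmetric GAP, and the merging lemma preserves these properties.
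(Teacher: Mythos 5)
Your proposal diverges fundamentally from the paper's proof, and I believe the central reduction step has a genuine gap. The paper does \emph{not} try to reduce to the product-space inverse theorem (Theorem~\ref{thm:ILO}) via conditioning on swaps. Instead, it works directly with the permutation sum: it embeds the $a_{ij}$ into $\F_p$ via a Freiman-type isomorphism, expands $\rho$ by Fourier inversion over $\F_p$, and then invokes Corollary~\ref{cor:Roos} (a consequence of Roos's permanent bound, Theorem~\ref{thm:Roo}) to obtain the clean characteristic-function estimate
\[
|\varphi(2\pi t)| \le \exp\Big(-\tfrac{1}{2n^{3}}\sum_{i,j,k,l}\|t(a_{ik}-a_{jk}-a_{il}+a_{jl})\|^2_{\R/\Z}\Big).
\]
From here the argument is the level-set/dual-set machinery of~\cite{NgV-Adv}: one locates a level set of $t$'s, extracts a dense subset $A'$ of the alternating differences with $|kA''|$ polynomially small, and applies the long-range Freiman-type inverse theorem (Theorem~\ref{thm:Freiman-inverse}) to contain $A''$ in a single proper symmetric GAP of the right rank and size. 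Roos's bound is the device that performs, at the analytic level, the "decoupling into alternating differences" that you are trying to engineer probabilistically.

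The concrete gap in your approach is the decoupling step. Fixing a row pair $(i,j)$ and conditioning on everything except the swap $\pi\mapsto\pi\circ(k\ l)$ gives you a single $\pm1$ choice, hence a "Rademacher sum" with a \emph{single} coefficient $a_{ik}-a_{jk}-a_{il}+a_{jl}$; anti-concentration of a one-term $\pm$ sum carries no useful information and Theorem~\ref{thm:ILO} is vacuous for it. To get a sum with $\Theta(n)$ genuinely independent $\pm$ coefficients you would need $\Theta(n)$ disjoint transpositions acting independently, but the Fisher--Yates chain does not produce this: the swap amounts $a_{t,\tau(j)}+a_{j,\tau(t)}-a_{t,\tau(t)}-a_{j,\tau(j)}$ depend on the running state $\tau$, so both the coefficients and the sign choices are strongly coupled, and there is no obvious symmetrization that makes them i.i.d.\ Rademacher. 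You flag this obstacle yourself, but the proposed fix ("Fubini/averaging... a setting the inverse theorem still covers after the standard symmetrization") is not a proof, and I do not see how to make it one without effectively reproving something like Roos's inequality. A secondary gap is the "GAP-merging lemma": a union (or even intersection over $\Theta(n^2)$ row pairs) of bounded-rank, polynomial-size proper symmetric GAPs is not in general contained in one such GAP with comparable parameters; the paper avoids this entirely by applying the Freiman-type theorem once to the whole set $A$ of quadruple differences.
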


Theorem~\ref{thm:ILO:prod} corresponds to the special case $a_{ij}=w_i v_j$.

\subsection{New bounds on the small ball probability: The subgaussian behavior}

We assume that $\|\BMv\|_{\infty}=O(1)$. 
We inverstigate the small-ball probability 
$\P (
|S_{\pi}-L|\le \delta)$. 
 It is clear that this continuous problem
 contains the discrete 
problem considered earlier as a special case
with $\delta=0$. 

In the product-space setting, this problem has been studied extensively, with
early foundational results due to Rogozin~\cite{Ro}, Kolmogorov~\cite{Kol}, and
Hal\'asz~\cite{Halasz}, dated back to the 1960s and 1970s. There have also been many recent developments, especially
in the inverse direction; see, for instance,
\cite{NgV-Adv, RV, TVinverse} and the survey~\cite{NVsur}.
For $S_{\pi}$, CLT results and Berry--Esseen--type
estimates have been investigated beginning with the classical works of
Wald--Wolfowitz~\cite{GN,WW} and Hoeffding~\cite{Hoeffding} in the early 1950s.
More recent approaches based on the Lindeberg exchange method or Stein’s method
can be found in
\cite{AChW, Bolt, ChenFang, HoChen, Roos-CLT}. We refer the reader to
\cite{CGS} and the references therein for further contributions concerning this
important statistic. The best anti-concentration bound obtainable from these
classical approaches is of order $O(n^{-1/2})$ (independent of the length of the
interval), arising from the rate of convergence in the CLT.

A key feature of this part is the dependence of the bounds on the parameter $L$. We show that these bounds exhibit sub-gaussian decay in $L$. This behavior is consistent with the central limit theorem and answers a question raised by S\"oze in \cite{Soze2}.

In what follows, we restrict our attention to one of the most natural choices for $\BMv$, namely
sequences arising from polynomials. We use this setting in one of our main applications. Our method applies in a more general setting, but the treatment is more technical and will appear elsewhere.

\subsubsection{A starting point: the linear case $v_i = i/n$}

Our starting point is the following result of S{\"o}ze~\cite[Lemma~4]{Soze2}, in
which he studied $S_{\pi}$ from~\eqref{eqn:S_p} for the special choice of
$\BMv$ given by $v_i=i/n$.

\begin{theorem}\label{lemma:Koze:0}
Assume that $\sum_i w_i = 0$ and $\sum_i w_i^2 = 1$. Then for every $L\in\R$,
\[
\P\Bigl(\bigl|S_{\pi}-L\bigr|\le \frac{1}{n}\Bigr)
= O\Bigl(\frac{1}{n} e^{-\Theta(|L|)}\Bigr).
\]
\end{theorem}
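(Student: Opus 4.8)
The plan is to split the estimate into a scale-$1/n$ anti-concentration bound that is \emph{uniform} in the centre of the interval, and an exponential large-deviation bound supplying the factor $e^{-\Theta(|L|)}$, and then to glue the two together by an exponential change of measure. Throughout write $S_\pi=\frac1n\sum_{i}w_i\pi(i)$ and $T:=\sum_i w_i\pi(i)=nS_\pi$, so that the event is $\{|T-nL|\le1\}$. Since $\sum_i w_i=0$, the map $\pi\mapsto\pi'$ with $\pi'(i)=n+1-\pi(i)$ is a measure-preserving involution of $S_n$ sending $S_\pi$ to $-S_\pi$; hence the law of $S_\pi$ is symmetric about $0$ and we may assume $L\ge0$. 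We also record that $\E S_\pi=0$ and $\Var S_\pi=\frac{n+1}{12n}=\Theta(1)$ (equivalently $\Var T=\Theta(n^2)$), so $S_\pi$ lives on a $\Theta(1)$-window with natural resolution $1/n$.

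\textbf{Step 1 (uniform local bound).} One first proves $\sup_{a\in\R}\P(|S_\pi-a|\le 1/n)=O(1/n)$. Since $\BMv=\frac1n(1,\dots,n)$ has distinct coordinates, Theorem~\ref{thm:discrete:1} already gives the sharp \emph{point-mass} bound $\sup_x\P(S_\pi=x)\le\frac1{n-1}$; the extra content is to control how many atoms of $S_\pi$ can cluster in an interval of length $2/n$, and this is where the additive structure of $\BMw$ enters. By Esseen's inequality it suffices to bound $\int_{-1}^{1}|\E e^{isT}|\,ds$ by $O(1/n)$: the range $|s|\lesssim 1/n$ contributes $O(1/n)$ via the Gaussian bound $|\E e^{isT}|\le e^{-c\,\Var(T)\,s^2}$ (valid there since $\Var(T)=\Theta(n^2)$), while for $|s|\gtrsim 1/n$ one branches according to whether $\BMw$ carries small-GAP structure in the sense of Theorem~\ref{thm:ILO:prod}. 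If it does not, the characteristic function decays fast (indeed $\rho$, and hence the whole probability, is already far below $1/n$). If it does, one peels off the $O(1)$ largest coordinates of $\BMw$ — each only shifts $a$ by $O(1)$ — and is left with $T$ essentially a bounded-height integer combinatorial sum $\sum_i k_i\pi(i)$ with $\BMk\perp\mathbf{1}$, whose sharp local behaviour is classical. This structured case genuinely cannot be handled by Esseen alone: in near-lattice examples such as $\BMw\approx\mathbf{e}_1-\frac1n\mathbf{1}$, where $S_\pi$ is essentially $\pi(1)/n$, plain Esseen overestimates $\P(|S_\pi-a|\le1/n)$ by a factor $\log n$, and one must instead count the relevant permutations directly (or appeal to the combinatorial local CLT).

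\textbf{Step 2 (large deviations and gluing).} With $L\ge0$ we have $\P(|S_\pi-L|\le1/n)\le\P(S_\pi\ge L-1/n)$, and since $S_\pi$ is bounded, has variance $\Theta(1)$, and has a well-controlled log-moment-generating function $\Lambda(\theta):=\log\E e^{\theta S_\pi}$ (standard exponential-moment estimates for combinatorial sums, together with the trivial bound $|S_\pi|\le\|\BMw\|_1/2$ in the ``spiky'' regime), the Chernoff bound gives $\P(S_\pi\ge L-1/n)\le e^{-\Lambda^{*}(L-1/n)}\le e^{-\Theta(L)}$ for $L\ge1$ (and the whole probability is $\le1$ trivially for $0\le L<1$). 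To keep the $1/n$ prefactor one does not use this directly but passes to the tilted law $d\widetilde\P=e^{\theta S_\pi}\,d\P/Z$, $Z=e^{\Lambda(\theta)}$, for a suitable $\theta=\theta(L)$, obtaining
\[
\P(|S_\pi-L|\le 1/n)=Z\,\widetilde\E\!\left[e^{-\theta S_\pi}\,;\,|S_\pi-L|\le 1/n\right]\le e^{-(\theta L-\Lambda(\theta))}\,e^{\theta/n}\,\widetilde\P(|S_\pi-L|\le 1/n).
\]
Choosing $\theta$ to nearly realise $\sup_{\theta\ge0}(\theta L-\Lambda(\theta))=\Lambda^{*}(L)\ge\Theta(L)$, with $\theta=O(n)$, makes the prefactor $O(1)\,e^{-\Theta(L)}$; and $\widetilde\P(|S_\pi-L|\le1/n)=O(1/n)$ by Step~1 applied to $\widetilde\P$. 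Together these give $\P(|S_\pi-L|\le1/n)=O\!\left(n^{-1}e^{-\Theta(L)}\right)$, and the case $L<0$ follows by the symmetry noted above.

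\textbf{Main obstacle.} The delicate point is running Step~1 for the \emph{tilted} measure $\widetilde\P(\pi)\propto\prod_i e^{(\theta/n)w_i\pi(i)}$ rather than for the uniform one, with constants independent of $\theta$ over the whole relevant range. The redeeming feature is that this tilt is ``rank-one and smooth'': it does not change the Diophantine data of $\BMw$ that governs anti-concentration, so the same structured/unstructured dichotomy applies uniformly in $\theta$; quantifying this — and, within it, obtaining the sharp $O(1/n)$ rather than $O(\log n/n)$ in the near-lattice regime — is the bulk of the work. An equivalent and perhaps cleaner bookkeeping dispenses with the change-of-measure language: in the Fourier inversion formula for $\P(|S_\pi-L|\le1/n)$ one shifts the contour by $i\theta$, which at one stroke produces the factor $e^{-\theta L}$ and replaces $\int|\E e^{isT/n}|\,ds$ by $\int|\E e^{(\theta+is)T/n}|\,ds$ — i.e.\ precisely the tilted local integral again.
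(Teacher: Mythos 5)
A bookkeeping point first: the paper does not prove this theorem; it is cited from S{\"o}ze~\cite[Lemma~4]{Soze2}, and the surrounding text only sketches S{\"o}ze's strategy (compare $\sum_i w_i v_{\pi(i)}$ with $\sum_i w_i u_i$ for i.i.d.\ uniform $u_i$, then exploit unimodality). Your proposal is thus a different route, and it should be judged on whether the outline closes. It does not --- there are two genuine gaps.

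Step~1 is not proved. You claim $\sup_a\P(|S_\pi-a|\le 1/n)=O(1/n)$ for arbitrary normalized $\BMw$. In the ``unstructured'' branch you argue that absence of GAP structure forces ``$\rho$, and hence the whole probability, ... far below $1/n$''; this conflates the atom probability $\rho=\sup_x\P(S_\pi=x)$, which Theorem~\ref{thm:ILO:prod} controls, with the interval probability $\P(|S_\pi-a|\le 1/n)$, about which that inverse theorem says nothing. In the ``structured'' branch you correctly note that plain Esseen only gives $O(\log n/n)$ (compare Remark~\ref{rmk:optimal}) and that one must ``count permutations directly,'' but this is precisely the hard step and it is not carried out.

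Step~2 requires an estimate that neither the paper nor your sketch supplies: the local bound $\widetilde{\P}(|S_\pi-L|\le 1/n)=O(1/n)$ for the tilted measure $\widetilde{\P}(\pi)\propto\prod_i e^{(\theta/n)w_i\pi(i)}$, uniformly in $\theta$, or equivalently (in your contour-shift reformulation) a bound on $\int|\E e^{(\theta+is)T/n}|\,ds$. The only tool available here for small-ball estimates on $S_n$ is Roos's permanent inequality (Theorem~\ref{thm:Roo}, Corollary~\ref{cor:Roos}), which bounds $|\perm(M)|$ for matrices whose entries have modulus one; under the tilt the entries $e^{(\theta+is)w_k\ell/n}$ do not, and the inequality no longer applies. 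This missing step is not cosmetic. By Lemma~\ref{lemma:deviation}, $S_\pi$ is subgaussian for \emph{every} normalized $\BMw$, so $\Lambda^*(L)=\Theta(L^2)$; a $\theta$-uniform tilted local bound would therefore give $O\bigl(\tfrac{1}{n} e^{-\Theta(L^2)}\bigr)$ with no non-degeneracy hypothesis, strictly stronger than Theorem~\ref{thm:cont:1}, which the paper establishes only under Condition~\ref{cond:separation} and whose removal it explicitly lists as an open problem. That your outline would over-deliver if it went through is the clearest signal that the tilted (complex-argument) local estimate is exactly where the difficulty lies, and it is left unaddressed.
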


An interesting feature of this theorem is the appearance of the parameter $L$
in the bound, which implies that $S_{\pi}$ exhibits exponential decay. S{\"o}ze
\cite{Soze2} conjectured a sharper estimate of order
$O(n^{-1} e^{-\Theta(L^{2})})$, corresponding to sub-gaussian
decay. Notice that once $L$ appears on the right-hand side, the magnitude of the $v_{i}$ becomes
relevant.

The proof in~\cite{Soze2} is clever but rather involved. Roughly speaking, the
author compares the sum $\sum_i w_i v_{i}$ with $\sum_i w_i u_i$, where the
$u_i$ are i.i.d.\ uniformly distributed on $(0,1)$, and then exploits certain
ad hoc unimodality properties of the resulting sum. Nonetheless, the resulting
bound is not optimal.

\begin{problem}\label{Problem:Soze}
Can one achieve a sub-gaussian bound in Theorem~\ref{lemma:Koze:0}?
\end{problem}

Other natural questions include weakening the  restriction
$\BMv=(1,\dots,n)/n$ and treating scales smaller than $1/n$. We achieve
these goals under a modest assumption on $\BMw$. In what follows, we impose the following assumption.

\begin{condition}[Non-degeneracy]\label{cond:separation}
We say that a sequence $w_1,\dots,w_n$ satisfying
\[
\sum_{i=1}^n w_i = 0
\quad\text{and}\quad
\sum_{i=1}^n w_i^2 = 1
\]
is \emph{non-degenerate} if
\begin{equation}\label{eqn:separation}
|w_i-w_j| \;\le\; \frac{1}{A\sqrt{\log n}}
\end{equation}
for all distinct $i,j$ and for some constant $A$.
\end{condition}

Roughly speaking, this condition corresponds to $\BMw$ being a unit vector
orthogonal to $\mathbf{1}$ and satisfying
$\|\BMw\|_{\infty}=o(1/\sqrt{\log n})$.

\begin{theorem}[New result at scale $1/n$]\label{thm:cont:1}
Let $\delta>0$ be given. Suppose that the sequence $(w_1,\ldots,w_n)$ satisfies
Condition~\ref{cond:separation} for some sufficiently large constant $A>0$.
Let $I\subset[n]$ be any subset with $|I|\ge \delta n$, and consider a sequence
$(v_1,\ldots,v_n)$ that is partially specified by
\[
v_i = i/n \quad \text{for all } i\in I.
\]
Then, for every $L\in\R$, we have the uniform bound
\begin{equation}\label{eqn:1:u}
\P\Bigl(\bigl|S_{\pi}-L\bigr|\le \frac{1}{n}\Bigr)
= O_{A}\Big(\frac{1}{n}\Big).
\end{equation}
If, in addition, there exists a constant $\widetilde{B}>0$ such that
$|v_i|\le \widetilde{B}$ for all $i\in[n]$, then we obtain the sub-gaussian bound
\begin{equation}\label{eqn:1:L}
\P\Bigl(\bigl|S_{\pi}-L\bigr|\le \frac{1}{n}\Bigr)
= O\Big(\frac{1}{n} e^{-\Theta(L^{2})}\Big).
\end{equation}
Here, the implied constants depend only on $A$ and $\widetilde{B}$.
\end{theorem}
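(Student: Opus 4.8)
The plan is to reduce the continuous small-ball estimate at scale $1/n$ to a combination of the discrete inverse theorem (Theorem~\ref{thm:ILO:prod}) and a Fourier-analytic (characteristic function) argument on the symmetric group, exploiting that the specified coordinates $v_i = i/n$ for $i \in I$ already force enough additive spread. For the first, uniform bound \eqref{eqn:1:u}, I would first condition on the restriction of $\pi$ to $[n]\setminus I$, so that the remaining randomness is a uniform bijection between two linearly-ordered index sets of size $|I| \ge \delta n$; on this conditional space $S_\pi = c + \sum_{i \in I} w_i \cdot \pi(i)/n$ for a constant $c$ depending on the conditioning, plus a cross term. The key point is that $(1/n, 2/n, \dots)$ is a genuine arithmetic progression, so one should be able to run a decoupling / swap argument: for a uniform random transposition $(i\, j)$ composed with $\pi$, the increment $S_{\pi} - S_{\pi \circ (ij)} = (w_i - w_j)(v_{\pi(i)} - v_{\pi(j)})$, and when both indices land in $I$ this increment is a nonzero multiple of $(w_i-w_j)/n$. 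Averaging such swaps and using that the $w_i - w_j$ are not all tiny (Condition~\ref{cond:separation} pins down the scale but the normalization $\sum w_i^2 = 1$ forces $\max_i |w_i| \gg 1/\sqrt n$, hence many pairs with $|w_i - w_j| \gg 1/\sqrt n$) yields that $S_\pi$ cannot concentrate on an interval of length $1/n$ with probability exceeding $O_A(1/n)$.

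More precisely, the cleanest route to \eqref{eqn:1:u} is probably via the characteristic function
$\phi(t) = \E_\pi e^{i t S_\pi}$
together with the bound $\P(|S_\pi - L| \le 1/n) \lesssim \frac1n \int_{-\Theta(n)}^{\Theta(n)} |\phi(t)|\,dt$. Restricting to coordinates in $I$ and using the exchangeable-pairs / permanent expansion for such $\phi$, one bounds $|\phi(t)|$ by a product (or an average over a matching) of factors of the form $|\cos(t (w_i-w_j)(v_k - v_l)/2)|$ or the analytic quantity controlled by $\exp(-c \sum \|t(w_i-w_j)/n\|_{\R/\Z}^2)$ over pairs $i,j$ landing in $I$. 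For $|t| \le \Theta(n)$ the arguments $t(w_i-w_j)/n$ lie in a bounded window (using $|w_i - w_j| = O(1/\sqrt{\log n})$, so in fact they are small), giving a Gaussian-type decay $|\phi(t)| \le \exp(-c t^2 \operatorname{Var}_I)$ where $\operatorname{Var}_I := \sum_{i \in I} w_i^2 - (\sum_{i\in I} w_i)^2/|I| \gg \delta$ by non-degeneracy and $\sum w_i^2 = 1$. Integrating gives $\int |\phi| \lesssim 1$, hence \eqref{eqn:1:u}. The constant $A$ enters to guarantee that the quadratic Taylor approximation of $\cos$ is valid uniformly over the relevant range of $t$, i.e. that no "wrap-around" in $\R/\Z$ spoils the Gaussian bound; this is exactly why $|w_i - w_j| \le 1/(A\sqrt{\log n})$ with $A$ large is the right hypothesis.

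For the sub-gaussian refinement \eqref{eqn:1:L}, the extra input is the uniform bound $|v_i| \le \widetilde B$. The idea is a standard exponential-tilting / large-deviations move: write $\P(|S_\pi - L| \le 1/n) = \E_\pi \mathbf{1}_{|S_\pi - L| \le 1/n}$ and insert a factor $e^{\lambda(S_\pi - L)} e^{-\lambda(S_\pi - L)}$; on the event $|S_\pi - L| \le 1/n$ the second factor is $1 + O(\lambda/n)$, so the probability is $\le e^{-\lambda L + O(\lambda/n)} \E_\pi[e^{\lambda S_\pi}\mathbf 1_{|S_\pi-L|\le 1/n}] \le e^{-\lambda L+O(1)}\E_\pi e^{\lambda S_\pi}$. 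Now one needs the moment generating function estimate $\E_\pi e^{\lambda S_\pi} \le e^{O(\lambda^2)}$ for $|\lambda| = \Theta(L)$ in the admissible range; this is where $\|\BMv\|_\infty \le \widetilde B$ is used, via Hoeffding's bound for sums over random permutations (the combinatorial-sum MGF bound: $\E_\pi e^{\lambda S_\pi} \le \exp(\lambda^2 \sum_{i} w_i^2 \|\BMv\|_\infty^2 /2 + \dots)$, or more carefully via the bounded-differences / Azuma inequality for the permutation statistic $\sum w_i v_{\pi(i)}$ whose Lipschitz constant under a transposition is $O(\max_i|w_i|\cdot \widetilde B) = O(\widetilde B)$ — actually one wants a variance-aware Bernstein form here so that the exponent is genuinely $\Theta(\lambda^2)$ with an absolute constant). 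Optimizing $\lambda \sim L$ and combining with \eqref{eqn:1:u} (to absorb the case of small $|L|$, where the sub-gaussian factor is $\Theta(1)$) produces $\frac1n e^{-\Theta(L^2)}$.

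The main obstacle I anticipate is the analytic control of the characteristic function $\phi(t)$ over the full range $|t| \lesssim n$ when only a $\delta$-fraction of the coordinates of $\BMv$ is specified: the unspecified coordinates could conspire adversarially, and one must argue that conditioning on $\pi|_{[n]\setminus I}$ still leaves a genuinely random $|I|$-element matching whose contribution dominates and cannot be cancelled by the (conditioned, hence deterministic) cross terms. Handling this rigorously likely requires either (i) a clean conditional-independence decomposition of $S_\pi$ into the "$I$-part" plus a constant, reducing exactly to S\"oze's linear setting on a sub-progression, for which Theorem~\ref{lemma:Koze:0} or its proof applies verbatim; or (ii) a direct swap/martingale argument on the restricted permutation. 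Option (i) is cleaner but needs care because the $I$-part is $\sum_{i \in I} w_i \pi(i)/n$ only after one also knows which values $\{j/n : j \notin \pi(I)^c\}$ are available — so in fact it is a sum $\sum_{i\in I} w_i v_{\sigma(i)}$ over a uniform bijection $\sigma$ from $I$ to a fixed size-$|I|$ subset $J$ of $[n]$ with $v_j = j/n$ on $J$, which is precisely the S\"oze form with $n$ replaced by $|I| \asymp n$. That reduction, once made precise, is what makes both \eqref{eqn:1:u} and the route to \eqref{eqn:1:L} go through, with the constant $A$ large enough to run S\"oze's (or the Fourier) estimate on the subsystem.
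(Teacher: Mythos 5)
Your skeleton (Esseen's inequality $\P(|S_\pi-L|\le 1/n)\ll\frac1n\int|\varphi|$, Roos's permanent bound for $|\varphi|$, a split of the $t$-integral into regimes, and a moment-generating-function input for the tail) is the right frame and matches the paper's. However, there are two substantial gaps, plus a smaller one.

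\textbf{Wrap-around is not avoided; it must be estimated.} You claim that for $|t|\le\Theta(n)$ the cosine arguments are small because $|w_i-w_j|=O(1/\sqrt{\log n})$. That is a scaling error: the arguments appearing after Roos are $t(w_i-w_j)(v_k-v_l)$, and with $k,l\in I$ the factor $v_k-v_l=(k-l)/n$ is typically of order $1$, not $1/n$. Thus for $|t|\gg\sqrt{\log n}$ the argument is of size $\gg |t|/\sqrt{\log n}\cdot|w_i-w_j|\cdot\Theta(1)$ and wraps repeatedly around $\R/\Z$ throughout most of the integration range $|t|\lesssim n$. A uniform Gaussian bound $|\varphi(t)|\le\exp(-ct^2\operatorname{Var}_I)$ over that whole range is unavailable; indeed, Remark~\ref{rmk:optimal} shows that without Condition~\ref{cond:separation} the characteristic-function method genuinely fails. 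The paper's large-$|t|$ analysis handles wrap-around explicitly: after Fact~\ref{fact:R}, a dyadic decomposition in the scales of $|w_i-w_j|$, combined with the wrap-around estimate $\sum_{r\in\CR}\|br\|^2_{\R/\Z}\asymp n$ of Lemma~\ref{lemma:wraparound}/Corollary~\ref{cor:wraparound} (itself resting on the Weyl-type inverse Lemma~\ref{lemma:Weyl:inverse}), gives $|\varphi_S(t)|\le n^{-2\sqrt A}$ on $(\sqrt{A\log n})/n\le|t|\le1$. Without some version of this lemma, the argument does not close.

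\textbf{Tilting does not produce the product $\tfrac1n e^{-\Theta(L^2)}$.} Your exponential-tilting step gives $\P\le e^{-\lambda L+O(1)}\E e^{\lambda S_\pi}\le e^{-\Theta(L^2)}$ after discarding the indicator, and combining that with \eqref{eqn:1:u} yields only $\min\{1/n,\,e^{-\Theta(L^2)}\}$, which is \emph{weaker} than $\frac1n e^{-\Theta(L^2)}$. Retaining the indicator and writing the expectation as $Z\cdot\widetilde\P(|S_\pi-L|\le 1/n)$ over the tilted measure would require re-deriving a permanent-type bound for a non-uniform measure on $S_n$, which Roos's estimate does not provide. The paper instead applies the MGF input \eqref{eqn:mgf:1} \emph{inside} the Fourier representation: it replaces the Esseen kernel with the Gaussian $e^{-\pi t^2}$, observes that $\varphi_{S/n}$ extends holomorphically to $\C$ because $S$ is bounded, and shifts the contour of $\int e^{-itL}h(t)\,dt$ to $\R-icL$ (the side contributions cancel by the symmetry $h(-\overline z)=\overline{h(z)}$). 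On the shifted line, $e^{-izL}$ contributes the factor $e^{-cL^2}$ while $|\varphi_{S/n}(t-icL)|\le C_0'e^{C_0'c^2L^2}$ by \eqref{eqn:mgf:1}; choosing $c=1/(4C_0')$ leaves $e^{-\Theta(L^2)}$, and the $1/n$ comes from the Jacobian of the change of variables $x=nt$. Note also that your option~(i), reducing to S\"oze's Theorem~\ref{lemma:Koze:0}, would only give $\frac1n e^{-\Theta(|L|)}$ (exponential decay), which is precisely what Problem~\ref{Problem:Soze} asks to improve to sub-gaussian — so it cannot prove \eqref{eqn:1:L}.

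\textbf{On conditioning.} If you condition on $\pi$ restricted to $\pi^{-1}([n]\setminus I)$, the remaining randomness is $\sum_{j\in J}w_j\,\sigma(j)/n$ over a uniform bijection $\sigma:J\to I$ with $J=\pi^{-1}(I)$ now fixed; the relevant variance $\sum_{j\in J}(w_j-\bar w_J)^2$ can be atypically small for unlucky $J$, so the bound $\operatorname{Var}_I\gg\delta$ you posit is not automatic. The paper avoids conditioning entirely: Roos's bound gives control in terms of $\sum_{i,j\in[n]^2}(w_i-w_j)^2=2n$, with only the $(k,l)$ indices restricted to $I$ via Fact~\ref{fact:R}, which bypasses this fluctuation issue.
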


\begin{corollary}
The answer to Problem~\ref{Problem:Soze} is affirmative for any vector $\BMw$
satisfying Condition~\ref{cond:separation}.
\end{corollary}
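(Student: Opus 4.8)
\emph{Overall plan.} I would run a Fourier (Halász-type) argument on $S_n$, with symmetrization by random transpositions taking over the role played by independence in the product-space theory, and with the hypothesis $v_i=i/n$ on $I$ entering through a least-common-denominator bound. First I would condition on $J:=\pi^{-1}(I)$ and on $\pi|_{[n]\setminus J}$; then $S_\pi=c+\tfrac1n T$ with $c$ constant and $T:=\sum_{i\in J}w_i\,\pi(i)$, where $\pi|_J\colon J\to I$ is a uniform bijection onto the set $I$ of $|I|\ge\delta n$ \emph{distinct integers} in $[n]$. A second-moment computation shows that outside an $e^{-\Omega(n)}$-fraction of conditionings the vector $(w_i)_{i\in J}$, after subtracting its mean (which only shifts $c$), has $\ell_\infty$-norm $\le(A\sqrt{\log n})^{-1}$ and $\ell_2$-norm $\Theta(\sqrt\delta)$; the remaining conditionings contribute $o(1/n)$. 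So it suffices to prove, uniformly over typical conditionings, the unit-scale bound $\sup_y\P_{\pi|_J}(|T-y|\le1)=O_{A,\delta}(1/n)$.

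\emph{Fourier reduction and symmetrization.} By Esseen's concentration-function inequality this reduces to $\int_{|\theta|\le1/2}|\E e^{i\theta T}|\,d\theta=O_{A,\delta}(1/n)$. To estimate $\E e^{i\theta T}$ I would replace $\pi|_J$ by its composition with a uniformly random perfect matching of $J$ followed by independent Rademacher swaps along that matching, which does not change the law of $T$; conditioning on the matching and on $\pi|_J$ gives $T\stackrel{d}{=}\mathrm{const}+\sum_k\epsilon_kc_k$ with $\epsilon_k$ i.i.d.\ Rademacher and $c_k=\tfrac12(w_{p_k}-w_{q_k})\bigl(\pi(p_k)-\pi(q_k)\bigr)$, whence $|\E e^{i\theta T}|\le\E\prod_k|\cos(\theta c_k)|\le\E\exp\!\bigl(-c\sum_k\sin^2(\theta c_k)\bigr)$. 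One checks --- using that $\sum_iw_i^2=1$ together with the bound $\|\BMw\|_\infty\le(A\sqrt{\log n})^{-1}$, which forces at least $A^2\log n$ nonzero entries, and that $|I|\ge\delta n$ --- that at least $\Omega(A^2\log n)$ of the pairs are \emph{active}: $w_{p_k}\ne w_{q_k}$ and $|\pi(p_k)-\pi(q_k)|=\Theta(n)$; also $\sum_kc_k^2\asymp\mathrm{Var}(T)\asymp_\delta n^2$. For $|\theta|\lesssim\sqrt{\log n}/n$ the elementary estimate $\sin^2(\theta c_k)\asymp\theta^2c_k^2$ gives Gaussian decay, so this short range already contributes $O_\delta(1/n)$.

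\emph{The LCD estimate --- the main obstacle.} The crux is the range $\sqrt{\log n}/n\lesssim|\theta|\le\tfrac12$, where a priori $\prod_k|\cos(\theta c_k)|$ could spike back up to $1$ at values of $\theta$ commensurable with the $c_k$; here the arithmetic of $v_i=i/n$ is essential. Writing an active $c_k$ as $\tfrac12d_km_k$ with $|d_k|\le(A\sqrt{\log n})^{-1}$ and $m_k$ a nonzero integer of size $\Theta(n)$, the quantity $\sin^2(\theta c_k)$ is small only when $m_k$ lies near the lattice $\tfrac{2\pi}{\theta d_k}\Z$, whose spacing is $\ge4\pi A\sqrt{\log n}\gg1$ once $A$ is large. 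Revealing $\pi|_J$ pair by pair --- each conditional law of $m_k$ being still spread over $\Theta(n)$ integers --- each active pair has $\sin^2(\theta c_k)\ge\mathrm{const}$ with conditional probability $\ge\tfrac12$, so a bounded-differences inequality gives, for each fixed $\theta$ in the range,
\[
\P_{\pi|_J}\!\Bigl(\textstyle\sum_k\sin^2(\theta c_k)\le\tfrac{c}{2}A^2\log n\Bigr)\le e^{-\Omega(A^2\log n)}=n^{-\Omega(A^2)},
\]
and a union bound over a polynomially fine net of $\theta$ (the integrand is Lipschitz at the relevant scale) upgrades this to: with probability $1-n^{-\Omega(A^2)}$ over $\pi|_J$ one has $|\E_{\mathrm{swaps}}e^{i\theta T}|\le n^{-\Omega(A^2)}$ for \emph{all} $\theta$ in the range simultaneously. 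Choosing $A$ large enough that the exponent exceeds $2$, this contributes $O(n^{-2})$ to the integral on the good event and $O(n^{-\Omega(A^2)})$ off it, so $\sup_y\P(|T-y|\le1)=O_{A,\delta}(1/n)$ and \eqref{eqn:1:u} follows; and since the $v_i$ for $i\notin I$ never entered, \eqref{eqn:1:u} needs no boundedness hypothesis. Making the pair-by-pair/net argument rigorous on $S_n$ --- where the coordinates are not independent and one must also keep track of the number of active pairs --- is where essentially all the work lies; it is the permutation-space substitute for the hard part of the Erd\H{o}s--Moser and Rudelson--Vershynin LCD arguments.

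\emph{Sub-gaussian decay in $L$ and conclusion.} Assume $|v_i|\le\widetilde B$. Then $S_\pi$ is a bounded linear statistic of $\pi$, and I would use the moment bound $\E_\pi e^{\sigma(S_\pi-\E S_\pi)}\le e^{C\widetilde B^2\sigma^2}$ for $|\sigma|\ll\sqrt n$ (standard concentration for the random-transposition walk, e.g.\ via its log-Sobolev inequality). For $|L|>C_0(\widetilde B)\sqrt{\log n}$ this tail bound alone already gives $\P(|S_\pi-L|\le1/n)\le e^{-\Theta(L^2)}\le n^{-1}e^{-\Theta(L^2)}$. For $|L|\le C_0\sqrt{\log n}$ I would write the event as an expectation under the tilted law $d\tilde\P\propto e^{\sigma S_\pi}\,d\P$ with $\sigma\asymp-(L-\E S_\pi)/\widetilde B^2$: the normalizer contributes the factor $e^{-\Theta(L^2)}$ by the moment bound, while the tilted small-ball probability is $O_{A,\widetilde B}(1/n)$ by rerunning the argument above with the contour shifted to $\theta\mapsto\theta-i\sigma/n$ --- legitimate since $S_\pi$ is bounded, and harmless because Condition~\ref{cond:separation} forces $|(\sigma/n)c_k|=O(1)$ in this range of $L$, so the cosines are merely replaced by uniformly comparable hyperbolic cosines. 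Finally, the stated corollary is the case $I=[n]$, $\widetilde B=1$, i.e.\ $\BMv=(1,2,\dots,n)/n$, for which \eqref{eqn:1:L} is precisely the sub-gaussian bound sought in Problem~\ref{Problem:Soze}.
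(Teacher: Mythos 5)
In the paper this corollary is immediate: taking $I=[n]$ and $\widetilde{B}=1$ in Theorem~\ref{thm:cont:1}, the sub-gaussian bound~\eqref{eqn:1:L} is exactly the estimate asked for in Problem~\ref{Problem:Soze}. What you have written is therefore not a deduction of the corollary but a self-contained alternative proof of (the relevant case of) Theorem~\ref{thm:cont:1}, and by a genuinely different route: where the paper uses Roos's permanent bound (Theorem~\ref{thm:Roo}/Corollary~\ref{cor:Roos}) to get a deterministic inequality $|\varphi_S(2\pi t)|\le\exp(-\tfrac{1}{2n^3}\sum_{i,j,k,l}\|\cdot\|^2_{\R/\Z})$ and then estimates that sum pointwise via the wraparound Lemma~\ref{lemma:wraparound} together with a dyadic decomposition on $|w_i-w_j|$, you instead symmetrize via a random perfect matching plus independent Rademacher swaps to get a conditional Rademacher sum, and then try to show $\sum_k\sin^2(\theta c_k)\gg\log n$ \emph{with high probability} over the matching and assignment. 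Your exponential-tilting treatment of the $L$-dependence is a reasonable substitute for the paper's holomorphic contour shift.

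There is, however, a real gap in the ``LCD estimate'' step, and it lies precisely where the paper's case analysis does its work. You call a pair \emph{active} when $w_{p_k}\neq w_{q_k}$ and $|\pi(p_k)-\pi(q_k)|=\Theta(n)$, and you claim each active pair has $\sin^2(\theta c_k)\ge\mathrm{const}$ with conditional probability $\ge\tfrac12$. But ``$w_{p_k}\neq w_{q_k}$'' imposes no lower bound on $|d_k|=|w_{p_k}-w_{q_k}|$, and the wraparound heuristic (lattice $\tfrac{2\pi}{\theta d_k}\Z$ of spacing $\gg 1$) only yields a usable bound when that spacing is $\ll n$, i.e.\ when $|\theta d_k|\gg 1/n$. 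If $|\theta d_k|\ll 1/n$ then $|\theta c_k|=\tfrac12|\theta d_k||m_k|\ll 1$ for \emph{every} admissible $m_k$, so $\sin^2(\theta c_k)$ is small deterministically and the claimed conditional-probability bound fails. In fact at the lower edge of your ``large $\theta$'' range, $|\theta|\asymp\sqrt{\log n}/n$, one would need $|d_k|\gtrsim 1/\sqrt{\log n}$, while Condition~\ref{cond:separation} caps $|d_k|\le 1/(A\sqrt{\log n})$; so for $A>1$ \emph{no} pair is usable by wraparound there, regardless of the matching. (Concretely, the vector with $w_i=\pm 1/(A\sqrt{\log n})$ on $A^2\log n$ coordinates and $w_i=0$ elsewhere satisfies all hypotheses yet has every $\sin^2(\theta c_k)\lesssim 1/A^2$ at that scale.) Your argument as written would therefore certify nothing over a whole subrange of $\theta$.

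What actually saves this regime is the Gaussian contribution from the non-wraparound pairs --- $\sum_k\sin^2(\theta c_k)\asymp\theta^2\sum_k c_k^2\asymp\theta^2 n^2\gg\log n$ --- not the ``active-pair'' wraparound. More generally, for each $\theta$ one must split pairs by the size of $|\theta d_k|$: those below the threshold $1/n$ contribute Gaussianly, those above contribute by wraparound, and one must show the combined total is $\gg\log n$. This is exactly the dyadic decomposition into level sets $\CG_k$ of $|w_i-w_j|$ and the Case~1/Case~2 dichotomy carried out in the paper's proof of~\eqref{eqn:1:u}, and it is the piece your proposal omits. Fixing the gap would essentially reproduce that case analysis inside your probabilistic framework (now also keeping track of how many pairs land in each dyadic level under a random matching, an extra layer the paper's deterministic bound avoids). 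The surrounding structure of your argument --- symmetrization, Esseen, tilting --- is sound; the missing ingredient is this $\theta$-dependent refinement of ``active.''
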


The next theorem allows us to treat the smaller scale $n^{-3/2}$ under the
additional assumption that most of the coefficients $w_i$ are not squeezed
into a very small interval. We refer the reader to
Remark~\ref{rmk:optimal:3/2} for a discussion of the sharpness of this
assumption.

\begin{theorem}[New treatment at scale $n^{-3/2}$]\label{thm:cont:3/2}
Let $0<\varepsilon<1/2$ and $\delta>0$ be given. Suppose that the sequence
$(w_1,\ldots,w_n)$ satisfies Condition~\ref{cond:separation} for some
sufficiently large constant $A>0$. Furthermore, assume that no interval of
length $\varepsilon/\sqrt{n}$ contains more than $(1-\varepsilon)n$ of the
values $w_i$. Let $I\subset[n]$ be any subset with $|I|\ge \delta n$, and
consider a sequence $(v_1,\ldots,v_n)$ that is partially specified by
\[
v_i = i/n \quad \text{for all } i\in I.
\]
Then, for every $L\in\R$, we have the uniform bound
\begin{equation}\label{eqn:3/2:u}
\P\Bigl(\bigl|S_{\pi}-L\bigr|\le \frac{1}{n^{3/2}}\Bigr)
= O_{A}\Big(\frac{1}{n^{3/2}}\Big).
\end{equation}
If, in addition, there exists a constant $\widetilde{B}>0$ such that
$|v_i|\le \widetilde{B}$ for all $i\in[n]$, then we obtain the sub-gaussian bound
\begin{equation}\label{eqn:3/2:L}
\P\Bigl(\bigl|S_{\pi}-L\bigr|\le \frac{1}{n^{3/2-\eps}}\Bigr)
= O\Big(\frac{1}{n^{3/2-\eps}} e^{-\Theta(L^{2})}\Big).
\end{equation}
Here, the implied constants depend only on $A$ and $\widetilde{B}$.
\end{theorem}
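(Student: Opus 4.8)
The plan is to reduce the small-ball problem for $S_\pi$ at scales $n^{-3/2}$ and $n^{-3/2+\eps}$ to a product-space anti-concentration estimate, following the route already used (implicitly) for Theorem~\ref{thm:cont:1}, but now extracting the extra $\sqrt n$ of concentration from the spreading hypothesis on $\BMw$. First I would use the standard two-stage revelation of $\pi$: condition on the images $\pi(i)$ for $i\notin I$, so that on $I$ the permutation acts as a uniform bijection onto a set $J\subset[n]$ with $|J|\ge\delta n$, and on $J$ we know $v_j=j/n$ for $j$ in a large subset $J'\subseteq J$ (after discarding the $O((1-\delta)n)$ indices where $v$ is unspecified). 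Thus it suffices to bound, uniformly in $L$, the probability that $\sum_{i\in I} w_i\, v_{\sigma(i)}$ lies in an interval of radius $n^{-3/2}$ (resp.\ $n^{-3/2+\eps}$), where $\sigma$ is a uniform bijection $I\to J$ and most values $v_{\sigma(i)}$ are of the form (integer)$/n$.

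The core step is a coupling between the permutation sum and an independent sum. Write $\sigma$ via the Chen--Stein / sequential construction and compare $\sum_{i\in I} w_i v_{\sigma(i)}$ with $\sum_{i\in I} w_i u_i$ where the $u_i$ are i.i.d.\ uniform on the multiset of available $v$-values; quantitatively one controls the difference through the second moment of the discrepancy, which is $O(1/n)$ by the standard exchangeable-pairs variance computation for random combinatorial sums (the same input that underlies the CLT references \cite{Bolt,ChenFang,Hoeffding}). Because the target scale $n^{-3/2}$ is still polynomially larger than the coupling error $n^{-1}\cdot(\text{typical fluctuation})$, the small-ball probability for $S_\pi$ is comparable to that of the i.i.d.\ sum up to constants. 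For the i.i.d.\ sum, each $u_i$ takes values in $\tfrac1n\Z$ with probability $1-o(1)$, so after rescaling by $n$ we are looking at a genuine integer-valued Littlewood--Offord problem with coefficients $n w_i$; the hypothesis that no interval of length $\eps/\sqrt n$ captures more than $(1-\eps)n$ of the $w_i$ guarantees that $\Omega(n)$ of the rescaled coefficients are pairwise separated by $\Omega(\sqrt n / \sqrt n)=\Omega(1)$ after a suitable normalization, which is exactly the input needed to run an Erd\H{o}s--type dyadic/Esseen argument and obtain concentration probability $O(n^{-3/2})$. This yields~\eqref{eqn:3/2:u}.

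For the sub-gaussian refinement~\eqref{eqn:3/2:L} I would run the same reduction but at the slightly larger scale $n^{-3/2+\eps}$ and additionally track the location $L$. Here the extra hypothesis $|v_i|\le\widetilde B$ makes $S_\pi$ a bounded-increment random combinatorial sum, so one has an exponential (indeed sub-gaussian) tail $\P(|S_\pi-\E S_\pi|\ge t)\le 2\exp(-\Theta(t^2))$ by Azuma/Hoeffding for sampling without replacement (McDiarmid's bounded-differences inequality applied to the combinatorial statistic). Splitting the interval $|S_\pi-L|\le n^{-3/2+\eps}$ according to whether $|L-\E S_\pi|$ is larger or smaller than a suitable multiple of the fluctuation scale, one bounds the far regime directly by the sub-gaussian tail and the near regime by~\eqref{eqn:3/2:u}; combining gives the factor $e^{-\Theta(L^2)}$ (note $\E S_\pi = \tfrac1n(\sum_i w_i)(\sum_j v_j)=0$ on the part of the sum coming from $I$, after absorbing the unspecified-$v$ contribution into the constant).

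The main obstacle I anticipate is making the coupling error genuinely small \emph{relative to the target scale} $n^{-3/2}$: a crude coupling only gives error $O(n^{-1})$ times a fluctuation of size $O(1)$, i.e.\ $O(n^{-1})$, which is far too large. The fix is that anti-concentration at scale $n^{-3/2}$ does not require pointwise closeness of the two sums but only closeness of their distributions \emph{at that resolution}, which follows from a Berry--Esseen-type estimate for the combinatorial sum with an explicit rate; equivalently, one compares characteristic functions $\E e^{2\pi i t S_\pi}$ and $\E e^{2\pi i t \sum w_i u_i}$ for $|t|\lesssim n^{3/2}$ and shows the difference is $o(1)$ there, using that for such $t$ the exponentials involved still only see the $O(1/n)$-scale discrepancy of individual terms. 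This is where the non-degeneracy Condition~\ref{cond:separation} (i.e.\ $\|\BMw\|_\infty \ll 1/\sqrt{\log n}$) enters: it keeps the characteristic function of the i.i.d.\ comparison sum bounded away from $1$ on the relevant frequency range except near $t\in n\Z$, and the $\log n$ factor is exactly what is needed to control the union over the $O(n^{3/2})$ relevant frequencies. The sharpness discussion (why the $\eps/\sqrt n$ spreading condition cannot be dropped) is deferred to Remark~\ref{rmk:optimal:3/2}.
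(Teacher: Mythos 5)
Your proposal takes a route that is genuinely different from the paper's (the paper works directly with the permutation-sum characteristic function via Roos's permanent bound, Corollary~\ref{cor:Roos}, plus the wrapping lemmas, whereas you propose to couple with an i.i.d.\ sum), but both of the key steps in your argument contain gaps that do not look fixable as stated.

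The central gap is the coupling/comparison with the i.i.d.\ sum. You acknowledge that a crude coupling gives error of order $n^{-1}$, which is much larger than the target scale $n^{-3/2}$, and you propose to rescue this by comparing characteristic functions $\E e^{2\pi i t S_\pi}$ and $\E e^{2\pi i t \sum_i w_i u_i}$ for $|t|\lesssim n^{3/2}$. But this comparison is exactly as hard as the original problem: the two random variables differ because one is sampling without replacement and the other with replacement, and controlling the characteristic-function gap down to $o(1)$ on frequencies of order $n^{3/2}$ is essentially a Berry--Esseen statement at rate $o(n^{-3/2})$, far beyond what the combinatorial CLT literature \cite{Bolt,ChenFang,Hoeffding} provides (those give Kolmogorov rate $O(n^{-1/2})$). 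Moreover the direction of the coupling matters: to conclude an \emph{upper} bound on $\sup_x\P(|S_\pi-x|\le n^{-3/2})$ you would need $\sum w_i u_i$ to have \emph{at least as much} anti-concentration as $S_\pi$, uniformly over $x$, and there is no reason that is true. The non-degeneracy Condition~\ref{cond:separation} is also doing a different job than you assign it: it is used in the paper to control the ``small $|t|$'' regime (where $\|\cdot\|_{\R/\Z}$ unwinds to absolute value), not to bound a union over $O(n^{3/2})$ ``bad'' frequencies. The paper's route via Roos's bound sidesteps the coupling issue entirely, which is the whole point of Corollary~\ref{cor:Roos}.

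There is a second, independent gap in the sub-gaussian refinement~\eqref{eqn:3/2:L}. Your proposed argument -- split on whether $|L-\E S_\pi|$ is large, use the McDiarmid/Azuma tail in the far regime and~\eqref{eqn:3/2:u} in the near regime -- yields only $\min\{n^{-3/2},\, e^{-\Theta(L^2)}\}$, not the product $n^{-3/2+\eps}e^{-\Theta(L^2)}$. These two quantities differ dramatically in the moderate regime $1\ll |L|\ll\sqrt{\log n}$: when $L^2 = \Theta(\log n)$ the min is $n^{-3/2}$ but the target is $\approx n^{-3}$. Getting the multiplicative form requires an argument that simultaneously extracts both factors; the paper does this by passing to a Gaussian-weighted Esseen integral $\int e^{-\pi t^2}\varphi_X(t)\,dt$, extending the characteristic function holomorphically, and shifting the contour to $\R - icL$ (cf.\ the treatment around~\eqref{eqn:contour}). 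The tail bound alone (your far regime) is only invoked in the paper to dispose of $|L|\gg\sqrt{\log n}$, not as a substitute for the contour argument.

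One smaller issue: after conditioning on $\pi$ outside $I$, the image set of $I$ under $\pi$ is random and its intersection with $I$ (where the $v_j=j/n$ are known) is typically only of size about $\delta^2 n$; this reduction works but has to be set up carefully, and in any case the paper's argument handles partial specification of $\BMv$ through the index set $I$ directly without a two-stage revelation, so the reduction is not needed.
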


This result may be viewed as a continuous analog of
Theorem~\ref{thm:discrete:3/2}. We emphasize that the condition excluding
intervals of length $1/\sqrt{n}$ that contain most of the $w_i$ is essential
for obtaining small-ball estimates at the $n^{-3/2}$ scale. In the
$L$-dependent bound~\eqref{eqn:3/2:L}, we assume the slightly larger radius
$n^{-3/2+\varepsilon}$ purely for technical convenience; see our treatment of
the ``very large~$|t|$'' regime in the proof of~\eqref{eqn:3/2:L} in
Section~\ref{sect:linear}.

Moreover, Theorem~\ref{thm:cont:5/2} shows that even finer approximations—down
to the scale $n^{-5/2+o(1)}$—are possible under stronger assumptions on the
coefficients $w_i$.

Finally, we remark that Theorem~\ref{thm:cont:1} may still hold without
Condition~\ref{cond:separation}. However, for
Theorems~\ref{thm:cont:3/2} and~\ref{thm:cont:5/2}, additional assumptions (of
the type imposed above) are necessary, since these results fail dramatically
when only a few of the $w_i$ are nonzero.
 
\subsubsection{More general conditions on $v_i$}

In this subsection, we generalize the above theorems by allowing the coefficients
$v_i$ to depend polynomially on $i$.

\begin{theorem}[Treatment at scale $1/n$]\label{thm:cont:poly:1}
Let $d\ge2$ be a fixed integer, and let $\delta>0$, $b\neq0$, and $B>0$ be
constants. Suppose that the sequence $(w_1,\ldots,w_n)$ satisfies
Condition~\ref{cond:separation} for some sufficiently large constant $A>0$.
Let $I\subset[n]$ be any subset with $|I|\ge \delta n$, and consider a sequence
$(v_1,\ldots,v_n)$ that is partially specified by
\[
v_i=\frac{P_d(i)}{n^{d}} \quad \text{for all } i\in I,
\]
where $P_d(i)$ is a real polynomial of degree $d$ with fixed leading coefficient
$b$, and whose remaining coefficients are allowed to depend on $n$, subject to
the bound
\[
|v_i|\le B \quad \text{for all } i\in I.
\]
Then, for any $L\in\R$, we have the uniform estimate
\begin{equation}\label{eqn:poly:u}
\P\Bigl(\bigl|S_{\pi}-L\bigr|\le \frac{1}{n}\Bigr)
= O_{A}\Bigl(\frac{1}{n}\Bigr).
\end{equation}
If, in addition, there exists a constant $\widetilde{B}>0$ such that
$|v_i|\le \widetilde{B}$ for all $i\in[n]$, then
\begin{equation}\label{eqn:poly:L}
\P\Bigl(\bigl|S_{\pi}-L\bigr|\le \frac{1}{n}\Bigr)
= O\Bigl(\frac{1}{n} e^{-\Theta(L^2)}\Bigr).
\end{equation}
Here, the implied constants depend on $A,B$ and $\widetilde{B}$.
\end{theorem}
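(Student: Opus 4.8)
The proof will follow the Fourier-analytic template of the linear case (Theorem~\ref{thm:cont:1}); essentially the only genuinely new ingredient is a quantitative ``spreading'' (non-concentration) estimate for the values $v_i=P_d(i)/n^d$, $i\in I$, which plays the role that the obvious arithmetic-progression structure of $(i/n)$ plays when $v_i=i/n$. Write $\phi(t):=\E_\pi[e^{itS_\pi}]$. By the Esseen concentration inequality $\P(|S_\pi-L|\le 1/n)\le \frac{C}{n}\int_{|t|\le\pi n}|\phi(t)|\,dt$, so \eqref{eqn:poly:u} reduces to proving $\int_{|t|\le\pi n}|\phi(t)|\,dt=O_A(1)$. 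To bound $|\phi(t)|$ we fix a pairing of $[n]$ into blocks $\{a_k,b_k\}$, $k\le m=\lfloor n/2\rfloor$ (plus one leftover index if $n$ is odd). A uniform $\pi$, conditioned on the unordered image pairs $\{\pi(a_k),\pi(b_k)\}$, orients each block by an independent fair coin, and the image pairs themselves form a uniform ordered perfect matching $M$ of $[n]$; in particular each $\{P_k,Q_k\}:=\{\pi(a_k),\pi(b_k)\}$ is, marginally, a uniformly random $2$-subset of $[n]$. This yields the standard bound
\[
|\phi(t)|\ \le\ \E_{M}\Big[\ \prod_{k\le m}\big|\cos\big(\tfrac t2(w_{a_k}-w_{b_k})(v_{P_k}-v_{Q_k})\big)\big|\ \Big].
\]
Choosing the pairing at random shows $\E\sum_k(w_{a_k}-w_{b_k})^2\ge 1$, so we may fix one pairing with $\sum_k(w_{a_k}-w_{b_k})^2\ge 1$; since every gap is $\le (A\sqrt{\log n})^{-1}$ by Condition~\ref{cond:separation}, at least $\Omega(A^2\log n)$ of the blocks are \emph{heavy}, meaning $|w_{a_k}-w_{b_k}|\ge n^{-1/2}$.

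The new lemma is elementary: if $R$ is a real polynomial of degree $d$ with leading coefficient $b\neq 0$, then for every $y$ the set $\{x:|R(x)|<\eta\}$ is a union of at most $d$ intervals whose total length is $O_d\big((\eta/|b|)^{1/d}\big)$ — factor $R(x)=b\prod_i(x-s_i)$ and note that some $|x-s_i|$ must be $<(\eta/|b|)^{1/d}$. Applying this with $R(x)=P_d(x)-P_d(q)$ for each fixed $q$ shows that $\{v_i:i\in I\}$ is uniformly non-concentrated, and a short count then produces a constant $c_0=c_0(\delta,d,b,B)>0$ for which at least $\Omega_{\delta,d}(n^2)$ pairs $(p,q)\in I\times I$ satisfy $|v_p-v_q|\ge c_0$; call these the \emph{spread} pairs. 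Since each $\{P_k,Q_k\}$ is a uniform $2$-subset, it is a spread pair with probability $\Omega_\delta(1)$; consequently, in expectation a constant fraction of the heavy blocks are \emph{good}, meaning additionally $|v_{P_k}-v_{Q_k}|\ge c_0$, so that $|(w_{a_k}-w_{b_k})(v_{P_k}-v_{Q_k})|\in[c_0 n^{-1/2},\,O((\log n)^{-1/2})]$ for such blocks. One also records here that $\E_M(v_{P_k}-v_{Q_k})^2\gtrsim 1$, which feeds the variance lower bound $\Var S_\pi\gtrsim 1$ used below.

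It remains to estimate the cosine product at all scales of $|t|$ up to $\pi n$. For $|t|$ bounded or up to a small power of $n$ one does not use the pairing: a quantitative central limit theorem for random combinatorial sums (the classical estimates of Hoeffding and successors cited in the introduction), together with $\Var S_\pi\gtrsim 1$, gives $|\phi(t)|\le e^{-\Theta(t^2)}+n^{-\Omega(1)}$, which is comfortably integrable (the coordinates outside $I$, being few, only increase the spread of $S_\pi$ and are absorbed by a routine truncation). For larger $|t|$ one uses $|\cos\theta|\le e^{-c\,\dist(\theta,\pi\Z)^2}$ and shows that, with high probability over $M$, a constant fraction of the good blocks are \emph{non-resonant}, i.e.\ $\dist\big(\tfrac t2(w_{a_k}-w_{b_k})(v_{P_k}-v_{Q_k}),\pi\Z\big)\ge\tfrac18$; this forces $|\phi(t)|\le n^{-\Omega(A^2)}$, and since $\phi$ is Lipschitz of polynomial order a union bound over a suitably fine net of $t$ then yields $\int_{|t|\le\pi n}|\phi(t)|\,dt=O_A(1)$ once $A$ is large. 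I expect the non-resonance step to be the main obstacle: the expected number of resonant good blocks equals $\#\{\text{heavy }k\}$ times the fraction of spread pairs $(p,q)$ whose difference $v_p-v_q$ lands in the density-$\tfrac14$ set $\{x:\dist(\tfrac t2(w_{a_k}-w_{b_k})x,\pi\Z)<\tfrac18\}$, and keeping this fraction bounded away from $1$ — uniformly over $t$ up to $\pi n$ and over the heavy blocks — requires showing that the differences $v_p-v_q=(P_d(p)-P_d(q))/n^d$ do not concentrate near any rank-one lattice at scales down to $\sim n^{-1}$. When $v_i=i/n$ this is the classical integer Littlewood--Offord estimate; for $d\ge2$ it is instead a Weyl/van der Corput equidistribution input for polynomial values (equivalently, one may invoke the continuous inverse theorem together with the fact that the image of a degree-$d$ polynomial on $\Omega(n)$ points cannot lie in a proper low-rank GAP of polynomial size). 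This is also the source of the stronger hypotheses needed at the finer scales in Theorems~\ref{thm:cont:3/2} and~\ref{thm:cont:5/2}.

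Finally, for the sub-gaussian bound \eqref{eqn:poly:L} one exploits the extra hypothesis $|v_i|\le\widetilde{B}$ for all $i$: then $\Var S_\pi=\Theta(1)$, $\E S_\pi=0$, and a bounded-differences (transposition) argument on $S_n$ combined with Condition~\ref{cond:separation} gives $\E[e^{\sigma S_\pi}]\le e^{O(\sigma^2)}$ for all real $\sigma$, so $S_\pi$ is sub-gaussian with parameter $O_{A,\widetilde{B}}(1)$. Running $\P(|S_\pi-L|\le 1/n)$ through the Esseen integral and shifting the contour to $\R+i\sigma$ with $\sigma\asymp L$, the very large $|t|$ tail is bounded crudely by $|\phi(t+i\sigma)|\le\E[e^{-\sigma S_\pi}]\le e^{O(\sigma^2)}$, while on the remaining range the estimates above survive the shift at the cost of a factor $e^{O(\sigma^2)}$; optimizing $\sigma$ produces the advertised factor $e^{-\Theta(L^2)}$.
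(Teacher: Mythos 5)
Your outline follows the general Fourier--analytic template, but it departs from the paper in two significant ways, and one of those departures leaves a genuine gap. The paper starts from Roos's permanent bound (Corollary~\ref{cor:Roos}), which gives $|\varphi(2\pi t)|\le\exp\!\bigl(-\tfrac{1}{2n^{3}}\sum_{i,j,k,l}\|t(w_i-w_j)(v_k-v_l)\|_{\R/\Z}^2\bigr)$, and then controls the exponent \emph{at every scale of $t$} via the same arithmetic device: a dyadic decomposition of $|w_i-w_j|$ together with the polynomial ``wrap-around'' estimate of Lemma~\ref{lemma:wraparound:square} (equivalently Corollary~\ref{cor:bigt:square}), whose proof is a positive-density inverse Weyl estimate (Lemma~\ref{lemma:Weyl:inverse}). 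In the small-$|t|$ regime no CLT is invoked at all: one has $\|x\|_{\R/\Z}=|x|$ pointwise and the sum is computed exactly, giving $|\varphi(t)|\le e^{-\Theta(t^2n^2)}$ with no additive error. Your proposal instead conditions on a perfect matching, which is a valid but structurally different starting bound (it is closer to the first inequality of Theorem~\ref{thm:Roo}), and your polynomial-sublevel-set lemma is a correct, elementary way to produce $\Omega(n^2)$ ``macroscopically spread'' pairs $(p,q)$.

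The gap is in the moderate-$t$ regime. You bound $|\varphi(t)|$ there by invoking ``a quantitative CLT for random combinatorial sums'' to conclude $|\varphi(t)|\le e^{-\Theta(t^2)}+n^{-\Omega(1)}$. That implication does not hold: the Hoeffding/Bolthausen results cited give Kolmogorov-distance bounds $\sup_x|F(x)-\Phi(x)|=O(n^{-1/2})$, and Kolmogorov closeness does \emph{not} control $|\widehat F(t)-\widehat\Phi(t)|$ (the implication runs the other way, via Esseen smoothing). The characteristic-function estimate you want in this range is essentially the content of Roos's Theorem~\ref{thm:Roo} itself, so this is not a shortcut but a restatement of the needed tool. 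Moreover the $n^{-\Omega(1)}$ additive error is dangerous when integrated up to $|t|\asymp n$ and would need the exponent of that error to beat the width of the regime, which is not automatic. For the large-$t$ regime you correctly identify that the obstruction is equidistribution of the differences $(P_d(p)-P_d(q))/n^d$ at scales down to $n^{-1}$ and that this needs Weyl/van der Corput input rather than anything you proved; in the paper this is exactly Lemma~\ref{lemma:wraparound:square}, which is nontrivial and is where the leading-coefficient hypothesis $b\neq 0$ is actually used. (One smaller confusion: the closing ``union bound over a net of $t$'' is unnecessary --- $\varphi(t)$ is a deterministic function of $t$ once the pairing is fixed and the matching is integrated out, so a pointwise bound on $|\varphi(t)|$ suffices and there is no stochastic object to net over.) Your contour-shift sketch for \eqref{eqn:poly:L} matches the paper's argument.
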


It is also possible to treat the next scale $n^{-3/2}$ in this polynomial
setting. However, the argument becomes significantly more involved, and we do
not pursue this direction here.

\subsection{Joint distributions}

We now turn to the more difficult problem of studying joint distributions.
Specifically, we consider three vectors $\BMw$, $\BMv$, and $\BMv'$, and the
event
\[
\Bigl\{ \bigl|S_{\pi}(\BMw,\BMv)-L_{1}\bigr|\le \frac{1}{n} \Bigr\}
\;\wedge\;
\Bigl\{ \bigl|S_{\pi}(\BMw,\BMv')-L_{2}\bigr|\le \frac{1}{n} \Bigr\}.
\]

\begin{theorem}[Joint distribution]\label{thm:comparison:2vs1}
Let $d\ge2$ be a fixed integer, and let $\delta>0$, $b\neq0$, $c\neq0$, and
$B>0$ be given. Suppose that the sequence $(w_1,\ldots,w_n)$ satisfies
Condition~\ref{cond:separation} for some sufficiently large constant $A>0$.
Let $I\subset[n]$ be any subset with $|I|\ge \delta n$, and consider the
sequences $(v_1,\ldots,v_n)$ and $(v_1',\ldots,v_n')$ partially specified by
\[
v_i=\frac{P_d(i)}{n^{d}}
\quad\text{and}\quad
v_i'=\frac{P_{d-1}(i)}{n^{d-1}}
\quad \text{for all } i\in I,
\]
where $P_d(i)$ and $P_{d-1}(i)$ are real polynomials of degrees $d$ and $d-1$,
respectively, with fixed leading coefficients $b$ and $c$. The remaining
coefficients may depend on $n$, subject to the bound
\[
|v_i|,\,|v_i'|\le B \quad \text{for all } i\in I.
\]
Then, for any $L_1,L_2\in\R$, we have
\begin{equation}\label{eqn:comparison:u}
\P\Bigl(
\bigl|S_{\pi}(\BMw,\BMv)-L_1\bigr|\le \frac{1}{n}
\;\wedge\;
\bigl|S_{\pi}(\BMw,\BMv')-L_2\bigr|\le \frac{1}{n}
\Bigr)
= O\Big(\frac{1}{n^2}\Big).
\end{equation}
If, in addition, there exists a constant $\widetilde{B}>0$ such that
$|v_i|,|v_i'|\le \widetilde{B}$ for all $i\in[n]$, then
\begin{equation}\label{eqn:comparison:L}
\P\Bigl(
\bigl|S_{\pi}(\BMw,\BMv)-L_1\bigr|\le \frac{1}{n}
\;\wedge\;
\bigl|S_{\pi}(\BMw,\BMv')-L_2\bigr|\le \frac{1}{n}
\Bigr)
= O\Big(\frac{1}{n^2} e^{-\Theta(L_1^2+L_2^2)}\Big).
\end{equation}
Here, the implied constants depend on $A,B$ and $\widetilde{B}$.
\end{theorem}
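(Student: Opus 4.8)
The plan is to run a two–dimensional version of the Fourier (Esseen) method behind Theorems~\ref{thm:cont:1} and~\ref{thm:cont:poly:1}, reducing everything to a single permutation–sum characteristic function for a combined vector. Set $\BMu=\BMu(\mathbf t):=t_1\BMv+t_2\BMv'\in\R^n$ for $\mathbf t=(t_1,t_2)\in\R^2$, so that $t_1S_\pi(\BMw,\BMv)+t_2S_\pi(\BMw,\BMv')=S_\pi(\BMw,\BMu)$. The two–dimensional Esseen smoothing inequality gives, for a suitable constant $c>0$,
\[
\P\Bigl(\bigl|S_\pi(\BMw,\BMv)-L_1\bigr|\le\tfrac1n,\ \bigl|S_\pi(\BMw,\BMv')-L_2\bigr|\le\tfrac1n\Bigr)\ \lesssim\ \frac1{n^{2}}\int_{[-cn,cn]^{2}}\bigl|\E_\pi e^{\,iS_\pi(\BMw,\BMu(\mathbf t))}\bigr|\,dt_1\,dt_2 ,
\]
so the problem collapses to a \emph{single} permutation–sum characteristic function for $\BMu$, and it suffices to show this integral is $O(1)$; the $L$–dependent improvement will come from a contour shift carried out in the last step, where one inserts the oscillatory factor $e^{-i(t_1L_1+t_2L_2)}$ in the integrand.

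The one genuinely new ingredient, absent in the single–vector theorems, is that we must control $|\E_\pi e^{iS_\pi(\BMw,\BMu)}|$ for \emph{every} direction $\mathbf t$, including directions in which $\BMv$ and $\BMv'$ nearly cancel. On $I$ we have $u_i=t_1P_d(i)/n^{d}+t_2P_{d-1}(i)/n^{d-1}$, a polynomial in $i$ of degree at most $d$. The key claim is that the empirical Gram matrix
\[
G_I:=\begin{pmatrix}\Var_{i\in I}(v_i) & \operatorname{Cov}_{i\in I}(v_i,v_i')\\[2pt] \operatorname{Cov}_{i\in I}(v_i,v_i') & \Var_{i\in I}(v_i')\end{pmatrix}
\]
is positive definite with smallest eigenvalue at least a constant $c_0=c_0(\delta,d,b,c,B)>0$ once $n$ is large: a nontrivial affine combination $\alpha P_d+\beta P_{d-1}+\gamma$ is a polynomial of degree $\le d$ and hence cannot vanish on the $|I|\ge\delta n>d$ points of $I$ unless it is identically $0$, which (matching the degree–$d$, then degree–$(d-1)$, then constant coefficients) forces $\alpha=\beta=\gamma=0$; since $b,c$ are fixed and $|v_i|,|v_i'|\le B$ on $I$ bounds all lower coefficients, a compactness argument over admissible coefficient vectors and density–$\ge\delta$ subsets upgrades this to the uniform bound. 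Consequently $\Var_{i\in I}(\BMu(\mathbf t))\ge c_0\|\mathbf t\|^{2}$ for all $\mathbf t$, and in particular $\Var_\pi S_\pi(\BMw,\BMu)\gtrsim\|\mathbf t\|^{2}$ in every direction. This uniform–in–direction lower bound is precisely what lets the single–vector machinery process the combined vector, and it is the feature that upgrades the bound from $1/n$ to $1/n^2$.

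To bound the integral, split $[-cn,cn]^2$ at $\|\mathbf t\|=A_0\sqrt{\log n}$ for a large constant $A_0$. The characteristic–function estimates established in the proof of Theorem~\ref{thm:cont:poly:1} use only the coordinates of the relevant vector lying in $I$, and their hypotheses hold here: $\|\BMw\|_\infty\le 1/(A\sqrt{\log n})$ by Condition~\ref{cond:separation}, $\|\BMu\|_\infty=O(\|\mathbf t\|)$ on $I$, and $\BMu|_I$ has empirical variance $\gtrsim\|\mathbf t\|^2$ by the previous step. For $\|\mathbf t\|\le A_0\sqrt{\log n}$ this yields $|\E_\pi e^{iS_\pi(\BMw,\BMu)}|\le e^{-\Theta(\|\mathbf t\|^{2})}$, which integrates to $O(1)$; for $A_0\sqrt{\log n}\le\|\mathbf t\|\le \sqrt2\,cn$ the Hal\'asz–type bound of that proof gives $|\E_\pi e^{iS_\pi(\BMw,\BMu)}|\le n^{-10}$ (for $A_0$ large), so this part contributes $O(n^2\cdot n^{-10})=o(1)$. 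The only delicate sub–regime is ``very large $|t|$'', where $\BMu|_I$ can cluster near a sublattice; it is handled exactly as the corresponding regime in the proof of Theorem~\ref{thm:cont:poly:1} (cf.\ the remark after Theorem~\ref{thm:cont:3/2}). This is the place I expect the bulk of the technical work to go. Assembling the two pieces yields \eqref{eqn:comparison:u}.

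For the sub–gaussian bound, assume in addition $|v_i|,|v_i'|\le\widetilde B$ for all $i\in[n]$. Using $\Var_\pi S_\pi(\BMw,\BMv)=\frac1{n-1}\sum_j(v_j-\bar v)^2$ (and its analogues for $\BMv'$ and the covariance) together with the previous step for the lower bound and global boundedness for the upper bound, the $2\times2$ covariance matrix $\Sigma$ of $\bigl(S_\pi(\BMw,\BMv),S_\pi(\BMw,\BMv')\bigr)$ has all eigenvalues in $[c_1,c_2]$ for constants $0<c_1\le c_2$. Write $\mathbf L=(L_1,L_2)$. For $\|\mathbf L\|\le c_3\sqrt{\log n}$, run the Esseen representation with a Beurling--Selberg majorant kernel (entire of exponential type) and the factor $e^{-i(t_1L_1+t_2L_2)}$, then shift each contour $t_j\mapsto t_j-i(\Sigma^{-1}\mathbf L)_j$; the shift is legitimate because $\bigl(S_\pi(\BMw,\BMv),S_\pi(\BMw,\BMv')\bigr)$ is bounded (so its bivariate Laplace transform is entire) and obeys $\E\exp\langle s,(S_\pi(\BMw,\BMv),S_\pi(\BMw,\BMv'))\rangle\le e^{O(\|s\|^2)}$ for $\|s\|=O(\sqrt{\log n})$ (Bernstein–type concentration of a linear statistic of a uniform permutation, using $\|\BMw\|_\infty\ll1/\sqrt{\log n}$ and $|v_i|,|v_i'|\le\widetilde B$). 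The shift produces the prefactor $e^{-\frac12\mathbf L^{\top}\Sigma^{-1}\mathbf L}=e^{-\Theta(L_1^2+L_2^2)}$ and leaves an integrand bounded as before, giving \eqref{eqn:comparison:L} in this range. For $c_3\sqrt{\log n}<\|\mathbf L\|=O(\sqrt n)$, the event forces $\bigl(S_\pi(\BMw,\BMv),S_\pi(\BMw,\BMv')\bigr)$ to be $\Omega(\|\mathbf L\|)$ from its mean $0$, so the same moment bound already gives probability $\le e^{-\Theta(\|\mathbf L\|^2)}\le n^{-2}e^{-\Theta(\|\mathbf L\|^2)}$; and for $\|\mathbf L\|$ exceeding $\sup_\pi\|(S_\pi(\BMw,\BMv),S_\pi(\BMw,\BMv'))\|=O(\sqrt n)$ the probability is $0$. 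Combining all ranges yields \eqref{eqn:comparison:L}.
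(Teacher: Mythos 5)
Your strategy — a two–dimensional Esseen inequality for the joint vector, a uniform Gram-matrix/span claim for the combined vector $\BMu(\mathbf t)=t_1\BMv+t_2\BMv'$ ensuring that the empirical variance is $\gtrsim\|\mathbf t\|^2$, a multi-scale analysis of the joint characteristic function, and a contour shift for the sub-Gaussian decay — is precisely the strategy the paper carries out in Section~\ref{sect:joint}. Your Gram-matrix lower bound plays the role of Claim~\ref{claim:span}; the paper proves it with an explicit parallelogram-area computation rather than a compactness argument, which keeps the constants concrete and automatically uniform in the $n$-dependent lower-order coefficients of $P_d,P_{d-1}$, whereas the compactness/equicontinuity step over ``admissible coefficient vectors and density-$\ge\delta$ subsets'' is only gestured at in your write-up.

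Two steps in your outline, however, hide the bulk of the real work and cannot be discharged in the way you describe. First, the assertion that the very-large-$|t|$ regime ``is handled exactly as the corresponding regime in the proof of Theorem~\ref{thm:cont:poly:1}'' is too quick: the leading coefficient of $\BMu(\mathbf t)|_I$, viewed as a polynomial in $i$, is proportional to $t_1$, so when $|t_1|\ll|t_2|$ the effective degree drops to $d-1$, and the single-vector Weyl estimate for degree $d$ does not apply. The paper's Lemma~\ref{lemma:quad:1} therefore must split into cases according to which component of $\mathbf t$ dominates; in the low-$|t_1|$ case it first shows via~\eqref{eqn:C/A} that the degree-$d$ term is negligible, and then falls back on the linear Diophantine estimate (Corollary~\ref{cor:wraparound}) together with Claim~\ref{claim:span}. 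This direction-dependent case analysis is the genuinely new content here. Second, the contour shift as you describe it does not go through over the whole integration domain. The Roos bound (Corollary~\ref{cor:Roos}) controls the characteristic function only on the real axis; after shifting $\mathbf t\mapsto\mathbf t-i\mathbf s$ with $\|\mathbf s\|\asymp\sqrt{\log n}$, the only available estimate is $\bigl|\E e^{i(\mathbf t-i\mathbf s)\cdot\BX}\bigr|\le\E e^{\mathbf s\cdot\BX}\lesssim e^{O(\|\mathbf s\|^2)}$, which does not decay in $\mathbf t$, so integrating it over a domain of size $\Theta(n^2)$ is useless — the phrase ``leaves an integrand bounded as before'' is not accurate. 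The paper first disposes of the large- and very-large-$\|\mathbf t\|$ regimes (Lemmas~\ref{lemma:quad:1} and~\ref{lemma:quad:2}) and shifts only the remaining small-$\|\mathbf t\|$ integral. For that shift to be legal the kernel must be entire and decay on horizontal lines, which the Gaussian in~\eqref{eqn:identity:2d} does but a Beurling–Selberg majorant (whose Fourier transform is compactly supported, hence not entire) does not. Finally, shifting by the full $\Sigma^{-1}\mathbf L$ overshoots unless the constant in the MGF bound~\eqref{eqn:mgf:1} matches $\Sigma$ exactly; the paper shifts by $-icL$ for a small absolute constant $c$ chosen at the end, and also fixes one variable and shifts only the other, which makes the reality/conjugation bookkeeping tractable.
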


It may be possible to extend our approach to sequences of polynomials of
non-consecutive degrees; however, we do not pursue this direction in the
present paper. We note that the weaker bound
$O(\frac{1}{n} e^{-\Theta(L_1^2+L_2^2)})$,
involving $O(1/n)$ rather than $O(1/n^2)$, follows immediately from the
one-dimensional estimates. We also refer the reader to
Theorem~\ref{thm:comparison:2vs1'} for an application of the above result
comparing the quantities $S_{\pi}(\BMw,\BMv)$ and $S_{\pi}(\BMw,\BMv')$.

\subsection{An inverse theorem via 
discretization}  We can obtain an inverse theorem in the continuous setting via a simple
discretization argument. For simplicity, assume that
$\|\BMw\|_{\infty}=\|\BMv\|_{\infty}=1$.
Fix a parameter $\alpha>0$ and round each $w_i$ and $v_i$ to the nearest integer
multiple of $\alpha$; denote the resulting vectors by $\BMw'$ and $\BMv'$,
respectively. This rounding procedure changes the value of $S_{\pi}$ by at most
$3n\alpha$. Moreover, the new sum $S_{\pi}(\BMw',\BMv')$ takes values in integer
multiples of $\alpha^2$.

If $S_{\pi}$ lies in an interval of length $2\delta$, then
$S_{\pi}(\BMw',\BMv')$ must belong to a discrete set $D$ of size at most
\(
m := \Big\lceil \frac{1}{\alpha^2}\bigl(2\delta+3n\alpha\bigr)\Big\rceil.
\)
Thus, if the (continuous) anti-concentration probability is $\rho$, then for
some $x\in D$ we have
\(
\P\bigl(S_{\pi}(\BMw',\BMv')=x\bigr)\ge m^{-1}\rho,
\)
placing us in a position to apply Theorem~\ref{thm:ILO:prod}.

\begin{theorem}[Inverse result for permutation sums: continuous setting]
\label{thm:ILO:prod1}
Let $\BMw,\BMv$ be unit vectors. Let $C,C'>0$ and $\varepsilon\in(0,1)$ be
constants. Assume that for some $L\in\R$ and $\delta\le1$,
\[
\rho := \P\bigl(|S_{\pi}-L|\le\delta\bigr)\ge n^{-C}.
\]
For any $\alpha\ge n^{-C'}$ and $n^{\eps}\le n'\le n$, there exists a proper symmetric GAP $Q$ of rank
$r_0=O_{C,C',\eps}(1)$ and size
$O_{\eps}\bigl(m\rho^{-1}(n')^{-r_0/2}\bigr)$ such that for at least
$(1-n'/n)n^4$ quadruples $(i,j,k,l)$, the quantity
$(w_i-w_j)(v_k-v_l)$ lies within distance at most $\alpha$ of a point in $Q$.
\end{theorem}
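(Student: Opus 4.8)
The plan is to make the discretization argument sketched just before the theorem statement fully rigorous and then invoke Theorem~\ref{thm:ILO:prod}. First I would set up the rounding: given the parameter $\alpha \ge n^{-C'}$, define $w_i' := \alpha \lfloor w_i/\alpha + 1/2 \rfloor$ and $v_i' := \alpha \lfloor v_i/\alpha + 1/2\rfloor$, so that $|w_i - w_i'| \le \alpha/2$ and $|v_i - v_i'| \le \alpha/2$. Writing $w_iv_i = w_i'v_i' + (w_i-w_i')v_i' + w_i'(v_i-v_i') + (w_i-w_i')(v_i-v_i')$ and summing over a permutation, using $\|\BMw'\|_\infty, \|\BMv'\|_\infty \le 1 + \alpha/2 \le 2$, one gets $|S_\pi(\BMw,\BMv) - S_\pi(\BMw',\BMv')| \le n(\alpha + \alpha + \alpha^2/4) \le 3n\alpha$ for $n$ large. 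Hence the event $|S_\pi(\BMw,\BMv) - L| \le \delta$ forces $|S_\pi(\BMw',\BMv') - L| \le \delta + 3n\alpha$.

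Next I would exploit the discreteness of $S_\pi(\BMw',\BMv')$: every term $w_i'v_{\pi(i)}'$ is an integer multiple of $\alpha^2$, so $S_\pi(\BMw',\BMv')$ takes values in $\alpha^2\Z$. Therefore, on the small-ball event, $S_\pi(\BMw',\BMv')$ lies in the finite set $D := \alpha^2\Z \cap [L - \delta - 3n\alpha,\, L + \delta + 3n\alpha]$, which has $|D| \le m := \lceil \alpha^{-2}(2\delta + 3n\alpha) + 1\rceil$. By the pigeonhole principle there is some $x^\ast \in D$ with
\[
\P\bigl(S_\pi(\BMw',\BMv') = x^\ast\bigr) \;\ge\; \frac{\rho}{m} \;\ge\; \frac{n^{-C}}{m}.
\]
Since $\alpha \ge n^{-C'}$ and $\delta \le 1$, the quantity $m$ is polynomially bounded: $m = O(n^{1 + 2C'})$. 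Hence $\rho/m \ge n^{-C''}$ for $C'' := C + 1 + 2C' + o(1)$, a constant depending only on $C$ and $C'$.

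Now I would apply Theorem~\ref{thm:ILO:prod} to the vectors $\BMw'$, $\BMv'$ with concentration probability $\rho' := \sup_x \P(S_\pi(\BMw',\BMv') = x) \ge \rho/m \ge n^{-C''}$ and with the same $\varepsilon$. This yields a proper symmetric GAP $Q$ of rank $r_0 = O_{C,C',\varepsilon}(1)$ and size $O_\varepsilon\bigl((\rho')^{-1}n^{-r_0/2}\bigr) = O_\varepsilon\bigl(m\rho^{-1}n^{-r_0/2}\bigr)$ that contains $(w_i' - w_j')(v_k' - v_l')$ for at least $(1-\varepsilon)n^4$ quadruples $(i,j,k,l)$. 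It remains to transfer this back to the original vectors: for each such quadruple,
\[
\bigl|(w_i - w_j)(v_k - v_l) - (w_i' - w_j')(v_k' - v_l')\bigr|
\le |w_i - w_j|\,|v_k - v_l - v_k' + v_l'| + |v_k' - v_l'|\,|w_i - w_j - w_i' + w_j'|,
\]
and since $|w_i - w_j| \le 2$, $|v_k' - v_l'| \le 4$, $|v_k - v_l - v_k' + v_l'| \le \alpha$ and $|w_i - w_j - w_i' + w_j'| \le \alpha$, this is at most $6\alpha$. Absorbing the constant $6$ into $\alpha$ (or, more cleanly, running the whole argument with $\alpha/6$ in place of $\alpha$ from the start, which only changes $C'$ by an additive constant and $m$ by a constant factor) shows that $(w_i - w_j)(v_k - v_l)$ is within distance $\alpha$ of a point of $Q$ for at least $(1-\varepsilon)n^4$ quadruples, as desired.

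The main obstacle is essentially bookkeeping rather than a genuine mathematical difficulty: one must verify that $m$ remains polynomially bounded (so that the hypothesis $\rho' \ge n^{-C''}$ of Theorem~\ref{thm:ILO:prod} holds with a legitimate constant $C''$), and one must carefully track how the two successive $\alpha$-perturbations — first from $S_\pi(\BMw,\BMv)$ to $S_\pi(\BMw',\BMv')$, then from $(w_i'-w_j')(v_k'-v_l')$ back to $(w_i-w_j)(v_k-v_l)$ — interact with the $\ell^\infty$ bounds on the rounded vectors. A minor subtlety worth stating explicitly is that $\BMw', \BMv'$ need not be unit vectors, but Theorem~\ref{thm:ILO:prod} as stated imposes no normalization on its input vectors, so this causes no problem (alternatively one may rescale, invoking Remark~\ref{remark:scaling}). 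Everything else — the pigeonhole step, the divisibility of $S_\pi(\BMw',\BMv')$ by $\alpha^2$, and the final triangle inequality — is routine.
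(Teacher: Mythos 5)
Your proof is correct and is exactly the discretization argument the paper sketches in the paragraph immediately before the theorem statement: round $\BMw,\BMv$ to multiples of $\alpha$, control $|S_\pi(\BMw,\BMv)-S_\pi(\BMw',\BMv')|$ by $O(n\alpha)$, use that $S_\pi(\BMw',\BMv')\in\alpha^2\Z$ together with pigeonhole to get a discrete concentration $\ge\rho/m$, invoke Theorem~\ref{thm:ILO:prod}, and then transfer the GAP membership of $(w_i'-w_j')(v_k'-v_l')$ back to $(w_i-w_j)(v_k-v_l)$ via the triangle inequality. You have merely made the paper's sketch fully rigorous (verifying $m=n^{O_{C'}(1)}$, noting that Theorem~\ref{thm:ILO:prod} requires no normalization, and tracking the $O(\alpha)$ slack); there is no substantive deviation.
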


To complete this section, we introduce a notion--closely related to the Least Common Denominator (LCD) concept of Rudelson and Vershynin~\cite{RV}--that captures small-ball probabilities rather efficiently.

\begin{definition}\label{defn:LCD}  
Let $\kappa \ge n^{3/2}$ and $0 < \gamma < 1$. The \emph{Essential Least Common Divisor} of a pair of vectors $\BMw, \BMv \in \R^n$ is defined by  
\[
\LCD_{\gamma, \kappa} (\BMw, \BMv)
\;=\;
\inf\Big\{ D > 0 \colon \dist\big(D \BMu, \Z^{n^4}\big)
< \min\{\gamma D \|\BMu\|_{2}, \, \kappa\} \Big\},
\]  
where $\BMu \in \R^{n^4}$ is the vector whose $(i,j,k,l)$-th coordinate is
\[
(v_{i} - v_{j})(w_{k} - w_{l}), \qquad 1\le i,j,k,l\le n.
\]
\end{definition}

\begin{theorem}\cite[Theorem~3.2]{Tran}\label{thm:LCD}  
Under the notation of Definition~\ref{defn:LCD}, assume that $\|\BMu\|_{2} \ge n^{3/2}$. Then, for any
\(
\delta \;\ge\; 1/\LCD_{\gamma, \kappa} (\BMw, \BMv),
\)
we have
\[
\sup_{x \in \R} \P\big( |S_{\pi} - x| \le \delta \big)
=
O\Big(\frac{\delta}{\gamma} \,+\, e^{-\kappa^{2} / 2n^{3}}\Big).
\]
\end{theorem}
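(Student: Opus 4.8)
The plan is to run the standard Halász/Esseen–type Fourier argument, but on the symmetric group rather than on a product space. The starting observation is the Esseen inequality: for any real random variable $Y$ and any $\delta>0$,
\[
\sup_{x}\P\big(|Y-x|\le\delta\big)
\;\le\;
C\delta\int_{-1/\delta}^{1/\delta}\big|\E e(tY)\big|\,dt,
\]
where $e(s)=e^{2\pi i s}$ (up to harmless normalization constants). Applying this with $Y=S_\pi=\sum_i w_i v_{\pi(i)}$ reduces everything to bounding the characteristic function $\varphi(t):=\E_\pi e(tS_\pi)$ for $|t|\le 1/\delta$. The key structural input that makes the permutation setting work is a \emph{decoupling/tensorization} estimate: one compares $\pi$ with an independent copy $\pi'$ and, after the usual expansion, controls $|\varphi(t)|^2$ (or a suitable power) by an expectation over independent coordinates of a product of terms of the form $\cos\!\big(2\pi t\,(v_i-v_j)(w_k-w_l)\big)$, ranging over quadruples $(i,j,k,l)$. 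This is precisely why the relevant vector is $\BMu\in\R^{n^4}$ with coordinates $(v_i-v_j)(w_k-w_l)$, and why the $\LCD$ is defined through it. Concretely, I would establish (or quote from \cite{Tran} / the combinatorial-sum literature) a bound of the shape
\[
|\varphi(t)|^{2}
\;\lesssim\;
\exp\!\Big(-c\sum_{(i,j,k,l)}\big\| t\,(v_i-v_j)(w_k-w_l)\big\|_{\R/\Z}^{2}\Big)
\;=\;
\exp\!\big(-c\,\dist^2(t\BMu,\Z^{n^4})\big),
\]
valid for $t$ in the range of interest; the normalization $\|\BMu\|_2\ge n^{3/2}$ guarantees the combinatorial averaging is not degenerate.

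Given this, the argument is essentially deterministic. Split the integral $\int_{-1/\delta}^{1/\delta}|\varphi(t)|\,dt$ according to whether $\dist(t\BMu,\Z^{n^4})$ is comparable to $\kappa$ or much smaller. For $t$ with $\dist(t\BMu,\Z^{n^4})\ge \kappa$, the exponential bound gives $|\varphi(t)|\le e^{-c\kappa^2}$; since on general principles $\dist^2(t\BMu,\Z^{n^4})\gtrsim \kappa^2/n^3$ after rescaling by $\|\BMu\|_2^2\le n^4$ (this is where the $e^{-\kappa^2/2n^3}$ term in the conclusion comes from — the loss is exactly the ratio between the ambient dimension $n^4$ and the normalization $n^3$), integrating over the whole range of length $2/\delta\le 2\LCD$ contributes at most $O(\LCD\cdot e^{-\kappa^2/2n^3})$, and multiplying by the Esseen prefactor $\delta\le 1/\LCD$ yields the $e^{-\kappa^2/2n^3}$ term. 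For $t$ with $\dist(t\BMu,\Z^{n^4})<\kappa$, the definition of $\LCD_{\gamma,\kappa}$ forces $\dist(t\BMu,\Z^{n^4})\ge \gamma |t|\,\|\BMu\|_2$ for all $|t|<\LCD_{\gamma,\kappa}$; since $\delta\ge 1/\LCD_{\gamma,\kappa}$, every $t$ in $[-1/\delta,1/\delta]$ lies in this regime, so $|\varphi(t)|\le \exp(-c\gamma^2 t^2\|\BMu\|_2^2)$. Integrating this Gaussian-type bound over $\R$ gives $\int|\varphi(t)|\,dt = O\big(1/(\gamma\|\BMu\|_2)\big)=O(1/\gamma)$ (using $\|\BMu\|_2\ge n^{3/2}\ge 1$), and multiplying by $\delta$ produces the $\delta/\gamma$ term. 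Adding the two contributions gives the claimed bound $C(\delta/\gamma + e^{-\kappa^2/2n^3})$.

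The main obstacle — and the step I would treat most carefully — is the characteristic-function estimate itself, i.e.\ passing from $\E_\pi e(tS_\pi)$ to a genuine product over independent coordinates indexed by quadruples. In a product space this is a one-line tensorization; over $S_n$ one must work harder, since the coordinates $v_{\pi(i)}$ are not independent. The standard route (going back to the analysis of random combinatorial sums, and carried out in \cite{Tran}) is to use the fact that a uniform permutation can be built from a sequence of independent transpositions, or alternatively to use a double-counting/second-moment expansion of $|\varphi(t)|^2$ against an independent copy $\pi'$ and then extract, via an averaging argument over pairs of indices, a product of cosines over quadruples with a constant fraction of the quadruples ``active.'' Once that inequality is in hand — and since Theorem~\ref{thm:LCD} is quoted from \cite[Theorem~3.2]{Tran}, I would cite it there rather than reprove it — the rest is the routine dyadic splitting of the Esseen integral described above. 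A secondary technical point is bookkeeping the constants so that the exponent $\kappa^2/2n^3$ (rather than some worse $\kappa^2/n^4$) comes out, which hinges on using $\|\BMu\|_2\ge n^{3/2}$ to normalize rather than the trivial bound $\|\BMu\|_\infty\le$ (const); this is exactly the role of the hypothesis $\|\BMu\|_2\ge n^{3/2}$ in the statement.
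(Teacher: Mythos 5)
Your overall plan is exactly the paper's: Esseen's inequality, then the permanent-based characteristic-function estimate (Corollary~\ref{cor:Roos}), then a case split driven by Definition~\ref{defn:LCD}, and the final bound $C(\delta/\gamma + e^{-\kappa^2/2n^3})$. But the key estimate you write down, $|\varphi(t)|^2 \lesssim \exp\!\big(-c\,\dist^2(t\BMu,\Z^{n^4})\big)$, is missing the essential $n^{-3}$ prefactor: Corollary~\ref{cor:Roos} gives
\[
|\varphi(2\pi t)| \;\le\; \exp\!\left(-\frac{1}{2n^{3}}\,\dist^{2}\!\big(t\BMu,\Z^{n^{4}}\big)\right),
\]
and that $n^{-3}$ is not a ``loss from rescaling by the ambient dimension $n^4$'' as you suggest, but is built into Roos's permanent bound (Theorem~\ref{thm:Roo}): the bound has the form $\big(\tfrac{1}{n^2(n-1)^2}\sum\cos^2\big)^{\lfloor n/2\rfloor/2}$, and the ratio between the effective exponent $\Theta(n)$ and the $\Theta(n^4)$ quadruples being averaged produces $1/n^3$. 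Consequently, your explanation of where the $\kappa^2/2n^3$ in the conclusion comes from is incorrect: it follows directly from $\dist\ge\kappa$ applied to the correctly normalized bound, and is unrelated to any inequality of the shape $\dist^2\gtrsim\kappa^2/n^3$. Likewise, the hypothesis $\|\BMu\|_2\ge n^{3/2}$ plays no role in producing the $n^3$ in the exponent of the second term; its purpose is entirely on the other branch of the case split, where $\dist\ge\gamma|t|\|\BMu\|_2/\delta$ combines with the $n^{-3}$ to give $\exp\!\big(-\tfrac{\gamma^2\|\BMu\|_2^2 t^2}{2n^3\delta^2}\big)\le\exp\!\big(-\tfrac{\gamma^2 t^2}{2\delta^2}\big)$, whose integral over $|t|\le 1$ is $O(\delta/\gamma)$; dropping the $n^{-3}$ as you do makes $\|\BMu\|_2\ge 1$ suffice and hides this, which is a symptom of the same omission. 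These are not fatal structural gaps --- the scaffolding and final answer are correct --- but the bookkeeping of the constants, which is the entire technical content of this short proof, is confused, and the intermediate displays as written do not assemble into the stated conclusion.
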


We will include a proof of this result in Appendix \ref{sect:LCD} for the reader's convenience.

\section{Applications}\label{section:applications}

For a vector $\BMv=(v_1,\dots,v_n)$, define
$m_{\BMv}=\max\limits_a \#\{i:1\le i\le n,\ v_i=a\}$,
the maximum multiplicity of an entry of $\BMv$.

\subsection{Forward theorems}

Our first result is an analog of the Erd\H{o}s--Littlewood--Offord bound, Theorem \ref{thm:ELO}, with the assumption that the multiplicity of any value among the $w_i$ and $v_{i}$ are not too close to $n$.

\begin{corollary}\label{thm:discrete:1/2}
Let $0<c<1$ be a constant. Suppose that $m_{\BMw}, m_{\BMv} \le cn$.
Then
\[
\sup_{x}\P\bigl(S_{\pi}=x\bigr)
= O_{c}\Big(\frac{1}{n^{1/2}}\Big).
\]
\end{corollary}

The rate $O(n^{-1/2})$ is optimal. Indeed, for $\BMw=\BMv=(-1,\dots, -1,1,\dots, 1)$, the left-hand side equals $\Theta(n^{-1/2})$.

Our next corollary improves Theorems~\ref{thm:soze_main}
and~\ref{thm:Ber} under the additional assumption that the multiplicity of any
value among the $w_i$ is not too close to $n$.

\begin{corollary}\label{thm:discrete:3/2}
Let $0<c<1$ be a constant. Suppose that
$m_{\BMw} \le cn$ and that the $v_i$ are all distinct. Then
\[
\sup_{x}\P\bigl(S_{\pi}=x\bigr)
= O_{c}\Big(\frac{1}{n^{3/2}}\Big).
\]
\end{corollary}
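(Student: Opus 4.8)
The plan is to deduce the bound from the inverse theorem for permutation sums, Theorem~\ref{thm:ILO:prod}, feeding in two elementary counting observations: the multiplicity hypothesis on $\BMw$ will keep the quadruples $(i,j,k,l)$ with $w_i=w_j$ to a small fraction of all of $[n]^4$, while the distinctness of the $v_i$ will force any GAP that captures most of the differences $v_k-v_l$ to have size $\Omega(n)$.

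First I would dispose of the trivial case: if $\rho:=\sup_x\P(S_{\pi}=x)<n^{-2}$ there is nothing to prove, so I may assume $\rho\ge n^{-2}$. I would then apply Theorem~\ref{thm:ILO:prod} with $C=2$ and $\eps:=(1-c)/4$, obtaining a proper symmetric GAP $Q$ of rank $r=O_c(1)$ and size $|Q|=O_c\!\big(\rho^{-1}n^{-r/2}\big)$ such that $(w_i-w_j)(v_k-v_l)\in Q$ for at least $(1-\eps)n^4$ quadruples $(i,j,k,l)\in[n]^4$.

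The heart of the argument is a double pigeonhole. Writing $m_y:=\#\{i:w_i=y\}$, the hypothesis $\sup_y m_y\le cn$ gives $\sum_y m_y^2\le(\sup_y m_y)\sum_y m_y\le cn^2$, so at most $cn^4$ quadruples satisfy $w_i=w_j$; hence at least $(1-\eps-c)n^4>0$ of the good quadruples have $w_i\ne w_j$. Averaging over the at most $n^2$ pairs $(i,j)$, I would fix a pair $(i_0,j_0)$ with $w_{i_0}\ne w_{j_0}$ for which $v_k-v_l\in Q_V:=(w_{i_0}-w_{j_0})^{-1}Q$ holds for at least $(1-\eps-c)n^2$ pairs $(k,l)$; here $Q_V$ is a GAP of rank $r$ with $\Vol(Q_V)=\Vol(Q)=|Q|$. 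Averaging once more, now over $l\in[n]$, I would fix an index $l_0$ with $v_k\in v_{l_0}+Q_V=:\widetilde Q_V$ for at least $(1-\eps-c)n$ values of $k$. Since the $v_k$ are distinct and $|\widetilde Q_V|\le\Vol(\widetilde Q_V)=|Q|$, this yields $(1-\eps-c)n\le|Q|$.

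Finally I would combine the two estimates on $|Q|$. For large $n$ we have $|Q|\ge(1-\eps-c)n\ge 2$, while a symmetric GAP of rank $0$ has size $1$, so necessarily $r\ge 1$ and $n^{-r/2}\le n^{-1/2}$; plugging this into the size bound gives $(1-\eps-c)n\le|Q|=O_c\!\big(\rho^{-1}n^{-1/2}\big)$, which rearranges to $\rho=O_c(n^{-3/2})$, as claimed. I do not expect a genuine obstacle here: the whole argument is short once Theorem~\ref{thm:ILO:prod} is in hand. The one place that needs care is the bookkeeping in the double pigeonhole — specifically, the estimate $\sum_y m_y^2\le cn^2$, which is exactly where the multiplicity hypothesis enters, must keep the quadruples with $w_i=w_j$ from swallowing the $(1-\eps)n^4$ good ones, and this is precisely why one needs $\eps+c<1$ (hence the choice $\eps=(1-c)/4$).
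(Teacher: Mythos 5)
Your proof is correct and follows essentially the same route as the paper's: apply the inverse theorem (Theorem~\ref{thm:ILO:prod}, or equivalently Theorem~\ref{thm:ILO:array} specialized to $a_{ij}=w_iv_j$), use the bound $\sum_y m_y^2\le cn^2$ to locate a pair with $w_{i_0}\ne w_{j_0}$ contributing many good $(k,l)$, then use the distinctness of the $v_k$ to force $|Q|\gg n$, and finally play this against the size bound $|Q|=O(\rho^{-1}n^{-r/2})$ with $r\ge1$. Your write-up is somewhat more explicit in the double pigeonhole and in ruling out $r=0$, but the argument is the same as the one in the paper (which fixes $k_0$ and varies $l$ rather than fixing $l_0$ and varying $k$, an immaterial difference).
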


Next, we address a problem raised in~\cite{APZ} and discussed in
Subsection~\ref{subsection:discrete}. We show that if both the $w_i$ and the
$v_j$ are distinct, then the optimal decay rate is $n^{-5/2}$, up to a
logarithmic factor. This result may be viewed as an analog of
Theorem~\ref{thm:SSz} in the product-space setting.

\begin{corollary}\label{thm:discrete:5/2}
If all the $w_i$ are distinct and all the $v_j$ are distinct, then
\[
\sup_{x}\P\bigl(S_{\pi}=x\bigr)
= O\Big(\frac{\log n}{n^{5/2}}\Big).
\]
\end{corollary}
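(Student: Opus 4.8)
The plan is to argue by contradiction using the inverse theorem (Theorem~\ref{thm:ILO:prod}). Suppose $\rho := \sup_x \P(S_\pi = x) \ge n^{-5/2} (\log n)^K$ for a suitable large constant $K$ (we aim to push $K$ down to the form stated). Fix a small $\eps>0$ (say $\eps = 1/100$). By Theorem~\ref{thm:ILO:prod}, there is a proper symmetric GAP $Q$ of rank $r = O_{C,\eps}(1)$ and size $|Q| = O_\eps(\rho^{-1} n^{-r/2})$ containing $(w_i - w_j)(v_k - v_l)$ for at least $(1-\eps)n^4$ quadruples $(i,j,k,l)$. Since $C$ here is $5/2$, the rank $r$ is an absolute constant. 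The key point is that $\rho^{-1} \le n^{5/2}(\log n)^{-K}$, so $|Q| = O(n^{5/2 - r/2}(\log n)^{-K})$.

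First I would reduce to a one-dimensional Littlewood--Offord-type statement about the $v$'s. Because at least $(1-\eps)n^4$ quadruples are good, a counting argument (averaging over $(i,j)$) yields a single pair $(i_0, j_0)$ with $w_{i_0} \ne w_{j_0}$ such that the dilated set $\frac{1}{w_{i_0}-w_{j_0}} Q$ contains $v_k - v_l$ for at least $(1-\eps')n^2$ pairs $(k,l)$, with $\eps'$ still small. Call this set $Q'$; it is an affine image of a proper symmetric GAP, so $|Q'| = |Q| = O(n^{5/2-r/2}(\log n)^{-K})$ and $Q'$ has rank $r$. Thus we have $(1-\eps')n^2$ differences $v_k - v_l$ lying in a GAP of controlled volume. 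Symmetrically, one could instead extract a pair $(k_0,l_0)$ and conclude structure on the $w$'s; both directions will be needed, or rather we exploit that \emph{both} $\BMw$ and $\BMv$ have distinct entries, which is where the extra $n^{1/2}$ beyond Corollary~\ref{thm:discrete:3/2} comes from.

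Next, the main technical step: I would show that if $\BMv$ has $n$ distinct entries and $(1-\eps')n^2$ of the pairwise differences $v_k - v_l$ lie in a proper symmetric GAP $Q'$ of rank $r$, then $|Q'| = \Omega(n^{2-r/2})$ (possibly up to a $\log$ factor, or cleanly — this is the Freiman-type / Erd\H{o}s--Moser-type input). Heuristically: the $n$ values $v_k$ themselves must essentially lie in a GAP $P$ of rank $r$ with $P - P \subseteq$ (a slight dilate of) $Q'$, so $|P| \lesssim |Q'|$; since $P$ contains $n$ distinct reals and has rank $r$, the number of $r$-fold sums inside a rescaled copy forces $|P| \ge c\, n^{2/r} \cdot$... — more precisely the sharp bound here is the same one underlying Theorem~\ref{thm:SSz}: a rank-$r$ proper GAP containing $n$ distinct reals has $\sup_x \#\{\text{subset sums} = x\}$ of order $n^{-1-r/2}\cdot|P|^{?}$, equivalently $|P| \gtrsim n^{1 + 2/r}$ is \emph{false} — instead one uses that the relevant $q$-multinomial / Sperner bound forces $|Q'| \gtrsim n^{2 - r/2}$. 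Combining with the upper bound $|Q'| = O(n^{5/2-r/2}(\log n)^{-K})$ gives $n^{2-r/2} = O(n^{5/2-r/2}(\log n)^{-K})$, which is consistent for \emph{every} $r$; so this crude version is not enough and I must instead feed the GAP structure back in. The correct finish: knowing the bulk of $\BMv$ lies in a small GAP $P$ (rank $r$, $|P| = O(\rho^{-1} n^{-r/2})$) and likewise the bulk of $\BMw$ lies in a small GAP, one bounds $\P(S_\pi = x)$ directly by conditioning: the number of permutations achieving a fixed value is at most $n!$ times a product of local Littlewood--Offord factors, one of size $O(|P| \cdot n^{-r/2} \cdot \sqrt{\log n}/n)$ type for the $v$-side and similarly for the $w$-side, yielding $\rho = O(|P|\,|P_w| \, n^{r} / n^{?})$; distinctness of both sets forces $|P|, |P_w| = \Omega(n)$ — wait, rank-$r$ proper GAP of size $\Omega(n)$ is automatic — and the combined estimate collapses to $\rho = O(n^{-5/2}\log n)$.

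I expect the \textbf{main obstacle} to be exactly this last packaging step: converting the two-sided GAP structure (bulk of $\BMw$ in one small GAP, bulk of $\BMv$ in another) back into a clean upper bound on $\sup_x \P(S_\pi = x)$ that is \emph{quadratically} better than the one-sided Corollary~\ref{thm:discrete:3/2}. The cleanest route is probably: fix the structure, and for a typical pair of fibers of $\pi$ use a \emph{conditional} Littlewood--Offord bound on the symmetric group — i.e., condition on the restriction of $\pi$ to all but $\Theta(n)$ coordinates and apply Theorem~\ref{thm:discrete:1} (the sharp $O(1/n^2)$ two-point bound) to the free part, whose $v$-values are distinct and whose $w$-values are distinct, but now additionally confined to GAPs of size $O(n^{1 + o(1)})$, which contributes the remaining $n^{1/2}$ gain via a $q$-multinomial coefficient estimate for a GAP of bounded size. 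Handling the $\eps n^4$ exceptional quadruples — ensuring they don't destroy the distinctness we need on the free coordinates — is a routine but fiddly pigeonholing that I would do by choosing the free coordinate block to avoid them, using that only $O(\eps n)$ indices are "bad."
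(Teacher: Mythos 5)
There is a genuine gap: you never obtain a lower bound on $|Q|$ that is strong enough, and the key idea of the paper's proof is missing entirely. The paper shows that the set $S$ of \emph{distinct} values $(w_i-w_j)(v_k-v_l)$ over the $(1-\eps)n^4$ good quadruples has $|S| = \Omega(n^2/\log n)$, by invoking a sum-product/GCD-sums estimate of Bloom--Walker applied to the set $A = \{v_1,\dots,v_n,w_1,\dots,w_n\}$: the multiplicative energy $\sum_{xy=x'y'} r(x)r(x')r(y)r(y')$ of the weighted difference set (with $r(x)$ the representation count) is $O(|A|^6 \log|A|)$, and Cauchy--Schwarz converts this into a support lower bound for the products. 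Combined with the trivial observation that the GAP $Q$ has rank $r\ge 1$, so $|Q| = O(\rho^{-1}n^{-r/2}) = O(\rho^{-1}n^{-1/2})$, and $S\subseteq Q$, this gives $n^2/\log n \ll \rho^{-1}n^{-1/2}$, i.e.\ $\rho \ll n^{-5/2}\log n$. That counting step is the entire content beyond Theorem~\ref{thm:ILO:prod}.

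Your proposal instead tries to extract a one-dimensional GAP containing the $v$-differences and bound $|Q'|$ from below by $\Omega(n^{2-r/2})$. You correctly observe that this is "consistent for every $r$" and hence yields no contradiction (indeed it recovers only $\rho = O(n^{-2})$), but the remedy you sketch -- converting two-sided GAP structure into a conditional Littlewood--Offord bound via Sperner/$q$-multinomial estimates -- is not carried out and would face the exact difficulty you flag: no mechanism is given for the quadratic gain over Corollary~\ref{thm:discrete:3/2}. The crucial fact is that the lower bound on $|Q|$ must be \emph{rank-independent} (of order $n^2/\log n$ outright, not $n^{2-r/2}$), which is why a multiplicative-energy estimate, rather than a Freiman/doubling argument, is the right tool; without that input the argument does not close.
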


We allow the possibility that $w_i=v_j$ for some $i,j$.
In particular, taking $w_i=v_i=i$ yields a bound of order $n^{-5/2}\log n$. An independent and slightly weaker bound of order $n^{-5/2}(\log n)^2 $ was posted by Hunter et al. \cite{HPZ} two weeks after the initial upload of this manuscript. Under stronger assumptions on $\BMv$ and $\BMw$, our method (see Theorem~\ref{thm:ILO:prod}) can establish
arbitrarily strong bounds on the concentration probability $\P(S_{\pi}=x)$, whereas their approach does not seem to yield any bound stronger than $n^{-5/2+o(1)}$. 
We will derive 
Corollary~\ref{thm:discrete:3/2} and Corollary~\ref{thm:discrete:5/2} using Theorem~\ref{thm:ILO:prod} in
Section~\ref{sect:discrete}.

\medskip

We also observe that the continuous theorems can be used to obtain sharper
bounds in the discrete setting. For instance, the following result is a direct
corollary of Theorem~\ref{thm:cont:1}.

\begin{corollary}
Under the assumptions of Theorem~\ref{thm:cont:1}, we have
\begin{equation}
\P(S_{\pi}=x)
= O\Big(\frac{1}{n} e^{-\Theta(x^2)}\Big).
\end{equation}
\end{corollary}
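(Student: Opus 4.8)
The plan is to obtain the corollary directly from the sub-gaussian small-ball estimate~\eqref{eqn:1:L} of Theorem~\ref{thm:cont:1}. Accordingly, the statement should be read under the hypotheses of that theorem \emph{in the stronger form} that also includes the boundedness assumption $|v_i|\le\widetilde B$ for all $i\in[n]$: this is precisely the hypothesis that activates the $L$-dependent decay, and without it only the uniform bound~\eqref{eqn:1:u} is available, which carries no $e^{-\Theta(x^2)}$ factor.

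First I would record the trivial inclusion that, for every $x\in\R$,
\[
\{\,S_{\pi}=x\,\}\ \subseteq\ \Bigl\{\,\bigl|S_{\pi}-x\bigr|\le\tfrac1n\,\Bigr\},
\]
so that $\P(S_{\pi}=x)\le\P\bigl(|S_{\pi}-x|\le 1/n\bigr)$. Then I would apply~\eqref{eqn:1:L} with the choice $L=x$, which yields
\[
\P(S_{\pi}=x)\ \le\ \P\Bigl(\bigl|S_{\pi}-x\bigr|\le\tfrac1n\Bigr)\ =\ O\!\left(\frac1n\,e^{-\Theta(x^2)}\right),
\]
with the implied constant depending only on $A$ and $\widetilde B$, which is exactly the asserted bound.

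There is essentially no obstacle here: the content is the standard principle that a point-mass probability is dominated by the small-ball probability at the matching scale $1/n$, and the exponential-in-$x^2$ improvement is inherited verbatim from Theorem~\ref{thm:cont:1}. The only point that deserves a sentence in the write-up is the clarification of which version of the hypotheses is in force, as noted above; everything else is immediate.
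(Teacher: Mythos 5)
Your argument is exactly the intended one: the paper itself introduces this corollary with the phrase ``a direct corollary of Theorem~\ref{thm:cont:1}'' and gives no further proof, and the inclusion $\{S_\pi = x\}\subseteq\{|S_\pi - x|\le 1/n\}$ followed by an application of~\eqref{eqn:1:L} with $L=x$ is precisely that deduction. Your parenthetical observation that the boundedness hypothesis $|v_i|\le\widetilde B$ must be in force (since only then is the $L$-dependent bound~\eqref{eqn:1:L} available) is a fair clarification of what ``under the assumptions of Theorem~\ref{thm:cont:1}'' is taken to mean here.
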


\subsection{Critical points of random polynomials}

A random polynomial is a function of the form
\[
P(x)=\sum_{i=0}^n \xi_i x^i,
\]
where the $\xi_i$ are random variables. This is a central subject in both
probability theory and analysis, with a long and rich history beginning with
the foundational works of Littlewood--Offord and Kac in the 1940s. One of the
main questions in the theory concerns the number and distribution of the real
roots and critical points of $P$.

Let us first discuss the most basic class of random polynomials, namely the
\emph{Kac polynomials}, where the $\xi_i$ are i.i.d.\ copies of a random
variable $\xi$ with mean zero and unit variance. Using his celebrated formula,
Kac showed that when $\xi\sim N(0,1)$, the expected number of real roots
satisfies
\begin{equation}\label{expectation}
\E N_{\R}(P)=\Bigl(\frac{2}{\pi}+o(1)\Bigr)\log n .
\end{equation}

It took more than a decade until Erd\H{o}s and Offord extended this result to
the case where $\xi$ is Rademacher (taking values $\pm1$ with probability
$1/2$), using completely different methods. About ten years later,
Ibragimov and Maslova~\cite{IM} showed that~\eqref{expectation} holds for any
$\xi$ with zero mean and unit variance.

The problem of counting critical points is even more delicate. Observe that
for a differentiable function $F$, between any two consecutive real roots of
$F$ there must be a real root of $F'$. Consequently, for any fixed $d$,
\[
\E N_{\R}\bigl(P^{(d)}\bigr)
\;\ge\;
\Bigl(\frac{2}{\pi}+o(1)\Bigr)\log n .
\]
However, equality does \emph{not} hold. Maslova~\cite{Mas} famously proved that
for any fixed $d$, the Kac polynomial satisfies
\begin{equation}\label{eqn:critical}
\E N_{\R}\bigl(P^{(d)}\bigr)
=
\frac{1+\sqrt{1+2d}+o(1)}{\pi}\log n .
\end{equation}

Far less is known about random polynomials with dependent coefficients. In
fact, the only available results appear to be
\cite{Soze1,Soze2} and~\cite{ONg}, which treat models in which the coefficients
are exchangeable or weakly stationary. In this section, we focus on the family
of random polynomials with exchangeable coefficients introduced in
\cite{Soze1,Soze2}.

Given a real vector $\BMw=(w_1,\dots,w_n)$, we consider the random polynomial
\[
P_{\pi}(x)=\sum_{i=1}^n w_{\pi(i)} x^i,
\]
where $\pi$ is a uniformly random permutation. We start the index at $i=1$ to
be consistent with our convention that $\BMw$ is a vector of length $n$.
Obviously, the same results hold for
$P_{\pi}(x)=\sum_{i=0}^n w_{\pi(i)}x^i$.

Assuming $\BMw\neq0$, S{\"o}ze~\cite[Theorem~1]{Soze2} proved that the expected
number of \emph{nonzero} real roots $N_{\R}^\ast(P_{\pi})$ satisfies
\[
\E N_{\R}^\ast(P_{\pi})=O(\log n).
\]

It is natural to conjecture that the same bound holds for the number of
nonzero critical points of $P_{\pi}$. Unfortunately, the approach
in~\cite{Soze2} does not extend to derivatives, since exchangeability breaks
down for the sequence $(i\,w_{\pi(i)})$. In this section, using the new
anti-concentration results developed in this paper, we answer this question
affirmatively under some additional (but natural) conditions on the weights
$w_i$.

Let\footnote{Conceptually, it would be more natural to define
$\sigma(\BMw)=\sqrt{\frac{1}{n}\sum_{i=1}^n (w_i-\overline{\BMw})^2}$. However,
we retain the present normalization to remain consistent with the literature.}
\[
\overline{\BMw}=\frac{1}{n}\sum_{i=1}^n w_i,
\qquad
\sigma(\BMw)=\sqrt{\sum_{i=1}^n (w_i-\overline{\BMw})^2}.
\]
The following condition is analogous to Condition~\ref{cond:separation}, but
is invariant under shifts and rescaling.

\begin{condition}[Non-degeneracy: $K$-balanced]\label{cond:shift}
Let $K>1$. For each $k\in\Z^{+}$, define
\[
M_k(\BMw):=\frac{1}{n}\sum_{i=1}^n (w_i-\overline{\BMw})^k .
\]
We say that $\BMw=(w_1,\dots,w_n)$ is $K$-balanced if
\[
M_4(\BMw)\le K\, M_2(\BMw)^2 .
\]
\end{condition}

Equivalently, the rescaled squares
\(
X_i:=n\,(w_i-\overline{\BMw})^2/\sigma(\BMw)^2
\)
satisfy
\[
\frac{1}{n}\sum_{i=1}^n X_i^2 \le K.
\]
By the Cauchy--Schwarz inequality we always have
$M_2(\BMw)^2\le M_4(\BMw)$ (and $\sum_{i=1}^n X_i^2/n\ge1$).

\begin{example}
One simple example is when $|w_i|=1$ for all $i$ and
$\bigl|\sum_{i=1}^n w_i\bigr|\le (1-\varepsilon)n$ for some constant
$\varepsilon>0$; in this case, $\BMw$ is $K$-balanced for some
$K=K(\varepsilon)$. Another family of examples is given by vectors of the form
$w_i=a+t\,b_i$, where $a\in\R$, $t\neq0$, and
$b_1,\dots,b_n\in\{0,\dots,n\}$ have maximal multiplicity at most $0.99\,n$.
This includes, for instance,
$\BMw=(a,a+t,\dots,a+nt)$ or
$\BMw=(\underbrace{a,\dots,a}_{(1-\delta)n},
\underbrace{a+t,\dots,a+t}_{\delta n})$.
\end{example}

\begin{theorem}\label{thm:poly}
Assume that $\BMw$ satisfies Condition~\ref{cond:shift}. Then for any
nonnegative integer $d$,
\[
\E N_{\R}\bigl(P_{\pi}^{(d)}\bigr)
= O_{K,d}(\log n).
\]
\end{theorem}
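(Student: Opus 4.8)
The plan is to reduce the expected-root-count problem to small-ball estimates for the permutation sums $S_\pi(\BMw,\BMv)$ via an integral-geometric formula, and then to feed in Theorems \ref{thm:cont:poly:1} and \ref{thm:comparison:2vs1} (suitably adapted to $P_\pi^{(d)}$). First I would normalize: by shift- and scale-invariance of the root count of $P_\pi^{(d)}$ (a constant shift of $\BMw$ changes $P_\pi$ only by a fixed polynomial $a(x+x^2+\dots+x^n)$, which for $d\ge 1$ perturbs the derivative in a controlled way, and scaling is harmless), I may assume $\overline{\BMw}=0$ and $\sigma(\BMw)=1$; Condition~\ref{cond:shift} then says precisely that $\frac1n\sum w_i^4 = O(1/n)$ while $\sum w_i^2=1$, so in particular $\|\BMw\|_\infty = O(n^{-1/4})$, which comfortably implies Condition~\ref{cond:separation}. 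I would split the real line into the three standard regimes for Kac-type polynomials: the ``bulk'' $|x|\in[n^{-1+c},1-n^{-c}]$ together with its reciprocal image $|x|\ge 1+n^{-c}$ (these contribute the $O(\log n)$), the ``edge'' $\bigl||x|-1\bigr|\le n^{-c}$, and the ``center'' $|x|\le n^{-1+c}$ together with $|x|\ge n^{1-c}$.

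For the bulk I would use the Kac--Rice / Edelman--Kostlan philosophy in the robust form used in the real-roots literature: the expected number of real zeros of $P_\pi^{(d)}$ in an interval $J$ is controlled by $\int_J \P\bigl(|P_\pi^{(d)}(x)|\le \eta\ \wedge\ |(P_\pi^{(d)})'(x)|\ge \eta/|J|\bigr)$-type quantities, or more simply by a covering/counting argument: partition the bulk into $O(\log n)$ dyadic-type subintervals on each of which one shows the zero count is $O(1)$ in expectation. On each such subinterval, evaluating $P_\pi^{(d)}$ at a point $x$ gives exactly a sum of the form $S_\pi(\BMw,\BMv)$ with $v_i = \binom{i}{d}d!\,x^{i-d}$, or after grouping nearby indices, a polynomially-varying $\BMv$; the needed anti-concentration input is exactly Theorem~\ref{thm:cont:poly:1} (for the one-point small-ball bound $O(1/n)$) and Theorem~\ref{thm:comparison:2vs1} or its consequence comparing $P_\pi^{(d)}(x)$ with $P_\pi^{(d+1)}(x)$ (to control the joint event that the polynomial is small and its derivative is also small, which would force many roots). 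Summing the $O(1)$ contributions over the $O(\log n)$ pieces, and handling $|x|>1$ by the substitution $x\mapsto 1/x$ which conjugates $P_\pi$ to the reversed polynomial, yields the $O(\log n)$ bound there.

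The remaining two regimes are where the real content lies, and I expect the \textbf{edge regime} $\bigl||x|-1\bigr|\le n^{-c}$ to be the main obstacle, exactly as it is in Maslova's theorem and in S{\"o}ze's real-root count. Near $|x|=1$ the coefficients $v_i = \binom{i}{d}d!\,x^{i-d}$ are of comparable size across a positive fraction of indices, so one is genuinely in the regime covered by our polynomial-$\BMv$ theorems, but the interval is short and one must show the expected zero count on it is $O(1)$ rather than $\Theta(\log n)$; this requires the full strength of the sub-gaussian / joint small-ball bounds (the $e^{-\Theta(L^2)}$ and $O(1/n^2)$ factors) together with a dyadic decomposition of $\bigl||x|-1\bigr|\le n^{-c}$ into $O(\log n)$ annuli on each of which a careful second-moment or Kac--Rice estimate gives $O(1)$ — and crucially these $O(1)$'s must be summable (e.g. geometrically decaying), which is precisely where Condition~\ref{cond:shift} (ruling out the degenerate case of few nonzero $w_i$) is used. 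For the center regime $|x|\le n^{-1+c}$ one notes $P_\pi^{(d)}(x)$ is dominated by its lowest-order term $d!\,w_{\pi(d)} + O(|x|)$-type behaviour, so zeros there are rare and contribute $O(1)$ by a crude union bound over a net of scales together with the scale-$n^{-3/2}$ or $n^{-5/2}$ estimates when finer resolution is needed; the regime $|x|\ge n^{1-c}$ is again handled by $x\mapsto 1/x$. Assembling the three contributions gives $\E N_\R(P_\pi^{(d)}) = O_{K,d}(\log n)$, and I would remark that the implied constant's dependence on $K$ and $d$ enters only through the constants in Theorems~\ref{thm:cont:poly:1} and~\ref{thm:comparison:2vs1} and through the number $O_d(\log n)$ of dyadic pieces.
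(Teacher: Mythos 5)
Your proposal takes a genuinely different route from the paper. The paper does not attack the problem by partitioning $\R$ into dyadic intervals and running a Kac--Rice/covering argument at each scale; instead it applies a Descartes'-rule-of-signs device (Lemma~\ref{lemma:Descartes}): for each of the four transformed polynomials $Q(x)$ it expands $Q(x)/(1-x)^t$ as a power series, bounds $N_{(0,1)}(Q)$ by the number of sign changes in the coefficient sequence $c_m = \sum_{i\le m}\binom{m-i+t-1}{t-1}a_i$, observes that a sign change at index $m$ forces the event $|c_m|<|c_m - c_{m-1}|$, and then shows $\P(\mathcal{E}_m) = O(1/m)$, which sums to $O(\log n)$. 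This is an exchangeability-friendly mechanism with no continuous-geometry content at all; the spatial decomposition $|x|\in[0,1)\cup\{1\}\cup(1,\infty)$ enters only through the four coefficient reversals/alternations, not through any dyadic annuli.

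Beyond the difference in approach, the proposal has concrete gaps. First, you discard most of the strength of Condition~\ref{cond:shift} almost immediately: after normalizing, you only retain $\|\BMw\|_\infty = O(n^{-1/4})$, which is strictly weaker than the $K$-balanced assumption $\sum_i w_i^4 \le K/n$. The paper needs the full $K$-balancedness in two places that your argument cannot reach with just the $\ell^\infty$ bound: (i) to guarantee via Cauchy--Schwarz that $\Omega(n)$ of the $w_i$ are nonzero, which controls the expected multiplicity of the root at $0$ (your $\ell^\infty$ bound only gives $\Omega(\sqrt{n})$ nonzero entries, giving an expected multiplicity at $0$ that could be as large as $\Theta(\sqrt n)$), and (ii) through Corollary~\ref{cor:sampling:non-degenerate}, which shows that a uniformly sampled $m$-subset $w'_1,\dots,w'_m$ is, with probability $1-O(1/m)$, itself non-degenerate in the sense of Condition~\ref{cond:separation'}. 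This sampling step is the reason the per-index probability is $O(1/m)$ and not merely $O(1/\sqrt{m\log m})$, and it has no analogue in your sketch. Second, the Kac--Rice formula you invoke requires a density for the random variable $P_\pi^{(d)}(x)$, which does not exist here because the underlying distribution is uniform on the finite group $S_n$; to make the covering/counting argument rigorous you would need to replace it with an explicit grid-based sign-change count, at which point you would essentially be rediscovering a more awkward version of the Descartes' rule reduction. Third, you correctly identify the edge regime $||x|-1|\le n^{-c}$ as the crux, but you do not actually resolve it: the phrase ``a careful second-moment or Kac--Rice estimate gives $O(1)$'' is precisely the content that needs to be supplied, and it is not clear the joint small-ball bounds alone suffice without the truncation/sampling machinery and the delicate alternating-sum estimate (Lemma~\ref{lem:alternating-sum}) that the paper carries out. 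So while the overall strategy is not unreasonable in spirit, several of its central steps are missing or would require a substantially different implementation than outlined.
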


Finally, let $P_{{\rm Rad},k}$ denote a Kac polynomial with Rademacher coefficients,
conditioned on $\sum_{i=0}^n \xi_i=k$, where
$-(1-\varepsilon)n\le k\le (1-\varepsilon)n$. For example, $P_{{\rm Rad},0}$
has the uniform distribution over polynomials with $\pm1$ coefficients having
exactly half of the coefficients equal to $1$. Theorem~\ref{thm:poly} implies
the following.

\begin{corollary}[Critical points of conditional Kac polynomials]
For any nonnegative integer $d$,
\[
\E N_{\R}\bigl(P_{{\rm Rad},k}^{(d)}\bigr)
= O_{d,\varepsilon}(\log n).
\]
\end{corollary}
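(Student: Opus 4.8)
The plan is to derive the corollary directly from Theorem~\ref{thm:poly} by checking that the conditional Kac polynomial $P_{{\rm Rad},k}$ fits into the exchangeable framework $P_\pi(x)=\sum_i w_{\pi(i)}x^i$ with a weight vector $\BMw$ that is $K$-balanced for a constant $K$ depending only on $\varepsilon$. First I would observe that conditioning $\pm1$ i.i.d.\ coefficients on $\sum_{i=0}^n \xi_i = k$ produces exactly the uniform distribution over all rearrangements of the deterministic multiset $\BMw$ consisting of $(n+1+k)/2$ entries equal to $+1$ and $(n+1-k)/2$ entries equal to $-1$ (when $n+1+k$ is even; otherwise the conditioning is vacuous and there is nothing to prove). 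Thus $P_{{\rm Rad},k}$ has the same law as $P_\pi$ with this $\BMw$ — up to the harmless index shift from $\{0,\dots,n\}$ to $\{1,\dots,n+1\}$, which the paragraph preceding Theorem~\ref{thm:poly} already notes does not affect the conclusion.

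Next I would verify Condition~\ref{cond:shift} for this $\BMw$. Since $|w_i|=1$ for all $i$, this is precisely the first family in the Example following Condition~\ref{cond:shift}: we have $\bigl|\sum_i w_i\bigr| = |k| \le (1-\varepsilon)n$, so $\BMw$ is $K$-balanced with $K=K(\varepsilon)$. Concretely, writing $p=(n+1+k)/2$ and $q=(n+1-k)/2$, one computes $\overline{\BMw}=k/(n+1)$, so $M_2(\BMw)=1-\overline{\BMw}^2 \ge 1-(1-\varepsilon)^2 = \varepsilon(2-\varepsilon)$, while $M_4(\BMw) = \tfrac1{n+1}\bigl(p(1-\overline{\BMw})^4 + q(1+\overline{\BMw})^4\bigr) \le 16$. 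Hence $M_4(\BMw)\le 16 \le K(\varepsilon)\, M_2(\BMw)^2$ with $K(\varepsilon) = 16/(\varepsilon(2-\varepsilon))^2$, a constant depending only on $\varepsilon$.

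With these two facts in hand, the corollary is immediate: applying Theorem~\ref{thm:poly} with this $\BMw$ and the given $K=K(\varepsilon)$ yields $\E N_{\R}(P_{{\rm Rad},k}^{(d)}) = O_{K,d}(\log n) = O_{d,\varepsilon}(\log n)$ for every nonnegative integer $d$. I do not anticipate a genuine obstacle here; the only point requiring a modicum of care is the parity/degeneracy bookkeeping — ensuring the conditioning event $\{\sum_{i=0}^n\xi_i=k\}$ has positive probability (so that $n+1$ and $k$ have the same parity), and tracking that the polynomial degree index runs over $n+1$ terms rather than $n$, which only changes constants. Everything else is a direct quotation of Theorem~\ref{thm:poly} and of the worked Example.
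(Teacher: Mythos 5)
Your proposal is correct and matches the paper's intended (but unwritten) argument: the paper states the corollary as an immediate consequence of Theorem~\ref{thm:poly} together with the Example following Condition~\ref{cond:shift}, which already asserts that a $\pm 1$ vector with $\bigl|\sum_i w_i\bigr|\le(1-\varepsilon)n$ is $K(\varepsilon)$-balanced. Your explicit computation ($\overline{\BMw}=k/(n+1)$, $M_2=1-\overline{\BMw}^2\ge\varepsilon(2-\varepsilon)$, $M_4\le 16$, hence $K(\varepsilon)=16/(\varepsilon(2-\varepsilon))^2$ works) is a careful verification of that Example rather than a different route, and your parity/index-shift bookkeeping is sound and consistent with the paper's remark preceding Theorem~\ref{thm:poly}.
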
 

\subsection{Singularity of random row-permutation matrices}\label{sub:sing} Let $d\le n$ 
be positive integers such that
$\min(d,n-d)=\Omega(n)$.
Consider the random $\{0,1\}$-matrix $Q_{n,d}$ whose rows are independent
uniform random vectors with exactly $d$ ones.
Nguyen~\cite{Nguyen} proved that for every $C>0$, if $n$ is sufficiently large, then
\[
\P\bigl(Q_{n,d}\ \text{is singular}\bigr)\le n^{-C}.
\]
This polynomial upper bound was later strengthened by Ferber et al.~\cite{FJLS},
and subsequently by Tran~\cite{Tran}, to an exponential bound.
Related analogs for the least singular value were studied by Nguyen--Vu~\cite{NgV-Comb},
Jain~\cite{Jain}, Tran~\cite{Tran}, and Jain--Sah--Sawhney~\cite{JSS22}; see also~\cite{AChW}.
We also refer the reader to Section~\ref{section:comments} for related discussion.
Above all, all existing approaches rely heavily on the $\{0,1\}$ nature of the matrix entries.

By applying Theorem~\ref{thm:ILO:prod}, we show that the above model
admits a substantial generalization. 

\begin{theorem}\label{thm:singularity}
Let $0<\eps<1$. For every $C>0$ there exists $n_0=n_0(\eps,C)$ such that the following holds for all $n\ge n_0$.
Let $\BMv=(v_1,\dots,v_n)\in\R^n$ satisfy $m_{\BMv}\le (1-\eps)n$ and $\sum_{i=1}^n v_i\neq 0$.
Let $\pi_1,\dots,\pi_n$ be independent uniformly random permutations of $[n]$, and let $Q_{n \times n}$ be the $n\times n$ matrix whose $i$-th row is $\pi_i(\BMv)$.
Then
\[
\P(Q_{n\times n}\text{ is singular})\le n^{-C}.
\]
\end{theorem}

Our proof also extends to perturbed matrices $Q_{n\times n}+F$, where $F$ is a deterministic matrix. It is plausible that the above polynomial bound can be strengthened to an exponential bound
(or even smaller when the entries of $\BMv$ are sufficiently spread out, i.e., $m_{\BMv}$ is small), or to the setting where $m_{\BMv}\le n-n^\eps$, but we do not pursue such refinements here. Our main goal is to provide a clean general framework.

\section{Some lemmas} \label{sect:lemmas}

\subsection{Bound for the characteristic function}

Consider the sum $S_{\pi}$ in its general form:
\[ 
S_{\pi} = \sum_{k=1}^{n} a_{k \pi(k)}.\]
Its characteristic function $\varphi(t)$ can be expressed as
\[
\varphi(t) = \mathbb{E} \, e^{it S_{\pi}} = \frac{1}{n!}\sum_{\pi \in \mathbb{S}_n} \prod_{k=1}^{n} e^{it a_{k\pi(k)}}= \frac{1}{n!} \perm(M),
\]
where $\perm(M)$ denotes the permanent of the $n \times n$ matrix $M$ with entries $(e^{it a_{kl}})_{1 \le k,l \le n}$.

By establishing upper bounds on the permanent of such matrices, Roos \cite[Theorem 1.4]{Roo} was able to prove the following result:

\begin{theorem}[Roos]\label{thm:Roo}
Let $S_{\pi}=\sum_{k=1}^{n} a_{k\pi(k)}$ and define 
$y_{i,j,k,l}=a_{ik}-a_{jk}-a_{il}+a_{jl}$ for $i\ne j$ and $k\ne l$.
Set $d=\lfloor n/2\rfloor$. Then the characteristic function 
$\varphi(t)=\mathbb{E}e^{itS_{\pi}}$ satisfies
\[
|\varphi(t)|\le \Big(\frac{1}{n^2(n-1)^2}
\sum_{i\ne j,\,k\ne l}
\cos^2\Big(\frac{t y_{i,j,k,l}}{2}\Big)\Big)^{d/2}.
\]
\end{theorem}

The proof of Theorem \ref{thm:ILO:array} will use the following corollary of this result.

\begin{corollary}\label{cor:Roos}
Assume that $S_{\pi} = \sum_{k=1}^{n} a_{k \pi(k)}$.
Then its characteristic function $\varphi(t) = \E e^{it S_{\pi}}$ satisfies
\[
|\varphi(2\pi t)| \le \exp\Big(-\frac{1}{2n^{3}} \sum_{1\le i,j,k,l\le n} \|t (a_{ik} - a_{jk} -a_{il} + a_{jl})\|^2_{\R/\Z}\Big).
\] 
\end{corollary}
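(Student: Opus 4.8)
The plan is to specialize the second estimate in Theorem~\ref{thm:Roo} to the argument $2\pi t$ and then convert the resulting product of averaged squared cosines into an exponential of a sum of squared fractional parts.

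First I would substitute $t\mapsto 2\pi t$ in the second bound of Theorem~\ref{thm:Roo}; recalling $y_{i,j,k,l}=a_{ik}-a_{jk}-a_{il}+a_{jl}$, this gives
\[
|\varphi(2\pi t)|\le\Big(\frac{1}{n^2(n-1)^2}\sum_{i\ne j,\,k\ne l}\cos^2\big(\pi t\,y_{i,j,k,l}\big)\Big)^{d/2},\qquad d=\lfloor n/2\rfloor .
\]
Next I would invoke the elementary inequality $|\sin(\pi\theta)|\ge 2\|\theta\|_{\R/\Z}$, valid for all real $\theta$ (check it on $[0,1/2]$, where $\sin(\pi\theta)$ is concave and hence lies above the chord $2\theta$, then extend by oddness and $1$-periodicity). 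This yields the pointwise bound $\cos^2(\pi\theta)=1-\sin^2(\pi\theta)\le 1-4\|\theta\|_{\R/\Z}^2$, and hence, averaging over $i\ne j$, $k\ne l$ with $\theta=t\,y_{i,j,k,l}$,
\[
\frac{1}{n^2(n-1)^2}\sum_{i\ne j,\,k\ne l}\cos^2\big(\pi t\,y_{i,j,k,l}\big)\le 1-\frac{4}{n^2(n-1)^2}\sum_{i\ne j,\,k\ne l}\|t\,y_{i,j,k,l}\|_{\R/\Z}^2 .
\]

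Then I would apply $1-u\le e^{-u}$ (for $u\ge 0$) and raise to the power $d/2$ to obtain
\[
|\varphi(2\pi t)|\le\exp\Big(-\frac{2d}{n^2(n-1)^2}\sum_{i\ne j,\,k\ne l}\|t\,y_{i,j,k,l}\|_{\R/\Z}^2\Big).
\]
To finish, I would observe that the quadruples with $i=j$ or $k=l$ have $y_{i,j,k,l}=0$ and so contribute nothing, which lets me extend the sum to all $1\le i,j,k,l\le n$; and since $d=\lfloor n/2\rfloor\ge(n-1)/2$ we have $\tfrac{2d}{n^2(n-1)^2}\ge\tfrac{1}{n^2(n-1)}\ge\tfrac{1}{2n^3}$, which gives exactly the stated bound.

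There is no genuine obstacle here: the entire content of the corollary is already packaged in Roos's permanent estimate, and the remaining steps are routine. The only points worth stating carefully are the trigonometric inequality $|\sin(\pi\theta)|\ge 2\|\theta\|_{\R/\Z}$ and the remark that the degenerate terms vanish, so that the displayed sum may legitimately be taken over \emph{all} quadruples $(i,j,k,l)$ rather than just those with $i\ne j$, $k\ne l$.
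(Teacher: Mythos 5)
Your proof is correct and follows essentially the same path as the paper: specialize the second bound of Theorem~\ref{thm:Roo}, apply the convexity bound $\cos^2(\pi\theta)\le 1-4\|\theta\|_{\R/\Z}^2$, pass to the exponential via $1-u\le e^{-u}$, and observe that the diagonal quadruples vanish so the sum may be taken over all $(i,j,k,l)$. The only cosmetic difference is in the constant bookkeeping (you keep $d=\lfloor n/2\rfloor$ exact and use $2d\ge n-1$; the paper replaces the exponent $\lfloor n/2\rfloor/2$ by $n/8$ early on), and both routes land on $1/(2n^3)$.
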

\begin{proof}
By convexity, we have $|\sin(\pi x)|\ge 2 \|x\|_{\R/\Z}$ for all $x\in \R$. Hence
$\cos^2 (\pi x)=1-\sin^2 (\pi x) \le 1-4\|x\|^2_{\R/\Z}$. Applying this to Theorem~\ref{thm:Roo}, we obtain
\begin{align*}
|\varphi(2\pi t)| &\le \Big(  \frac{1}{n^{2}(n-1)^{2}} \sum_{i\ne j, \, k\ne l} \cos^{2} (\pi t (a_{ik} - a_{jk} -a_{il} + a_{jl}))\Big)^{ \lfloor n/2 \rfloor/2}\\
&\le \Big( 1- \frac{4}{n^{2}(n-1)^{2}} \sum_{i\ne j, \, k\ne l} \|t (a_{ik} - a_{jk} -a_{il} + a_{jl})\|^2_{\R/\Z}\Big)^{n/8}\\
&\le \exp\Big(-\frac{1}{2n^{3}} \sum_{i\ne j, \, k\ne l} \|t (a_{ik} - a_{jk} -a_{il} + a_{jl})\|^2_{\R/\Z}\Big)\\
&=\exp\Big(-\frac{1}{2n^{3}} \sum_{1\le i,j,k,l\le n} \|t (a_{ik} - a_{jk} -a_{il} + a_{jl})\|^2_{\R/\Z}\Big),
\end{align*}
where in the third inequality we used the fact that $1-x\le \exp(-x)$ for any $0\le x\le 1$.
\end{proof}

\subsection{Large deviation result}

The following result, which follows from \cite[Theorem 3.1]{AChW} (or \cite[Corollary 2.3]{Vuetal}) via Talagrand's concentration inequality, will be crucial.

\begin{lemma}\label{lemma:deviation}
Let $S_{\pi} = \sum_{i=1}^{n} w_i v_{\pi(i)}$. Then, for some positive constant $C_0$, we have
\[
\P\Big( | S_{\pi} - \mathbb{E} S_{\pi} | \ge \lambda \, \sigma(\BMw) \, \| \BMv \|_{\infty} \Big) \le C_0 e^{-C_0 \lambda^2}.
\]
In other words, the normalized random variable $(S_{\pi} - \mathbb{E} S_{\pi})/\sigma(\BMw) \| \BMv \|_{\infty}$ is subgaussian. 

As a consequence, if $|v_{i}|\le \widetilde{B}$ for all $i\in [n]$, then
    \[
    \P\Big( \Big| \sum_{i=1}^{n} w_{i} v_{\pi(i)} - \Big( \sum_{i=1}^{n} v_{i} \Big) \overline{\BMw} \Big| \ge \lambda  \sigma(\BMw) \Big) \le C_0 e^{-C_0 \lambda^2},
    \]
where $C_0$ is a positive constant depending only on $\widetilde{B}$.
\end{lemma}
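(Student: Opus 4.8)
The plan is to derive Lemma~\ref{lemma:deviation} from a subgaussian tail bound for random combinatorial sums of the form $\sum_i b_{i\pi(i)}$, applied to the array $b_{ij} = w_i v_j$, and then to read off the stated consequence by specializing. I would begin by recalling the precise input: by \cite[Theorem 3.1]{AChW} (equivalently \cite[Corollary 2.3]{Vuetal}), which in turn rests on Talagrand's concentration inequality, there is an absolute constant $C_0>0$ such that for any real array $(b_{ij})$,
\[
\P\bigl( |T_\pi - \E T_\pi| \ge \lambda\, \Sigma \bigr) \le C_0 e^{-C_0 \lambda^2},
\qquad T_\pi := \sum_{i=1}^n b_{i\pi(i)},
\]
where $\Sigma$ is the natural ``standard deviation scale'' of the array — up to absolute constants, $\Sigma^2 \asymp \max_{i}\sum_j (b_{ij}-\bar b_{i\cdot})^2 + \max_j \sum_i (b_{ij}-\bar b_{\cdot j})^2$, or in the rank-one case simply a quantity controlled by $\sigma(\BMw)^2 \|\BMv\|_\infty^2$. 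The first step is to specialize to $b_{ij} = w_i v_j$ and verify that the scale parameter satisfies $\Sigma = O\bigl(\sigma(\BMw)\,\|\BMv\|_\infty\bigr)$: here $\bar b_{i\cdot} = w_i \bar{\BMv}$, so $\sum_j (b_{ij}-\bar b_{i\cdot})^2 = w_i^2 \sum_j (v_j - \bar{\BMv})^2 \le w_i^2\, n \|\BMv\|_\infty^2$ (alternatively $\le w_i^2 \|\BMv\|_2^2$), and the column-variance term is bounded analogously using $\sum_i (w_i - \bar{\BMw})^2 = \sigma(\BMw)^2$; combining, $\Sigma^2 = O\bigl(\sigma(\BMw)^2\|\BMv\|_\infty^2\bigr)$ after absorbing the $\|\BMw\|_\infty^2 \le \sigma(\BMw)^2$-type comparisons, which is exactly the claimed normalization. (If \cite{Vuetal} already states the rank-one version $S_\pi = \sum w_i v_{\pi(i)}$ with this exact scale, this verification is immediate and one simply cites it.) This yields the first displayed inequality and the ``subgaussian'' assertion verbatim.

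The second step is the stated consequence. Assume $|v_i| \le \widetilde B$ for all $i$, so $\|\BMv\|_\infty \le \widetilde B$. Then the tail bound becomes $\P\bigl(|S_\pi - \E S_\pi| \ge \lambda\, \sigma(\BMw)\widetilde B\bigr) \le C_0 e^{-C_0\lambda^2}$. Next I would compute $\E S_\pi$ exactly: by linearity and the fact that $\pi(i)$ is uniform on $[n]$,
\[
\E S_\pi = \sum_{i=1}^n w_i\, \E v_{\pi(i)} = \sum_{i=1}^n w_i \cdot \frac1n\sum_{j=1}^n v_j = \overline{\BMw}\sum_{j=1}^n v_j,
\]
recalling $\overline{\BMw} = \tfrac1n\sum_i w_i$. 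Substituting and replacing $\lambda$ by $\lambda/\widetilde B$ (which only changes $C_0$ to a constant depending on $\widetilde B$, since $e^{-C_0(\lambda/\widetilde B)^2} = e^{-(C_0/\widetilde B^2)\lambda^2}$), we obtain precisely
\[
\P\Bigl( \Bigl| \sum_{i=1}^n w_i v_{\pi(i)} - \Bigl(\sum_{i=1}^n v_i\Bigr)\overline{\BMw} \Bigr| \ge \lambda\, \sigma(\BMw) \Bigr) \le C_0' e^{-C_0' \lambda^2}
\]
with $C_0' = C_0'(\widetilde B)$, as desired.

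The only genuine obstacle is bookkeeping around the exact form of the variance proxy $\Sigma$ in the cited large-deviation theorem: I must make sure the column-fluctuation term $\max_j \sum_i (w_i v_j - \overline{w v_j})^2$ and any cross terms are all dominated by $\sigma(\BMw)^2 \|\BMv\|_\infty^2$ up to an absolute constant, and that the theorem's hypotheses (e.g.\ centering conventions, whether it bounds deviations from the mean or from the median) match what we use. If \cite{AChW} or \cite{Vuetal} states the bound with a Lipschitz-type proxy rather than a variance proxy, one passes through the observation that $\pi \mapsto S_\pi$ changes by at most $2\|\BMw\|_\infty \cdot 2\|\BMv\|_\infty = O(\sigma(\BMw)\|\BMv\|_\infty)$ under a transposition (using $\|\BMw\|_\infty \le \sigma(\BMw)$ under Condition~\ref{cond:separation}, or more carefully $|w_i - w_j| \le 2\|\BMw - \overline{\BMw}\mathbf 1\|_\infty \le 2\sigma(\BMw)$), and Talagrand/McDiarmid for random permutations then gives the subgaussian tail directly. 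Everything else is a one-line specialization, so I expect the proof to be short once the correct citation form is pinned down.
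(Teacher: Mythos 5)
Your primary route — invoking \cite[Theorem 3.1]{AChW} (or \cite[Corollary 2.3]{Vuetal}), specializing to the rank-one array $b_{ij}=w_iv_j$, computing $\E S_\pi=\overline{\BMw}\sum_j v_j$, and then rescaling $\lambda$ by $\widetilde B$ — is exactly the paper's route: the paper offers no argument beyond that citation, and your proposal correctly fills in the specialization and elementary bookkeeping.

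The fallback you sketch, however, would \emph{not} work if the cited result were stated in a form requiring re-derivation, so it is worth flagging as a genuine gap rather than mere bookkeeping. A transposition-bounded-differences argument for $S_n$ (Maurey/McDiarmid) yields subgaussian concentration at scale $c\sqrt{n}$, not $c$: if $|S_\pi-S_{\pi\circ\tau}|\le c$ for every transposition $\tau$, the conclusion is $\P\bigl(|S_\pi-\E S_\pi|\ge t\bigr)\lesssim e^{-t^2/(Cnc^2)}$. With $c\asymp\sigma(\BMw)\|\BMv\|_\infty$ and $t=\lambda\,\sigma(\BMw)\|\BMv\|_\infty$, this gives only $e^{-\lambda^2/(Cn)}$, which is vacuous in the regime $\lambda=\Theta(\sqrt{\log n})$ used throughout the paper (e.g.\ in reducing \eqref{eqn:1:L} to the range \eqref{eqn:L<}). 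The normalization $\sigma(\BMw)\|\BMv\|_\infty$ is the true standard-deviation scale of $S_\pi$ — indeed from \eqref{eqn:Var}, with $b_{ij}=w_iv_j$,
\[
\Var(S_\pi)=\frac{1}{4n^2(n-1)}\sum_{i\ne j}(w_i-w_j)^2\sum_{k\ne l}(v_k-v_l)^2\ll\sigma(\BMw)^2\|\BMv\|_\infty^2,
\]
and concentration at the standard-deviation scale is strictly finer than what bounded differences provides. Getting it requires Talagrand's convex-distance inequality (applied to a carefully chosen exceptional set of permutations) or a Stein/exchangeable-pairs variance argument; that is precisely the content of the cited results and cannot be substituted by the naive Lipschitz observation. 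Two smaller points: the inequality $\|\BMw\|_\infty\le\sigma(\BMw)$ is false in general — the correct uniform bound is $\max_i|w_i-\overline{\BMw}|\le\sigma(\BMw)$, which you do also give — and Condition~\ref{cond:separation} is not a hypothesis of this lemma, so it should not be invoked here.
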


An immediate consequence of the above lemma is the following bound on the moment generating function of the normalized variable
\[
\bar{S} := \frac{S_{\pi} - \mathbb{E} S_{\pi}}{\sigma(\BMw)\|\BMv\|_{\infty}}.
\]
There exists a constant $C_0'>0$ such that (see, for example, \cite[Proposition 2.6.1]{VerHDP})
\begin{equation}\label{eqn:mgf:1}
m_{\bar{S}}(t) := \mathbb{E} e^{t\bar{S}}
\le C_0' e^{C_0' t^2}, \qquad t\in\R.
\end{equation}

\subsection{Diophantine Properties}

We begin with linear forms.

\begin{lemma}[Wrapping around for linear forms]\label{lemma:wraparound}
For any $0<\delta<1$, there exists a constant $C>0$ such that the following holds. 
Let $I\subset\{-n,\dots,n\}$ with $|I|\ge \delta n$. Then, for $\frac{C}{n}\le |b|\le \frac{1}{C}$ and any $b_0\in\R$,
\[
\sum_{r\in I} \|br+b_0\|_{\R/\Z}^2=\Theta_{\delta}(n).
\]
\end{lemma}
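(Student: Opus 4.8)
The plan is to prove the upper bound by a trivial termwise estimate and the lower bound by a counting argument that exploits the equidistribution of the sequence $br + b_0$ modulo $1$ as $r$ ranges over a dense subset of $\{-n,\dots,n\}$. For the upper bound, simply note $\|br+b_0\|_{\R/\Z}^2 \le 1/4$ for every term, so $\sum_{r\in I}\|br+b_0\|_{\R/\Z}^2 \le |I|/4 \le n/4$, which is $O(n)$ with an absolute constant; no lower bound on $|b|$ is needed here. The content of the lemma is therefore the matching lower bound $\sum_{r\in I}\|br+b_0\|_{\R/\Z}^2 \gg_\delta n$, which must use both $|b|\ge C/n$ (so that the points do not all collapse near a single residue) and $|b|\le 1/C$ (so that consecutive points $br+b_0$ and $b(r+1)+b_0$ differ by the small amount $|b|$, letting us control how the sequence sweeps the circle).

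For the lower bound, the key step is the following: partition $\{-n,\dots,n\}$ into consecutive blocks of length $m := \lceil 1/(3|b|)\rceil$. Within one such block $\{r_0, r_0+1,\dots,r_0+m-1\}$, the points $b r + b_0 \bmod 1$ are equally spaced with gap $|b|$ and together span an arc of total length $(m-1)|b| \in [\tfrac13 - |b|,\ \tfrac13]$ roughly (using $|b|\le 1/C$ to make this precise), so they cannot all lie within distance, say, $1/12$ of a single integer; hence at least a constant fraction (in fact $\gg 1$ many, and in proportion $\gg 1$) of the indices $r$ in the block satisfy $\|br+b_0\|_{\R/\Z} \ge 1/12$, contributing $\gg 1$ to the sum per block. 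More carefully: in any arithmetic progression modulo $1$ with common difference $|b| \le 1/C$ and at least $m \ge 1/(3|b|)$ terms, the number of terms whose distance to the nearest integer is at least $1/12$ is $\ge c m$ for an absolute $c>0$ — this is a clean one-dimensional fact proved by observing the progression hits every subinterval of length $\ge 2|b|$ roughly proportionally. The number of full blocks inside $\{-n,\dots,n\}$ is $\asymp n/m \asymp n|b|$, and since we only care about $r\in I$ with $|I|\ge \delta n$, we need the blocks to interact well with $I$; here we instead run the same argument over all of $\{-n,\dots,n\}$ to get $\sum_{r=-n}^{n}\|br+b_0\|_{\R/\Z}^2 \gg n|b| \cdot m \asymp n$ (each of the $\asymp n|b|$ blocks contributes $\gg m \cdot (1/12)^2 \asymp 1/|b|$), and then must upgrade this to a sum over $I$ only.

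The main obstacle is precisely this last point: passing from a lower bound over the full index set to one over an arbitrary dense subset $I$. A clean way around it is to avoid the block decomposition entirely and argue via a second-moment/Weyl-type inequality. Specifically, consider the exponential sum $\sum_{r\in I} e(k(br+b_0))$ for a suitable small nonzero integer $k$: since $|b|\le 1/C$, the frequency $kb$ is bounded away from $\Z$, and since $|b|\ge C/n$, one has $\|kb\|_{\R/\Z} \ge c/n$ for the relevant small $k$, so a geometric-series bound gives $|\sum_{r\in I} e(k(br+b_0))| \le |I| - \Omega(\delta n)$ is \emph{not} immediate — instead one should use that $\sum_{r\in\{-n,\dots,n\}} e(k(br+b_0)) = O(1/\|kb\|_{\R/\Z}) = O(n/C)$, hence $\big|\sum_{r\in I} e(k(br+b_0))\big| \le \big|\sum_{r\notin I}\big| + O(n/C) \le (1-\delta)\cdot 2n\cdot\tfrac12$ after an averaging over a few values of $k$, which forces the fractional parts of $\{br+b_0 : r\in I\}$ not to concentrate near $0$. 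Then expanding $\|x\|_{\R/\Z}^2$ in its Fourier series $\|x\|_{\R/\Z}^2 = \frac{1}{12} - \sum_{k\ge 1}\frac{\cos(2\pi k x)}{\pi^2 k^2}$ and summing over $r\in I$ gives
\[
\sum_{r\in I}\|br+b_0\|_{\R/\Z}^2 = \frac{|I|}{12} - \frac{1}{\pi^2}\sum_{k\ge1}\frac{1}{k^2}\,\mathrm{Re}\!\Big(e(kb_0)\sum_{r\in I} e(kbr)\Big),
\]
and it remains to show the error term is at most $|I|/24$, say. Bounding the tail $k > K$ trivially by $|I|\sum_{k>K} k^{-2} \le |I|/K$ handles all but finitely many $k$, and for each of the finitely many remaining $k\le K$ the inner sum $\sum_{r\in I} e(kbr)$ must be shown to be $o(|I|)$, or at least $\le (1-c)|I|$ — this is where $C/n \le |b|$ enters, ensuring $kb$ is not within $o(1/n)$ of an integer so that $e(kbr)$ genuinely oscillates over the range $|r|\le n$; the subset $I$ being an \emph{arbitrary} dense set is the genuine difficulty, and is handled by noting $\big|\sum_{r\in I} e(kbr)\big| \le \sum_{j}\big|\sum_{r\in I\cap B_j} e(kbr)\big|$ over blocks $B_j$ of length $\sim 1/(k|b|)$ on each of which $e(kbr)$ winds once around the circle, making any subsum of absolute value at most $(1 - c)|B_j|$ unless $I\cap B_j$ is itself concentrated — a contradiction with $|I|\ge\delta n$ once we sum over the $\Omega(\delta)$-fraction of blocks that are themselves dense in $I$. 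I expect the block-decomposition route (the first argument above, made uniform over $I$ by a pigeonhole choice of which blocks to use) to be the shortest path to a complete proof, and I would present it that way, keeping the Fourier expansion as an alternative.
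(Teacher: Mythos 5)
Your proposal takes a genuinely different route from the paper, which does not prove Lemma \ref{lemma:wraparound} on its own but derives it as the $d=1$ case of Lemma \ref{lemma:wraparound:square}: there the argument is by contradiction, showing that if $\sum_{r\in I}\|br+b_0\|_{\R/\Z}^2\le(\delta/64)n$ then $\frac1n\big|\sum_{r\in I}e(br+b_0)\big|\ge\delta/2$, and then invoking the inverse Weyl estimate (Lemma \ref{lemma:Weyl:inverse}) to produce a natural number $q\ll\delta^{-O(1)}$ with $\|qb\|_{\R/\Z}\ll\delta^{-O(1)}/n$, which is incompatible with $C/n\le|b|\le1/C$ once $C$ is large in terms of $\delta$. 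Your block-decomposition route is more elementary and could in principle prove the $d=1$ case directly, but it has a real gap. The per-block estimate you state --- that a constant fraction of each block has $\|br+b_0\|_{\R/\Z}\ge1/12$, ``contributing $\gg1$ to the sum per block'' --- only controls the full-block sum $\sum_{r\in B}\|br+b_0\|^2$, not $\sum_{r\in I\cap B}$. When $\delta$ is small, the adversary can choose from each dense block exactly those $\delta m$ indices whose values lie in $[0,1/12)$, making the $1/12$ cutoff yield nothing. What is actually needed is the quantitatively sharper fact that the values $\|br+b_0\|$ over a block form a near-progression with gap $|b|$, so that \emph{any} subset $T$ of a block with $|T|\ge\delta m$ satisfies $\sum_{r\in T}\|br+b_0\|^2\gg|T|^3|b|^2\gg\delta^3/|b|$ (the $j$-th smallest value over the block is $\gg j|b|$). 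Combined with a pigeonhole showing that $\gg_\delta n|b|$ blocks meet $I$ in at least $(\delta/4)m$ indices (the sparse blocks together account for at most $\delta n/2$ elements of $I$), this gives $\sum_{r\in I}\|br+b_0\|^2\gg\delta^4n$. You gesture at ``a pigeonhole choice of which blocks to use'' but never supply this corrected within-block bound, and without it the argument does not close.

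The Fourier alternative you sketch should be dropped in its present form: the intermediate inequality $|\sum_{r\in I}e(k(br+b_0))|\le|\sum_{r\notin I}|+O(n/C)\le(1-\delta)\cdot2n\cdot\tfrac12$ is false as written, and for the adversarial $I$ consisting of the $\delta n$ indices with smallest $\|br+b_0\|_{\R/\Z}$, the exponential sums $\sum_{r\in I}e(k(br+b_0))$ with $k\lesssim 1/\delta$ are genuinely close to $|I|$ in modulus, not $o(|I|)$. Cancellation only sets in for $k\gtrsim 1/\delta$, which is the same $\delta^3$-type delicacy as in the block argument, not an escape from it; you are better off completing the block route.
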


The above immediately yields the following simple result.

\begin{corollary}\label{cor:wraparound} 
Let $0 < \delta < 1$ and $I \subset \{-n,\dots,n\}$ with $|I| \geq \delta n$. For every constant $C > 0$ sufficiently large in terms of $\delta$, and for all $\frac{1}{Cn} \leq |b| \leq \frac{1}{C}$ and for any $b_0 \in \R$ we have
$$\sum_{r \in I} \| b r +b_0 \|_{\R/\Z}^2 =\Theta_{C} (n).$$
\end{corollary}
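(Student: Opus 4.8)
The plan is to split the admissible range $\frac{1}{Cn}\le |b|\le \frac1C$ into two regimes, according to whether $|b|$ is comparable to $1/n$ or genuinely larger. Let $C'=C'(\delta)>0$ be the constant produced by Lemma~\ref{lemma:wraparound}, and choose the constant $C$ in the statement large enough in terms of $\delta$ that $C\ge C'$. In the \emph{wrapping regime} $\frac{C'}{n}\le |b|\le \frac1C$ we then have $\frac{C'}{n}\le |b|\le \frac1{C'}$, so Lemma~\ref{lemma:wraparound} applies verbatim and yields $\sum_{r\in I}\|br+b_0\|_{\R/\Z}^2\asymp_\delta n$, which is even stronger than needed. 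It remains to treat the \emph{small regime} $\frac{1}{Cn}\le |b|\le \frac{C'}{n}$, in which $|b|=\Theta(1/n)$; there the upper bound is trivial, $\sum_{r\in I}\|br+b_0\|_{\R/\Z}^2\le |I|/4=O(n)$, so only the matching lower bound of order $n$ (with implied constant depending on $\delta$ and $C$) requires work.

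For that lower bound I would argue by pigeonhole. Up to replacing $(b,b_0,I)$ by $(-b,-b_0,-I)$ and then passing to the larger of $I\cap\{0,\dots,n\}$ and $-\bigl(I\cap\{-n,\dots,0\}\bigr)$ (which only flips the sign of $b_0$), we may assume $b>0$ and $I\subseteq\{0,\dots,n\}$ with $|I|\ge \delta n/2$. Put $W:=\lfloor 1/b\rfloor$; since $b\le C'/n$ we have $W\ge n/(2C')$ for $n$ large, so $\{0,\dots,n\}$ is covered by $O_\delta(1)$ consecutive blocks of length $W$, and some block $B$ contains at least $m:=\Omega_\delta(n)$ elements of $I$. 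For $r\in B$ write $r=(\min B)+j$ with $j\in\{0,\dots,W-1\}$; then $br+b_0\equiv \beta_0+bj\pmod 1$ for a fixed $\beta_0$, and since $bW\le 1$ the points $bj$, $0\le j\le W-1$, form an arithmetic progression of common difference $b$ inside $[0,1)$. Hence for any $t$ the arc $\{x:\|x+\beta_0\|_{\R/\Z}<t\}$ has length $\le 2t$ and so contains at most $\frac{2t}{b}+2$ of the points $bj$; in particular at most $\frac{2t}{b}+2$ of the $m$ indices $j$ arising from $r\in I\cap B$ satisfy $\|br+b_0\|_{\R/\Z}<t$.

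Now choose $t:=bm/8$. Since $m\le W\le 1/b$ this gives $t\le \frac18<\frac12$, and $\frac{2t}{b}+2=\frac m4+2\le \frac m2$ once $n$ is large. Therefore at least $\frac m2$ of the indices $j$ arising from $r\in I\cap B$ satisfy $\|br+b_0\|_{\R/\Z}\ge t$, whence
\[
\sum_{r\in I}\|br+b_0\|_{\R/\Z}^2\ \ge\ \frac m2\,t^2\ =\ \frac{b^2m^3}{128}\ \ge\ \frac{m^3}{128\,C^2 n^2}\ =\ \Omega_{\delta,C}(n),
\]
using $b\ge 1/(Cn)$ and $m=\Omega_\delta(n)$. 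As $C'$ depends only on $\delta$, the implied constant depends only on $\delta$ and $C$, which is exactly the claim.

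The only real obstacle is this small regime $|b|=\Theta(1/n)$: it falls outside the scope of Lemma~\ref{lemma:wraparound}, because there the linear form $br$ need not wind around the circle more than a bounded number of times as $r$ ranges over $I$, so the lemma cannot be invoked, and one cannot rescale $b$ upward into the lemma's range without correspondingly shrinking $n$. The pigeonhole argument circumvents this; its one mild subtlety is that the shift $b_0$ is arbitrary, so rather than bounding $|br+b_0|$ from below one has to control how many of the grid points $\beta_0+bj$ can concentrate within $t$ of an integer — which is precisely what the $b$-separation, together with the choice $t=bm/8$, delivers.
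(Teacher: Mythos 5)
Your proposal is correct, and it handles the small regime by a genuinely different mechanism than the paper. You and the paper both reduce the range $\frac{C'(\delta)}{n}\le|b|\le\frac1C$ to Lemma~\ref{lemma:wraparound} directly, and both recognize that the remaining window $\frac{1}{Cn}\le|b|\lesssim\frac{1}{n}$ is the real issue. The paper dispatches it with a one-line \emph{amplification} trick: it observes that $\|br+b_0\|_{\R/\Z}\ge\frac1k\|(kb)r+kb_0\|_{\R/\Z}$ for any $k\in\Z^+$, takes $k=\lceil C^2\rceil$ so that $kb$ lands back in the range of Lemma~\ref{lemma:wraparound}, applies the lemma to $kb$, and divides by $k^2$, losing only a constant factor depending on $C$. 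You instead give a self-contained pigeonhole argument: after locating a block $B$ of length $W=\lfloor1/b\rfloor$ containing $m=\Omega_\delta(n)$ points of $I$, you note that the residues $br+b_0\equiv\beta_0+bj\ (\mathrm{mod}\ 1)$ with $r=\min B+j$ form an arithmetic progression of step $b$ in $[0,1)$, so at most $\frac{2t}{b}+2$ of them can lie within distance $t$ of an integer, and the choice $t=bm/8$ then forces at least $m/2$ of them to contribute $\ge t^2=\Theta(b^2m^2)\gtrsim_{\delta,C}1/n$ each. Both are correct and give $\gg_{\delta,C}n$; the paper's route is shorter because it recycles the lemma, while yours is more elementary and in effect re-derives the "bounded winding" case of the Weyl-type estimate from scratch. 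One small stylistic remark: the paper attributes its amplification inequality to Cauchy--Schwarz, but it is really just the subadditivity $\|kx\|_{\R/\Z}\le k\|x\|_{\R/\Z}$, which you implicitly avoid by not using it at all.
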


\begin{proof}(of Corollary \ref{cor:wraparound})
For $\tfrac{C}{n} \le |b| \le \tfrac{1}{C}$, the result follows directly from Lemma \ref{lemma:wraparound}. Now suppose $\tfrac{1}{Cn} \le |b| \le \tfrac{C}{n}$. Since $|b|$ is too small in this range, we amplify it slightly. For $k \in \Z^{+}$, Cauchy--Schwarz gives
\[
\| br+b_0 \|_{\R/\Z}^{2} \;\ge\; \frac{1}{k^{2}} \, \| k (b r+b_0) \|_{\R/\Z}^{2}=\frac{1}{k^{2}} \, \| (kb)r+(kb_0) \|_{\R/\Z}^{2}.
\]
Taking $k = \lceil C^{2} \rceil$ and applying Lemma \ref{lemma:wraparound} completes the proof.
\end{proof}

Lemma \ref{lemma:wraparound} is a special case of the following more general result on polynomial sequences.

\begin{lemma}[Wrapping around for polynomial sequences]\label{lemma:wraparound:square}
Let $\delta > 0$ and $d \in \Z^+$ be given. There exists a constant $C>0$ such that the following holds. Let $I \subset \{-n,\dots,n\}$ with $|I| \geq \delta n$. Then, for any $\frac{C}{n} \leq |b| \leq \frac{n^{d-1}}{C}$,
\[
\sum_{r \in I} \Big\| \frac{b r^d + b' r^{d-1} + \dots}{n^{d-1}} \Big\|_{\R/\Z}^2 =\Theta_{\delta}(n),
\]
where $b', b'', \dots \in \R$ are arbitrary.
\end{lemma}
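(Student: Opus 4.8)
The upper bound is immediate: $\|x\|_{\R/\Z}^2\le 1/4$ for every $x$, so with $Q(r):=(br^d+b'r^{d-1}+\cdots)/n^{d-1}$ we get $\sum_{r\in I}\|Q(r)\|_{\R/\Z}^2\le|I|/4=O(n)$. The plan for the lower bound is an induction on $d$, carried out in contrapositive form: I will produce a large constant $C=C(\delta,d)$ and a small constant $\eta=\eta(\delta,d)$ such that, whenever $C/n\le|b|\le n^{d-1}/C$, one has $\sum_{r\in I}\|Q(r)\|_{\R/\Z}^2>\eta n$. Set $\delta_1:=\tfrac9{10}\delta$. Whenever I assume for contradiction that the sum is $\le\eta n$, I pass to the \emph{good set} $G:=\{r\in I:\|Q(r)\|_{\R/\Z}\le\epsilon_0\}$, which by Markov's inequality satisfies $|G|\ge|I|-\eta n/\epsilon_0^2\ge\delta_1 n$ as soon as $\eta\le\epsilon_0^2\delta/10$; here $\epsilon_0=\epsilon_0(\delta,d)$ is a small constant fixed below.

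\emph{Base case $d=1$.} Here $Q(r)=br+b_0$ and $n^{d-1}=1$. For $r,r'\in G$ we have $\|b(r-r')\|_{\R/\Z}=\|Q(r)-Q(r')\|_{\R/\Z}\le2\epsilon_0$, so the symmetric set $G-G\subseteq\{-2n,\dots,2n\}$ lies inside $\{h:\|bh\|_{\R/\Z}\le2\epsilon_0\}$; on the other hand, fixing any $r_0\in G$ gives $|G-G|\ge|G|\ge\delta_1 n$. A direct count bounds $\#\{h\in\{-2n,\dots,2n\}:\|bh\|_{\R/\Z}\le2\epsilon_0\}$: as $h$ runs over $\{-2n,\dots,2n\}$ the value $bh$ sweeps an interval of length $4|b|n$, hitting $O(|b|n)$ residues, and each residue is attained by $O(\epsilon_0/|b|)$ integers $h$; using $|b|n\le n/C$ and $1/|b|\le n/C$ this is at most $(16\epsilon_0+14/C)n$. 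Choosing $\epsilon_0<\delta/80$ and $C>70/\delta$ forces $(16\epsilon_0+14/C)n<\delta_1 n$, contradicting $|G-G|\ge\delta_1 n$.

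\emph{Inductive step $d\ge2$, assuming the lemma for $d-1$.} For $h\ge1$ set $Q_h(r):=Q(r+h)-Q(r)$; this is a polynomial in $r$ of degree $d-1$ whose coefficient of $r^{d-1}$ is $dbh/n^{d-1}=(dbh/n)\cdot n^{-(d-2)}$, and $\|Q_h(r)\|_{\R/\Z}\le2\epsilon_0$ whenever $r,r+h\in G$. The core of the argument is to produce one step $h^\ast$ with: (i) $I':=\{r:r,r+h^\ast\in G\}$ of size $|I'|\ge\delta'n$ for a constant $\delta'=\delta'(\delta)>0$; and (ii) $h^\ast$ in the window $\mathcal H:=[\,C'/(d|b|)\,,\,n^{d-1}/(C'd|b|)\,]$, where $C'=C'(\delta',d-1)$ is the constant supplied by the inductive hypothesis at density $\delta'$. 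Condition (ii) is precisely what puts the leading coefficient $dbh^\ast/n$ of $Q_{h^\ast}$ into the admissible range $[C'/n,\,n^{d-2}/C']$ of the lemma at level $d-1$. Granting (i)--(ii), the inductive hypothesis gives $\sum_{r\in I'}\|Q_{h^\ast}(r)\|_{\R/\Z}^2\ge c'n$ for a constant $c'=c'(\delta',d-1)>0$, while $r\in G$ forces $\sum_{r\in I'}\|Q_{h^\ast}(r)\|_{\R/\Z}^2\le 4\epsilon_0^2|I'|\le12\epsilon_0^2 n$; taking $\epsilon_0<\tfrac12\sqrt{c'/12}$ gives a contradiction, and setting $\eta:=\epsilon_0^2\delta/10$ closes the induction. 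There is no circularity: the constants are fixed in the order $\delta'\to C'\to c'\to\epsilon_0\to\eta$, with $C$ then taken large in terms of all of them.

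\emph{Finding $h^\ast$.} This is the step I expect to be the main obstacle, since both the width and the location of $\mathcal H$ vary with $|b|$, and we must match it against the differences actually realised by $G$. Split on whether $n^{d-1}/(C'd|b|)\ge2n$. If so (``$|b|$ small or medium''), then $\mathcal H\cap\{1,\dots,2n\}\supseteq[\alpha,2n]$ with $\alpha:=\lceil C'/(d|b|)\rceil\le C'n/(dC)$, and enlarging $C$ (in terms of $\delta$ and $C'$) we may assume $\alpha\le\delta_1 n/9$; then the number of pairs $(r,r')\in G^2$ with $r'-r\ge\alpha$ is at least $\binom{|G|}{2}-\alpha|G|\ge\tfrac13\delta_1^2 n^2$, so pigeonholing over $h\in[\alpha,2n]$ yields $h^\ast$ with $|I'|\ge\tfrac16\delta_1^2 n$. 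If instead $n^{d-1}/(C'd|b|)<2n$ (``$|b|$ large''), then $C'/(d|b|)$ is at most a constant (indeed $<1$ when $d\ge3$), whereas $K_0:=n^{d-1}/(C'd|b|)\ge C/(C'd)$ can be made an arbitrarily large constant by enlarging $C$; a Cauchy--Schwarz estimate over the $\asymp 2n/K_0$ blocks of length $K_0$ partitioning $\{-n,\dots,n\}$ gives $\sum_{1\le h\le K_0}\#\{r:r,r+h\in G\}\ge\tfrac12\bigl(\sum_{\text{blocks }J}|G\cap J|^2-|G|\bigr)\gg_\delta K_0 n$, so some $h^\ast\le K_0$ has $|I'|\gg_\delta n$, and discarding the $O_{d,C'}(1)$ smallest values of $h$ (each contributing at most $n$ to the sum) also secures $h^\ast\ge C'/(d|b|)$. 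In both cases $\delta'$ may be taken as a fixed positive multiple of $\delta_1^2$, hence of $\delta^2$, consistent with the ordering of constants above. Finally, the case $d=1$ of the statement just proved is exactly Lemma~\ref{lemma:wraparound}.
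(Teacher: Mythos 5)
Your proof is correct in substance, but it takes a genuinely different route from the paper. The paper deduces the lower bound by contradiction via exponential sums: from $\sum_{r\in I}\|Q(r)\|_{\R/\Z}^2\le(\delta/64)n$ and the pointwise bound $\cos(2\pi x)\ge 1-32\|x\|_{\R/\Z}^2$ one gets $\tfrac1n|\sum_{r\in I}e(Q(r))|\ge\delta/2$, which Tao's inverse Weyl estimate (Lemma~\ref{lemma:Weyl:inverse}) converts into a small denominator $q\ll\delta^{-O(1)}$ with $\|qb/n^{d-1}\|_{\R/\Z}\ll\delta^{-O(1)}n^{-d}$; this contradicts $C/n\le|b|\le n^{d-1}/C$ once $C$ is large. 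You instead run a self-contained induction on $d$ directly at the level of the $\|\cdot\|_{\R/\Z}^{2}$ sums: passing to the good set $G$ by Markov, differencing $Q_h(r)=Q(r+h)-Q(r)$ to drop the degree, choosing an $h^\ast$ that both lands in the admissible coefficient window of the level-$(d-1)$ statement and supports a positive-density common set $I'=\{r:r,r+h^\ast\in G\}$, and reaching a contradiction because $\|Q_{h^\ast}\|_{\R/\Z}\le 2\epsilon_0$ on $I'$ while the inductive hypothesis forces $\sum_{I'}\|Q_{h^\ast}\|^2\ge c'n$. This effectively unrolls the Weyl-differencing scheme underlying Proposition~\ref{prop:Weyl:inverse} in the appendix, but specialized to the squared-distance formulation, so it avoids passing to exponential sums and invoking the inverse theorem as a black box. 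What you pay for this self-containedness is heavier bookkeeping: the two-regime analysis depending on whether $n^{d-1}/(C'd|b|)\ge 2n$, the ordering $\delta'\to C'\to c'\to\epsilon_0\to\eta\to C$, and the degradation of the density parameter across recursion levels. A few constants in the exposition are slightly loose (e.g.\ ``$14/C$'' in the base case and ``each contributing at most $n$'' should be $O(n)$ since $|G|$ can be close to $2n$), but these do not affect the validity: the structure of the argument is sound, and the essential point in Case~1 is that $\binom{|G|}{2}-\alpha|G|=\tfrac12|G|(|G|-1-2\alpha)\gg\delta_1^2n^2$ once $\alpha\le\delta_1 n/9$, which does hold.
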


To prove this result, we will use a very nice inverse-type Weyl estimate by Tao \cite[Corollary 5]{Tao}, which is stated below (and proved in Appendix \ref{sect:Dio} for the reader's convenience) for positive density form
(instead of full density form, $I=\{-n,\ldots,n\}$, as in \cite{Tao}).

\begin{lemma}\label{lemma:Weyl:inverse}  Let $I$ be a subset of $\{-n,\dots, n\}$ with $|I|\ge \delta n$ for some $\delta>0$. Let $P(k) = \sum_{i \leq d} \alpha_i k^i$
be a polynomial of degree at most $d \geq 0$, where $\alpha_0,\dots,\alpha_d \in \R/\Z$. If
\[
\frac{1}{n} \Big| \sum_{k \in I} e(P(k)) \Big| \geq \delta,
\]
then there is a natural number $q= O_d(\delta^{-O_d(1)})$ such that
\[
\| q\alpha_i \|_{\R/\Z} =O_d(\delta^{-O_d(1)} n^{-i})
\quad \text{for all } i=0,\dots,d.
\]
\end{lemma}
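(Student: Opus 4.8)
The plan is to split the argument into a base case $d=1$ and, for $d\ge2$, a reduction of the degree-$d$ estimate to the linear one by means of $d-1$ rounds of Weyl differencing together with a divisor-and-spacing argument that recovers the sharp exponents $n^{-i}$ (the case $d=0$ is trivial: $q=1$ works since $\|\alpha_0\|_{\R/\Z}\le1/2$). I note at the outset that the obvious shortcut fails: from $\sum_{k\in I}e(P(k))$ one cannot simply pass to the complete sum $\sum_{k=-n}^{n}e(P(k))$ and invoke the full-density estimate of Tao, because $\sum_{k\notin I}e(P(k))$ need not be small; the Weyl machinery has to be run over the dense set $I$ itself.

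For the reduction, fix $P$ of degree $d\ge2$ with $\frac1n\big|\sum_{k\in I}e(P(k))\big|\ge\delta$ and write $\Delta_hQ(k):=Q(k+h)-Q(k)$. Iterating the differencing identity $\big|\sum_{k\in I}e(Q(k))\big|^{2}=\sum_{h}\sum_{k\in I\cap(I-h)}e(\Delta_hQ(k))$ exactly $d-1$ times, with each shift parameter $h_j$ ranging over $\{-2n,\dots,2n\}$, reduces $P$ to a polynomial that is linear in $k$:
\[
\Big|\tfrac1n\sum_{k\in I}e(P(k))\Big|^{2^{d-1}}\ \ll_{d}\ \frac1{n^{d-1}}\sum_{h_1,\dots,h_{d-1}}\Big|\sum_{k\in I_{\vec h}}e\big(\beta(\vec h)\,k+c(\vec h)\big)\Big|,
\]
where $I_{\vec h}$ is an intersection of shifts of $I$ and $\beta(\vec h)=d!\,\alpha_d\,h_1\cdots h_{d-1}$. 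A popularity pigeonhole produces $\gg\delta^{O_d(1)}n^{d-1}$ tuples $\vec h$ for which the inner sum exceeds $\delta^{O_d(1)}n$; the base case, applied to each such $\vec h$ and followed by a pigeonhole on the resulting modulus, gives a fixed $q^{\ast}\ll_d\delta^{-O_d(1)}$ with $\|q^{\ast}\beta(\vec h)\|_{\R/\Z}\ll_d\delta^{-O_d(1)}n^{-1}$ on a positive-density set of tuples. Since the multiset $\{h_1\cdots h_{d-1}\}$ realizes each value $O(n^{o(1)})$ times, there are $\gg\delta^{O_d(1)}n^{d-1-o(1)}$ distinct integers $m\in[-(2n)^{d-1},(2n)^{d-1}]$ with $\|q^{\ast}d!\,\alpha_d\,m\|_{\R/\Z}\ll_d\delta^{-O_d(1)}n^{-1}$. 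A positive-density set of such small-norm multiples of $q^{\ast}d!\,\alpha_d$ is, up to bounded error, an arithmetic progression with common difference $q'\ll_d\delta^{-O_d(1)}$ that reaches up to scale $n^{d-1}$; combining this with the elementary fact that $\|j\beta\|_{\R/\Z}\le\varepsilon$ for all $j\le J$ forces $\|\beta\|_{\R/\Z}\le\varepsilon/J$ upgrades the bound at the common difference to $\|q'q^{\ast}d!\,\alpha_d\|_{\R/\Z}\ll_d\delta^{-O_d(1)}n^{-d}$. Taking $q$ to be a bounded multiple of $q'q^{\ast}$ gives $\|q\alpha_d\|_{\R/\Z}\ll_d\delta^{-O_d(1)}n^{-d}$, and then tracking the constant terms $c(\vec h)$ of the differenced polynomials through the same pigeonholes (or simply subtracting the now-structured top term and iterating) yields $\|q\alpha_i\|_{\R/\Z}\ll_d\delta^{-O_d(1)}n^{-i}$ for every $i\le d$, the $i=0$ case being trivial.

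The main obstacle is the base case $d=1$: given $I\subseteq\{-n,\dots,n\}$ with $|I|\ge\delta n$ and $\big|\sum_{k\in I}e(\alpha_1k+\alpha_0)\big|\ge\delta n$, one must exhibit $q\ll\delta^{-O(1)}$ with $\|q\alpha_1\|_{\R/\Z}\ll\delta^{-O(1)}n^{-1}$. When $I$ is an interval this is immediate from the closed form of the Dirichlet kernel, but for a general dense set $I$ the sum $\sum_{k\in I}e(\alpha_1k)$ no longer obeys the clean bound $\ll\min\!\big(n,\|\alpha_1\|_{\R/\Z}^{-1}\big)$. The route I would take is to expand $\mathbf 1_I$ into additive characters of $\Z/(2n+1)\Z$, bound each resulting complete linear sum by the Dirichlet kernel, and use an $L^{2}$ estimate on $\widehat{\mathbf 1_I}$ (whose total mass is $|I|/(2n+1)\le1$) to confine the bulk of $\sum_{k\in I}e(\alpha_1k)$ to boundedly many Fourier modes $r/(2n+1)$ with $\alpha_1+r/(2n+1)$ small; isolating the dominant mode and clearing its denominator then produces $q$. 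Carrying out this extraction with only a polynomial loss in $\delta$, and in a form compatible with the divisor-and-spacing step above, is the delicate core of the argument, and is exactly the point at which one follows Tao's treatment of the full-density case.
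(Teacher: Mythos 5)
Your reduction for $d\ge 2$ has a fatal quantitative gap at the divisor-bound step, and it is exactly the gap that the paper's proof is structured to avoid. After $d-1$ rounds of differencing you obtain $\gg_d \delta^{O_d(1)}n^{d-1}$ tuples $\vec h$ with $\|q^{\ast}d!\,\alpha_d h_1\cdots h_{d-1}\|_{\R/\Z}\ll \delta^{-O_d(1)}n^{-1}$, and the divisor bound turns these into $\gg \delta^{O_d(1)}n^{d-1-o(1)}$ \emph{distinct} integers $m$ with $\|q^{\ast}d!\,\alpha_d m\|_{\R/\Z}$ small. But these $m$ live in $[-(2n)^{d-1},(2n)^{d-1}]$, so they form a set of density $\delta^{O_d(1)}n^{-o(1)}$ there — \emph{not} positive density. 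A pigeonhole/spacing lemma of the type you then invoke (cf. Lemma~\ref{lemma:equi:1}) applied to a set of density $\eta$ yields a modulus $q'\ll \eta^{-1}$, here $q'\ll \delta^{-O_d(1)}n^{o(1)}$, which destroys the required conclusion $q\ll_d\delta^{-O_d(1)}$. This $n^{o(1)}$ loss is precisely the loss in the classical Weyl inequality. The paper's argument instead differences only \emph{once}, applies the degree-$(d-1)$ inverse theorem to each $P(\cdot+h)-P(\cdot)$ (leading coefficient $dh\alpha_d$) for a positive-density set of shifts $h$, pigeonholes on the bounded moduli $q_h$ to obtain $\|m\alpha_d\|_{\R/\Z}\ll\delta^{-O(1)}n^{-(d-1)}$ on a genuinely positive-density set of $m$ in an interval of length $\delta^{-O(1)}n$, and only then applies the spacing lemma; the lower coefficients are handled by restricting to a subprogression of bounded spacing on which the top term is essentially constant and recursing (the mutual induction between Proposition~\ref{prop:Weyl:inverse} and Lemma~\ref{lemma:Weyl:inverse}). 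The induction on degree cannot be replaced by iterated differencing plus the divisor bound.

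The base case is also not actually established, and your Fourier sketch cannot close it in the form you state it. Expanding $\mathbf{1}_I$ in characters of $\Z/(2n+1)\Z$ and isolating a dominant mode only shows that $\alpha_1$ lies within $\delta^{-O(1)}/n$ of some $-r_0/(2n+1)$; nothing bounds $r_0$, and clearing the denominator $2n+1$ would require $q$ comparable to $n$. Indeed, the linear statement is genuinely obstructed for a general dense $I$: take $\alpha_1=r_0/(2n+1)$ with $r_0\asymp\sqrt{n}$ coprime to $2n+1$ and $I=\{k:\|\alpha_1 k\|_{\R/\Z}\le 1/20\}$; then $|I|\gg n$ and $|\sum_{k\in I}e(\alpha_1 k)|\gg n$, yet $\|q\alpha_1\|_{\R/\Z}=qr_0/(2n+1)\gg n^{-1/2}$ for every bounded $q$. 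In other words, a dense set $I$ can itself carry the phase, so the exponential-sum hypothesis alone is too weak in the linear case; the engine that actually powers the paper's argument is Lemma~\ref{lemma:equi:1}, whose hypothesis is the \emph{pointwise} bound $\|k\theta\|_{\R/\Z}\le\eps$ on a dense set, and in the downstream application (Lemma~\ref{lemma:wraparound:square}) the input available is of that pointwise form ($\sum_{r\in I}\|P(r)\|_{\R/\Z}^2$ small). Any correct writeup of the base case has to route through that stronger hypothesis rather than through a dominant-Fourier-mode extraction.
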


\begin{proof}(of Lemma \ref{lemma:wraparound:square})  Using the inequality $|\sin(\pi x)| \le 4 \|x\|_{\R/\Z}^2$, we obtain
\[
\cos(2\pi x)=1-2 \sin^2(\pi x) \ge 1-32 \|x\|_{\R/\Z}^2.
\]
Suppose, for contradiction, that
\[
\sum_{r\in I} \Big\| \frac{b r^d + b' r^{d-1} + \dots}{n^{d-1}} \Big\|_{\R/\Z}^2 \le (\delta/64) n.
\]
Then it follows that
\[ 
\sum_{r\in I} \cos\Big(2\pi \,\frac{b r^d + b' r^{d-1} + \dots}{n^{d-1}}\Big) \ge |I|-(\delta/2)n \ge (\delta/2)n.
\]
Thus
\[ 
\frac{1}{n}\Big| \sum_{r\in I} e\Big(\frac{b r^d + b' r^{d-1} + \dots}{n^{d-1}}\Big) \Big| \ge \frac{1}{n}(\delta/2)n \ge \delta/2. 
\]
By Lemma \ref{lemma:Weyl:inverse}, there exists a positive integer $q \le \delta^{{-O(1)}}$ such that
\[
\Big\| q \cdot \frac{b}{n^{d-1}} \Big\|_{\R/\Z} \le \frac{\delta^{{-O(1)}}}{n^d}.
\]
On the other hand, since $C/n\le |b| \le n^{d-1}/C$, we have 
\[
\Big\| q \cdot \frac{b}{n^{d-1}} \Big\|_{\R/\Z}=\Big| q \cdot \frac{b}{n^{d-1}} \Big|>\frac{\delta^{{-O(1)}}}{n^d},
\]
provided that $C$ is sufficiently large in terms of $\delta$ and $d$. This yields a contradiction.
\end{proof}

\section{Discrete settings: proof of Theorem \ref{thm:ILO:array}, Theorem \ref{thm:discrete:3/2}, and Theorem \ref{thm:discrete:5/2}} \label{sect:discrete}


We begin by showing that the bound in Theorem \ref{thm:ILO:array} is sharp.

\begin{lemma}\label{lemma:optimal} The conclusion of Theorem \ref{thm:ILO:array} is optimal, in the sense that 
if $Q$ is a {proper symmetric GAP} of rank $r=O(1)$ and size $O(\rho^{-1}n^{-r/2})$, and $a_{i k} - a_{j k} - a_{i l} + a_{j l} \in Q$ for all quadruples, then 
\[
\sup_{x} \P_\pi (S_{\pi}=x) =\Omega(\rho).
\]
\end{lemma}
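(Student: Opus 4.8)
The plan is to prove the matching lower bound $\sup_x \P_\pi(S_\pi = x) \gg n^{-r/2}/|Q|$; since the hypothesis supplies $|Q| = O(\rho^{-1}n^{-r/2})$, this immediately gives $\sup_x \P_\pi(S_\pi = x) \gg \rho$. The first point is that a bare pigeonhole does not suffice: a single transposition changes $S_\pi$ by $\pm(a_{ik}-a_{jk}-a_{il}+a_{jl}) \in Q$, and any permutation is a product of at most $n-1$ transpositions, so $S_\pi$ ranges over a translate of the Minkowski sum $(n-1)Q$, a set of size $O(n^r|Q|)$, and pigeonhole over it yields only $\gg n^{-r}/|Q|$. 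The whole game is to recover the extra factor $n^{r/2}$, which will come from the concentration of $S_\pi$ around its mean.

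First I would set up a convenient representation of the array. Specializing the structural hypothesis to the quadruples $(i,1,k,1)$ shows $q_{ik} := a_{ik}-a_{1k}-a_{i1}+a_{11} \in Q$ for all $i,k$ (it equals $0 \in Q$ when $i=1$ or $k=1$), hence $a_{ik} = R_i + C_k + q_{ik}$ with $R_i := a_{i1}-a_{11}$ and $C_k := a_{1k}$. Because $\sum_i R_i + \sum_i C_{\pi(i)} = \sum_i R_i + \sum_k C_k$ does not depend on $\pi$, it is enough to lower-bound $\sup_x \P_\pi(T_\pi = x)$, where $T_\pi := \sum_i q_{i\pi(i)}$ and every $q_{ik}$ lies in $Q$. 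I stress that here the remainder lies in $Q$ itself, not merely in $4nQ$ as in the representation of Remark~\ref{rmk:all:str}: inflating the GAP by a factor of $n$ would destroy the saving we need.

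Next I would lift $T_\pi$ to the generator coefficients. Writing $Q = \{\sum_{s=1}^r m_s g_s : m_s \in \Z,\ |m_s| \le N_s\}$ with $|Q| = \prod_{s=1}^r(2N_s+1)$ by properness, fix for each $(i,k)$ integers $m_s^{(ik)}$ with $|m_s^{(ik)}| \le N_s$ and $q_{ik} = \sum_s m_s^{(ik)} g_s$. Then $T_\pi = \sum_{s=1}^r M_s(\pi)\,g_s$ with $M_s(\pi) := \sum_i m_s^{(i\pi(i))} \in \Z$, so $T_\pi$ (hence $S_\pi$) is a function of the integer vector $(M_1(\pi),\dots,M_r(\pi))$. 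Each $M_s(\pi)$ is a random combinatorial sum whose array entries are bounded by $N_s$, so Hoeffding's variance formula for such sums (or the large-deviation estimate behind Lemma~\ref{lemma:deviation}, which applies to general arrays) gives $\Var(M_s(\pi)) = O(nN_s^2)$. By Chebyshev and a union bound over the $r = O(1)$ indices $s$, with probability at least $1/2$ every $M_s(\pi)$ lies within $O_r(\sqrt n\,N_s)$ of its mean simultaneously; on that event $(M_1(\pi),\dots,M_r(\pi))$ sits in a box of side lengths $O_r(\sqrt n\,N_s)$, hence containing at most $O_r(n^{r/2}\prod_s N_s) = O_r(n^{r/2}|Q|)$ integer points, so $S_\pi$ takes at most $O_r(n^{r/2}|Q|)$ values there. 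Pigeonholing over these values yields an $x_0$ with $\P_\pi(S_\pi = x_0) \gg n^{-r/2}/|Q|$, as desired.

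The main obstacle is conceptual rather than computational: one must avoid the ready-made $4nQ$-representation of Remark~\ref{rmk:all:str} and instead extract a remainder genuinely inside $Q$ (via the $(i,1,k,1)$ specialization), and then recognize that the missing $n^{r/2}$ is precisely the concentration gain of a sum of $n$ bounded terms over a random permutation, captured by the $O(nN_s^2)$ variance bound. Minor bookkeeping points: the implied constants $O_r(\cdot)$ grow with $r$, so $r = O(1)$ is genuinely used; trivial generators ($N_s=0$) may be discarded; and the inequality $\prod_s N_s \le |Q|$ is where properness enters.
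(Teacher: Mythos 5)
Your argument is correct and coincides with the paper's proof essentially step for step: both center via $a'_{ik}=a_{ik}-a_{1k}-a_{i1}+a_{11}\in Q$ (which shifts $S_\pi$ by a $\pi$-independent constant), both use properness to decompose into integer generator coordinates $M_s(\pi)=\sum_i m_s^{(i\pi(i))}$, both bound $\mathrm{Var}(M_s(\pi))=O(nN_s^2)$ via the combinatorial-sum variance formula, and both finish with Chebyshev plus a union bound over the $r=O(1)$ coordinates followed by pigeonhole over a box of $O_r(n^{r/2}|Q|)$ integer points. The only cosmetic difference is that the paper routes the variance bound through the explicit second-difference identity applied to the doubly-centered array $\tilde b_{ik}$, whereas you invoke the same $O(nN_s^2)$ bound directly from the entrywise bound $|m_s^{(ik)}|\le N_s$; the substance is identical.
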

\begin{proof} Assume that $Q$ has the form
\[
Q = \{\sum_{s=1}^{r} q_{s} g_{s}: |q_{s}| \le N_{s} \}.
\]
Let $a_{ij}' := a_{ij}-a_{1j}-a_{i1} + a_{11}$. Since $a_{ij}'\in Q$, there exist integers $a_{ij,s}$, bounded by $N_s$, such that
\begin{equation}\label{eqn:a:l}
a_{ij}' = \sum_{s=1}^{r} a_{ij;s} g_{s}.
\end{equation}
Write $S_{\pi}((a_{ij}'))= \sum_i a_{i \pi(i)}'$ for a uniform permutation $\pi$. Then $S_{\pi}((a_{ij}'))=S_{\pi}+c$ with $c=na_{11}-\sum_{i=1}^n(a_{i1}+a_{1i})$. As $c$ is independent of $\pi$,  
\[
\sup_{x} \P_\pi (S_{\pi}((a_{ij}'))=x) =\sup_{x} \P_\pi (S_{\pi}=x).
\]
Consider an array $(b_{ij})$, which plays the role of $a_{ij;s}$ for each $1\le s\le r$. Let
$$\tilde b_{ik} =  b_{ik} - \frac{1}{n} \sum_{l=1}^{n} b_{il} - \frac{1}{n} \sum_{j=1}^{n} b_{jk}  + \frac{1}{n^{2}}\sum_{j,l=1}^na_{jl}.$$
Observe that 
\[
b_{ik}-b_{jk} - b_{il}+b_{jl} = \tilde b_{ik}- \tilde b_{jk} - \tilde b_{il}+ \tilde b_{jl}.
\]
Let $S_{\pi}((b_{ij}))=\sum_i b_{i\pi(i)}$, where $\pi$ is a uniform permutation. Then
$\E S_{\pi}((b_{ij}))=(1/n)\sum_{i,j} b_{ij}$. It is also known \cite[formula (89)]{Gold} that
\begin{equation}\label{eqn:Var}
\Var S_{\pi}((b_{ij}))=\frac{1}{4n^2(n-1)}\sum_{i\ne j;\,k\ne l}
(b_{ik}-b_{jk}-b_{il}+b_{jl})^2.
\end{equation}
Hence, if $b_{ik}-b_{jk}-b_{il}+b_{jl}\in[-N,N]$, then
$\Var(S_{\pi}((b_{ij})))\le nN^2$. By Chebyshev's inequality, with probability at least
$1-16/C^2$,
\begin{equation}\label{eqn:Chebyl}
|S_{\pi}((b_{ij}))-\E S_{\pi}((b_{ij}))|\le (C/4)\sqrt{n}\,N .
\end{equation}

For each fixed $1\le s\le r$, apply this bound to the sequence
$b_{ij}=a_{ij;s}$ from \eqref{eqn:a:l}. Taking the intersection over $s$,
we obtain an event $\CE$ (over the random permutation $\pi$) with
$\P(\CE)\ge 1-16r/C^2$ such that, simultaneously for all $s$,
\[
|S_{\pi}((a_{ij;s}))-\mu_s|\le C\sqrt{n}\,N_s,
\]
where $\mu_s=\E S_{\pi}((a_{ij;s}))$.

Under this event $\CE$, the original sum $\sum_i a_{i\pi(i)}'$ lies in the shifted GAP
\[
\big([\mu_1-C\sqrt{n}N_1,\mu_1+C\sqrt{n}N_1]\cap\Z\big)g_1
+\dots+\big([\mu_r-C\sqrt{n}N_r,\mu_r+C\sqrt{n}N_r]\cap\Z\big)g_r.
\]
By the pigeonhole principle, there exists $x$ in this GAP such that
\[
\P(S_{\pi}(a_{ij}')=x)
\ge
\frac{\P(\CE)}{(3C)^r n^{r/2}\prod_i N_i}
\ge
\rho
\]
for appropriate choices of the constants.
\end{proof}

We next turn to the proof of our inverse theorem by relying on the method of \cite{NgV-Adv}.
\begin{proof} (of Theorem \ref{thm:ILO:array}) The proof consists of several steps.

\textit{Embedding.}
The following theorem (see \cite[Lemma 5.25]{TVbook}, \cite[Theorem 4.3]{NgV-Adv}) allows us to assume that $a_{ij}$ are elements of $\F_p$ for some large prime $p$. 
\begin{theorem}
Let $V$ be a finite subset of a torsion-free additive group $G$. Then, for any integer $k$, there is a map $\phi : V \to \phi(V)$ into some finite subset $\phi(V)$ of the integers $\Z$ such that
$$
v_1 + \cdots + v_i=v'_1 + \cdots + v'_j 
\;\;\Longleftrightarrow\;\;
\phi(v_1) + \cdots + \phi(v_i)=\phi(v'_1) + \cdots + \phi(v'_j)
$$
for all $i,j \le k$. The same holds if we replace $\Z$ by $\F_p$, provided $p$ is sufficiently large, depending on $V$.
\end{theorem}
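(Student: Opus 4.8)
The plan is to recognize this as the standard construction of a \emph{Freiman isomorphism} (cf.\ Ruzsa), and to build $\phi$ explicitly from a single ``radix'' linear functional. First I would reduce to the lattice case. Since $V$ is finite it generates a finitely generated subgroup $H\le G$; as $G$ is torsion-free, so is $H$, and by the structure theorem for finitely generated abelian groups $H\cong\Z^m$ for some $m=m(V)$. Fixing such an isomorphism, we may assume $V$ is a finite subset of $\Z^m$, and we set $M:=\max_{v\in V}\|v\|_\infty$.

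Next, pick an integer $N>2kM+1$ and let $\psi_N:\Z^m\to\Z$ be the homomorphism $\psi_N(x_1,\dots,x_m):=\sum_{t=1}^m N^{t-1}x_t$. Define $\phi:=\psi_N|_V$; since $V$ is finite, $\phi(V)$ is a finite subset of $\Z$. The implication ``$\Rightarrow$'' is immediate because $\psi_N$ is a homomorphism. For ``$\Leftarrow$'', suppose $\sum_{l\le i}\phi(v_l)=\sum_{l\le j}\phi(v'_l)$ with $i,j\le k$, and put $w:=\sum_{l\le i}v_l-\sum_{l\le j}v'_l\in\Z^m$, so that $\psi_N(w)=0$ and $\|w\|_\infty\le(i+j)M\le 2kM$. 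If $w\neq 0$, let $t_0$ be the largest index with $w_{t_0}\neq 0$; then
\[
0=|\psi_N(w)|\ \ge\ N^{t_0-1}|w_{t_0}|-\sum_{t<t_0}N^{t-1}|w_t|\ \ge\ N^{t_0-1}-2kM\,\frac{N^{t_0-1}-1}{N-1}\ >\ N^{t_0-1}\Bigl(1-\frac{2kM}{N-1}\Bigr)\ >\ 0,
\]
a contradiction; hence $w=0$, i.e.\ $\sum_{l\le i}v_l=\sum_{l\le j}v'_l$.

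Finally, for the version over $\F_p$: set $B:=k\max_{v\in V}|\phi(v)|$ and take any prime $p>2B$. Compose $\phi$ with the reduction $\Z\to\F_p$ to obtain $\bar\phi:V\to\F_p$ with finite image. For any $i,j\le k$ the integer $\sum_{l\le i}\phi(v_l)-\sum_{l\le j}\phi(v'_l)$ has absolute value at most $2B<p$, so it vanishes mod $p$ if and only if it vanishes in $\Z$; combined with the $\Z$-case already established, this gives the stated equivalence for $\bar\phi$.

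I do not expect a genuine obstacle here: the argument is essentially bookkeeping. The only point requiring care is the quantitative choice of parameters — first $N$ large in terms of $k$ and $M=\max_{v\in V}\|v\|_\infty$ so that the base-$N$ functional is injective on the box $\{-2kM,\dots,2kM\}^m$ containing all differences of $\le k$-term subsums (no ``carry'' can cancel an additive relation), and then $p$ large in terms of $k$ and $\max_{v}|\phi(v)|$ so that no ``wrap-around'' mod $p$ does so either.
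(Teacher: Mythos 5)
The paper does not actually prove this statement; it quotes it as a known tool, citing \cite{TVbook} (Lemma 5.25) and \cite{NgV-Adv} (Theorem 4.3). Your proof is correct and is essentially the standard ``rectification'' / Freiman $k$-isomorphism argument that appears in those references: reduce to $\Z^m$ using the fact that the subgroup generated by $V$ is a finitely generated torsion-free abelian group, then compose with the base-$N$ linear functional $\psi_N$ with $N$ taken large enough (relative to $k$ and $M=\max_v\|v\|_\infty$) that no carrying can produce a spurious vanishing of a difference of two $\le k$-term subsums, and finally reduce mod a prime $p$ exceeding twice the maximal possible $k$-fold sum to get the $\F_p$ version. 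The parameter choices $N>2kM+1$ and $p>2k\max_v|\phi(v)|$ are correct, and the $t_0=1$ and $V=\{0\}$ edge cases, while making parts of your strict inequality chain degenerate, do not affect the conclusion. So this is a sound, self-contained proof of a result the paper treats as a black box.
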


\textit{Fourier analysis.}
We view elements of $\F_p$ as integers between $0$ and $p-1$. Let $S=\sum_{i=1}^na_{i\pi(i)}$, and suppose 
$$    
\rho=\P(S=x)
$$
for some $x \in \F_p$. Using the standard notation $e_p(z)=\exp(2\pi i z/p)$, we have
$$
\rho = \P(S=x)=\E\frac{1}{p}\sum_{t \in \F_p}e_p(t(S-x))=\E\frac{1}{p}\sum_{t \in \F_p}e_p(tS)e_p(-tx).
$$
Denote by $A$ the multiset $\{a_{i k} - a_{j k} - a_{i l} + a_{j l}:1\le i,j,k,l\le n\}$. Then
$$
\rho \le \frac{1}{p}\sum_{t\in \F_p}|\E e_p(tS)| \le \frac{1}{p}\sum_{t \in \F_p}\exp \Big(-\frac{1}{2n^3}\sum_{a\in A}\|\frac{at}{p}\|_{\R/\Z}^2\Big),
$$
where the second inequality follows from Corollary \ref{cor:Roos}. 

\textit{Level sets.}
For $m\in \BBN$, define $L_m:=\big\{t \in \F_p: \sum_{a\in A}\|\frac{at}{p}\|_{\R/\Z}^2\in [4(m-1)n^3,4mn^3]\big\}$. Then
$$
n^{-C}\le \rho \le \frac{1}{p}\sum_{t \in \F_p}\exp \Big(-\frac{1}{2n^3}\sum_{a\in A}\|\frac{at}{p}\|_{\R/\Z}^2\Big) \le \frac{1}{p}+\frac{1}{p}\sum_{m\ge 1}\text{exp}(-2(m-1))|L_m|.
$$
Since $\sum_{m\ge 1}\text{exp}(-m)<1$ and $p\ge n^{2C}$, there exists a level set $L_m$ such that 
$$
|L_m|\exp(-m+2)\ge\rho p.
$$ 
As $\rho \ge n^{-C}$ and $|L_m|\le p$, it follows that $m=O(\log n)$.

\textit{Double counting and the triangle inequality.}
We have 
$$
\sum_{a\in A}\sum_{t \in L_m}\|\frac{at}{p}\|_{\R/\Z}^2=\sum_{t \in L_m}\sum_{a\in A}\|\frac{at}{p}\|_{\R/\Z}^2\le  (4mn^3)|L_m|.
$$

Let $n^{\eps} \le n' \le n$. By averaging, at least a $(1-n'/n)$-fraction of $a\in A$ (i.e. $(n^{4}-n'n^{3})$ quadruples) satisfy
$$
\sum_{t \in L_m}\|\frac{at}{p}\|_{\R/\Z}^2\le\frac{4m}{n'}|L_m|.
$$
Denote this set by $A'$. We will show that $A'$ is a dense subset of a proper GAP.

Applying the triangle inequality to the norm $\|\cdot\|_{\R/\Z}$, we obtain, for any $a \in l A'$,
\begin{equation}\label{eq:sumset}
\sum_{t \in L_{m}}\|\frac{a t}{p}\|^2_{\R/\Z} \leq l^{2} \frac{4m}{n'}|L_{m}|.
\end{equation}

\textit{Dual sets.} Define $L_{m}^{*}:=\big\{a|\sum_{t \in L_{m}}\|\frac{a t}{p}\|^2_{\R/\Z} \leq \frac{1}{40}|L_{m}|\big\}$. We claim
\begin{equation}\label{eq:dual}
|L_{m}^{*}| \leq \frac{8 p}{|L_{m}|}.
\end{equation}
Indeed, set $T_{a}=\sum_{t \in L_{m}} \cos \frac{2 \pi a t}{p}$. Using $\cos 2 \pi z \geq 1-20\|z\|^2_{\R/\Z}$, we see that for $a \in L_{m}^{*}$
$$
T_{a} \geq \sum_{t \in L_{m}}(1-20\|\frac{a t}{p}\|^2_{\R/\Z}) \geq \frac{1}{2}|L_{m}|.
$$
On the other hand, since $\sum_{a \in \F_p} \cos \frac{2 \pi a z}{p}=p \mathbf{1}_{z=0}$,
$$
\sum_{a \in \F_p} T_{a}^{2} \leq 2 p|L_{m}|.
$$
The bound \eqref{eq:dual} follows by averaging.

Set $k=\sqrt{\frac{n'}{160m}}$. By \eqref{eq:sumset}, we have $\bigcup_{l=1}^k lA' \subset L_m^*$. Setting $A''=A'\cup\{0\}$, this implies $kA''\subset L_m^*\cup\{0\}$. Hence
\begin{equation}\label{eq:critical}
|kA''|=O\Big(\frac{p}{|L_m|}\Big)=O(\rho^{-1}e^{-m+2}).
\end{equation}

The ambient field $\F_p$ is no longer important, so we may view the $a_{ij}$ as integers. We now invoke the following long-range inverse theorem (see \cite[Theorem 3.2]{NgV-Adv}).

\begin{theorem}\label{thm:Freiman-inverse}
Let $\gamma > 0$ be a constant. Assume that $X$ is a subset of a torsion-free group such that $0 \in X$ and $|kX| \le k^{\gamma} |X|$ for some integer $k \ge 2$ that may depend on $|X|$. Then there is a proper symmetric GAP $Q$ of rank $r = O(\gamma)$ and cardinality $O_{\gamma}(k^{-r} |kX|)$ such that $X \subset Q$.
\end{theorem}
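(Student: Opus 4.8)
The plan is to reduce to $X\subseteq\Z$, place $X$ inside an efficient proper GAP via a quantitative Freiman theorem, and then cut the rank of that GAP down until it matches the growth exponent of the iterated sumsets $|mX|$, which the hypothesis forces to be at most $\gamma$. First, since $X$ is a finite subset of a torsion-free group and every hypothesis and conclusion involves only sums of at most $k$ elements of $X$, the Freiman embedding theorem quoted above (with order $\gg k$) lets us assume $X\subseteq\Z$ with $0\in X$, preserving $|mX|$ for all $m\le k$. Because $0\in X$ we have the chain $X\subseteq 2X\subseteq\cdots\subseteq 2^{\lfloor\log_2k\rfloor}X\subseteq kX$, so $\prod_{i<\lfloor\log_2k\rfloor}|2^{i+1}X|/|2^{i}X|=|2^{\lfloor\log_2k\rfloor}X|/|X|\le k^{\gamma}$ is a product of $\le\log_2k$ terms and some $Y:=2^{i}X$ has $0\in X\subseteq Y$, $|Y|\le|kX|\le k^{\gamma}|X|$ and doubling constant $O_{\gamma}(1)$ (small $k$ being handled directly, since then $X$ itself has doubling $k^{\gamma}=O_{\gamma}(1)$). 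Applying a quantitative Freiman theorem (Green--Ruzsa, or Chang's version over $\Z$) to $Y$, in the ``generated by $Y$'' refinement available from $0\in Y$ and passing to the $\Z^{d}$ box model, yields a proper symmetric GAP $P$ of rank $d_{0}=O_{\gamma}(1)$ with $X\subseteq Y\subseteq P$, $|P|=O_{\gamma}(|Y|)$, and all sumset dilates $\lambda P$ proper; crucially all implied constants here depend only on $\gamma$, not on $k$, which is the whole point of having first passed to $Y$.

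The rank $d_{0}$ produced by Freiman can be far larger than $O(\gamma)$, and the core of the argument is the rank reduction. The guiding principle is that the rank of a \emph{non-degenerate} GAP containing $X$ equals the exponent of growth of its dilates, and the hypothesis $|kX|\le k^{\gamma}|X|$ pins this exponent at $\le\gamma+o(1)$ on scales up to $k$. Concretely, starting from $P$ one repeatedly passes to a proper GAP of strictly smaller rank still containing $X$, by collapsing generators along which the current GAP is degenerate (its dilates failing to multiply the size by the expected power), paying only $O_{\gamma}(1)$ factors in size per step; after $O_{\gamma}(1)$ steps this stabilises at a proper GAP $Q\supseteq X$ of rank $r$ whose sumset dilates genuinely grow, $|\lambda Q|\gg_{r}\lambda^{r}|Q|$. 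Because $X$ generates $Q$ one can arrange $Q\subseteq\ell' X$ for some $\ell'=O_{\gamma}(1)$; then for $\lambda=\lfloor k/\ell'\rfloor$ the dilate $\lambda Q$ is proper, has size $\gg_{\gamma}k^{r}|Q|$, and lies inside $kX$. Comparing with $|kX|\le k^{\gamma}|X|\le k^{\gamma}|Q|$ forces $k^{r}\ll_{\gamma}k^{\gamma}$, hence $r\le\gamma+O_{\gamma}(1/\log k)=O(\gamma)$ (using that Cauchy--Davenport gives $|kX|\ge\tfrac{k}{2}|X|$, so necessarily $\gamma\ge\tfrac12$), and the same inequality delivers the size bound $|Q|\ll_{\gamma}k^{-r}|kX|$. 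If a symmetric GAP is required, one finally replaces $Q$ by a properised copy of $Q-Q$: this still contains $X$ since $0\in X$, and changes rank and size only by $O_{\gamma}(1)$ factors.

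The difficult and delicate step is the rank reduction, where several things must hold at once: all constants must stay of the form $O_{\gamma}(1)$ rather than $O_{k^{\gamma}}(1)$ (hence the preliminary passage to the bounded-doubling scale $Y$ before invoking Freiman); the size loss at each of the $O_{\gamma}(1)$ collapsing steps must be controlled; the GAP must be kept in a minus-free ``$Q\subseteq\ell'X$'' form with all relevant dilates proper, which is where the $\Z^{d}$ box model is used; and the final dilation argument must be run so as to recover the clean $k^{-r}$ saving in the size rather than a weaker power of $k$. Once this machinery is set up, the remaining estimates are routine.
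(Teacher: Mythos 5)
The paper does not contain a proof of this statement; it is quoted verbatim from \cite[Theorem~3.2]{NgV-Adv} (which in turn rests on Tao and Vu's long-range inverse theorem). So there is no in-paper proof to compare against, and your argument has to stand on its own.

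Your skeleton --- Freiman embedding into $\Z$, pigeonholing along $X\subseteq 2X\subseteq\cdots\subseteq 2^{\lfloor\log_2k\rfloor}X$ for a scale $Y=2^iX$ of bounded doubling, Freiman/Chang at that scale, rank reduction --- is the right skeleton and matches the standard approach. But the single claim on which everything rests, namely that ``one can arrange $Q\subseteq\ell'X$ for some $\ell'=O_\gamma(1)$,'' is asserted rather than proved, and as stated it does not follow from the construction you describe. This is not a cosmetic omission: both $r=O(\gamma)$ and $|Q|\ll_\gamma k^{-r}|kX|$ are extracted from the single inequality $k^r|Q|\ll_\gamma|\lambda Q|\le|kX|$, and that inequality needs $\lambda\ell'\le k$ with $\ell'=O_\gamma(1)$. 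The Freiman step gives at best $P\subseteq \ell''Y=\ell''\,2^iX$ with $\ell''=O_\gamma(1)$, hence after rank reduction only $Q\subseteq O_\gamma(2^i)X$. Nothing in the pigeonhole bounds $2^i$ by $O_\gamma(1)$; all it gives is $2^i\le k$ and $|Y|\le k^\gamma|X|$. If $\ell'=\Theta(2^i)$ is comparable to $k$, then $\lambda=O_\gamma(1)$ and the dilation step degenerates to the content-free $|Q|\ll_\gamma|kX|$, with no $k^{-r}$ saving. Put differently, the rank reduction as you describe it ``pays only $O_\gamma(1)$ factors in size per step,'' so the output has $|Q|=O_\gamma(|Y|)$, and $|Y|$ can be as large as $|kX|$; there is no mechanism in your argument that brings $|Q|$ down to the target scale $k^{-r}|kX|$.

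What is missing is a genuine \emph{shrinking} step, which is conceptually different from rank reduction. Because $0\in X$, for every $x\in X$ the element $kx=x+\cdots+x$ lies in $kX$, and in a $t$-proper box model with $t\ge k$ (a John/Bilu-type normalisation that is indeed available from the $\Z^d$ picture you invoke) its coordinates are exactly $k$ times those of $x$. Hence if one has a proper GAP $R$ with $kX\subseteq R$ and $kR$ proper, then $X$ automatically lies in the box $\tfrac1kR$ obtained by dividing every dimension of $R$ by $k$. This is precisely where the $k^{-r}$ comes from: one then wins by showing $|R|=O_\gamma(|kX|)$ and collapsing the short dimensions of $\tfrac1kR$ to reach rank $O(\gamma)$. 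Your proposal never passes from ``$X$ in a GAP'' to ``$kX$ in a GAP and then shrink by $k$,'' and the ``$Q\subseteq\ell'X$'' shortcut is not a substitute: it presupposes the very efficiency that the shrinking step is designed to produce.

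Two smaller points. The bound $|kX|\ge\tfrac k2|X|$ is not Cauchy--Davenport (that is a $\Z/p\Z$ statement); over $\Z$ it is the elementary $|A+B|\ge|A|+|B|-1$ iterated. And replacing $Q$ by a ``properised copy of $Q-Q$'' at the end does require the properisation theorem (cf.\ Tao--Vu, \emph{Additive Combinatorics}, Theorem~3.40) --- routine, but worth naming, since $Q-Q$ need not be proper even when $Q$ is.
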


Since 
$$k=\Omega\Big(\sqrt{\frac{n'}{m}}\Big)=\Omega\Big(\sqrt{\frac{n'}{\log n}}\Big), \quad \rho^{-1} \leq n^{C} \le k^{2 C/\eps+1},
$$
\eqref{eq:critical} allows us to apply Theorem \ref{thm:Freiman-inverse} with $\gamma=2C/\eps+1$ and $X$ the set of distinct elements of $A^{''}$ (note that $k X=k A^{''}$ for $k \geq 2$). Hence $X$ is contained in a proper symmetric GAP $Q$ of rank $r = O_{C, \eps}(1)$ and size
$$
O_{C, \eps}(k^{-r}|kX|)= O_{C, \eps}(k^{-r}|kA^{''}|)= O_{C, \eps}\Big(\rho^{-1}e^{-m+2}(\sqrt{\frac{n'}{m}})^{-r}\Big)= O_{C,\eps}(\rho^{-1}(n')^{-r/2}),
$$
which completes the proof.
\end{proof}

To complete the section we quickly deduce Theorem \ref{thm:discrete:1/2}, Theorem \ref{thm:discrete:3/2} and Theorem \ref{thm:discrete:5/2}.

\begin{proof}(of Theorem \ref{thm:discrete:1/2}) Assume for contradiction that $\rho:=\sup_{x} \P\big( \sum_{i=1}^n w_i v_{\pi(i)} = x \big) \ge A n^{-1/2}$ for some sufficiently large constant $A$. Set $\eps=.1$ and $C =1/2$. By Theorem \ref{thm:ILO:prod}, there is a GAP $Q$ of rank $r$ and size $O(\rho^{-1}n^{-r/2})$ that contains a $(1-\eps)$-portion of the set of quadruples $\{(w_{i}-w_{j})(v_{k} -v_{l}): 1\le i,j,k,l \le n\}$. This set must have at least $\Theta(1)$ (distinct) elements, because by assumption there are $\Theta_{c}(n^{2})$ pairs $i,j$ where $w_{i}-w_{j} \neq 0$ and $\Theta_{c}(n^{2})$ pairs $k,l$ where $v_{k}-w_{l} \neq 0$.
Since $Q$ has rank at least $1$, we have $\Omega(n)=|Q|=O(\rho^{-1}n^{-1/2}) = O(n/A)$, a contradiction if $A$ is sufficiently large.
\end{proof}

\begin{proof}(of Theorem \ref{thm:discrete:3/2}) Assume for contradiction that $\rho:=\sup_{x} \P\big( \sum_{i=1}^n w_i v_{\pi(i)} = x \big) \ge A n^{-3/2}$ for some sufficiently large constant $A$. Set $\eps=.1$ and $C =
3/2$. By Theorem \ref{thm:ILO:prod}, there is a GAP $Q$ of rank $r$ and size $O(\rho^{-1}n^{-r/2})$ that contains a $(1-\eps)$-portion of the set of quadruples $\{(w_{i}-w_{j})(v_{k} -v_{l}): 1\le i,j,k,l \le n\}$. This set must have at least $\Theta(n)$ (distinct) elements, because by assumption there are $\Theta_{c}(n^{2})$ pairs $i,j$ where $w_{i}-w_{j} \neq 0$, and on average, each such pair $(i,j)$ is associated with $\Theta(n^{2})$ pairs $(k,l)$. Among these pairs, we just chose $\Theta(n)$ pairs of the form $(k_{0},l_{1}),\dots, (k_{0},l_{m}), m = \Theta(n)$. 
Since $Q$ has rank at least $1$, we have $\Omega(n)= |Q|=O(\rho^{-1}n^{-1/2}) = O(n/A)$, a contradiction.
\end{proof}

\begin{proof}(of Theorem \ref{thm:discrete:5/2}) Assume for contradiction that $\rho:=\sup_{x} \P\big( \sum_{i=1}^n w_i v_{\pi(i)} = x \big) \ge A n^{-5/2} \log n$ for some sufficiently large constant $A$. Set $\eps=.1$ and $C =5/2$. By Theorem \ref{thm:ILO:prod}, there is a GAP $Q$ of rank $r$ and size $O(\rho^{-1}n^{-r/2})$ that contains a $(1-\eps)$-portion of the set of quadruples $\{(w_{i}-w_{j})(v_{k} -v_{l}): 1\le i,j,k,l \le n\}$. Denote this set by $S$. We claim that $S$ must have at least  $\Theta(n^{2}/\log n)$ elements. Since $Q$ has rank at least $1$, this would imply $\Omega(n^2/\log n)=|Q|=O(\rho^{-1}n^{-1/2})=O(n^2/A\log n)$, a contradiction.

It remains to prove $|S|=\Omega(n^2/\log n)$. Let $A=\{v_1,\ldots,v_n,w_1,\ldots,w_n\}$. For $x\in A-A$, let $r(x)$ denote the number of representations $a-b=x$ with $(a,b)\in A^2$.  
By \cite[Theorem 3]{BW},\footnote{The statement in \cite{BW} is given for $A \subset \Z$, but the proof works verbatim for $A \subset \R$.} we have
\[ \sum_{z}\Big|\sum_{xy=z}r(x)r(y)\Big|^2=\sum_{xy=x'y'}r(x)r(x')r(y)r(y')=O(|A|^6\log |A|).
\]
Applying the Cauchy-Schwarz inequality gives
\[
\frac{1}{|S|}\Big(\sum_{xy\in S}r(x)r(y)\Big)^2=O(|A|^6\log |A|).
\]
Moreover, $\sum_{xy\in S}r(x)r(y)$ counts at least the number of selected quadruples, namely $(1-\eps)n^4$. Therefore, $|S|=\Omega(n^8/|A|^6\log |A|)=\Omega(n^2/\log n)$, as claimed.
\end{proof}

\section{Continuous setting: proof of Theorems \ref{thm:cont:1} and \ref{thm:cont:3/2}}\label{sect:linear}

We will use the following simple fact.

\begin{fact}\label{fact:R}
Let $ I \subset [n]$ with $ |I| \geq \delta n $ for some constant $ \delta > 0 $. Then, there exists $ \CR \subset \{-n, \dots, n\} $ of size $ |\CR|= \Omega_{\delta}(n)$ such that for every $ r \in \CR $, there are $ \Omega_{\delta}(n)$ pairs $x,y \in I$ with $x - y = r $.
\end{fact}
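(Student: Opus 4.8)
Fact~\ref{fact:R} is a purely combinatorial counting statement about the difference set of $I$, so the natural approach is a double-counting argument on the quantity
\[
N := \#\{(x,y)\in I\times I : x>y\},
\]
which equals $\binom{|I|}{2}\gg_\delta n^2$ since $|I|\ge\delta n$. Each such ordered pair contributes a difference $r=x-y$ lying in $\{1,\dots,n-1\}\subset\{-n,\dots,n\}$. For $r\in\{-n,\dots,n\}$ let $d(r)$ denote the number of pairs $x,y\in I$ with $x-y=r$; then $\sum_{r} d(r) = |I|^2 \gg_\delta n^2$, while trivially $d(r)\le n$ for every $r$ and $d(r)=0$ for $|r|\ge n$. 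The first step is to record these bounds.

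**Main step.** Let $\CR := \{\,r\in\{-n,\dots,n\} : d(r) \ge \tfrac{\delta^2}{4} n\,\}$ be the set of "popular" differences. The goal is to show $|\CR|\gg_\delta n$. Split the sum $\sum_r d(r)$ according to whether $r\in\CR$:
\[
|I|^2 \;=\; \sum_{r\in\CR} d(r) \;+\; \sum_{r\notin\CR} d(r)
\;\le\; |\CR|\cdot n \;+\; (2n+1)\cdot \frac{\delta^2}{4}n.
\]
Since $|I|^2 \ge \delta^2 n^2$, and $(2n+1)\tfrac{\delta^2}{4}n \le \tfrac{\delta^2}{2}n^2 + O(n) \le \tfrac{3\delta^2}{4} n^2$ for $n$ large, we get $|\CR|\cdot n \ge \tfrac{\delta^2}{4} n^2$, i.e. $|\CR| \ge \tfrac{\delta^2}{4} n \gg_\delta n$. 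Finally, by the very definition of $\CR$, every $r\in\CR$ is realized as $x-y$ by at least $\tfrac{\delta^2}{4}n \gg_\delta n$ pairs $(x,y)\in I\times I$, which is exactly the conclusion. (One may also symmetrize: if $r\in\CR$ then $-r\in\CR$ as well, since $x-y=r \iff y-x=-r$, though this is not needed.)

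**Obstacle.** There is essentially no obstacle here; the only point requiring a modicum of care is choosing the popularity threshold (here $\tfrac{\delta^2}{4}n$) so that the "unpopular" contribution $(2n+1)\cdot(\text{threshold})$ is safely smaller than $|I|^2\ge\delta^2 n^2$, which forces the threshold to be a suitably small constant multiple of $\delta^2 n$; any fixed fraction below $\delta^2/2$ works. The statement and its implied constants are robust, so I would simply fix the constants as above and move on.
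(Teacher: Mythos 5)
Your proof is correct. The paper presents Fact~\ref{fact:R} as a ``simple fact'' with no proof supplied, so there is nothing to compare against; your argument --- double count $\sum_r d(r) = |I|^2 \ge \delta^2 n^2$, use $d(r)\le |I|\le n$ and $d(r)=0$ for $|r|\ge n$, and take $\CR$ to be the popular differences with $d(r)\ge (\delta^2/4)n$ --- is the standard and natural way to fill this in, and the arithmetic checks out.
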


Throughout this section, let $\mathcal{R} \subset \{-n, \dots, n\}$ be a set of size $|\mathcal{R}| = \Theta_{\delta}(n)$, as defined in Fact~\ref{fact:R}. Let $C$ and $A$ be positive constants, with $C$ sufficiently large relative to $\delta$ and $\varepsilon$, and $A$ sufficiently large relative to $C$.

We will break down the proof of Theorem \ref{thm:cont:1} into two parts, the uniform bound \eqref{eqn:1:u} and the $L$-dependent bound \eqref{eqn:1:L}.

\begin{proof}[Proof of \eqref{eqn:1:u} from Theorem \ref{thm:cont:1}]
Let 
\[
S = n\sum_{i} w_{i} v_{\pi(i)}, \quad \text{and} \quad X = S - L n.
\]
Using Esseen's estimate, we can write
\begin{equation}\label{eqn:kt}
\P(|X| \le 1) 
=O\Big( \Big| \int_{|t| \le 1} k(t)\varphi_{X}(t) \, dt \Big| \Big)
=O\Big( \int_{|t| \le 1} |\varphi_{X}(t)| \, dt\Big),
\end{equation}
where $k(t) = 1_{[-1/2, 1/2]} * 1_{[-1/2, 1/2]}(t)$ (see for instance \cite{NVsur}), and $\varphi_{X}(t)$ is the characteristic function of $X = S - L n$:
\[
\varphi_{X}(t) = \mathbb{E} \, e^{i t (S - L n)} = \mathbb{E} \, e^{i t S} e^{- i L n t} = \varphi_{S}(t) e^{- i L n t}.
\]
Thus,
\[
|\varphi_{X}(t)| = |\mathbb{E} \, e^{it S}| = |\varphi_{S}(t)|.
\]

We first note that by Corollary \ref{cor:Roos},
\[
|\varphi_{S}(2\pi t)| \le \exp\Big\{ - \frac{1}{2n^{3}} \sum_{i,j,k,l} \| tn (w_{i} - w_{j}) (v_{k} - v_{l}) \|_{\R/\Z}^{2} \Big\}.
\]

The exponent on the right-hand side can be bounded from below by
\[
\frac{c_{\delta}}{n^{2}} \sum_{\substack{1 \le i,j \le n \\ r\in \CR}} \| t (w_{i} - w_{j}) r \|_{\R/\Z}^{2},
\]
and so
\begin{equation}\label{eqn:int:1}
\P(|X| \le 1) =O\Big(\int_{|t| \le 1} |\varphi_{S}(t)| \, dt\Big)
=O\Big(\int_{|t| \le 1} \exp\Big\{ -\frac{c_{\delta}}{n^{2}} \sum_{\substack{1 \le i,j \le n \\ r\in \CR}} \| t (w_{i} - w_{j}) r \|_{\R/\Z}^{2} \Big\} dt\Big).
\end{equation}

We will split the integral depending on whether $|t| \le (\sqrt{A\log n})/n$ or $(\sqrt{A\log n})/n \le |t| \le 1$. 

\underline{\bf Large $|t|$.} Assume that $$\frac{\sqrt{A\log n}}{n} \le |t| \le 1.$$

We first discard those $(i,j)$ for which $|w_{i} - w_{j}|$ is
smaller than $1/\sqrt{n}$. Let
\[
\CG = \{ (i,j) : |w_{i} - w_{j}| \ge 1/\sqrt{n} \}.
\]
Since $\sum_{1\le i,j\le n}(w_{i}-w_{j})^2=2n$, we see that
\[
n \le \sum_{(i,j) \in \CG} (w_{i} - w_{j})^{2} \le 2n.
\]
For $0 \le k \le \log \Big( \frac{\sqrt{n}}{A \sqrt{\log n}} \Big)+1$ (so that $\frac{2^{k-1}}{\sqrt{n}} \le \frac{1}{A\sqrt{\log n}}$ and $\frac{2^{k}}{\sqrt{n}}\ge \frac{1}{\sqrt{n}}$), let $\CG_{k}$ be the set of pairs $(i,j)$ such that
\[
D_{k-1} := \frac{2^{k-1}}{\sqrt{n}} < |w_{i} - w_{j}| \le \frac{2^{k}}{\sqrt{n}} =: D_{k}.
\]
Then $\CG=\bigcup_{k}\CG_{k}$ and
\[
n \le \sum_{k} D_{k}^{2} |\CG_{k}| \le 8n.
\]

Now for each fixed pair $(i,j)\in \CG_{k}$, we consider 
the sum $\sum_{r\in \CR} \| t (w_{i} - w_{j}) r \|_{\R/\Z}^{2}$.

If $|t|D_{k} \le \frac{1}{Cn}$, then $|t (w_{i} - w_{j}) r|\le \frac{1}{Cn}n=\tfrac{1}{C}<1$, so 
\[
\sum_{r \in \CR} \| t (w_{i} - w_{j}) r \|_{\R/\Z}^{2} = \sum_{r\in \CR} |t (w_{i} - w_{j}) r|^{2} =\Theta_{\delta}(t^{2} n^{3} D_k^{2}).
\]
On the other hand, if $|t|D_{k}>\frac{1}{Cn}$, then $\tfrac{1}{2Cn}< |t(w_i-w_j)| \le \frac{1}{A\sqrt{\log n}} \le \frac{1}{2C}$, so by Corollary~\ref{cor:wraparound}, we have
\[
\sum_{r} \| t (w_{i} - w_{j}) r \|_{\R/\Z}^{2} =\Theta_{C}(n).
\]
Thus, we obtain
\[
\sum_{(i,j)\in \CG,r\in \CR} \| t (w_{i} - w_{j}) r \|_{\R/\Z}^{2} =\Omega_{\delta, C}(1)\cdot \Big(\sum_{k:\, |t|D_{k} \le 1/(Cn)} t^{2} n^{3} D_{k}^{2} |\CG_{k}| + \sum_{k:\, |t|D_{k}  > 1/(Cn)} n |\CG_{k}|\Big).
\]

To further estimate the right-hand side, we divide into two cases.

{\bf Case 1:} $\sum_{k:\, |t| \le 1/(CnD_{k})} D_{k}^{2} |\CG_{k}| \ge (1/10)n$. 

Since $t^2 \ge \frac{A \log n}{n^2}$,
\[
\sum_{k:\, |t| \le 1/(CnD_{k})} t^{2} n^{3} D_{k}^{2} |\CG_{k}|=\Omega(A n^{2} \log n).
\]

{\bf Case 2:} $\sum_{k:\, |t| > 1/(CnD_{k})} D_{k}^{2} |\CG_{k}| > (3/4 - 1/10) n$. 

Since $D^2_{k} \le \frac{1}{A^2 \log n}$, we get
\[
\sum_{k:\, |t| > 1/(CnD_{k})} n|\CG_{k}| =\Omega(A^{2} n^{2} \log n) =\Omega(A n^{2}\log n).
\]

In both cases, we always have
\[
\sum_{i,j,r} \| t (w_{i} - w_{j}) r \|_{\R/\Z}^{2} =\Omega_{\delta,C}(A n^{2} \log n).
\]
Therefore, assuming that $A$ is sufficiently large relative to $\delta$ and $C$, for $\frac{\sqrt{A\log n}}{n} \le |t| \le 1$, we have
\begin{equation}\label{eqn:phiS:larget} 
\frac{c_{\delta}}{n^{2}}\sum_{i,j,r} \| t (w_{i} - w_{j}) r \|_{\R/\Z}^{2} \ge 2\sqrt{A} \log n, \qquad |\varphi_{S}(t)| \le n^{-2\sqrt{A}}.
\end{equation}

\underline{\bf Small $|t|$.} It remains to consider 
\[
|t| \le \frac{\sqrt{A\log n}}{n}.
\]

Since $|w_{i} - w_{j}| \le \frac{1}{A \sqrt{\log n}}$, we have $|t(w_{i}-w_{j})r| \le \frac{1}{\sqrt{A}}<1$, which implies $\| t (w_{i} - w_{j}) r \|_{\R/\Z} = | t (w_{i} - w_{j}) r |$.
Thus, we obtain
\[
\frac{c_{\delta}}{n^{2}} \sum_{i,j,r} \| t (w_{i} - w_{j}) r \|_{\R/\Z}^{2} 
= \frac{c_{\delta}}{n^{2}} \sum_{i,j,r}(t (w_{i} - w_{j}) r)^{2} =\Theta_{\delta}(t^{2} n^{2}).
\]
Therefore,
\[
\int_{|t| \le (\sqrt{A\log n})/ n} |\varphi_{S}(t)| \, dt \le \int_{|t| \le (\sqrt{A\log n})/ n} \exp(-\Theta(t^{2} n^{2})) \, dt 
=O\Big(\frac{1}{n} \int_{\R} \exp(-\Theta(x^{2})) \, dx\Big) 
=O\Big(\frac{1}{n}\Big).
\]
\end{proof}

Next, we modify the above approach to deal with the extra factor $e^{-c L^{2}}$, paying more attention to how the characteristic functions depend on $L$. We will mainly establish the following bound:
\begin{equation}\label{eqn:modification:1}
\P\Big( \Big| \sum_{i} w_{i} v_{\pi(i)} - L \Big| \le \frac{1}{n} \Big) 
= O\left( \max\Big\{ n^{-\sqrt{A}}, \ \frac{1}{n} e^{-\Theta(L^{2})} \Big\} \right).
\end{equation}
In fact, for large $L$, we can apply Lemma \ref{lemma:deviation}: if 
\[
L \ge \frac{2}{\sqrt{C_{0}}} \sqrt{\log n},
\]
then
\[
\P\Big( \Big| n\sum_{i=1}^{n} v_{i} w_{\pi(k)} -n \Big( \sum_{i=1}^{n} v_{i} \Big) \overline{\BMw} \Big| 
\ge L n \cdot \sigma(\BMw) \Big) 
\le C_{0} e^{-C_{0} L^{2}} 
\le \frac{1}{n} e^{-C_{0} L^{2} / 2}.
\]
Thus, for Theorem \ref{thm:cont:1} it suffices to assume
\begin{equation}\label{eqn:L<}
L < \frac{2}{\sqrt{C_{0}}} \sqrt{\log n}.
\end{equation}

\begin{proof}(of \eqref{eqn:1:L} (or more precisely  \eqref{eqn:modification:1}) of Theorem \ref{thm:cont:1}) As mentioned above, assume $L< (2 /\sqrt{C_{0}})\sqrt{ \log n}$. We first note that the treatment for large $t$ in the above proof (i.e., $|\varphi_S(t)|\le n^{-2\sqrt{A}}$ for $(\sqrt{A\log n})/n \le |t| \le 1$) can be extended all the way to 
$(\sqrt{A\log n})/n\le |t| \le \sqrt{A\log n}$.

\underline{\bf Very Large $|t|$.} We assume now that 
$$1 \le |t| \le  \sqrt{A\log n}.$$

We first throw away those $|w_{i}-w_{j}|$ that are smaller than $1/\sqrt{n}$, and set 
$$\CG=\{(i,j): |w_{i}-w_{j}| \ge 1/\sqrt{n}\}.$$
Then we have 
$$\sum_{(i,j)\in \CG} (w_{i}-w_{j})^{2} \ge n.$$
Since $|w_{i}-w_{j}| \le 1/A\sqrt{\log n}$, it follows that 
$$|\CG| \ge A^{2} n \log n.$$
Now we consider the sum $\sum_{i,j,r} \|t (w_{i}-w_{j})r\|_{\R/\Z}^{2}$, where $(i,j) \in \CG$. As $1/\sqrt{n}  \le |w_{i}-w_{j}| \le 1/A\sqrt{\log n}$ and $1 \le |t| \le  \sqrt{A\log n}$, we have 
$\frac{1}{\sqrt{n}}  \le |t (w_{i}-w_{j})| \le \frac{1}{\sqrt{A}}<\frac{1}{C}$.
Thus, by Lemma \ref{lemma:wraparound}, it follows that 
$$  \sum_{r \in \CR} \|t (w_{i}-w_{j}) r\|_{\R/\Z}^{2} =\Theta_{\delta}(n).$$
This implies 
$$\sum_{i,j,r} \|t (w_{i}-w_{j})r\|_{\R/\Z}^{2} =\Omega_{\delta}(n |\CG|) =\Omega_{\delta}(A^2n^{2}\log n).$$
Therefore, for $1\le |t|\le \sqrt{A\log n}$, we also 
have
\begin{equation}\label{eqn:L:large:t}
|\varphi_{S}(t)| \le n^{-A}.
\end{equation}

Our plan in the proof of \eqref{eqn:1:L} is to replace $k(t)$ in \eqref{eqn:kt} by some smoother function that can be extended holomorphically to $\C$. Our starting point is that
\begin{equation}\label{eqn:identity:1d}
\int_{\R} e^{-\pi t^{2}} e^{i t x} dt = e^{-\pi x^{2}/2}.
\end{equation}
Hence
$$\E \int_{\R} e^{-\pi t^{2}} e^{i t X} dt = \E e^{-\pi X^{2}/2}.$$

For any $K$ (noting here and later that the integrals are real valued due to the symmetry of the range of $t$),
$$-\int_{|t|\ge K} e^{-\pi  t^{2}}dt  \le \int_{|t|>K} e^{-\pi  t^{2}} e^{i t x} dt  \le \int_{|t|\ge K} e^{-\pi  t^{2}} dt \le e^{-\Theta(K^{2})}.$$ 
Thus, for sufficiently large $A$, with $X=S-Ln$
$$|\E \int_{|t| \ge \sqrt{A\log n}} e^{-\pi  t^{2}} e^{i t X} dt| \le \int_{|t| \ge \sqrt{A\log n}} e^{-\pi  t^{2}} dt  \le n^{-2\sqrt{A}}.$$
We thus have 
\begin{align}\label{eqn:X<1:1}
\P(|X|\le 1) \le e^{\pi/2}  \E e^{-\pi X^{2}/2} &\le e^{\pi/2} \Big[   \E \int_{|t| \le \sqrt{A\log n}} e^{-\pi  t^{2}} e^{i t X} dt + n^{-2\sqrt{A}} \Big] \nonumber \\
&\le  e^{\pi/2} \int_{|t| \le \sqrt{A\log n}} e^{-\pi  t^{2}} \E e^{i t X} dt + e^{\pi/2} n^{-2\sqrt{A}}.
\end{align}
At this point, if $\P(|X| \le 1) \le 2e^{\pi /2} n^{-2\sqrt{A}}$, then there is nothing to prove.
In the remaining case, from \eqref{eqn:X<1:1} we have reached that  
$$\P(|X|<1) \le 2 e^{\pi/2} \int_{|t| \le \sqrt{A\log n}} e^{-\pi  t^{2}} \varphi_{X}(t) dt.$$
Combining \eqref{eqn:L:large:t} with \eqref{eqn:phiS:larget} from the proof of \eqref{eqn:1:u}, we get: for $(\sqrt{A\log n})/n \le |t| \le \sqrt{A\log n}$, 
$$|\varphi_{X}(t)| \le n^{-2\sqrt{A}}.$$
It follows that
$$
\int_{(\sqrt{A\log n})/n\le |t| \le \sqrt{A\log n}} e^{-\pi  t^{2}} \varphi_{X}(t) dt \le (2\sqrt{A\log n})n^{-2\sqrt{A}} = o(n^{-\sqrt{A}}).
$$
It remains to bound $ \int_{|t| \le (\sqrt{A\log n})/n} e^{-\pi  t^{2}} \varphi_{X}(t) dt$.
We will decompose the integral to
\begin{align*}
\int_{|t| \le (\sqrt{A\log n})/n} e^{-\pi  t^{2}} \varphi_{X}(t) dt & =  \int_{  L/n< |t| \le (\sqrt{A\log n})/n} e^{-\pi  t^{2}} \varphi_{X}(t) dt +\int_{|t| \le L/n} e^{-\pi  t^{2}} \varphi_{X}(t) dt.
\end{align*}
For the first integral, recall from the proof of \eqref{eqn:1:u} in the case of ``small $|t|$'' that for $|t|\le (\sqrt{A\log n})/n$, $|\varphi_{X}(t)| \le \exp(-\Theta (t^{2}n^{2}))$, so
$$\int_{  L/n< |t| \le (\sqrt{A\log n})/n} e^{-\pi  t^{2}} \varphi_{X}(t) dt \le \frac{1}{n}\int_{L<|x|} e^{-\Theta(x^{2})} dx \le  \frac{1}{n}e^{-\Theta(L^{2})}.$$

It remains to work with the second integral 
\[
\int_{|t| \le L/n} e^{-\pi t^{2}} \varphi_{X}(t) \, dt,
\]
which, by the change of variable \(x = nt\), can be rewritten as
\[
\frac{1}{n} \int_{|x| \le L} e^{-\pi x^{2}/n^{2}} \, \varphi_{X/n}(x) \, dx,
\]
where
\[
\varphi_{X/n}(x) = \mathbb{E} \, e^{i x X/n} = \mathbb{E} \, e^{i x (S/n - L)} = e^{-i xL}\varphi_{S/n}(x).
\]
Note that if we extend \(\varphi_{S/n}(t)\) (or  \(\varphi_{X/n}(t)\)) to the complex plane, since \(S\) is bounded (for each fixed \(n\)), we obtain a holomorphic function
\[
\varphi_{S/n}(z) =  \mathbb{E} \, e^{i z (S/n)}.
\]
Let
\[
h(t) = e^{-\pi t^{2}/n^{2}} \, \varphi_{S/n}(t).
\]
This function can be extended holomorphically to
\[
h(z) = e^{-\pi z^{2}/n^{2}} \, \varphi_{S/n}(z).
\]
Since $|\mathbb{E} Y| \le \mathbb{E}|Y|$ for any complex-valued random variable $Y$, inequality \eqref{eqn:mgf:1} gives  \begin{equation}\label{eqn:CS:1} 
|\varphi_{S/n}(z)| = \Big|\mathbb{E} \, e^{i z S/n} \Big|= \Big|\mathbb{E} \, e^{i (t+ i s) S/n} \Big|= \Big|\mathbb{E} \, e^{i (t/n) S} e^{-s (S/n)} \Big|\le \mathbb{E} \, e^{-s (S/n)} \le C_{0}'e^{C_{0}' s^{2}}.
\end{equation}

Now we establish the bound \(O(\frac{\exp(-c L^{2})}{n})\) for some sufficiently small constant \(c\) (such as \(c = 1/(4C_{0}')\)). Write 
$$ \frac{1}{n} \int_{|x| \le L} e^{-\pi x^{2}/n^{2}} \, \varphi_{X/n}(x) dx=\frac{1}{n} \int_{|x| \le L}  e^{-i xL} e^{-\pi x^{2}/n^{2}} \, \varphi_{S/n}(x)dx= \frac{1}{n} \int_{|x| \le L} e^{-i xL} h(x)dx.$$
First, using contour integration, we pass to the line \(\R - i c L \):
\begin{align}\label{eqn:contour}
\frac{1}{n} \int_{|x| \le L} e^{-i x L} h(x) \, dx
&= \Re \Big[ \frac{1}{n} \int_{\substack{z \in \R - i c L \\ |\Re(z)| \le L}} e^{-i z L} h(z) \, dz \Big] \nonumber \\
&= \frac{1}{n} \Re \Big[ \int_{|t| \le L} e^{-i (t - i c L) L} h(t - i c L ) \, dt \Big] \nonumber \\
&= \frac{e^{-c L^{2}}}{n} \Re \Big[ \int_{|t| \le L} e^{-i t L} h(t - i c L ) \, dt \Big],
\end{align}
where it is crucial to notice that the first integral is real-valued because \(h(-t) = \overline{h(t)}\), and the real parts of the integrals (with opposite orientation) on the lines \(\Re(z) = -L\) and \(\Re(z) = L\) cancel each other.  
More specifically,
\[
\Re \int_{z = -L - i t,\, 0 \le t \le c L} e^{-i z L} h(z) \, dt
= \Re \int_{z = L - i t,\, 0 \le t \le c L} e^{-i z L} h(z) \, dt
\]
since they are conjugates of each other.  
This follows from \(S \in \R\) and
\[
h(-x + i y) = e^{-\pi (-x + i y)^{2}/n^{2}} \, \mathbb{E} \, e^{i (-x + i y) S / n}
= e^{-\pi (x^{2} - y^{2} - 2 i x y)/n^{2}} \, \mathbb{E} \, e^{-i x S / n} e^{-y S / n},
\]
while
\[
h(x + i y) = e^{-\pi (x + i y)^{2}/n^{2}} \, \mathbb{E} \, e^{i (x + i y) S / n}
= e^{-\pi (x^{2} - y^{2} + 2 i x y)/n^{2}} \, \mathbb{E} \, e^{i x S / n} e^{-y S / n}.
\]

Thus, by \eqref{eqn:contour},
\[
\P(|X| \le 1) \le \frac{e^{-c L^{2}}}{n} \Big| \int_{|t| \le L} e^{-i t L} h(t - i c L) \, dt \Big|.
\]
Note that
\[
\Big| e^{-\pi (t - i c L)^{2}/n^{2}} \Big|
= e^{-\pi (t^{2} - c^{2} L^{2})/n^{2}} = \Theta(1),
\quad \text{as } n \to \infty \text{ and } |t| \le L = O(\sqrt{\log n}),
\]
and by \eqref{eqn:CS:1},
\[
|\varphi_{S/n}(t - i c L)| \le C_{0}' e^{C_{0}' c^{2} L^{2}}.
\]

Putting these together, by choosing \(c = 1/(4 C_{0}')\), we obtain (in the case \(\P(|X| \le 1) \ge e^{\pi/2}n^{-A/2}\))
\[
\P(|X| \le 1) \le \frac{e^{-c L^{2}}}{n} \cdot e^{C_{0}' c^{2} L^{2}} \cdot 2L
=O\Big(\frac{e^{-L^{2}/(16 C_{0}')} \, L}{n}\Big) =O\Big(\frac{e^{-\Theta(L^{2})}}{n}\Big).
\]
\end{proof}

\begin{remark}\label{rmk:optimal}
We observe that the assumption $|w_i - w_j| \le \frac{1}{A \sqrt{\log n}}$ in Theorem~\ref{thm:cont:1} cannot be relaxed to $|w_i - w_j| = O(\frac{1}{\sqrt{\log n}})$ when using only the characteristic function method (i.e., relying solely on Theorem~\ref{thm:Roo}). Indeed, suppose we partition $[n]$ into two disjoint sets $I \cup J = [n]$ with $|I| = 2c \log n$ for some constant $c > 0$. Let $w_i = \frac{1}{\sqrt{2c \log n}}$ and $- \frac{1}{\sqrt{2c \log n}}$ for half of the $i \in I$ respectively, and $w_j = 0$ for all $j \in J$. Then,
\begin{align*}
\frac{1}{n^2} \sum_{i,j,r} \|t(w_i - w_j) r\|_{\R/\Z}^2 
&= \frac{1}{n^2} \Big[ \sum_{(i,j) \in I \times J} \sum_r \|t(w_i - w_j) r\|_{\R/\Z}^2 + \sum_{(i,j) \in I \times I} \sum_r \|t(w_i - w_j) r\|_{\R/\Z}^2 \Big] \\
&\le \frac{1}{n^2} \Big(2 c \log n \cdot n + (c \log n)^2 \Big) \cdot n \le 3c \log n.
\end{align*}
Therefore,
\[
\int_{|t| \le 1} \exp\Big( - \frac{1}{n^2} \sum_{i,j,r} \|t(w_i - w_j) r\|_{\R/\Z}^2 \Big) dt 
\ge \int_{|t| \le 1} e^{-3c \log n} \, dt 
= \Theta(n^{-3c}),
\]
which is too large to obtain meaningful decay.
\end{remark}

We next move to a finer scale, proving Theorem \ref{thm:cont:3/2}.

\begin{proof}[Proof of \eqref{eqn:3/2:u} of Theorem \ref{thm:cont:3/2}]
Let $\Delta>0$ be a constant chosen sufficiently large in terms of $\eps$ and $\delta$.\footnote{Only the small-$|t|$ regime considered below requires $\Delta$ to be a constant. The other regimes hold for \emph{any} $\Delta = \Delta(n)$ sufficiently large with respect to $\eps$ and $\delta$.} Define
\[
S' = \sum_{i} \frac{n^{3/2}}{\Delta}w_{i} v_{\pi(i)} =\frac{n^{1/2}}{\Delta} S
\quad\text{and}\quad
X' = S' - \frac{n^{3/2}}{\Delta}L.
\]
To prove \eqref{eqn:3/2:u}, it suffices to show $\P(|X'| \le 1)=O_{\eps,\delta,\Delta,A}(\frac{1}{n^{3/2}})$.
Using Esseen's estimate together with Corollary \ref{cor:Roos}, we can write
\[
\sup_{x} \P(|S' - x| \le 1)
= O\Big(\int_{|t| \le 1} |\varphi_{S'}(t)| \, dt\Big)
=O\Big(\int_{|t| \le 1} \exp\Big\{- \frac{1}{2n^{3}} \sum_{i,j,k,l} \Big\|\frac{t n^{3/2}}{\Delta} (w_{i}-w_{j})(v_{k}-v_{l})\Big\|_{\R/\Z}^{2}\Big\} \, dt\Big).
\]
Recalling that $v_i = i/n$ for $i\in I$, and following the argument from the proof of \eqref{eqn:1:u}, the exponent on the right-hand side can be bounded from below by
\[
\frac{c_{\delta}}{n^{2}} \sum_{\substack{1 \le i,j \le n \\ r\in \CR}}
\Big\| \frac{t n^{1/2}}{\Delta} (w_{i} - w_{j}) r \Big\|_{\R/\Z}^{2},
\]
and so
\begin{equation}\label{eqn:int:3/2}
\P(|X'| \le 1)  
= O\Big(\int_{|t| \le 1} \exp\Big\{ -\frac{c_{\delta}}{n^{2}} \sum_{\substack{1 \le i,j \le n \\ r\in \CR}} \Big\| \frac{t n^{1/2}}{\Delta}(w_{i} - w_{j}) r \Big\|_{\R/\Z}^{2} \Big\} dt\Big).
\end{equation}

We note that this differs from \eqref{eqn:int:1} in that we have an extra factor of $n^{1/2}$ in the exponent.  
As such, our case analysis for $t$ will be different, and we will need more information in addition to \eqref{eqn:separation} about the $w_{i}$ (as assumed in Theorem \ref{thm:cont:3/2}).

We discard pairs $(i,j)$ for which $|w_{i}-w_{j}|$ is much smaller than $1/\sqrt{n}$, and set
\[
\CG = \{ (i,j) : |w_{i}-w_{j}| \ge \eps / 2\sqrt{n} \}.
\]
Since there is no interval of length $\eps/\sqrt{n}$ containing at least $(1-\eps)n$ elements from $w_{1},\dots,w_{n}$, the number of pairs $(i,j)$ such that $|w_{i}-w_{j}| < \eps/2\sqrt{n}$ is at most $(1-\eps)n^{2}$.
Thus, we have
\begin{equation}\label{eqn:3/2:u:starting}
|\CG|\ge \eps n^2, \qquad \sum_{(i,j)\in \CG} (w_{i}-w_{j})^{2} =\Theta_{\eps}(n).
\end{equation}

\underline{\bf Intermediate $|t|$.}  
We first focus on the range
\[
\frac{(\sqrt{A\log n})\Delta}{n^{3/2}} \le |t| \le \frac{\Delta}{n^{1/2}}.
\]

Now, for $\log \big(\frac{2}{\eps}\big)\le k \le \log\Big( \frac{\sqrt{n}}{A \sqrt{\log n}} \Big)+1$ (so that $\frac{2^{k-1}}{\sqrt{n}} \le \frac{1}{A\sqrt{\log n}}$ and $\frac{2^{k}}{\sqrt{n}} \ge \frac{\eps}{2\sqrt{n}}$), let $\CG_{k}$ be the collection of pairs $(i,j)$ for which
\[
D_{k-1} = \frac{2^{k-1}}{\sqrt{n}} < |w_{i}-w_{j}| \le \frac{2^{k}}{\sqrt{n}} = D_{k}.
\]
Then we see that $\CG=\bigcup_{k}\CG_{k}$ and
\[
\sum_{k} D_{k}^{2} |\CG_{k}| =\Theta_{\eps}(n).
\]

Given a fixed pair $(i,j)\in \CG_{k}$, we consider the sum $\sum_{r\in \CR} \|\frac{t n^{1/2}}{\Delta} (w_{i}-w_{j})r\|_{\R/\Z}^{2}$. For $\frac{t n^{1/2}}{\Delta}D_{k}\le \frac{1}{C n}$, we have $|\frac{t n^{1/2}}{\Delta} (w_{i}-w_{j})r|\le \frac{1}{C}<1$, so
\[
\sum_{r\in \CR}\Big\|\frac{t n^{1/2}}{\Delta} (w_{i}-w_{j})r\Big\|^{2}_{\R/\Z} = \sum_{r\in \CR}\Big|\frac{t n^{1/2}}{\Delta} (w_{i}-w_{j})r\Big|^2 =\Theta_{\delta} \Big(\frac{t^{2}n^{4}D^{2}_{k}}{\Delta^{2}}\Big).
\]
On the other hand, if $\frac{tn^{1/2}}{\Delta}D_{k}> \frac{1}{Cn}$, then $\frac{1}{2Cn}\le |\frac{tn^{1/2}}{\Delta}(w_{i}-w_{j})|\le \frac{1}{A\sqrt{\log n}}$, and by Corollary \ref{cor:wraparound}, we get
\[
\sum_{r\in \CR} \Big\|\frac{tn^{1/2}}{\Delta} (w_{i}-w_{j})r\Big\|_{\R/\Z}^{2} =\Theta_{C}(n).
\]
From the above discussion, we conclude that
\[
\sum_{i,j,r} \Big\|\frac{tn^{1/2}}{\Delta} (w_{i}-w_{j})r\Big\|_{\R/\Z}^{2}
=\Omega_{\delta,C}(1)\cdot
\Big(
\sum_{k: \, \frac{|t|n^{1/2}}{\Delta}D_{k} \le \frac{1}{Cn}} \frac{t^2 n^{4} D_{k}^{2}}{\Delta^2} |\CG_{k}|
+ \sum_{k: \, \frac{|t|n^{1/2}}{\Delta}D_{k}>\frac{1}{Cn}} n |\CG_{k}|\Big).
\]

To lower bound the right-hand side, we distinguish two cases.

\medskip
\noindent\textbf{Case 1:} $\sum_{k: \, \frac{|t|n^{1/2}}{\Delta}D_{k} \le \frac{1}{Cn}} D_{k}^{2} |\CG_{k}| =\Theta_{\eps}(n)$.

Since $t^2 \ge \frac{(A\log n)\Delta^{2}}{n^{3}}$, we have
\[
\sum_{k: \, \frac{|t|n^{1/2}}{\Delta}D_{k} \le \frac{1}{Cn}} \frac{t^2 n^{4} D_{k}^{2}}{\Delta^{2}} |\CG_{k}|
=\Omega_{\eps}(An^{2}\log n).
\]

\medskip
\noindent\textbf{Case 2:} $\sum_{k:\, \frac{|t|n^{1/2}}{\Delta}D_{k} > \frac{1}{Cn}} D_{k}^{2} |\CG_{k}| =\Theta_{\eps}(n)$.

As $D^{2}_{k}\le 1/(A^2\log n)$, we obtain $\sum_{k:\, \frac{|t|n^{1/2}}{\Delta}D_{k} > \frac{1}{Cn}} |\CG_{k}|= \Omega_{\eps}(A^2 n \log n)$,
which implies
\[
\sum_{k:\, \frac{|t|n^{1/2}}{\Delta}D_{k} > \frac{1}{Cn}} n |\CG_{k}|
=\Omega_{\eps}(A^2 n^{2}\log n).
\]

Therefore, in both cases,
\[
\sum_{i,j,r} \|t n^{1/2} (w_{i}-w_{j})r\|_{\R/\Z}^{2} =\Omega_{\eps,\delta,C}(A n^{2} \log n).
\]
Assuming $A$ is sufficient large relative to $\eps,\delta$, and $C$, this implies that for $\frac{ \sqrt{A\log n}}{n^{3/2}} \le |t| \le \frac{1}{n^{1/2}}$, we have
\[
\frac{c_{\delta}}{n^{2}}\sum_{i,j,r} \| t (w_{i} - w_{j}) r \|_{\R/\Z}^{2} \ge 2\sqrt{A} \log n, \qquad |\varphi_{S'}(t)| \le n^{-2\sqrt{A}}.
\]

\underline{\bf Large $|t|$.} We now focus on the range 
$$\frac{\Delta}{n^{1/2}} \le |t|\le 1.$$  

Let $\CG_{0}$ be the set obtained from $\CG$ by removing pairs $(i,j)$ with $|w_{i} - w_{j}| > 2/\sqrt{\eps n}$. Since $\sum_{i,j}(w_{i}-w_{j})^2=2n$, at most $(\eps/2)n^2$ pairs were removed. From \eqref{eqn:3/2:u:starting}, we see that $|\CG_{0}|\ge |\CG|-(\eps/2)n^2\ge (\eps/2)n^2$. For every $(i,j)\in \CG_{0}$, as $\frac{\eps}{2\sqrt{n}} \le |w_{i} - w_{j}| \le \frac{2}{\sqrt{\eps n}}$, we have
\[
\frac{\eps}{2\sqrt{n}}\le \Big|\frac{t n^{1/2}}{\Delta} (w_{i}-w_{j})\Big|\le \frac{2}{\Delta \sqrt{\eps}}.
\]
Assuming $\Delta$ is sufficiently large in terms of $\eps$ and $\delta$, Corollary \ref{cor:wraparound} then gives
\[
\frac{c_{\delta}}{n^{2}}
\sum_{\substack{(i,j)\in \CG_{0}\\ r\in \CR}}
\Big\| \frac{t n^{1/2}}{\Delta} (w_i-w_j) r \Big\|_{\R/\Z}^{2}
= \Theta_{\delta}\!\Big(\frac{|\CG_{0}|}{n}\Big)
= \Theta_{\delta,\varepsilon}(n).
\]

\underline{\bf Small $|t|$.}  
It remains to consider 
$$|t| \le \frac{(\sqrt{A\log n})\Delta}{n^{3/2}}.$$

In this case, as $|w_{i}- w_{j}| \le 1/(A \sqrt{\log n})$, we have
\[
\Big|\frac{t n^{1/2}}{\Delta} (w_{i}-w_{j})r\Big|\le \frac{1}{\sqrt{A}}<1.
\]
Therefore,
\[
\frac{c_{\delta}}{n^{2}} \sum_{i,j,r} \Big\|\frac{t n^{1/2}}{\Delta} (w_{i}-w_{j})r\Big\|_{\R/\Z}^{2}
= \frac{c_{\delta}}{n^{2}} \sum_{i,j,r} \Big|\frac{t n^{1/2}}{\Delta}(w_{i}-w_{j})r\Big|^{2}
=\Theta_{\delta,\Delta}(t^{2} n^{3}).
\]
Hence,
\[
\int_{|t|\le \frac{(\sqrt{A\log n})\Delta}{n^{3/2}}} \exp\Big\{- \frac{c_{\delta}}{n^{2}} \sum_{\substack{1\le i,j \le n \\r \in \CR}} \Big\| \frac{t n^{1/2}}{\Delta} (w_{i}-w_{j}) r\Big\|_{\R/\Z}^{2}\Big\}\Big)
\le \int_{\R} e^{-\Theta(t^{2} n^{3})}\, dt
= O(n^{-3/2}).
\]
\end{proof}

\begin{remark}\label{rmk:optimal:3/2}
Consider the case where $w_1 = \dots = w_{(1-t)n} =0$ and $w_{(1-t)n+1} = \dots = w_n = \pm 1/\sqrt{ \delta n}$, which satisfies the condition of Theorem~\ref{thm:cont:1}. In this setting, one can show that the weighted sum $\sqrt{ \delta n} \sum_{i} w_i \pi(i)$ spreads rather evenly over the interval $[-C t^{1/2} n^{3/2}, Ct^{1/2}n^{3/2}]$ for some constant $C > 0$.\footnote{This follows since the variance is of order $t n^3$.} By the pigeonhole principle, $\sup_{x} \P(\sqrt{\delta n} \sum_{i} w_i \pi(i)=x)= \Omega(t^{-1/2}n^{-3/2})$. Hence for any $\delta>0$
\[
\sup_{x} \P(| \sum_{i} w_i \pi(i)/n-x| \le \delta)  =\Omega(t^{-1/2}n^{-3/2}).
\]
\end{remark}

In what follows, we turn to the proof of the $L$-dependent estimate in this finer scale.  
Our general method is similar to the proof of \eqref{eqn:3/2:L}, although the details are slightly different.

\begin{proof}[Proof of \eqref{eqn:3/2:L} of Theorem \ref{thm:cont:3/2}]
Define
\[
S' = \sum_{i} n^{3/2-\eps} w_i v_{\pi(i)} = n^{1/2-\eps} S
\quad \text{and} \quad
X' = S' - n^{3/2-\eps} L.
\]
These random variables correspond to those defined in the proof of~\eqref{eqn:3/2:u}, obtained by setting $\Delta = n^{\varepsilon}$. It suffices to assume $L = O(\sqrt{\log n})$.  

We first observe that our treatment of the intermediate- and large-$|t|$ regimes above, namely
\[
|\varphi_{S'}(t)| \le n^{-2\sqrt{A}} \quad \text{for} \quad \frac{\sqrt{A\log n}}{n^{3/2-\eps}} \le t \le 1,
\]
in fact extends all the way to $|t| \le \sqrt{A\log n}$.

\underline{\bf Very large $|t|$.}  We now assume that
\[
1 \le |t| \le \sqrt{A\log n}.
\]

Similarly to the treatment of the large-$|t|$ regime in the proof of \eqref{eqn:3/2:u}, there exists a set $\CG_{0}$ containing at least $(\eps/2)n^{2}$ pairs $(i,j)$ for which $\frac{\eps}{2\sqrt{n}} \le |w_{i} - w_{j}| \le \frac{2}{\sqrt{\eps n}}$. For each such pair, we have
\[
\frac{\eps}{2 n^{\eps}}\le |t n^{1/2-\eps} (w_{i}-w_{j})|\le \frac{2\sqrt{A\log n}}{\sqrt{\eps}n^{\eps}}.
\]
It then follows from Corollary \ref{cor:wraparound} that
\[
\frac{c_{\delta}}{n^{2}} \sum_{\substack{(i,j) \in \CG_{0}\\ r \in \CR}} 
\Big\| t n^{1/2-\eps} (w_{i}-w_{j}) r \Big\|_{\R/\Z}^{2} 
=\Theta_{\delta}\Big(\frac{|\CG_{0}|}{n}\Big) =\Theta_{\delta,\eps}(n).
\]
As such, in the case $1 \le |t| \le \sqrt{A\log n}$, we also have
\[
|\varphi_{S'}(t)| \le e^{-\Omega(n)}.
\]

\medskip
\noindent
Our next step is similar to the proof of \eqref{eqn:1:L} of Theorem \ref{thm:cont:1}, so we will be brief.  
Starting from
\[
\int_{t \in \R} e^{-\pi t^{2}} e^{i t x} \, d\xi = e^{-\pi x^{2}/2},
\]
and assuming that $\P(|X| \le 1) \le 2e^{\pi /2} n^{-2\sqrt{A}}$, we arrive at
\[
\P(|X'|\le 1) \le 2 e^{\pi/2} \int_{|t| \le \sqrt{A\log n}} e^{-\pi t^{2}} \varphi_{X'}(t) \, dt.
\]
Since we have shown $|\varphi_{X'}(t)| \le n^{-2\sqrt{A}}$ for $\frac{\sqrt{A\log n}}{n^{3/2-\eps}} \le |t| \le \sqrt{A\log n}$, it follows that
\[
\P(|X'|\le 1) \le 2 e^{\pi} \int_{|t| \le \frac{\sqrt{A\log n}}{n^{3/2-\eps}}} e^{-\pi t^{2}} \varphi_{X'}(t) \, dt + (2\sqrt{A\log n})n^{-2\sqrt{A}}.
\]
Similarly to the proof of Theorem \ref{thm:cont:1}, we decompose the integral on the right-hand side into
\[
\int_{|t| \le L/n^{3/2-\eps}} e^{-\pi t^{2}} \varphi_{X'}(t) \, dt
\;+\;
\int_{L/n^{3/2-\eps} < |t| \le (\sqrt{A\log n})/n^{3/2-\eps}} e^{-\pi t^{2}} \varphi_{X'}(t) \, dt.
\]

The treatment of the second integral is similar to the method used for small-$|t|$ regime (i.e., $|t| \le \frac{\sqrt{A \log n}}{n^{3/2}}$) in the proof of \eqref{eqn:3/2:u}, where we obtained a bound of the type
\[
\frac{1}{n^{3/2}}\int_{L \le |x|} \exp(-c x^{2}) \, dx
= \frac{\exp(-cL^{2})}{n^{3/2}}.
\]

For the first integral, by the change of variables $x = n^{3/2-\eps}t$, we can rewrite it as
\[
\frac{1}{n^{3/2-\eps}}\int_{|x| \le L} e^{-\pi x^{2}} \varphi_{X'/n^{3/2-\eps}}(x) \, dx,
\]
where
\[
\varphi_{X'/n^{3/2-\eps}}(x) = \mathbb{E} \, e^{i x (S'/n^{3/2-\eps} - L)}.
\]
If we extend this to the complex plane, then, as $S'$ is bounded, we obtain a holomorphic function
\[
\varphi_{S'/n^{3/2-\eps}}(z) = \mathbb{E} \, e^{i z S'/n^{3/2-\eps}}.
\]

Let
\[
h(t) = e^{-\pi t^{2}/n^{3/2-\eps}} \, \varphi_{S'/n^{3/2-\eps}}(t),
\]
which extends holomorphically to
\[
h(z) = e^{-\pi z^{2}/n^{3/2-\eps}} \, \varphi_{S'/n^{3/2-\eps}}(z).
\]
By \eqref{eqn:mgf:1},
\begin{equation}\label{eqn:CS:3/2}
|\varphi_{S'}(z/n^{3/2-\eps})| = |\varphi_{S}(z/n)| \le C_{0}' e^{C_{0}' s^{2}}.
\end{equation}
Next, by using contour integration, we pass to the line $\R + icL$:
\begin{align}\label{eqn:contour:3/2}
\P(|X'|\le 1)
&\le \frac{1}{n^{3/2-\eps}} \int_{|t| \le L} e^{-i t L} h(t) \, dt \\
&= \Re \Big( \int_{\substack{z\in \R - icL \\ |\Re(z)| \le L}} e^{-i z L} h(z) \, dz \Big) \nonumber \\
&= \frac{1}{n^{3/2-\eps}} \Re \Big( \int_{|t| \le L} e^{-i(t-icL)L} h(t - icL) \, dt \Big) \nonumber \\
&= \frac{e^{-cL^{2}}}{n^{3/2-\eps}} \, \Re \Big( \int_{|t| \le L} e^{itL} h(t - icL) \, dt \Big). \nonumber
\end{align}
By \eqref{eqn:CS:3/2},
\[
|\varphi_{S'/n^{3/2-\eps}}(t - icL)| = O\Big(e^{C_{0}' c^{2} L^{2}}\Big).
\]

Putting this together, by choosing $c = 1/(4C_{0}')$, we obtain the bound
\[
\P(|X'|\le 1) \le \frac{e^{-cL^{2}}}{n^{3/2-\eps}} \cdot e^{C_{0}' c^{2} L^{2}} \cdot 2A L
= O\Big( \frac{e^{-\Theta(L^{2})}}{n^{3/2-\eps}} \Big).\qedhere
\]
\end{proof}

To conclude this section, we further refine the scaling under
an additional---yet still quite generic---condition on the $w_{i}$, as stated below. The proof will be given in Appendix \ref{sec:proof-5/2}.

\begin{theorem}\label{thm:cont:5/2}
Let $0 < \eps < 1/2$ and $\delta > 0$ be given.  
Suppose that the sequence $(w_1, \ldots, w_n)$ satisfies Condition~\ref{cond:separation} for some sufficiently large constant $A > 0$.  
In addition, assume that no interval of length $\eps / \sqrt{n}$ contains more than $(1 - \eps)n$ of the values $w_i$, and that there are at least $A n \log n$ pairs $(i, j)$ for which  
\[
\frac{1}{n^{3/2}} \le |w_i - w_j| \le \frac{1}{n^{3/2 - \eps}}.
\]
Then, for every $L \in \R$, we have  
\[
\P \Big( \Big|\sum_i w_i \, \frac{\pi(i)}{n} - L \Big| \le \frac{1}{n^{5/2 - \eps}} \Big) = O\Big( \frac{1}{n^{5/2 - \eps}} \Big).
\]
\end{theorem}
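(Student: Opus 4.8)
The plan is to follow the proof of~\eqref{eqn:3/2:u} (the uniform scale-$n^{-3/2}$ estimate), but at the finer scale $n^{-5/2+\eps}$; the extra power of $n$ forces us to exploit the new hypothesis on the pairs with $|w_i-w_j|\asymp n^{-3/2}$. Since the assertion is uniform in $L$ with no sub-gaussian factor, no contour-integration is needed: writing $S' = n^{5/2-\eps}\sum_i w_i\pi(i)/n = n^{3/2-\eps}\sum_i w_i\pi(i)$ and $X' = S' - n^{5/2-\eps}L$, we have $|\varphi_{X'}(t)|=|\varphi_{S'}(t)|$, so it suffices to show $\P(|X'|\le1)=O(n^{-5/2+\eps})$. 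By Esseen's inequality $\P(|X'|\le1)\ll\int_{|t|\le1}|\varphi_{S'}(t)|\,dt$, and applying Corollary~\ref{cor:Roos} to $S'=\sum_i a'_{i\pi(i)}$ with $a'_{ij}=n^{3/2-\eps}w_i\,j$ (so that $a'_{ik}-a'_{jk}-a'_{il}+a'_{jl}=n^{3/2-\eps}(w_i-w_j)(k-l)$) and restricting $k-l$ to the positive-density set $\CR$ from Fact~\ref{fact:R} exactly as in the proof of~\eqref{eqn:1:u}, we reach
\[
\P(|X'|\le1)\ \ll\ \int_{|t|\le1}\exp\!\Big(-\frac{c_\delta}{n^2}\sum_{\substack{1\le i,j\le n\\ r\in\CR}}\big\|t\,n^{3/2-\eps}(w_i-w_j)r\big\|_{\R/\Z}^2\Big)\,dt\ =:\ \int_{|t|\le1}\exp(-\Phi(t))\,dt .
\]

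For $|t|\le t_1:=A\sqrt{\log n}\,n^{-5/2+\eps}$, the separation hypothesis~\eqref{eqn:separation} together with $|r|\le n$ makes every argument $t\,n^{3/2-\eps}(w_i-w_j)r$ lie in $(-1,1)$, so each $\|\cdot\|_{\R/\Z}$ equals its argument and $\Phi(t)\asymp_\delta t^2n^{5-2\eps}$ (using $\sum_{i,j}(w_i-w_j)^2=2n$ and $\sum_{r\in\CR}r^2\asymp_\delta n^3$); hence $\int_{|t|\le t_1}\exp(-\Phi(t))\,dt\ll n^{-5/2+\eps}\int_\R e^{-\Theta(x^2)}\,dx=O(n^{-5/2+\eps})$, which is the main term. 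For $t_1\le|t|\le t_2:=A\sqrt{\log n}\,n^{-3/2+\eps}/C$ we run the Case~1/Case~2 dichotomy of the proof of~\eqref{eqn:3/2:u} on the bulk pairs $\CG=\{(i,j):|w_i-w_j|\ge\eps/(2\sqrt n)\}$, which by the no-concentration hypothesis satisfy $|\CG|\ge\eps n^2$ and $\sum_{\CG}(w_i-w_j)^2\asymp_\eps n$: grouping dyadically by $|w_i-w_j|\asymp D_k$ and splitting according to whether $|t|n^{3/2-\eps}D_k\le\tfrac1{Cn}$ (contribution $\asymp_\delta t^2n^{6-2\eps}D_k^2|\CG_k|$ per level) or $|t|n^{3/2-\eps}D_k\in(\tfrac1{Cn},\tfrac1C]$ — the upper cutoff guaranteed by $|t|\le t_2$ and $D_k\le\tfrac1{A\sqrt{\log n}}$, so that Corollary~\ref{cor:wraparound} gives $\asymp_{\delta,C}n$ per pair — the two cases yield $\sum_{i,j,r}\|\cdots\|^2\gg_\eps t^2n^{7-2\eps}\gg A^2n^2\log n$ (from $|t|\ge t_1$) and $\gg_\eps(\sum_k|\CG_k|)\,n\gg A^2n^2\log n$ (from $D_k^2\le\tfrac1{A^2\log n}$), respectively. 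Thus $\Phi(t)\gg_{\delta,\eps}A^2\log n$, so $|\varphi_{S'}(t)|\le n^{-2\sqrt A}$ and this regime contributes $\le 2t_2n^{-2\sqrt A}\ll n^{-\sqrt A}$.

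The new ingredient is the range $t_2\le|t|\le1$. Here the bulk pairs eventually become useless — for $|t|$ near $1$ one has $|t|n^{3/2-\eps}|w_i-w_j|\gg1$ for every bulk pair, so Corollary~\ref{cor:wraparound} no longer applies — and we invoke the hypothesis furnishing a set $\CS$ of at least $An\log n$ pairs with $|w_i-w_j|\in[n^{-3/2},n^{-3/2+\eps}]$. For $(i,j)\in\CS$, $b_{ij}(t):=|t|\,n^{3/2-\eps}|w_i-w_j|\in[\,|t|n^{-\eps},\,|t|\,]$, and for the sub-collection with $b_{ij}(t)\in[\tfrac1{Cn},\tfrac1C]$ Corollary~\ref{cor:wraparound} gives $\sum_{r\in\CR}\|b_{ij}(t)r\|_{\R/\Z}^2\asymp_{\delta,C}n$; if $\gg n\log n$ pairs of $\CS$ lie in this sub-collection then $\Phi(t)\gg A\log n$, whence $|\varphi_{S'}(t)|\le n^{-2\sqrt A}$ and the contribution of $[t_2,1]$ is again $\ll n^{-\sqrt A}\ll n^{-5/2+\eps}$. (This is exactly why the hypothesis is stated with the count $An\log n$ rather than merely $\gg n$: the argument counts admissible pairs directly, rather than weighing them by $(w_i-w_j)^2$ as in the bulk case, where the mass of $\CS$ is negligible.)

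\textbf{The main obstacle} is to verify the highlighted ``$\gg n\log n$ admissible pairs'' condition \emph{uniformly} for every $t\in[t_2,1]$. The admissible window $\big[\tfrac1{C|t|n^{5/2-\eps}},\tfrac1{C|t|n^{3/2-\eps}}\big]$ for $|w_i-w_j|$ has fixed multiplicative width $n$, while the band $[n^{-3/2},n^{-3/2+\eps}]$ has width $n^{\eps}$, so the window contains the entire band only for $|t|$ in a sub-interval of multiplicative width $\asymp n^{1-\eps}$ (roughly $[n^{-1+\eps},1]$ up to constants); near the two ends of the $t$-range only a \emph{sub-band} of $[n^{-3/2},n^{-3/2+\eps}]$ is admissible, and moreover there is a narrow band of $t$-values just above $t_2$ on which the bulk window has already shrunk to a bounded number of dyadic levels. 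Closing these gaps requires a further dyadic subdivision of $[t_2,1]$, combined with a pigeonhole over dyadic sub-bands of $[n^{-3/2},n^{-3/2+\eps}]$ (so that the admissible sub-band still captures $\gg n\log n$ of the $An\log n$ pairs), together with a second Case~1/Case~2 split to cover the transition zone using whatever bulk levels remain admissible. Arranging all the thresholds $t_1,t_2,A,C$ in the correct order so that the small-, intermediate-, and large-$|t|$ regimes glue to cover all of $[-1,1]$ is the technical heart; it is bookkeeping rather than a new idea, but it is where the proof can only just be pushed through, which is why the result is stated with the (still fairly generic) extra hypothesis on $\BMw$.
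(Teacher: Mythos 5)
Your overall plan—Esseen plus Roos, a multi-range split in $t$, the bulk Case~1/Case~2 dichotomy on the intermediate range, and the hypothesis pairs for the largest $|t|$—is exactly the paper's, and the small- and first-intermediate-$|t|$ regimes are handled correctly. But your treatment of $[t_2,1]$ has a genuine gap, and the mitigations you sketch do not close it. In the zone $t_2\le |t|\lesssim n^{-1+\eps}$ (with $t_2\sim\sqrt{A\log n}\,n^{-3/2+\eps}$), the admissible window $\bigl[\tfrac{1}{C|t|n^{5/2-\eps}},\tfrac{1}{C|t|n^{3/2-\eps}}\bigr]$ for $|w_i-w_j|$ lies entirely above the band $[n^{-3/2},n^{-3/2+\eps}]$: at $|t|=t_2$ its lower endpoint is $\sim\tfrac{1}{\sqrt{\log n}\,n}$, which exceeds $n^{-3/2+\eps}$ for every $\eps<1/2$. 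So no pigeonhole over dyadic sub-bands of your set $\CS$ can produce a single admissible pair there. Even where the band does intersect the window, pigeonholing over its $\sim\eps\log n$ dyadic levels retains only $\gg n$ pairs, not $\gg n\log n$, giving $\Phi(t)\gg1$ rather than $\gg\log n$—too weak to beat the measure $\asymp n^{-1+\eps}$ of this $t$-range against a target of $n^{-5/2+\eps}$. And the bulk dichotomy you gesture at also fails as stated, since for $|t|>t_2$ the upper bulk levels have $|t|n^{3/2-\eps}D_k>1/C$, outside the reach of Corollary~\ref{cor:wraparound}.

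The paper's fix is not bookkeeping but a different (and simpler) observation, the same one used for ``large $|t|$'' in the proof of~\eqref{eqn:3/2:u}. For $\Delta n^{-3/2+\eps}\le|t|\le n^{-1+\eps}$, truncate the bulk to $\CG_0=\{(i,j): \tfrac{\eps}{2\sqrt n}\le|w_i-w_j|\le\tfrac{2}{\sqrt{\eps n}}\}$; by $\sum_{i,j}(w_i-w_j)^2=2n$ at most $(\eps/2)n^2$ pairs are removed, so $|\CG_0|\ge(\eps/2)n^2$. Then \emph{every} pair of $\CG_0$ satisfies $\tfrac{\eps}{2\sqrt n}\le\tfrac{|t|n^{3/2-\eps}|w_i-w_j|}{\Delta}\le\tfrac{2}{\Delta\sqrt\eps}$, which lies in $[\tfrac1{Cn},\tfrac1C]$ once $\Delta\gg_{\eps}C$, so Corollary~\ref{cor:wraparound} applies uniformly with no dichotomy and no pigeonhole, yielding $\Phi(t)\gg_{\delta,\eps}n$. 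Your hypothesis pairs are needed only on $[n^{-1+\eps},1]$, where they become admissible all at once. Note also that the paper carries a scaling constant $\Delta$ (targeting $|S'-x|\le\Delta$ rather than $\le1$), which provides exactly the constant of slack needed for the upper cutoff $\le 1/C$ at the edge cases—e.g.\ at $|t|\approx1$ with $|w_i-w_j|\approx n^{-3/2+\eps}$ the argument is $\approx1/\Delta$, not $\approx1$. Working at unit scale as you do loses this constant factor; without it, Corollary~\ref{cor:wraparound} fails at the boundary, so the scaling device is not cosmetic.
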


We can also obtain an $L$-dependent bound, as in Theorems~\ref{thm:cont:1} and \ref{thm:cont:3/2}. This result can be seen as a continuous analog of Theorem \ref{thm:discrete:5/2}. Here, roughly speaking, if the $w_{i}$ are spread out evenly over the interval $[-C/\sqrt{n},\, C/\sqrt{n}]$, then the average consecutive spacing is $1/n^{3/2}$. The above condition requires that most of the consecutive spacings asymptotically attain this bound.

\section{Continuous setting: proof of Theorem \ref{thm:cont:poly:1} for the polynomial sequences}\label{sect:poly}

We will use the method 
in a similar way to that in Theorem \ref{thm:cont:1}. 

\begin{proof}(of \eqref{eqn:poly:u} Theorem \ref{thm:cont:poly:1}) From Esseen's estimate and Corollary \ref{cor:Roos}, we can write
\[
\sup_{L} \P(|n\sum_{i} w_{i} v_{\pi(i)} -Ln| \le 1) =O\Big(\int_{|t| \le 1} |\varphi(t)|\, dt\Big)=O\Big(\int_{|t| \le 1} e^{- \frac{1}{n^{3}} \sum_{i,j,k,l} \|t n (w_{i}-w_{j}) (v_{k}-v_{l})\|_{\R/\Z}^{2}}\Big).
\]
Here we recall that $v_i = P_d(i)/n^{d}$ for  $i \in I$, where $P_{d}(i) = b i^{d} + b_{n}' i^{d-1} +b_{n}'' i^{d-2}+ \dots$ is a real polynomial of degree $d$ with fixed leading coefficient $b$ such that 
\begin{equation}\label{eqn:bound:P_2}
|v_{i}| \le  B \quad \text{for all} \quad i \in I.
\end{equation}
The exponent of the 
right-hand side can be bounded from below by
\begin{equation}\label{eqn:int:1/2}
\frac{1}{n^{3}} \sum_{1\le i,j\le n; \, k,l \in I} \|t (w_{i}-w_{j})(P_{d}(k) - P_{d}(l))/n^{d-1}\|_{\R/\Z}^{2}.
\end{equation}

As in the proof of Theorem \ref{thm:cont:1}, we will break down the integral depending on whether $|t|\le (\sqrt{A\log n})/n$ or  $(\sqrt{A\log n})/n \le  |t| \le 1$.

\underline{\bf  Large $|t|$.} We assume now that 
$$\frac{\sqrt{A\log n}}{n} \le |t| \le 1.$$

 We
 first throw away those $|w_{i}-w_{j}|$ that are smaller than $1/\sqrt{n}$, and set 
$$\CG=\{(i,j): |w_{i}-w_{j}| \ge 1/\sqrt{n}\}.$$
Then we have 
$$\sum_{(i,j)\in \CG} (w_{i}-w_{j})^{2} \asymp n.$$
Now for $0 \le k \le \log\Big( \frac{\sqrt{n}}{A \sqrt{\log n}} \Big)+1$, we let $\CG_{k}$ be the collection of pairs $(i,j)$ for which
$$ D_{k-1}=\frac{2^{k-1}}{\sqrt{n}} <  |w_{i}-w_{j}| \le \frac{2^{k}}{\sqrt{n}} = D_{k}.$$
Then we see that 
\[
\sum_{k} D_{k}^{2} |\CG_{k}| =\Theta(n).
\]

We use the following corollary of Lemma \ref{lemma:wraparound:square} to estimate the expression in \eqref{eqn:int:1/2}.\footnote{Part \textbf{(ii)} of Corollary \ref{cor:bigt:square} follows from Lemma \ref{lemma:wraparound:square} in the same way that Corollary \ref{cor:wraparound} follows from Lemma \ref{lemma:wraparound}.}

\begin{corollary}\label{cor:bigt:square} Let $r_{0} \in [n]$ be fixed.
\begin{itemize}
\item[\textbf{(i)}] We have 
\[
\sum_{r\in I} (P_{d}(r) - P_{d}(r_{0}))^2=\Theta_{C}(n^{2d+1}).
\]
\item[\textbf{(ii)}] For $ D_{k-1} \le |w|  \le D_{k}$ and $\frac{1}{C n D_{k}} \le  |t| \le \frac{n^{d-1}}{CD_{k}}$, we have
$$ \sum_{r\in I} \|t w (P_{d}(r) - P_{d}(r_{0}))/n^{d-1}\|_{\R/\Z}^{2} =\Theta_{C}(n).$$
\end{itemize}
\end{corollary}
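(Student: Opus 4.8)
The plan is to establish \textbf{(i)} by a direct estimate on sums of squared polynomial values, and \textbf{(ii)} by reducing to Lemma~\ref{lemma:wraparound:square} together with a Cauchy--Schwarz amplification exactly as in the proof of Corollary~\ref{cor:wraparound}. Conceptually, part \textbf{(i)} will be invoked in the regime where $|t|$ is so small that $\|\cdot\|_{\R/\Z}$ agrees with $|\cdot|$, and part \textbf{(ii)} covers the complementary ``wraparound'' regime, mirroring the split in the proof of \eqref{eqn:int:1}.

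For \textbf{(i)}, write $Q(r):=P_d(r)-P_d(r_0)$; this is a polynomial in $r$ of degree exactly $d$ with leading coefficient $b\neq0$, since the $r_0$-dependent terms affect only the constant coefficient. The upper bound is immediate: as $|P_d(i)|\le Bn^d$ for $i\in I$ (and $r_0\in I$ in the relevant application, or one first invokes the standard fact that a degree-$d$ polynomial bounded by $Bn^d$ on $\ge\delta n$ points of $[n]$ is $O_{B,d,\delta}(n^d)$ on $[0,n]$), we get $|Q(r)|\le 2Bn^d$ on $I$, hence $\sum_{r\in I}Q(r)^2\le|I|\cdot 4B^2n^{2d}=O(n^{2d+1})$. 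For the lower bound, rescale: $q(x):=Q(nx)/n^d$ is a degree-$d$ polynomial on $[0,1]$ with leading coefficient $b$, so $q(x)=b\prod_{i=1}^d(x-z_i)$ over $\C$. If $|q(x)|<\eta$ then $\prod_i|x-z_i|<\eta/|b|$, so (comparing the minimum factor with the geometric mean, and using $|x-z_i|\ge|x-\Re(z_i)|$ for complex roots) $x$ lies within $(\eta/|b|)^{1/d}$ of $\Re(z_i)$ for some $i$. Thus $\{x\in[0,1]:|q(x)|<\eta\}$ is covered by at most $d$ intervals of total length $\le 2d(\eta/|b|)^{1/d}$, and choosing $\eta=\eta(\delta,d,b)>0$ so that this is $<\delta/4$ yields, for $n$ large, at least $\delta n/2$ indices $r\in I$ with $|q(r/n)|\ge\eta$. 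Hence $\sum_{r\in I}Q(r)^2=n^{2d}\sum_{r\in I}q(r/n)^2\ge n^{2d}\cdot\eta^2\cdot\delta n/2\gg n^{2d+1}$.

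For \textbf{(ii)}, set $\theta:=tw$; since $D_k=2D_{k-1}$ the hypotheses give $|\theta|\in[\tfrac{1}{2Cn},\tfrac{n^{d-1}}{C}]$, and the polynomial $\theta\bigl(P_d(r)-P_d(r_0)\bigr)/n^{d-1}$ has leading coefficient $\theta b$ with $|\theta b|\in[\tfrac{|b|}{2Cn},\tfrac{|b|n^{d-1}}{C}]$. When $|\theta b|$ already lies in the admissible window $[\tfrac{C'}{n},\tfrac{n^{d-1}}{C'}]$ of Lemma~\ref{lemma:wraparound:square} — which holds at the upper end once $C\ge|b|C'$ — we apply that lemma directly (its hypotheses are met because $I\subseteq[n]\subseteq\{-n,\dots,n\}$ and $|I|\ge\delta n$, and the lower-order coefficients are arbitrary reals), obtaining $\sum_{r\in I}\|\cdot\|_{\R/\Z}^2\asymp_{\delta}n$. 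At the small end, where $|\theta|\sim 1/n$ falls below the threshold, we amplify as in Corollary~\ref{cor:wraparound}: by Cauchy--Schwarz $\|\theta(\cdot)\|_{\R/\Z}^2\ge k^{-2}\|k\theta(\cdot)\|_{\R/\Z}^2$ for any positive integer $k$, and for a suitable $k=k(C,C',b)=O(1)$ the quantity $k\theta b$ enters the window; here one uses $d\ge2$ to check that this does not overshoot the upper endpoint, since $|k\theta b|=O(1/n)=o(n^{d-1})$. The matching upper bound $\sum_{r\in I}\|\cdot\|_{\R/\Z}^2\le|I|/4=O(n)$ is trivial, so the claim follows.

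The main obstacle is the lower bound in \textbf{(i)}: one must rule out the degenerate possibility that $P_d(r)-P_d(r_0)$ is abnormally small throughout the positive-density set $I$. The root-localization estimate above handles this using only that the leading coefficient $b$ is a fixed nonzero constant, so that the ``small set'' of $q$ has measure bounded uniformly in $n$ and in the other (possibly $n$-dependent) coefficients of $P_d$. A secondary, more routine subtlety in \textbf{(ii)} is keeping the Cauchy--Schwarz amplification at the lower end of the $t$-range inside the hypothesis window of Lemma~\ref{lemma:wraparound:square}, which is exactly where the standing assumption $d\ge2$ is used.
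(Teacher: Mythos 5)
Your proof is correct, and for part \textbf{(ii)} it follows precisely the route the paper indicates in its footnote: apply Lemma~\ref{lemma:wraparound:square} directly when the rescaled leading coefficient $twb$ falls into the lemma's admissible window $[C'/n,\,n^{d-1}/C']$, and otherwise amplify by a bounded integer $k$ via Cauchy--Schwarz, exactly as Corollary~\ref{cor:wraparound} is deduced from Lemma~\ref{lemma:wraparound}. One small overstatement there: you assert that $d\ge 2$ is needed to guarantee the amplified quantity $k\theta b$ does not overshoot the upper endpoint $n^{d-1}/C'$, but since $k=O(1)$ and $k|\theta b|=O(1/n)$ at the small end, the containment $O(1/n)\le n^{d-1}/C'$ holds for all $d\ge 1$ once $n$ is large; the stated restriction $d\ge 2$ plays no role in this corollary.

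For part \textbf{(i)} the paper supplies no proof, and your root-localization argument (factoring $q(x)=b\prod(x-z_i)$, using that the minimum factor is at most the geometric mean, and covering $\{|q|<\eta\}$ by $d$ short intervals) is a valid, self-contained, and elementary way to obtain the lower bound uniformly in the $n$-dependent lower-order coefficients. A shorter route that stays within the paper's toolkit is to derive \textbf{(i)} from Lemma~\ref{lemma:wraparound:square} itself: since $\|x\|_{\R/\Z}\le|x|$, for any $\lambda>0$ one has
$$\sum_{r\in I}Q(r)^2 \;\ge\; \frac{n^{2(d-1)}}{\lambda^2}\sum_{r\in I}\Big\|\frac{\lambda\, Q(r)}{n^{d-1}}\Big\|_{\R/\Z}^2,$$
and choosing $\lambda\asymp 1/n$ so that $\lambda b$ sits at the bottom of the lemma's window gives $\sum_{r\in I}Q(r)^2\gg n^{2(d-1)}\cdot n^2\cdot n=n^{2d+1}$, matching the trivial upper bound. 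Your approach has the advantage of being independent of the Weyl machinery; the lemma-based one is more uniform with the surrounding text. Your caveat about $r_0$ is also apt: the upper bound in \textbf{(i)} needs $|P_d(r_0)|\ll n^d$, which is automatic when $r_0\in I$ (the only case used in the paper) and otherwise requires the polynomial-extension fact you cite.
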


As a consequence, for 
$$\frac{1}{C n D_{k}} \le  |t| \le 1$$ 
we obtain 
\[
\frac{1}{n^{3}} \sum_{(i,j) \in \CG_{k}} \sum_{r,r_{0}\in I} \|t (w_{i}-w_{j})(P_{d}(r) - P_{d}(r_{0}))/n^{d-1}\|_{\R/\Z}^{2} =\Theta_{C}\Big(\frac{|\CG_{k}|}{n}\Big).
\]
Consider 
\[
|t|\le \frac{1}{C n D_{k}}.
\]
In this case, for $(i,j) \in \CG_{k}$, using \eqref{eqn:bound:P_2}
$$\|t (w_{i}-w_{j}) (P_{d}(r) - P_{d}(r_{0}))/n^{d-1}\|_{\R/\Z} = |t (w_{i}-w_{j}) (P_{d}(r) - P_{d}(r_{0}))/n^{d-1}|.$$
Thus
\[
\frac{1}{n^{3}} \sum_{(i,j) \in \CG_{k}} \sum_{r,r_{0}\in I} \|t (w_{i}-w_{j})(P_{d}(r) - P_{d}(r_{0}))/n^{d-1} \|_{\R/\Z}^{2}=\frac{1}{n^{3}} \sum_{(i,j) \in \CG_{k}} \sum_{r,r_{0}\in I} \big| t (w_{i}-w_{j}) (P_{d}(r) - P_{d}(r_{0}))/n^{d-1}\big|^{2}
\]
\[
=\Theta\Big(\frac{1}{n^{3}} t^{2} D_{k}^{2} |\CG_{k}| n^{4}\Big)=   \Theta(t^{2} nD_{k}^{2} |\CG_{k}|).
\]

Putting together, 
\[
\frac{1}{n^{3}} \sum_{(i,j) \in \CG} \sum_{r,r_{0}\in I} \|t (w_{i}-w_{j})(P_{d}(r) - P_{d}(r_{0}))/n \|_{\R/\Z}^{2} =\Omega(1)\cdot \Big(\sum_{k:\, |t|D_{k}\le 1/(Cn)}   t^{2} n D_{k}^{2} |\CG_{k}| + \frac{1}{n}\sum_{k:\, |t|D_{k}> 1/(Cn)}  |\CG_{k}|\Big).
\]

{\bf Case 1:} $\sum_{k:\, |t|D_{k}\le 1/(Cn)} D_{k}^{2} |\CG_{k}|\ge (1/10)n$. 

Then as $t^2 \ge \frac{A\log n}{n^2}$  
\[
\sum_{k:\, |t|D_{k}\le 1/(Cn)}  t^2  n D_{k}^{2} |\CG_{k}| =\Omega(A \log n).
\]

{\bf Case 2:}  $\sum_{k:\, |t|D_{k}> 1/(Cn)} D_{k}^{2} |\CG_{k}|> (9/10)n$. 

Then as $D^{2}_{k}\le 1/A^2\log n$, we have that
\[
\sum_{k:\, |t|D_{k}\ge 1/(Cn)} |\CG_{k}|=\Omega(A^2n \log n).
\]
We thus conclude that in the case of large $|t|$, similarly to \eqref{eqn:phiS:larget}
\begin{equation}\label{eqn:phiS:larget:poly} 
|\varphi_{S}(t)| \le n^{-2\sqrt{A}},
\end{equation}
provided that $A$ is sufficiently large with respect to $C$.

\underline{\bf Small $|t|$.}  It remains to focus on 
\[
|t| \le \frac{\sqrt{A\log n}}{n}.
\]
As $|w_{i}- w_{j}| \le 1/(A \sqrt{\log n})$ and $|P_{d}(r)/n^{d-1}|\le Bn$, if we choose $A$ sufficiently large,
$$\|t (w_{i}-w_{j}) (P_{d}(r) - P_{d}(r_{0}))/n^{d-1}\|_{\R/\Z} = |t (w_{i}-w_{j}) (P_{d}(r) - P_{d}(r_{0}))/n^{d-1}|.$$
Then 
\begin{align*}
\frac{1}{n^{3}} \sum_{1\le i,j \le n; \, r,r_{0} \in I} \|t (w_{i}-w_{j}) (P_{d}(r) - P_{d}(r_{0}))/n^{d-1}\|_{\R/\Z}^{2}&= \frac{1}{n^{3}} \sum_{1\le i,j \le n; \, r,r_{0} \in I} |t (w_{i}-w_{j}) (P_{d}(r) - P_{d}(r_{0}))/n^{d-1}|^{2}\\ & =\Theta(t^{2}n^{2}),
\end{align*}
where we used the fact that $\sum_{1\le i,j \le n}(w_{i}-w_{j})^2=2n$, and that $\sum_{r,r_{0} \in I} \big(P_{d}(r) - P_{d}(r_{0})\big)^{2} =\Theta(n^{2d+2})$.
We then
have
\[
\int_{|t| \le (\sqrt{A\log n})/n} \exp(-\Theta(t^{2} n^{2}))dt \le \frac{1}{n}\int_{\R} \exp(-\Theta(x^{2})dx = O(1/n).
\]
\end{proof}

\begin{proof}(of \eqref{eqn:poly:L} of Theorem \ref{thm:cont:poly:1}) We begin by considering the case where $|t|$ is very large, and show that the characteristic function is very small in this regime.

\underline{\bf Very Large $|t|$.} We assume now that 
$$1 \le |t| \le  \sqrt{A\log n}.$$

We
again throw away those $|w_{i}-w_{j}|$ that are smaller than $1/\sqrt{n}$. 
Let 
$$\CG=\{(i,j): |w_{i}-w_{j}| \ge 1/\sqrt{n}\}.$$
Then we have 
$$\sum_{(i,j)\in \CG} (w_{i}-w_{j})^{2} \ge n.$$
It thus follows that, as $(w_{i}-w_{j})^2 \le 1/A^2\log n$ 
$$|\CG| \ge A^{2} n \log n.$$
Now consider the sum $\sum_{(i,j)\in \CG; \, r,r_{0} \in I} \|t (w_{i}-w_{j}) (P_{d}(r) - P_{d}(r_{0}))/n^{d-1}\|_{\R/\Z}^{2}$. 
As $\frac{1}{\sqrt{n}} \le |w_{i}-w_{j}| \le \frac{1}{A\sqrt{\log n}}$ and $1 \le |t| \le  \sqrt{A\log n}$, we have $\frac{1}{\sqrt{n}}  \le |t(w_{i}-w_{j})| \le \frac{1}{\sqrt{A}}\le \frac{1}{C}$.
As such, Corollary \ref{cor:bigt:square} implies that 
\[
\sum_{r\in I} \|t (w_{i}-w_{j}) (P_{d}(r) - P_{d}(r_{0}))/n^{d-1}\|_{\R/\Z}^{2} = \Theta(n).
\]
Hence we have 
\[
\sum_{1\le i,j\le n; \, r,r_{0} \in I} \|t (w_{i}-w_{j}) (P_{d}(r) - P_{d}(r_{0}))/n^{d-1}\|_{\R/\Z}^{2}  = \Omega(n^{2} |\CG|)= \Omega(A^2n^{3}\log n).
\]
As such, in the case that 
$1\le |t|\le \sqrt{A\log n}$ we also have
\begin{equation}\label{eqn:L:large:t:quad}
|\varphi_{S}(t)| \le n^{-A},
\end{equation}
provided that $A$ is sufficiently large. 

The rest of the proof is almost identical to that of \eqref{eqn:1:L} and of \eqref{eqn:3/2:L}, and hence we omit the details. 
 \end{proof}

\section{Proof of Theorem \ref{thm:comparison:2vs1} for the joint distributions}\label{sect:joint}
We will justify for the case $d=2$ only, the case $d\ge 3$ can be treated similarly. We restate the result below (after scaling up by a factor of $n$).

\begin{theorem}[Smoothness of the joint vector, $d=2$]\label{thm:comparison:2vs1:d=2}
Let $b, B>0$ be constants. Suppose that the sequence $(w_{1}, \ldots, w_{n})$ satisfies Condition~\ref{cond:separation} for some sufficiently large constant $A > 0$. Let $I\subset [n]$ be any subset with $|I|\ge \delta n$, and consider the sequences $(v_{1},\dots,v_{n})$ and  $(v'_{1},\dots,v'_{n})$ partially defined by
\[
v_{i}=\frac{b i^2 + b_{n}' i + b_{n}''}{n^{2}} \quad \text{and} \quad v'_{i}=\frac{i}{n} \quad \text{for all} \quad i\in I,
\]
such that
\[
|v_{i}|,|v'_{i}|\le B \quad \text{for all} \quad i\in I.
\]
Then, for any given $L_1, L_2\in \R$, we have:
\begin{itemize}
\item (Uniform bound)
\begin{equation}\label{eqn:comparison:2D:u}
\P\Big( \Big| n\sum_{i=1}^{n} v_{i} w_{\pi(i)} - L_1 n \Big| \le 1 
\;\wedge\; 
\Big| n\sum_{i=1}^{n} v'_i w_{\pi(i)} - L_2 n \Big| \le 1 \Big) 
= O_{A}\Big( \frac{1}{n^2} \Big),
\end{equation}
\end{itemize}
If, additionally $|v_{i}|,|v'_{i}|\le \widetilde{B}$ for all $i\in [n]$, for some constant $\widetilde{B}>0$, then we have:
\begin{itemize}
\item ($L$-dependent bound)
\begin{equation}\label{eqn:comparison:2D:L}
\P\Big( \Big|n \sum_{i=1}^{n} v_{i} w_{\pi(i)} - L_1 n \Big| \le 1 
\;\wedge\; 
\Big| n\sum_{i=1}^{n} v'_{i} w_{\pi(i)} - L_2 n \Big| \le 1 \Big) 
= O\Big( \frac{1}{n^2} e^{-\Theta(L_1^2 + L_2^2)} \Big).
\end{equation}
\end{itemize}
Here the implied constants are allowed to depend on $A$ and $\widetilde{B}$.
\end{theorem}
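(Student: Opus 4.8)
\emph{Proof proposal for Theorem~\ref{thm:comparison:2vs1:d=2}.}
The plan is to run a two–variable version of the Fourier argument of Theorems~\ref{thm:cont:1} and~\ref{thm:cont:poly:1}; the genuinely new point is that anti-concentration must be extracted \emph{simultaneously} in the two frequency directions dual to the two sums, and these interact because $1,i,i^2$ are not orthogonal on $I$. Write $X_1=n\sum_i v_iw_{\pi(i)}-L_1n$, $X_2=n\sum_i v_i'w_{\pi(i)}-L_2n$ and set $\tilde v_m=t_1v_m+t_2v_m'$. By the product of the one–dimensional Esseen kernels from~\eqref{eqn:kt} together with Corollary~\ref{cor:Roos} applied to the array $a_{ij}=n\tilde v_iw_j$, and then discarding the terms with $i\notin I$ or $j\notin I$,
\[
\P\big(|X_1|\le1\ \wedge\ |X_2|\le1\big)\ \ll\ \int_{|t_1|,|t_2|\le1}\exp\!\Big(-\tfrac{c}{n^{3}}\!\!\sum_{i,j\in I,\ 1\le k,l\le n}\!\!\big\|n(\tilde v_i-\tilde v_j)(w_k-w_l)\big\|_{\R/\Z}^{2}\Big)\,dt_1\,dt_2 .
\]
For $i,j\in I$ one has $\tilde v_i-\tilde v_j=n^{-2}\big(\beta_2(i^2-j^2)+\beta_1(i-j)\big)$ with $\beta_2:=bt_1$ and $\beta_1:=b_n't_1+nt_2$; note $|\beta_2|=O(1)$ and $|\beta_1|=O(n)$ on the box (the latter because $|b_n'|=O(n)$, which follows from $|v_i|\le B$ at two well–separated points of $I$), and $(t_1,t_2)\mapsto(\beta_2,\beta_1)$ is linear with constant Jacobian $(b^2n)^{-1}$.

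I would change variables to $(\beta_2,\beta_1)$ and split the box into a \emph{core}, where essentially $n|\beta_2|+|\beta_1|\le A\sqrt{\log n}$, and two tails $B_1=\{|\beta_2|\gtrsim A\sqrt{\log n}/n\}$ and $B_2=\{|\beta_1|\gtrsim A\sqrt{\log n}\}$ that together cover the box, with the thresholds chosen so that $B_1$ is entered well before the core's $\beta_2$–boundary, hence $n|\beta_2|\ll|\beta_1|$ off $B_1$ once $|\beta_1|$ is of core size. On the core no wrapping occurs, i.e.\ $\|n(\tilde v_i-\tilde v_j)(w_k-w_l)\|_{\R/\Z}=|n(\tilde v_i-\tilde v_j)(w_k-w_l)|$ for all indices, because $|w_k-w_l|\le(A\sqrt{\log n})^{-1}$. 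The exponent then factorises as a constant times $\big(\sum_{i,j\in I}(\tilde v_i-\tilde v_j)^2\big)\big(\sum_{k,l}(w_k-w_l)^2\big)n^{-1}\asymp (\beta_1+c'n\beta_2)^2+n^2\beta_2^2$, where the positive-definiteness of the Gram matrix of $\{i,i^2\}$ on $I$, and the fact that $c'\asymp1$, follow from Corollary~\ref{cor:bigt:square}(i). After the substitution $\gamma_1=\beta_1+c'n\beta_2$, $\gamma_2=\beta_2$, the core contributes $\ll(b^2n)^{-1}\int e^{-\Theta(\gamma_1^2)}d\gamma_1\int e^{-\Theta(n^2\gamma_2^2)}d\gamma_2=O(n^{-2})$, which is the desired main term.

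On $B_1\cup B_2$ the plan is to show the exponent is $\gg A^2\log n$, so the integrand is $\le n^{-\Theta(A^2)}$ and the contribution over the whole (bounded-area) region is $o(n^{-2})$. Dyadically partition the pairs $(k,l)$ by $D_m\asymp2^m/\sqrt n$ with $D_{m-1}<|w_k-w_l|\le D_m$ (using $\sum_{|w_k-w_l|\ge1/\sqrt n}(w_k-w_l)^2\asymp n$ and $D_m^2\le(A^2\log n)^{-1}$), fix $j=r_0\in I$, and regard $i\mapsto(w_k-w_l)\big(\beta_2(i^2-r_0^2)+\beta_1(i-r_0)\big)/n$ as a polynomial in $i$. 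In $B_1$, Lemma~\ref{lemma:wraparound:square} with $d=2$ (leading coefficient $(w_k-w_l)\beta_2$) gives $\sum_{i\in I}\|\cdot\|_{\R/\Z}^2\asymp n$ for every dyadic class with $D_m|\beta_2|\gg1/n$, and the Case~1/Case~2 dichotomy exactly as in Theorem~\ref{thm:cont:1} forces the exponent $\gg A^2\log n$ (Case~2 from the count of wrapping pairs, Case~1 from the surviving $n^2\beta_2^2$). Region $B_2$ is the delicate one and the step I expect to be the main obstacle: the $i^2$–coefficient may be too small for Lemma~\ref{lemma:wraparound:square}, yet one cannot treat $\beta_2(i^2-j^2)/n^2$ as a harmless perturbation of the linear term, since its pointwise size is only $O(1)$ and the crude inequality $\|x+\delta\|^2\ge\tfrac12\|x\|^2-|\delta|^2$ loses an unacceptable factor when summed over $I$. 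The clean device is to apply the inverse Weyl estimate Lemma~\ref{lemma:Weyl:inverse} \emph{directly} to the full degree-two polynomial: were $\sum_{i\in I}\|\cdot\|_{\R/\Z}^2\ll n$, there would be a bounded $q$ with $\|q(w_k-w_l)\beta_1/n\|_{\R/\Z}\ll1/n$, and since $q(w_k-w_l)\beta_1/n$ is itself $o(1)$ this forces $|(w_k-w_l)\beta_1|=O(1)$; hence every dyadic class with $D_m|\beta_1|\gg1$ again satisfies $\sum_{i\in I}\|\cdot\|_{\R/\Z}^2\asymp n$, regardless of $\beta_2$, and the same dichotomy yields the exponent $\gg A^2\log n$ (in Case~1 using $|\beta_1+c'n\beta_2|\asymp|\beta_1|$, valid because $n|\beta_2|\ll|\beta_1|$ off $B_1$). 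The remainder is bookkeeping of constants, with $A$ taken large relative to $C,\eps,\delta,b,B$.

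For the $L$–dependent bound~\eqref{eqn:comparison:2D:L} the argument mirrors the proof of~\eqref{eqn:1:L}. If $\max(|L_1|,|L_2|)\gg\sqrt{\log n}$ the estimate already follows from the subgaussian large-deviation bound Lemma~\ref{lemma:deviation} applied to whichever of $S_\pi(\BMw,\BMv)$, $S_\pi(\BMw,\BMv')$ has the distant target, so one may assume $|L_1|,|L_2|=O(\sqrt{\log n})$. One then replaces the Esseen kernel by the Gaussian identity~\eqref{eqn:identity:1d} in each variable, checks that the bounds above persist on the enlarged box $|t_1|,|t_2|\le\sqrt{A\log n}$ (wrapping is only stronger there, so $|\varphi_X|\le n^{-\Theta(A)}$ for frequencies outside the core–scaled region), extends $\varphi_X(z_1,z_2)$ holomorphically in both variables (legitimate since $X_1,X_2$ are bounded for fixed $n$), and shifts each contour $t_j\mapsto t_j-icL_j/n$. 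This produces the factor $e^{-\Theta(L_1^2+L_2^2)}$, while the shifted integrand is controlled by the moment-generating-function bound~\eqref{eqn:mgf:1} in each variable; combined with the $O(n^{-2})$ estimate on the unshifted integral this gives $O\big(n^{-2}e^{-\Theta(L_1^2+L_2^2)}\big)$. The remaining details are routine and identical to those in Section~\ref{sect:linear}.
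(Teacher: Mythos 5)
Your proposal runs the same Esseen/Roos/contour--shift pipeline as the paper's proof, and the overall structure is sound. There are two places where you deviate from the paper, and one of them deserves comment because it rests on a confusion about a threshold.

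\textbf{The mixed regime.} The paper's Lemma~\ref{lemma:quad:1}, Case~2, is your region $B_2$. You assert that "one cannot treat $\beta_2(i^2-j^2)/n^2$ as a harmless perturbation of the linear term, since its pointwise size is only $O(1)$." This is the consequence of setting the $B_1$-boundary at $|\beta_2|\gtrsim A\sqrt{\log n}/n$; but the natural boundary, corresponding to the paper's $|t_1|<\sqrt{A\log n}/(2n)$, is $|\beta_2|\lesssim\sqrt{A\log n}/n$. Off $B_1$ with the correct threshold, the quadratic term satisfies pointwise $|(w_k-w_l)\beta_2(i^2-j^2)/n|\le\tfrac{1}{A\sqrt{\log n}}\cdot\tfrac{\sqrt{A\log n}}{n}\cdot n=A^{-1/2}$, and the crude estimate $\|a_r+\delta_r\|_{\R/\Z}\ge\|a_r\|_{\R/\Z}-|\delta_r|$, summed over $\Theta(n)$ values of $r$ a constant fraction of which have $\|a_r\|_{\R/\Z}\asymp1$, loses only $O(n/\sqrt A)$, which is absorbed for $A$ large. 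This is exactly what the paper does, applying Corollary~\ref{cor:wraparound} to the linear part and swallowing the quadratic term as an $O(A^{-1/2})$ error; no appeal to Lemma~\ref{lemma:Weyl:inverse} is needed here. Your alternative device --- applying Lemma~\ref{lemma:Weyl:inverse} to the full degree-two phase and reading off $\|q\alpha_1\|_{\R/\Z}\ll 1/n$, then using $q\alpha_1=o(1)$ to upgrade to $|\alpha_1|\ll1/n$ --- is in fact valid, so this is not a gap, but it is heavier machinery than the problem requires, and the motivation for invoking it (the perturbation being $O(1)$) is an artifact of the off-by-$\sqrt A$ threshold.

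\textbf{Two small inaccuracies worth flagging.} First, the positive-definiteness of the Gram matrix of $\{i,i^2\}$ restricted to $I$, which you invoke to identify the core exponent with $(\beta_1+c'n\beta_2)^2+n^2\beta_2^2$, is not Corollary~\ref{cor:bigt:square}(i) (a univariate moment estimate); it is the content of the paper's Claim~\ref{claim:span}, which is proved directly by a two-point determinant argument. Second, in the paper's version of the $L$-dependent step only one contour is shifted (the one attached to $\max(|L_1|,|L_2|)$), which suffices because $L_1^2+L_2^2\le 2\max(L_1,L_2)^2$; the paper's real-part cancellation on the vertical sides is checked for that single shift with the other frequency held fixed. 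Your plan to shift both contours is plausible but requires re-verifying that the vertical-segment contributions still cancel after the first shift, which is an additional step you have not spelled out. The rest of the argument --- the change of variables to $(\beta_1,\beta_2)$, the dyadic decomposition, the wrapping Lemma~\ref{lemma:wraparound:square} in $B_1$, and the Gaussian kernel plus subgaussian MGF bound \eqref{eqn:mgf:1} for the contour estimate --- matches the paper.
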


Let 
\[
S_1 =n \sum_{i=1}^{n} v_{i} w_{\pi(i)}, \quad 
S_2 =n \sum_{i=1}^{n} v'_{i} w_{\pi(i)},
\]
and define the vector $\BMS = (S_1, S_2)$. We are interested in the event
\[
\Big| \frac{S_1}{n} - L_1 \Big| \le \frac{1}{n} \quad \text{and} \quad \Big| \frac{S_2}{n} - L_2 \Big| \le \frac{1}{n},
\]
or equivalently,
\[
|S_1 - nL_1| \le 1 \quad \text{and} \quad |S_2 - nL_2| \le 1.
\]
For convenience, we will also let $\BMX = (X_1, X_2) := (S_1 - nL_1, S_2 - nL_2)$. 

\begin{proof}(of Equation~\eqref{eqn:comparison:2D:u}) 
We consider the characteristic function of $\BMX$: for any $\BMt=(t_{1},t_{2}) \in \R^{2}$,
\[
\varphi_{\BMX}(\BMt) = \E e^{i (t_{1}X_{1} + t_{2}X_{2})} 
= \E e^{i t_{1}(S_{1} - L_{1}n) + i t_{2} (S_{2} - L_{2}n)} = \E e^{i (t_{1}S_{1} + t_{2} S_{2})} e^{{-it_{1} L_{1}n - i t_{2}L_{2}n}}.
\]
We first establish the following estimate.
\begin{lemma}[large $\|\BMt\|_{2}$]\label{lemma:quad:1}
For $\BMt=(t_{1},t_{2})$ such that $(\sqrt{A\log n})/n \le \|\BMt\|_{2} \le 1$ we have
\[
|\varphi_{\BMX}(\BMt)| \le n^{-2\sqrt{A}}.
\]
\end{lemma}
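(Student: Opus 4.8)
The plan is to mirror the ``large $|t|$'' analysis in the proof of~\eqref{eqn:1:u}, with Corollary~\ref{cor:wraparound} replaced by the polynomial wrap-around estimate of Lemma~\ref{lemma:wraparound:square} (in the amplified form of Corollary~\ref{cor:bigt:square}). Note that $|\varphi_{\BMX}(\BMt)|=|\E e^{i(t_1S_1+t_2S_2)}|$ does not depend on $L_1,L_2$, so these play no role here.

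First I would reduce the joint characteristic function to a single permutation sum carrying a polynomial weight. Writing $\tilde v_i:=t_1 n v_i+t_2 n v_i'$, we have $t_1 S_1+t_2 S_2=\sum_i \tilde v_i\,w_{\pi(i)}$, and for $i\in I$
\[
\tilde v_i=\frac{t_1 b}{n}\,i^2+\Big(\frac{t_1 b_n'}{n}+t_2\Big)i+\frac{t_1 b_n''}{n}
\]
is a quadratic polynomial in $i$ with leading coefficient $t_1 b/n$ and linear-level coefficient $\approx t_2$. Applying Corollary~\ref{cor:Roos} to the array $a_{kl}=\tilde v_k w_l$ (for which $a_{ik}-a_{jk}-a_{il}+a_{jl}=(\tilde v_i-\tilde v_j)(w_k-w_l)$) and discarding all quadruples with $i\notin I$ or $j\notin I$ gives
\[
|\varphi_{\BMX}(\BMt)|\le \exp\Big(-\frac{1}{2n^3}\sum_{i,j\in I}\ \sum_{1\le k,l\le n}\Big\|\tfrac{1}{2\pi}(\tilde v_i-\tilde v_j)(w_k-w_l)\Big\|_{\R/\Z}^2\Big),
\]
so it suffices to prove that this double sum is $\gg_{\delta} A n^3\log n$ whenever $\sqrt{A\log n}/n\le\|\BMt\|_2\le 1$.

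I would then run the dyadic decomposition from the proof of~\eqref{eqn:1:u}, but now over the pairs $(k,l)$: discard those with $|w_k-w_l|<1/\sqrt n$, set $\CG_m=\{(k,l):D_{m-1}<|w_k-w_l|\le D_m\}$ with $D_m=2^m/\sqrt n$, so $\sum_m D_m^2|\CG_m|\asymp n$. Fix $j\in I$ and $(k,l)\in\CG_m$ and consider the quadratic $q(i):=\tfrac1{2\pi}(w_k-w_l)(\tilde v_i-\tilde v_j)$. If its leading coefficient (treated as a degree-$2$ sequence), or else its linear coefficient (treated as a degree-$1$ sequence), lies in the Diophantine window $[\,1/(Cn),\,n^{d-1}/C\,]$, then Lemma~\ref{lemma:wraparound:square}, amplified as in Corollary~\ref{cor:bigt:square}, gives $\sum_{i\in I}\|q(i)\|_{\R/\Z}^2\asymp_{\delta} n$; if every coefficient is too small for wrap-around, then $\|q(i)\|_{\R/\Z}=|q(i)|$ and
\[
\sum_{i\in I}\|q(i)\|_{\R/\Z}^2=\frac{(w_k-w_l)^2}{4\pi^2}\sum_{i\in I}(\tilde v_i-\tilde v_j)^2\ \ge\ \frac{(w_k-w_l)^2}{4\pi^2}\,|I|\,\Var_I(\tilde v),
\]
and I would invoke the quantitative non-degeneracy bound $\Var_I(\tilde v)\gg_{\delta,b,B}\|\BMt\|_2^2\,n^2$, valid uniformly over all density-$\delta$ index sets $I$ because $\{i,i^2\}$ are quantitatively linearly independent on such sets (a consequence of the positive-density inverse Weyl estimate, Lemma~\ref{lemma:Weyl:inverse}, or of a short compactness argument). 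Summing over $j\in I$ and then over $(k,l)$, and splitting according to whether the bins with $D_m\|\BMt\|_2\lesssim 1/n$ contribute at least, say, $n/10$ to $\sum_m D_m^2|\CG_m|$ or not, exactly as in the two-case dichotomy of~\eqref{eqn:1:u}: in the first case one uses $\|\BMt\|_2^2\ge A\log n/n^2$ together with the variance bound, in the second case one uses $D_m^2\le 1/(A^2\log n)$ together with the wrap-around estimate. Both cases force the double sum to be $\gg_{\delta} A n^3\log n$, hence $|\varphi_{\BMX}(\BMt)|\le n^{-2\sqrt A}$ once $A$ is large in terms of $\delta,b,B$.

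The step I expect to be the main obstacle is the bookkeeping inside the ``no wrap-around'' branch and at its boundary with the wrap-around branch. The weight $\tilde v$ is a genuine quadratic only when $|t_1|$ is not dominated by $|t_2|$; when $|t_1|$ is small the quadratic degenerates toward the linear sequence $t_2\cdot i$, and one must identify, in each regime of $(t_1,t_2,D_m)$, which of the two coefficients of $q$ is admissible for the relevant wrap-around lemma. When the $i^2$-coefficient of $q$ is too small, its $i^2$-term has bounded modulus on $I$ and can be absorbed as a perturbation of the linear polynomial via $\|x+y\|_{\R/\Z}\ge\|x\|_{\R/\Z}-|y|$; one has to check that placing the dyadic threshold (between the two global cases) slightly inside the wrap-around window makes every remaining pair $(k,l)\in\CG_m$ fall under the $d=2$ or the $d=1$ lemma. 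The uniform variance lower bound $\Var_I(\tilde v)\gg_{\delta,b,B}\|\BMt\|_2^2 n^2$ is the other point requiring care, since it must hold for arbitrary density-$\delta$ sets $I$ and degenerates correctly when $t_1$ or $t_2$ vanishes.
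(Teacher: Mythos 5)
Your proposal follows the same overall strategy as the paper --- reduce to a single permutation sum via Corollary~\ref{cor:Roos}, dyadically decompose over the pairs $(k,l)$ with $|w_k-w_l|\ge 1/\sqrt n$, and then for each bin either apply a polynomial wrap-around lemma or fall back on $\|q\|_{\R/\Z}=|q|$ together with a variance (parallelogram-area) lower bound. The variance claim $\Var_I(\tilde v)\gg\|\BMt\|_2^2 n^2$ you invoke is exactly the content of the paper's Claim~\ref{claim:span}, and it has a short elementary proof (area of the parallelogram spanned by the degree-$1$ and degree-$2$ vectors); one does not need Lemma~\ref{lemma:Weyl:inverse} for it.

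Where you differ from the paper is in how the two parameters $t_1,t_2$ are disentangled. The paper first splits globally on $|t_1|\gtrless\sqrt{A\log n}/(2n)$. In its Case~2 (small $|t_1|$), the whole quadratic piece $t_1(w_i-w_j)\tfrac{br^2+\cdots}{n}$ is \emph{uniformly} pointwise $O(1/\sqrt A)$ by~\eqref{eqn:C/A}, so it can always be absorbed, and the remaining linear part is handled by Corollary~\ref{cor:wraparound} (wrap-around) or Claim~\ref{claim:span} (no wrap-around). In its Case~1 (large $|t_1|$), the substitution $r=s+h$ turns the problem into the one-parameter quadratic situation of~\eqref{eqn:phiS:larget:poly}, with $t_1$ playing the role of the effective frequency and the $t_2 h$ term absorbed into the linear coefficient, which Lemma~\ref{lemma:wraparound:square} tolerates. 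Your proposal instead keeps $t_1,t_2$ general throughout and resolves, per bin, which of the $d=2$ or $d=1$ lemmas (or neither) applies. This is viable, and in fact the three per-bin alternatives you sketch (degree-$2$ wrap-around; degree-$2$ coefficient below threshold but degree-$1$ in window, with the small $i^2$-term absorbed via $\|x+y\|\ge\|x\|-|y|$; neither, where $\|q\|=|q|$ and Claim~\ref{claim:span} applies) are precisely the sub-cases one would need to make the paper's terse ``proceed as \eqref{eqn:phiS:larget:poly}'' in Case~1 rigorous, since there the $t_2 h$ contribution need not be pointwise small in the non-wrap-around bins.

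The delicate point you flag is real, and there is one place where your sketch is slightly imprecise: ``if every coefficient is too small for wrap-around, then $\|q(i)\|_{\R/\Z}=|q(i)|$'' needs to be read with the correct thresholds. Being outside the degree-$2$ window in the sense of Corollary~\ref{cor:bigt:square} means $|(w_k-w_l)t_1 b|\lesssim 1/(Cn)$, which (since $i\in[n]$) does bound the $i^2$-term of $q$ by $O(1/C)<1/4$; likewise being outside the degree-$1$ window bounds the linear term by $O(1/C)$. With $C$ large, both conditions together do give $|q(i)|<1/2$, so the reduction to $\|q\|=|q|$ is legitimate --- but only once one writes the thresholds in terms of the individual coefficients $|(w_k-w_l)t_1|$ and $|(w_k-w_l)(t_1 b_n'/n+t_2)|$, not the combined $\|\BMt\|_2 D_m$. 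The latter is $\asymp$ the maximum of the two (using $|b_n'/n|=O(1)$), so the global dyadic dichotomy you propose does separate wrap-around from no-wrap-around bins up to constants, but a clean write-up should handle the absorption in the intermediate regime (degree-$2$ too small, degree-$1$ in window) explicitly, exactly as the paper does in its Case~2.

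In short: same reduction, same key lemmas, a different organization of the casework (per-bin three-way split instead of a global $|t_1|$-dichotomy). Your sketch is correct in outline; the place that needs the most care is exactly the absorption step you already flagged.
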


We remark that this generalizes \eqref{eqn:phiS:larget} of Theorem \ref{thm:cont:1} (where $t_{2}=0$) and \eqref{eqn:phiS:larget:poly} of Theorem \ref{thm:cont:poly:1} (where $t_{1}=0$).
 
\begin{proof}(of Lemma \ref{lemma:quad:1}) Given $v_{k} = t_{1}\frac{b k^2 + b_{n}' k + b_{n}''}{n^{2}} + t_{2}  \frac{k}{n}$ for $k\in I$, we start with
\[
|\varphi_{\BMX}(\BMt)|  \le \exp\Big(- \frac{1}{2n^{3}} \sum_{i,j,k,l} \|n (w_{i}-w_{j}) (v_{k}-v_{l})\|_{\R/\Z}^{2}\Big).
\]
It boils down to bound from below the following
\[
\frac{1}{n^{3}} \sum_{1\le i,j\le n; \, r,s \in I} \big\| t_{1} (w_{i}-w_{j})\frac{b r^2 + b_{n}' r-b s^2 - b_{n}' s}{n} + t_{2}(w_{i}-w_{j}) (r - s) \big\|_{\R/\Z}^{2}.
\]


\medskip
\noindent\textbf{Case 1.}
Assume that
\[
\frac{\sqrt{A\log n}}{2n} \le |t_{1}| \le 1.
\]
Our goal is to use Lemma \ref{lemma:wraparound:square}, but for this, we will first need to simplify the term involving $t_{2}$. We use the following claim. 

\begin{claim} Let $w_{i}, w_{j}$ be fixed. Then there exists a subset $I_{0} \subset [n]$, and for each $s\in I_{0}$, a corresponding subset $J_{s} \subset \{-n,\dots,n\}$, such that all of these sets have size $\Theta_{\delta}(n)$, and that
\begin{align*}
& \sum_{r,s \in I} \big\| t_{1} (w_{i}-w_{j})\frac{b r^2 + b_{n}' r-b s^2 - b_{n}' s}{n} + t_{2}(w_{i}-w_{j}) (r - s) \big\|_{\R/\Z}^{2} \\
& \ge \sum_{s\in I_{0}} \sum_{h\in J_{s}} \big\| t_{1} (w_{i}-w_{j})\frac{b (h+2s)h + b_{n}'h}{n} + t_{2}(w_{i}-w_{j})h \big\|_{\R/\Z}^{2}.
\end{align*}
\end{claim}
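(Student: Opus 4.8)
The plan is to derive the inequality by a single change of variables together with nonnegativity of the summands. First I would set $h := r-s$ in the double sum over $(r,s)\in I\times I$. The algebraic identity $b r^2 + b'_n r - b s^2 - b'_n s = b\big((r-s)+2s\big)(r-s) + b'_n(r-s) = b(h+2s)h + b'_n h$, together with $r-s=h$, shows that the summand $\big\| t_1(w_i-w_j)\tfrac{br^2+b'_nr-bs^2-b'_ns}{n} + t_2(w_i-w_j)(r-s)\big\|_{\R/\Z}^2$ becomes \emph{exactly} $\big\| t_1(w_i-w_j)\tfrac{b(h+2s)h+b'_nh}{n} + t_2(w_i-w_j)h\big\|_{\R/\Z}^2$ after substituting $r=h+s$. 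Hence the full double sum equals $\sum_{s\in I}\sum_{h:\,h+s\in I}\big\|\,\cdot\,\big\|_{\R/\Z}^2$.

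Since each term $\|\cdot\|_{\R/\Z}^2$ is nonnegative, restricting to any sub-collection of pairs only decreases the sum; I would therefore take $I_0\subseteq I$ and, for each $s\in I_0$, $J_s\subseteq\{h\in\{-n,\dots,n\}:h+s\in I\}$, and the claimed inequality follows term by term. It then remains only to check that these sets can be chosen of size $\Theta_\delta(n)$, and the direct choice $I_0=I$, $J_s=\{r-s:r\in I\}=I-s$ does this: $|I_0|=|I|\ge\delta n$, each $|J_s|=|I|\ge\delta n$, both are at most $2n+1$, and $J_s\subseteq\{-(n-1),\dots,n-1\}\subseteq\{-n,\dots,n\}$. (If one wanted additional uniformity across $s$ — e.g. a common positive-density set of popular differences — one could instead invoke Fact~\ref{fact:R}; nothing in the argument changes.) With this choice the displayed inequality is in fact an equality; it is phrased as an inequality only because in the sequel it suffices to keep a positive-density sub-family of pairs $(s,h)$ on which the inner sums over $h$ are amenable to Lemma~\ref{lemma:wraparound:square}.

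There is no substantial obstacle in the claim itself: it is purely a reindexing of the double sum, and the only point requiring (minor) care is keeping both index sets of linear size while respecting the constraint that $s$ and $h+s$ both lie in $I$, which the natural choice above handles automatically. The genuine work — bounding $\sum_{h\in J_s}\big\|\,\cdot\,\big\|_{\R/\Z}^2$ from below for fixed $s$ by applying Lemma~\ref{lemma:wraparound:square} (with $d=2$) to the quadratic-in-$h$ polynomial $\tfrac{t_1(w_i-w_j)b\,h^2 + (\,\cdots)\,h}{n}$, controlling when the leading coefficient $t_1(w_i-w_j)b$ lies in the admissible range $[C/n,\,n/C]$ (amplifying by an integer factor as in Corollary~\ref{cor:wraparound} when it is too small), and then summing over $s\in I_0$ to reach a bound of order $n^2$ — all happens \emph{after} this claim and is where the Diophantine input enters.
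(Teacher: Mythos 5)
Your reindexing $h=r-s$ and the algebraic identity $br^{2}+b'_nr-bs^{2}-b'_ns = b(h+2s)h+b'_nh$ match the paper exactly, so the core of the argument is the same. Where you differ is in the choice of $I_0$ and $J_s$: you take the naive $I_0=I$ and $J_s=I-s$, which makes the displayed inequality an equality and immediately gives $|I_0|,|J_s|\in[\delta n,\,n]=\Theta_\delta(n)$ and $J_s\subseteq\{-(n-1),\dots,n-1\}$. The paper instead first invokes Fact~\ref{fact:R} to fix a common set $\CR$ of $\Theta_\delta(n)$ popular differences, sets $J_s:=\{h\in\CR:\,s+h\in I\}$, and shows by double counting that $|J_s|=\Theta_\delta(n)$ for $\Theta_\delta(n)$ many $s$, defining $I_0$ to be this set of $s$. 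Both choices satisfy the hypotheses of Lemma~\ref{lemma:wraparound:square} (a positive-density subset of $\{-n,\dots,n\}$), so both are adequate for the subsequent step; your version is the more elementary one and avoids the double-counting argument, while the paper's version fixes a single uniform set $\CR$ of differences, a formulation that is slightly more in line with how $\CR$ is used elsewhere in the section (e.g.\ in the proof of \eqref{eqn:1:u}) but is not logically necessary here. Your parenthetical remark that one could instead invoke Fact~\ref{fact:R} is precisely what the paper does.
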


\begin{proof}
By Fact \ref{fact:R}, there exists a subset $\CR\subset \{-n,\dots,n\}$ with $|\CR|=\Theta_{\delta}(n)$, such that for every $h \in \CR $, there are $ \Theta_{\delta}(n) $ pairs $r, s\in I$ with $ r - s = h $. For each $s\in I$, let $J_{s}$ denote the set of $h\in \CR$ such that $s+h\in I$. By double counting the pairs $r,s\in I$ with $r-s\in \CR$, we deduce that $|J_{s}|=\Theta_{\delta}(n)$ for $\Theta_{\delta}(n)$ many $s\in I$. Denote the set of such $s$ by $I_{0}$.

The sum is bounded from below by
$$\sum_{h\in \CR}\sum_{r,s \in I: \, r-s=h} \big\| t_{1} (w_{i}-w_{j})\frac{b r^2 + b_{n}' r-b s^2 - b_{n}' s}{n} + t_{2}(w_{i}-w_{j}) (r - s) \big\|_{\R/\Z}^{2},$$
which simplifies to
$$\sum_{s\in I}\sum_{h\in J_{s}} \big\| t_{1} (w_{i}-w_{j})\frac{b (h+2s)h + b_{n}'h}{n} + t_{2}(w_{i}-w_{j})h \big\|_{\R/\Z}^{2},$$
by the substitution $r=s+h$. Restricting the outer sum to $s\in I_{0}$, we obtain the desired inequality.
\end{proof}

To complete the treatment in this case, we just proceed as how we proved \eqref{eqn:phiS:larget:poly} for each of the sum $\sum_{h\in J_{s}} \big\| t_{1} (w_{i}-w_{j})\frac{b (h+2s)h + b_{n}'h}{n} + t_{2}(w_{i}-w_{j})h \big\|_{\R/\Z}^{2}$, using Lemma \ref{lemma:wraparound:square}.

\medskip

\noindent\textbf{Case 2.}
Assume that $|t_{1}| < \frac{\sqrt{A\log n}}{2n}$. Then since $ \frac{\sqrt{A\log n}}{n} \le \|\BMt\|_{2} \le 1$, we must have
\[
\frac{\sqrt{A\log n}}{2n} \le |t_{2}| \le 1.
\]
As in the proof of \eqref{eqn:phiS:larget}, we consider $\CG$ to be the collection of pairs $i,j$ where $|w_{i}-w_{j}|\ge 1/2\sqrt{n}$. First notice that because $|t_{1}| < \frac{C\sqrt{\log n}}{2n}$ and $|w_{i} - w_{j}| \le \frac{1}{A \sqrt{\log n}}$, we have
\begin{equation}\label{eqn:C/A} 
|t_{1} (w_{i}-w_{j})\frac{b r^2 + b_{n}' r -b s^2 - b_{n}' s}{n}| \le \frac{2}{\sqrt{A}}.
\end{equation}
For any fixed $s$, applying Corollary \ref{cor:wraparound} with $b = t_{2} (w_{i}-w_{j})$ and  
$b_{0}=-t_{2}(w_{i}-w_{j})s$ and assuming that $|t_{2}| D_{k} \ge 1/Cn$, gives 
\[
\sum_{r} \| t_{2}(w_{i}-w_{j}) (r - s)\|_{\R/\Z}^{2} =\Theta_{C}(n),
\]
and, together with \eqref{eqn:C/A}, we obtain
\[
\sum_{r} \big\| t_{1} (w_{i}-w_{j}) \frac{b r^2 + b_{n}' r -b s^2 - b_{n}' s}{n}   + t_{2}(w_{i}-w_{j}) (r - s) \big\|_{\R/\Z}^{2} =\Theta_{C}(n).
\]
Now if $|t_{2}| D_{k} < 1/Cn$ , then 
$$\| t_{1} (w_{i}-w_{j}) \frac{b r^2 + b_{n}' r -b s^2 - b_{n}' s}{n}  + t_{2}(w_{i}-w_{j}) (r - s) \big\|_{\R/\Z}^{2} =(w_{i}-w_{j})^{2} |t_{1} \frac{b r^2 + b_{n}' r -b s^2 - b_{n}' s}{n} + t_{2}(r - s)|^{2}.$$
To finish, we use the following fact.
\begin{claim}\label{claim:span}
For any fixed $s \in I$, the $2$-dimensional vectors $(\frac{b r^2 + b_{n}' r -b s^2 - b_{n}' s}{n^{2}},\frac{r-s}{n}), r\in I$ completely span $\R^{2}$ in the sense that for any unit vector $(t_{1},t_{2})$ we have 
\[
\sum_{r\in I} |(t_{1},t_{2}) \cdot (\frac{b r^2 + b_{n}' r -b s^2 - b_{n}' s}{n^{2}},  \frac{r-s}{n})  |^{2} =\Theta(n).
\]
\end{claim}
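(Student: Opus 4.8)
The plan is to recast the claim as a spectral estimate for the $2\times 2$ Gram matrix of the two vectors appearing in it. Fix $s\in I$ and set
\[
g_1(r)=\frac{b r^2 + b_{n}' r -b s^2 - b_{n}' s}{n^{2}}=v_r-v_s,\qquad g_2(r)=\frac{r-s}{n},
\]
and let $G$ be the $2\times 2$ positive semidefinite matrix with entries $G_{ab}=\sum_{r\in I}g_a(r)g_b(r)$. Since $\sum_{r\in I}\big|(t_1,t_2)\cdot(g_1(r),g_2(r))\big|^{2}=(t_1,t_2)\,G\,(t_1,t_2)^{t}$, the claim is exactly the assertion that $\lambda_{\min}(G)\asymp\lambda_{\max}(G)\asymp n$. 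As $G$ is $2\times 2$ and $G\succeq 0$, it suffices to prove $\tr G=O(n)$ and $\det G\gg n^{2}$.

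For the trace: $G_{11}=\sum_{r\in I}(v_r-v_s)^{2}=n^{-4}\sum_{r\in I}(P_2(r)-P_2(s))^{2}\asymp n^{-4}\cdot n^{5}=n$ by part \textbf{(i)} of Corollary~\ref{cor:bigt:square} (applied with $d=2$ and $r_0=s$), while $G_{22}=n^{-2}\sum_{r\in I}(r-s)^{2}\le |I|\cdot n^{2}/n^{2}=O(n)$. Hence $\tr G=G_{11}+G_{22}=O(n)$, so $\lambda_{\max}(G)=O(n)$; this already gives the upper bound in the claim.

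The substance of the argument is $\det G\gg n^{2}$. The key algebraic point is the factorization $b r^2+b_{n}'r-b s^2-b_{n}'s=(r-s)\big(b(r+s)+b_{n}'\big)$, so $g_1(r)=\frac{(r-s)\big(b(r+s)+b_{n}'\big)}{n^{2}}$. Substituting into the Lagrange (Cauchy--Binet) identity
\[
\det G=\tfrac{1}{2}\sum_{r,r'\in I}\big(g_1(r)g_2(r')-g_1(r')g_2(r)\big)^{2},
\]
the factors $b(\,\cdot\,+s)+b_{n}'$ cancel under the antisymmetrization, leaving $g_1(r)g_2(r')-g_1(r')g_2(r)=\frac{b\,(r-s)(r'-s)(r-r')}{n^{3}}$, whence
\[
\det G=\frac{b^{2}}{2\,n^{6}}\sum_{r,r'\in I}(r-s)^{2}(r'-s)^{2}(r-r')^{2}.
\]
It remains to see the triple sum is $\gg n^{8}$. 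Put $I'=\{r\in I:|r-s|\ge\delta n/4\}$; since at most $O(\delta n)$ integers lie within distance $\delta n/4$ of $s$ and $|I|\ge\delta n$, we have $|I'|\gg n$, and on $I'$ one has $(r-s)^{2}(r'-s)^{2}\ge(\delta n/4)^{4}$. Enumerating $I'=\{r_1<\dots<r_{|I'|}\}$, every pair with $r_j$ among the smallest $|I'|/4$ elements and $r_k$ among the largest $|I'|/4$ elements satisfies $r_k-r_j\ge k-j\ge|I'|/2$, and there are $\gg|I'|^{2}$ such pairs, so $\sum_{r,r'\in I'}(r-r')^{2}\gg|I'|^{4}\gg n^{4}$. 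Consequently $\sum_{r,r'\in I}(r-s)^{2}(r'-s)^{2}(r-r')^{2}\gg n^{4}\cdot n^{4}=n^{8}$, hence $\det G\gg n^{2}$. Together with $\tr G=O(n)$ and $G\succeq0$ this forces $\lambda_{\min}(G)=\det G/\lambda_{\max}(G)\gg n$, so both eigenvalues of $G$ are $\asymp n$, which is the claim.

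The only step that is not routine is spotting the factorization of the quadratic difference: it is exactly what makes the antisymmetrized cross terms in the Lagrange identity collapse to the clean product $(r-s)(r'-s)(r-r')$, after which $\det G\gg n^{2}$ reduces to elementary counting. It is worth noting that no information on the lower-order coefficients $b_{n}',b_{n}''$ is needed beyond $|v_i|\le B$: $b_{n}''$ already cancels in $v_r-v_s$, and $b_{n}'$ survives only inside the common factor $b(r+s)+b_{n}'$, which disappears under antisymmetrization. The same mechanism handles the higher-degree pairs in Theorem~\ref{thm:comparison:2vs1}, which is why only $d=2$ is written out.
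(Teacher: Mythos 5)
Your proof is correct, and it is in essence the same argument as the paper's: the key algebraic fact you both exploit is that the signed parallelogram area $g_1(r)g_2(r')-g_1(r')g_2(r)$ collapses to $b(r-s)(r'-s)(r-r')/n^{3}$ (the paper writes this in the variables $h=r-s$, $h'=r'-s$ as $b(h-h')hh'/n^{3}$), and that this is $\asymp 1$ for $\Theta(n^{2})$ pairs with $|r-s|,|r'-s|,|r-r'|\asymp n$. Where the paper then picks $\Theta(n^2)$ good pairs and sums the pointwise inequality $|\langle t,u_h\rangle|^{2}+|\langle t,u_{h'}\rangle|^{2}\asymp 1$, you globalize the same data through the Lagrange/Cauchy--Binet identity and the bound $\lambda_{\min}(G)\ge\det G/\lambda_{\max}(G)$; this is a cleaner packaging (it avoids the double-counting bookkeeping when passing from $\Theta(n^{2})$ pair-estimates to a $\Theta(n)$ single-sum bound, and makes uniformity in $(t_1,t_2)$ automatic), but it is the same underlying computation. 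One caveat on your closing remark: for consecutive degrees $(d,d-1)$ with $d\ge 3$ the factorization $P_d(r)-P_d(s)=(r-s)Q(r,s)$ no longer makes the bracket in the Lagrange identity reduce to a single monomial, so the "same mechanism" needs a genuine extra argument there; the paper restricts the written proof to $d=2$ for the same reason.
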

\begin{proof} We only restrict to $r$ for which $r-s$ has order $n$. We rewrite $(\frac{b r^2 + b_{n}' r -b s^2 - b_{n}' s}{n^{2}},  \frac{r-s}{n})$ as $(\frac{b(h+2s)h+b_{n}'h}{n^2},\frac{h}{n})$, where $h:=r-s$. Choose any pairs $h,h'$ of order $n$ (i.e. $|r-s|,|r'-s|$ is of order $n$) so that 
\[
|h-h'|=\Theta(n).
\]
We see that the vectors $(\frac{b(h+2s)h+b_{n}'h}{n^{2}},\frac{h}{n})$ and  $(\frac{b(h'+2s)h'+b_{n}'h'}{n^{2}}, \frac{h'}{n})$ both have norm of order 1, and area of the parallelogram formed by them is  
\[
|\frac{b(h+2s)h+b_{n}'h}{n^{2}}\frac{h'}{n}-\frac{b(h'+2s)h'+b_{n}'h'}{n^{2}}\frac{h}{n}| = |\frac{b(h-h') h h'}{n^{3}}| =\Theta(1).
\]
As such, for any unit vector $(t_{1},t_{2})$
\[
|(t_{1},t_{2}) \cdot (\frac{b(h+2s)h+b_{n}'h}{n^{2}},\frac{h}{n}) |^{2} +  |(t_{1},t_{2}) \cdot (\frac{b(h'+2s)h'+b_{n}'h'}{n^{2}}, \frac{h'}{n})|^{2} =\Theta(1).
\]
To finish the proof, we just choose
$\Theta(n^{2})$ distinct pairs $(r,r')$ from $I^{2}$ satisfying the above properties, and sum up the estimates.
\end{proof}
As a corollary, for each fixed $s$, by the claim
\[
\sum_{r} (w_{i}-w_{j})^{2} |t_{1}\big(\frac{b r^2 + b_{n}' r -b s^2 - b_{n}' s}{n^{2}}\big) + t_{2} (r - s)|^{2} =\Theta\Big((w_{i}-w_{j})^{2} (t_{1}^{2}+t_{2}^{2}) n^{3}\Big).
\]
The rest of the proof from this point on is similar to our proof of  \eqref{eqn:phiS:larget}, we omit the details. This completes our proof of Lemma \ref{lemma:quad:1}.
\end{proof}

\begin{lemma}[Very large $\|\BMt\|_{2}$]\label{lemma:quad:2}
For $\BMt=(t_{1},t_{2})$ such that $1 \le \|\BMt\|_{2} \le  \sqrt{A\log n}$ we have
\[
|\varphi_{\BMX}(\BMt)| \le n^{-2\sqrt{A}}.
\]
\end{lemma}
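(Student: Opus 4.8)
The plan is to follow the proof of Lemma~\ref{lemma:quad:1} essentially verbatim, now over the wider range $1\le\|\BMt\|_2\le\sqrt{A\log n}$. The key point is that although each $|t_m|$ may be as large as $\sqrt{A\log n}$, Condition~\ref{cond:separation} forces every difference $w_i-w_j$ to have absolute value at most $1/(A\sqrt{\log n})$, so all products $|t_m|\,|w_i-w_j|$ that appear stay below $1/\sqrt A$ --- small enough for the wrapping-around lemmas to apply. Writing
\[
\Psi(r,s)\ :=\ t_1\frac{b(r^2-s^2)+b_{n}'(r-s)}{n}\ +\ t_2(r-s)\qquad (r,s\in I),
\]
so that (exactly as in the proof of Lemma~\ref{lemma:quad:1}) $(w_i-w_j)\Psi(r,s)$ is the fourth difference entering Corollary~\ref{cor:Roos}, we obtain
\[
|\varphi_{\BMX}(\BMt)|\ \le\ \exp\Big(-\frac{1}{2n^3}\sum_{1\le i,j\le n;\ r,s\in I}\big\|(w_i-w_j)\Psi(r,s)\big\|_{\R/\Z}^2\Big),
\]
and it suffices to show the exponent sum is $\gg_\delta A^2 n^3\log n$. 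Restricting to $\CG=\{(i,j):|w_i-w_j|\ge 1/(2\sqrt n)\}$ and combining $\sum_{i,j}(w_i-w_j)^2=2n$ with $(w_i-w_j)^2\le 1/(A^2\log n)$ gives $|\CG|\ge A^2 n\log n$.

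Next, exactly as in the Claim inside the proof of Lemma~\ref{lemma:quad:1}, for each fixed $(i,j)$ we substitute $r=s+h$ using Fact~\ref{fact:R}: there are sets $I_0\subset I$ and $J_s\subset\{-n,\dots,n\}$ ($s\in I_0$), all of size $\Theta_\delta(n)$, such that
\[
\sum_{r,s\in I}\big\|(w_i-w_j)\Psi(r,s)\big\|_{\R/\Z}^2\ \ge\ \sum_{s\in I_0}\sum_{h\in J_s}\Big\|(w_i-w_j)\Big(\tfrac{bt_1}{n}h^2+\mu_s h\Big)\Big\|_{\R/\Z}^2,\qquad \mu_s:=\tfrac{t_1(2bs+b_{n}')}{n}+t_2.
\]
Hence it is enough to prove that for every $(i,j)\in\CG$ and every $s\in I_0$,
\[
\sum_{h\in J_s}\Big\|(w_i-w_j)\Big(\tfrac{bt_1}{n}h^2+\mu_s h\Big)\Big\|_{\R/\Z}^2\ \gg_\delta\ n;
\]
summing over $s\in I_0$ and $(i,j)\in\CG$ then gives $\gg_\delta |\CG|\cdot n\cdot n\gg_\delta A^2 n^3\log n$, and therefore $|\varphi_{\BMX}(\BMt)|\le\exp(-\Theta_\delta(A^2\log n))\le n^{-2\sqrt A}$ once $A$ is large in terms of $\delta$.

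To prove the inner bound, regard the bracketed expression as the quadratic polynomial in $h$ with leading coefficient $\alpha_2:=(w_i-w_j)bt_1$ (read off after clearing the $1/n$) and linear coefficient $(w_i-w_j)\mu_s$; one always has $|\alpha_2|\le b/\sqrt A\le n/C$. We argue according to the size of $|\alpha_2|=|w_i-w_j|\,b\,|t_1|$. If $|w_i-w_j|\,b\,|t_1|\ge C/(k_0 n)$ for a suitable constant $k_0=k_0(\delta)$, then amplifying by $k_0$ à la Corollary~\ref{cor:wraparound} (at the cost of the harmless factor $k_0^{-2}$) puts the leading coefficient into the admissible window $[C/n,\ n/C]$, and Lemma~\ref{lemma:wraparound:square} with $d=2$ over the set $J_s$ yields the bound $\gg_\delta n$. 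If instead $|w_i-w_j|\,b\,|t_1|<C/(k_0 n)$, then (choosing $k_0$ large) the quadratic term $(w_i-w_j)\tfrac{bt_1}{n}h^2$ is bounded on $J_s$ by an arbitrarily small constant; moreover $|t_1|=O(n^{-1/2})$ in this range, so $\|\BMt\|_2\ge1$ forces $|t_2|\ge 1/2$, and since $|v_i|\le B$ on $I$ controls the coefficients of $P_d$, we get $\mu_s=t_2+O_\delta(n^{-1/2})$ with $|\mu_s|\in[\tfrac14,\,2\sqrt{A\log n}]$, so $|(w_i-w_j)\mu_s|$ lies in the admissible window of Corollary~\ref{cor:wraparound}; applying that corollary to the linear term $(w_i-w_j)\mu_s h$ and absorbing the small quadratic perturbation again gives $\gg_\delta n$.

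The main obstacle is precisely this last case split. Unlike the one-variable polynomial estimate~\eqref{eqn:phiS:larget:poly}, where one simply invokes Corollary~\ref{cor:bigt:square} because the relevant sequence is a scalar multiple of a \emph{fixed} quadratic, here $\Psi(r,s)$ is a genuine linear combination of a quadratic and a linear sequence, so for each scale of $|w_i-w_j|$ one must decide whether the quadratic part or the linear part is in charge, and bridge the borderline scales $|w_i-w_j||t_1|\asymp 1/n$ by a constant amount of amplification. Everything else --- in particular the eventual passage to the $L$-dependent bound~\eqref{eqn:comparison:2D:L} by contour integration --- proceeds exactly as in the proofs of~\eqref{eqn:1:L} and~\eqref{eqn:3/2:L}, and the routine computations are omitted.
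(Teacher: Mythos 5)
Your proof is correct, and it fills in the details of what the paper only sketches in a few lines. The underlying ingredients are identical --- Corollary~\ref{cor:Roos}, the substitution $r=s+h$ via Fact~\ref{fact:R}, Lemma~\ref{lemma:wraparound:square} for the quadratic regime, Corollary~\ref{cor:wraparound} (with constant amplification) for the linear regime, and $|\CG|\ge A^2 n\log n$ from Condition~\ref{cond:separation} --- but your organization of the case analysis is genuinely different and arguably cleaner. The paper splits globally on $|t_1|$ into three ranges and points the reader to the proofs of \eqref{eqn:poly:L}, \eqref{eqn:poly:u}, and the Lemma~\ref{lemma:quad:1} Case~2 argument respectively; you instead run a single per-pair dichotomy on the size of the leading coefficient $|(w_i-w_j)\,b\,t_1|$. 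This has a concrete advantage: in the paper's middle range $(\sqrt{A\log n})/n\le|t_1|\le1$, the literal ``argue as in \eqref{eqn:poly:u} for large $|t|$'' runs into the obstacle that the added $t_2(w_i-w_j)(r-s)$ term may wrap around, so the sub-case of \eqref{eqn:poly:u} that replaces $\|\cdot\|_{\R/\Z}$ by $|\cdot|$ does not carry over verbatim. Your dichotomy sidesteps this because whenever the quadratic coefficient is below $C/(k_0 n)$ for a pair in $\CG$, the bound $|w_i-w_j|\ge 1/(2\sqrt n)$ forces $|t_1|=O(n^{-1/2})$, hence $|t_2|\ge\sqrt3/2$, and then $|\mu_s|$ lands squarely in the admissible window of Corollary~\ref{cor:wraparound} with the quadratic piece reduced to a small bounded perturbation absorbed at the cost of a harmless $O(1/k_0)$. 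Two small points worth tightening when writing it up: (i) state explicitly that $|v_i|\le B$ on a set of $\Theta_\delta(n)$ indices forces $b_n'=O(n)$ by interpolation, which is what makes $|t_1(2bs+b_n')/n|=O(|t_1|)$; and (ii) record that the small bounded perturbation is absorbed using $\|x+\eta\|_{\R/\Z}^2\ge\|x\|_{\R/\Z}^2-|\eta|$ for $|\eta|\le 1$, so the loss per term is $O(1/k_0)$ and the total loss $O(n/k_0)$ is dominated by the $\asymp_\delta n$ main term once $k_0$ is large in terms of $\delta$.
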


\begin{proof}(of Lemma \ref{lemma:quad:2})
If $1 \le |t_{1}| \le \sqrt{A\log n}$, then we can argue as in the proof of \eqref{eqn:poly:L} for very large $|t|$.  
If $(\sqrt{A\log n})/n \le |t_{1}| \le 1$, we can also argue as in the proof of \eqref{eqn:poly:u} for large $|t|$. It remains to assume $|t_{1}| \le (\sqrt{A\log n})/n$, in which case $1/2 \le |t_{2}| \le \sqrt{A\log n}$.  

To this end, we recall \eqref{eqn:C/A} that $|t_{1} (w_{i}-w_{j})\frac{b r^2 + b_{n}' r -b s^2 - b_{n}' s}{n}| \le \frac{2}{\sqrt{A}}$. We can then argue as in the proof of \eqref{eqn:poly:L}, again applying Corollary \ref{cor:wraparound}.
\end{proof}

By Lemmas \ref{lemma:quad:1} and \ref{lemma:quad:2}, it suffices to focus on $\|\BMt\|_{2} \le \frac{\sqrt{A\log n}}{n}$. In this case,
\[
|\varphi_{\BMX}(\BMt)| \le \exp\Big(-\frac{1}{2n^{3}} \sum_{1\le i,j\le n; \, r,s\in I} \big| t_{1} (w_{i}-w_{j}) \frac{b r^2 + b_{n}' r -b s^2 - b_{n}' s}{n} + t_{2}(w_{i}-w_{j}) (r - s) \big|^{2} \Big).
\]
Since $\sum_{1\le i,j\le n}(w_{i}-w_{j})^2=2n$, the exponent simplifies to
\[
\frac{1}{n^{2}} \sum_{r,s \in I} \big| t_{1} \frac{b r^2 + b_{n}' r -b s^2 - b_{n}' s}{n} + t_{2} (r - s)  \big|^{2}.
\] 
Again by Claim \ref{claim:span}, 
\[
\frac{1}{n^{2}} \sum_{r,s \in I} \big| t_{1} \frac{b r^2 + b_{n}' r -b s^2 - b_{n}' s}{n} + t_{2} (r - s)  \big|^{2} =\Theta \Big((t_{1}^{2}+t_{2}^{2})n^{2}\Big).
\]
Using this, we are done with the proof of \eqref{eqn:comparison:2D:u}.
\end{proof}

\begin{proof}(of Equation \eqref{eqn:comparison:2D:L})
For simplicity, we replace $b k^{2} + b'_{n} k + b''_{n}$ with $k^{2}$; the argument extends without change to the general case. Our starting point is a two-dimensional variant of \eqref{eqn:identity:1d}:
\begin{equation}\label{eqn:identity:2d}
 \int_{\R^{2}} e^{-\pi\|\BMt\|_{2}^{2}} e^{i \BMt \cdot \BMx} \, d\BMt = e^{-\pi \|\BMx\|_{2}^{2}/2}.
\end{equation}

Hence
\[
\mathbb{E} \int_{\R^{2}} e^{-\pi \|\BMt\|_{2}^{2}} e^{i \BMt \cdot \BMX} \, d\BMt
= \mathbb{E} e^{-\pi \|\BMX\|_{2}^{2}/2}.
\]

For any $K$ (noting here and later that the integrals are real-valued because of the symmetry of the range of $t$),
\[
-\int_{\|\BMt\|_{2}\ge K} e^{-\pi  \|\BMt\|_{2}^{2}}\, d\BMt
 \le \int_{\|\BMt\|_{2}>K} e^{-\pi  \|\BMt\|_{2}^{2}} e^{i \BMt \cdot \Bx} \, d\BMt
 \le \int_{\|\BMt\|_{2} \ge K} e^{-\pi  \|\BMt\|_{2}^{2}} \, d\BMt
 \le e^{-C' K^{2}}.
\]
Thus, for sufficiently large $A$, with
\[
\BMX=\BMS-n\BML = (S_{1}-n L_{1}, \; S_{2} -n L_{2}),
\]
\[
\Big| \mathbb{E} \int_{\|\BMt\|_{2} \ge \sqrt{A\log n}} e^{-\pi  \|\BMt\|_{2}^{2}} e^{i \BMt \cdot \BMX} \, d\BMt \Big|
 \le \int_{\|\BMt\|_{2} \ge \sqrt{A\log n}} e^{-\pi  \|\BMt\|_{2}^{2}} \, d\BMt
 \le n^{-2\sqrt{A}}.
\]
We thus have (note that the integral is real because of the symmetry of the range of $t$)
\begin{align*}
\P(\|\BMX\|_{2}\le 1) &\le e^{\pi} \, \mathbb{E} e^{-\pi \|\BMX\|^{2}/2} \\
&\le e^{\pi} \Big[ \mathbb{E} \int_{\|\BMt\|_{2} \le \sqrt{A\log n}} e^{-\pi  \|\BMt\|_{2}^{2}} e^{i \BMt \cdot \BMX} \, d\BMt + n^{-2\sqrt{A}} \Big] \\
&\le e^{\pi} \int_{\|\BMt\|_{2} \le \sqrt{A\log n}} e^{-\pi  \|\BMt\|_{2}^{2}} \mathbb{E} e^{i \BMt \cdot \BMX} \, d\BMt + e^{\pi} n^{-2\sqrt{A}}.
\end{align*}

Next, if $\P(\|\BMX\|_{2} \le 1)\le 2e^{\pi /2} n^{-2\sqrt{A}}$, then there is nothing to prove. Assume otherwise, then from the above we have
\[
\P(\|\BMX\|_{2}\le 1) \le 2 e^{\pi} \int_{\|\BMt\|_{2} \le \sqrt{A\log n}} e^{-\pi  \|\BMt\|_{2}^{2}} \varphi_{\BMX}(\BMt) \, d\BMt.
\]

It remains to bound the RHS. As we assume that $L_{1}, L_{2} = O(\sqrt{\log n})$, by Lemma \ref{lemma:quad:1} and Lemma \ref{lemma:quad:2}, it suffices to focus on $\|\BMt\|_{2} \le (\sqrt{A\log n})/n$. Without loss of generality \footnote{The case $L_{1} > L_{2}$ can be treated similarly by applying~\eqref{eqn:mgf:1} to the sequence of squares corresponding to the sum $S_{1}$ in~\eqref{eqn:CS:1} instead.}, let us assume
\[
L_{2} \ge L_{1}.
\]
We will establish the bound
\[
\int_{\|\BMt\|_{2} \le (\sqrt{A\log n})/n} e^{-\pi  \|\BMt\|_{2}^{2}} \varphi_{\BMX}(\BMt) \, d\BMt = O\Big( \frac{e^{-c L_{2}^{2}}}{n^{2}} \Big).
\]
We first write
\begin{align*}
\int_{\|\BMt\|_{2} \le (\sqrt{A\log n})/n} e^{-\pi  \|\BMt\|_{2}^{2}} \varphi_{\BMX}(\BMt) \, d\BMt &\quad \le \int_{|t_{1}| \le L_{2}/n, \; |t_{2}| \le L_{2}/n} e^{-\pi  \|\BMt\|_{2}^{2}} \varphi_{\BMX}(\BMt) \, d\BMt \\
&\quad + \int_{\|\BMt\|_{2} \le (\sqrt{A\log n})/n, \; L_{2}/n \le |t_{1}| \le (\sqrt{A\log n})/n} e^{-\pi  \|\BMt\|_{2}^{2}} \varphi_{\BMX}(\BMt) \, d\BMt \\
&\quad + \int_{\|\BMt\|_{2} \le (\sqrt{A\log n})/n, \; L_{2}/n \le |t_{2}| \le (\sqrt{A\log n})/n} e^{-\pi  \|\BMt\|_{2}^{2}} \varphi_{\BMX}(\BMt) \, d\BMt.
\end{align*}

For the second integral, recall that for $\|\BMt\|_{2} \le (\sqrt{A\log n})/n$, we have $|\varphi_{\BMX}(\BMt)| \le \exp(-\Theta(\|\BMt\|_{2}^{2}n^{2}))$, so
\[
\int_{\|\BMt\|_{2} \le (\sqrt{A\log n})/n, \, L_{2}/n\le |t_{1}| \le (\sqrt{A\log n})/n}
 e^{-\pi  \|\BMt\|_{2}^{2}} \varphi_{\BMX}(\BMt) \, d\BMt
 \le \frac{1}{n^{2}}\int_{L_{2}\le |x_{1}|} e^{-\Theta(x_{1}^{2} + x_{2}^{2})} dx_{1}dx_{2}
 \le \frac{1}{n^{2}}e^{-\Theta(L_{2}^{2})}.
\]
The third integral is bounded similarly:
\[
\int_{\|\BMt\|_{2} \le (\sqrt{A\log n})/n, \, L_{2}/n\le |t_{2}| \le (\sqrt{A\log n})/n}
 e^{-\pi  \|\BMt\|_{2}^{2}} \varphi_{\BMX}(\BMt) \, d\BMt
 \le \frac{1}{n^{2}}\int_{L_{2}\le |x_{2}|} e^{-\Theta(x_{1}^{2} + x_{2}^{2})} dx_{1}dx_{2}
 \le \frac{1}{n^{2}}e^{-\Theta(L_{2}^{2})}.
\]

It remains to work with the first integral. By the change of variables $x_{1} = n t_{1}$, $x_{2} = n t_{2}$, it becomes
\[
\frac{1}{n^{2}}\int_{|x_{1}| \le L_{2},\; |x_{2}| \le L_{2}} e^{-\pi  \|\BMx\|_{2}^{2}/n^{2}} \varphi_{\BX/n}(\BMx) \, dx_{2}dx_{1},
\]
where (recalling that $S_{1}= \sum_{k\in I} k^{2}w_{\pi(k)}/n, S_{2}=\sum_{k\in I} k w_{\pi(k)}$)
\[
\varphi_{\BMX/n}(\BMx) = \mathbb{E} e^{i\BMx \cdot \BMX/n}
= \mathbb{E} e^{i x_{1} X_{1} + i x_{2} (S_{2}/n -L_{2})} =  e^{-i x_{2}L_{2}}\mathbb{E} e^{i x_{1} X_{1} + i x_{2} S_{2}/n}.
\]
In what follows we will fix $x_{1} \in [-L_{2},L_{2}]$ and consider only the inner integral with respect to $x_{2}$, 
\begin{align*}
f(x_{1})&=\int_{|x_{2}| \le L_{2}} e^{-\pi  \|\BMx\|_{2}^{2}/n^{2}} (\varphi_{\BMX/n}(x_{1},x_{2})+\varphi_{\BMX/n}(-x_{1},x_{2})) \, dx_{2}\\
&=:\int_{|x_{2}| \le L_{2}} e^{-\pi  \|\BMx\|_{2}^{2}/n^{2}} (\varphi_{\BMX/n,x_{1}}(x_{2})+\varphi_{\BMX/n,-x_{1}}(x_{2})) \, dx_{2}
\end{align*}
To treat with this integral, we first extend $x_{2}$ to complex numbers via
\begin{align*}
\varphi_{\BMX/n,x_{1}}(z)& = e^{-i zL_{2}}\mathbb{E} e^{i x_{1} X_{1} + i z (S_{2}/n)}\\
\varphi_{X/n,-x_{1}}(z)& = e^{-i zL_{2}}\mathbb{E} e^{-i x_{1} X_{1} + i z (S_{2}/n)}.
\end{align*}
For short, let 
\begin{align}\label{eqn:hx_{1}}
h_{x_{1}}(x_{2}) &=  e^{-\pi  (x_{1}^{2} + x_{2}^{2})/n^{2}} (\mathbb{E} e^{i x_{1} X_{1} + i x_{2} (S_{2}/n)} +  \mathbb{E} e^{-i x_{1} X_{1} +i x_{2} (S_{2}/n)})\nonumber \\
 &=e^{-\pi  (x_{1}^{2} + x_{2}^{2})/n^{2}} \mathbb{E} (e^{i x_{1} X_{1}}+ e^{-i x_{1} X_{1}})e^{i x_{2} (S_{2}/n)}.
\end{align}

This function can be extended holomorphically to 
\begin{align*}
h_{x_{1}}(z)&=e^{-\pi  (x_{1}^{2}+ z^{2})/n^{2}}  (\mathbb{E} e^{i x_{1} X_{1} + i z (S_{2}/n)} +  \mathbb{E} e^{-i x_{1} X_{1} +i z (S_{2}/n)}).
\end{align*}
Since $|\mathbb{E} Y| \le \mathbb{E} |Y|$ for any complex-valued random variable $Y$, for $z=t+is$, inequality \eqref{eqn:mgf:1} gives 
\begin{align}\label{eqn:CS:2} 
|\mathbb{E} e^{i x_{1} X_{1} + i (t+is) (S_{2}/n)} +  \mathbb{E} e^{-i x_{1} X_{1} +i (t+is) (S_{2}/n)} | \le 2 \E  e^{{-sS_{2}/n}}  \le 2C_{0}'e^{C_{0}'s^{2}}.
\end{align}
We write
$$f(x_{1})=\int_{|x_{2}| \le L_{2}} e^{-\pi  \|\BMx\|_{2}^{2}/n^{2}} (\varphi_{\BMX/n,x_{1}}(x_{2})+\varphi_{X/n,-x_{1}}(x_{2})) \, dx_{2} =\int_{|x_{2}| \le L_{2}} e^{-i x_{2}L_{2}} h_{x_{1}}(x_{2}) dx_{2}.$$
By using contour integral, we pass to the line $\R-icL$
\begin{align}\label{eqn:contour:joint}
\int_{|x_{2}| \le L_{2}} e^{-i x_{2}L_{2}}h_{x_{1}}(x_{2})dx_{2} &=\Re (\int_{z\in \R-icL_{2}, \, |\Re(z)| \le L_{2}} e^{-i zL_{2}} h_{x_{1}}(z)dz) \nonumber  \\
&= \Re (\int_{|t| \le L_{2}} e^{-i(t-icL_{2})L_{2}} h_{x_{1}}(t-icL_{2})dt) \nonumber \\
&=(e^{-cL_{2}^{2}}) \Re (\int_{|t| \le L_{2}} e^{-itL_{2}} h_{x_{1}}(t-icL_{2})dt),
\end{align}
where it is crucial to notice that the first integral is real-valued because $h_{x_{1}}(-x_{2}) = \overline{{h}_{x_{1}}(x_{2})}$ from \eqref{eqn:hx_{1}} and the real part of the integrals (with opposite orentation) on the lines $\Re(z) = -L_{2}$ and $\Re(z) = L_{2}$ cancel each other. More specifically
$$\Re \int_{z=-L_{2} - it, \, 0\le t\le cL} e^{-i zL_{2}}h_{x_{1}}(z)dt = \Re \int_{z=L_{2} - it, \, 0\le t\le cL_{2}} e^{-i zL_{2}}h_{x_{1}}(z)dt$$
as they are conjugate to each other: this follows from that fact that  $S_{1},S_{2} \in \R$ and $h_{x_{1}}(-x+it) = \overline{{h}_{x_{1}}(x+it)}$, which can be  seen from 
$$h_{x_{1}}(-x+iy) =e^{-\pi  x_{1}^{2}/n^{2}-\pi  (-x+iy)^{2}/n^{2}} \E (e^{i x_{1} X_{1}/n}+ e^{-i x_{1} X_{1}/n} ) e^{i(-x+iy)S_{2}/n}$$
$$ =e^{-\pi (x_{1}^{2}+x^{2}-y^{2})/n^{2}} e^{2\pi i xy/n^{2}} \E (e^{i x_{1} X_{1}/n}+ e^{-i x_{1} X_{1}/n} ) e^{-i x S_{2}/n} e^{-yX_{2}/n},$$ 
while 
$$h_{x_{1}}(x+iy) =e^{-\pi (x_{1}^{2}+x^{2}-y^{2})/n^{2}} e^{-2\pi i xy/n^{2}} \E (e^{i x_{1} X_{1}/n}+ e^{-i x_{1} X_{1}/n} ) e^{i x X_{2}/n} e^{-yS_{2}/n}.$$
To continue \eqref{eqn:contour:joint}, note that
$$|e^{-\pi  (t-icL_{2})^{2}/n^{2}}| = e^{-\pi (t^{2} - c^{2}L_{2}^{2})/n^{2}} \approx 1, \mbox{ as $n\to \infty$ and $L_{2}=O(\sqrt{\log n})$}$$
and so by \eqref{eqn:CS:2},
\begin{align*}
|h_{x_{1}}(t-icL_{2})| &= |e^{-\pi  (t-icL_{2})^{2}/n^{2}}| |\mathbb{E} e^{i x_{1} X_{1} + i (t+icL_{2}) (S_{2}/n)} +  \mathbb{E} e^{-i x_{1} X_{1} +i (t-icL_{2}) (S_{2}/n)} | \\
&= O(e^{C_{0}'c^{2}L_{2}^{2}}).
\end{align*}
Putting together, by choosing $c=1/8C_{0}'$, we have obtained a bound
$$|f(x_{1})| \le (e^{-cL_{2}^{2}}) \times e^{C_{0}'c^{2}L_{2}^{2}} \times 2L_{2} =O(e^{-\Theta(L_{2}^{2})}).$$
All together, in the case $\P(\|\BMX\|_{2} \le 1)\le 2e^{\pi /2} n^{-2\sqrt{A}}$ we have 
$$\P(\|\BMX\|_{2}\le 1)  \le \frac{1}{n^{2}}\int_{|x_{1}|\le  L_{2}} |f(x_{1})| dx_{1} =O\Big(\frac{1}{n^{2}} L_{2} e^{-\Theta(L_{2}^{2})}\Big) = O\Big(\frac{1}{n^{2}}e^{-\Theta(L_{2}^{2})}\Big),$$
completing the proof.
\end{proof}

To conclude this section, we present below a comparison estimate, which will be useful for the next section.

\begin{theorem}\label{thm:comparison:2vs1'}
Let $d \ge 2$ be a fixed integer, and let $\delta > 0$, $b \ne 0$, $c \ne 0$, and $\widetilde{B} > 0$ be constants. Suppose\footnote{Note that $\sum_i w_i$ is not necessarily zero.} that the sequence $(w_1, \dots, w_n)$ satisfies $\sigma(\BMw) = 1$, and that
\[
|w_i - w_j| \le \frac{1}{A \sqrt{\log n}} \quad \text{for all } i, j,
\]
for some sufficiently large constant $A > 0$.
Let $I \subset [n]$ be any subset with $|I| \ge \delta n$, and consider the sequences $(v_1, \dots, v_n)$ and $(v_1', \dots, v_n')$ partially specified by
\[
v_i = \frac{P_d(i)}{n^{d}} \quad \text{and} \quad v_i' = \frac{P_{d-1}(i)}{n^{d-1}} \quad \text{for all } i \in I,
\]
where $P_d$ and $P_{d-1}$ are real polynomials of degrees $d$ and $d - 1$, respectively, with fixed leading coefficients $b$ and $c$, and whose remaining coefficients may depend on $n$.
Assume that
\[
|v_i|,\, |v_i'| \le \widetilde{B} \quad \text{for all } i \in [n], \qquad \text{and} \qquad \Big| \sum_{i=1}^{n} v'_{i} \Big| \le \widetilde{B}\Big| \sum_{i=1}^{n} v_{i} \Big|.
\]
Then,
\begin{equation}
\P\Big( \Big| \sum_{i=1}^n w_i v_{\pi(i)} \Big| \le \frac{1}{n} \Big| \sum_{i=1}^n w_i v_{\pi(i)}' \Big| \Big) = O_{A}\Big( \frac{1}{n} \Big).
\end{equation}
\end{theorem}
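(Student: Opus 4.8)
\emph{Proof proposal.} The plan is to reduce to the mean-zero situation and then bound the probability by slicing over the discretized value of the second sum, feeding each slice into the joint small-ball estimate of Theorem~\ref{thm:comparison:2vs1}.

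First I would put $\overline{\BMw}=\frac1n\sum_iw_i$ and $\BMw'=\BMw-\overline{\BMw}\mathbf{1}$, so that $\sum_iw_i'=0$, $\sum_i(w_i')^2=\sigma(\BMw)^2=1$ and $|w_i'-w_j'|=|w_i-w_j|\le\frac1{A\sqrt{\log n}}$; thus $\BMw'$ obeys Condition~\ref{cond:separation}. Since $\sum_iv_{\pi(i)}=\sum_iv_i$ for every $\pi$,
\[
S_\pi(\BMw,\BMv)=T_1+\mu_1,\qquad S_\pi(\BMw,\BMv')=T_2+\mu_2,
\]
where $T_1:=S_\pi(\BMw',\BMv)$ and $T_2:=S_\pi(\BMw',\BMv')$ have mean zero, while $\mu_1:=\overline{\BMw}\sum_iv_i$ and $\mu_2:=\overline{\BMw}\sum_iv_i'$ are deterministic and satisfy $|\mu_2|\le\widetilde B|\mu_1|$ by hypothesis. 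The target event is $\CE=\{|T_1+\mu_1|\le\frac1n|T_2+\mu_2|\}$. Because $\BMv,\BMv'$ have consecutive degrees $d,d-1$ with nonzero leading coefficients and are bounded by $\widetilde B$ on $[n]$, Theorem~\ref{thm:comparison:2vs1} applies to the pair $(T_1,T_2)$: for all $L_1,L_2\in\R$,
\begin{equation}\label{eqn:joint:here}
\P\big(|T_1-L_1|\le\tfrac1n\ \wedge\ |T_2-L_2|\le\tfrac1n\big)=O\big(\tfrac1{n^2}e^{-\Theta(L_1^2+L_2^2)}\big).
\end{equation}

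Next I would slice over $m\in\Z$. On $\{T_2\in[\tfrac mn,\tfrac{m+1}n)\}$ one has $|T_2+\mu_2|\le\tfrac1n(|m+n\mu_2|+1)$, so $\CE$ forces $T_1$ into the interval of radius $\tfrac1{n^2}(|m+n\mu_2|+1)$ about $-\mu_1$. Covering that interval by $N_m=O\big(\tfrac{|m+n\mu_2|+1}n+1\big)=O\big(\tfrac{|m|+1}n+|\mu_2|+1\big)$ grid intervals $[\tfrac ln,\tfrac{l+1}n)$ — each index $l$ obeying $|l/n|\ge|\mu_1|-\tfrac1{n^2}(|m+n\mu_2|+1)-\tfrac1n$ — and applying \eqref{eqn:joint:here} with $L_1\asymp\tfrac ln$, $L_2\asymp\tfrac mn$ to each pair, I obtain
\[
\P(\CE)\le\sum_{m\in\Z}N_m\cdot O\big(\tfrac1{n^2}e^{-\Theta((l/n)^2+(m/n)^2)}\big).
\]
I would then split on the size of $\mu_1$. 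If $|\mu_1|\le C_1$ (a large absolute constant) then $|\mu_2|\le\widetilde BC_1=O(1)$, so $N_m=O(\tfrac{|m|+1}n+1)$, and crudely bounding $e^{-\Theta((l/n)^2)}\le1$ gives $\P(\CE)=O(\tfrac1{n^2})\sum_m(\tfrac{|m|+1}n+1)e^{-\Theta((m/n)^2)}=O(\tfrac1{n^2})\cdot O(n)=O(\tfrac1n)$, the $m$-sum being comparable to $\int_\R(\tfrac{|x|}n+1)e^{-\Theta(x^2/n^2)}\,dx=O(n)$. If $|\mu_1|>C_1$, I would split the $m$-sum at $|m|$ of order $n^2|\mu_1|$: for $|m|$ at most a suitable constant multiple of $n^2|\mu_1|$ one checks, using $|\mu_2|\le\widetilde B|\mu_1|$ and $n$ large, that $|l/n|\ge|\mu_1|/2$, whence $e^{-\Theta((l/n)^2)}\le e^{-\Theta(\mu_1^2)}$ and this part is at most $O\big(\tfrac{e^{-\Theta(\mu_1^2)}}{n^2}\big)\sum_mN_me^{-\Theta((m/n)^2)}=O\big(\tfrac{(|\mu_1|+1)e^{-\Theta(\mu_1^2)}}n\big)=O(\tfrac1n)$, since $\sup_{t\ge0}(t+1)e^{-\Theta(t^2)}<\infty$; for the remaining $m$ the factor $e^{-\Theta((m/n)^2)}\le e^{-\Theta(n^2\mu_1^2)}$ makes the contribution negligible. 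Hence $\P(\CE)=O(1/n)$ in all cases.

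The step I expect to be the main obstacle is the apparent loss of a $\sqrt{\log n}$ factor. The naive route — conditioning on $T_2$ having its typical magnitude $O(\sqrt{\log n})$ via sub-Gaussianity, then invoking a one-dimensional small-ball bound for $T_1$ at scale $\tfrac{\sqrt{\log n}}n$ — only yields $O\big(\tfrac{\sqrt{\log n}}n\big)$. To remove it one must both (i) slice by the \emph{actual} value of $T_2$ and use the \emph{joint} bound \eqref{eqn:joint:here}, whose Gaussian decay in $m$ lets the sum over slices converge to $O(n)$ rather than acquire a logarithmic factor, and (ii) use the hypothesis $|\sum_iv_i'|\le\widetilde B|\sum_iv_i|$ in the form $|\mu_1|\ge|\mu_2|/\widetilde B$, so that exactly when $|T_2+\mu_2|$ is forced to be large (large $|\mu_2|$) the target $-\mu_1$ for $T_1$ lies proportionally far from its mean and the factor $e^{-\Theta(\mu_1^2)}$ in \eqref{eqn:joint:here} absorbs the loss.
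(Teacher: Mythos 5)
Your proposal is correct and follows essentially the same route as the paper's proof: slice by the value of the second sum at mesh $1/n$, apply the joint small-ball estimate of Theorem~\ref{thm:comparison:2vs1} to each pair of slices, and use the hypothesis $|\sum_i v_i'|\le \widetilde B|\sum_i v_i|$ (equivalently $|\mu_2|\le\widetilde B|\mu_1|$) so that the Gaussian decay in one coordinate compensates for the number of admissible slices in the other. The only cosmetic difference is bookkeeping: you center $\BMw$ explicitly and index slices by integers $m,l$ with mesh $1/n$ throughout, whereas the paper works with the uncentered sums $X_1,X_2$, slices $|X_2|$ into unit intervals $I_k$ (subdivided into $n$ sub-intervals when applying the joint bound) and $|X_1|$ into intervals $I_l/n$ with $1\le|l|\le|k|$, then splits the sum at $|k|\asymp|\E X_2|$ rather than at a constant threshold on $|\mu_1|$; the underlying estimate and the role of the hypothesis are identical.
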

Our proof shows that we can actually relax the condition $|\sum_{i}v'_i| =O(|\sum_{i}v_i|)$ to 
$| \overline{\BMv'} \cdot \overline{\BMw} | = O(| \overline{\BMv} \cdot \overline{\BMw}|)$. In the centered case, where $\overline{\BMw} = 0$, the latter condition holds for all $v_i$ and $v'_i$. Note also that a weaker version of this result, in which the right-hand side is 
$O(\frac{\sqrt{\log n}}{n})$, can be handled by a much simpler method 
(by not relying on Theorem~\ref{thm:comparison:2vs1} for the joint event, but instead 
using Theorem~\ref{thm:cont:poly:1} together with Lemma~\ref{lemma:deviation}).  
We leave the details to the reader.

\begin{proof}[Proof of Theorem~\ref{thm:comparison:2vs1'}]
Let
\[
X_1 = \sum_{i=1}^{n} w_i v_{\pi(i)}, \quad X_2 = \sum_{i=1}^{n} w_i v'_{\pi(i)}.
\]
Then, define $a:=\E X_1 = \overline{\BMv'} \cdot \overline{\BMw}$ and $b:=\E X_2 = \overline{\BMv} \cdot \overline{\BMw}$. By the assumption, 
\[
|a| \leq \widetilde{B} |b|.
\]
Let $ I_k = [k-1, k] $ if $ k $ is a positive integer, and $ I_k = [-k, -k+1] $ if $ k $ is a negative integer. We will consider the joint events that $|X_{2}| \in I_{k}$ and $|X_{1}| \in I_{l} / n$ for some integer $l$ with $1 \le |l| \le |k|$. 

By Theorem~\ref{thm:comparison:2vs1}, and by decomposing $I_{k}$ into $n$ intervals of length $1/n$ each, we have 
\[
\P\big(|X_{2}| \in I_{k},\, |X_{1}| \in I_{l}/n \big)
= O\Big(\frac{1}{n} e^{-\Theta((k-a)^{2})} e^{-\Theta((l/n-b)^{2})}\Big).
\]
Summing over $l$ with $1\le |l| \le |k|$ and over $k$ gives the bound
\[
\P\big(|X_{1}| \le |X_{2}|/n\big) = \frac{O(1)}{n}\sum_{1\le |l|\le |k|}e^{-\Theta((k-a)^{2})} e^{-\Theta((l/n-b)^{2})}.
\]
To estimate the above double sum, we consider two regimes: $|k| \ge 2|a|$ and $|k|<2|a|$. Since $(k-a)^2 \ge k^2/4$ for $|k|\ge 2|a|$, we see that 
\begin{align*}
\frac{1}{n}\sum_{1\le |l|\le |k|; \, |k|\ge 2|a|}e^{-\Theta((k-a)^{2})} e^{-\Theta((l/n-b)^{2})} &\le \frac{1}{n}\sum_{1\le |l|\le |k|; \, |k|\ge 2|a|}e^{-\Theta(k^{2})}\\
&\le \frac{1}{n}\sum_{k}2|k|e^{-\Theta(k^{2})}=O\Big(\frac{1}{n}\Big).
\end{align*}
For $1\le |l|\le |k|<2|a|$, we have $|l|/n<(2|a|)/n<(2B'|b|)/n\le |b|/2$, so $(l/n-b)^2\ge b^2/4$. It follows that
\begin{align*}
\frac{1}{n}\sum_{1\le |l|\le |k|<2|a|}e^{-\Theta((k-a)^{2})} e^{-\Theta((l/n-b)^{2})} \le \frac{1}{n}\sum_{1\le |l|\le |k|<2|a|}e^{-\Theta(b^{2})} \le \frac{16a^2e^{-\Theta(b^2)}}{n}= O\Big(\frac{1}{n}\Big)
\end{align*}
for $|a|\le \widetilde{B}|b|$.
\end{proof}

\section{Some generalizations of our results}\label{sect:general}
In this section we discuss a few more generalization of our results. First of all, Condition \eqref{eqn:separation} can be replaced by the following weaker assumption.

\begin{condition}[Non-degeneracy II]\label{cond:separation'} 
Let $\eps>0$, and suppose $A$ is sufficiently large depending on $\eps$.  
A sequence $w_{1},\dots, w_{n}$ is said to be {not too degenerate} if
\begin{equation}\label{eqn:separation'}
\sum_{\substack{i<j \\ |w_i - w_j|/\sigma(\BMw) \le 1/A \sqrt{\log n}}} (w_{i} - w_{j})^{2} \;>\; \eps n \sigma^2(\BMw).
\end{equation}
\end{condition}
In other words, we allow distances of order larger than $\sigma(\BMw)/A\sqrt{\log n}$, but require that the contribution from pairs at smaller distances is not too small relative to the main term $n \sigma^2(\BMw)$.

\begin{theorem}\label{thm:general'} 
All of our results, including Theorem \ref{thm:cont:1}, Theorem \ref{thm:cont:3/2}, Theorem \ref{thm:cont:5/2}, Theorem \ref{thm:cont:poly:1}, Theorem~\ref{thm:comparison:2vs1} and Theorem~\ref{thm:comparison:2vs1'}  extend to $(w_{i})$ satisfying \eqref{eqn:separation'} (with the normalization $\sigma(\BMw)=1$).
\end{theorem}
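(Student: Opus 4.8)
The plan is to check, theorem by theorem, that the uniform separation bound $|w_i-w_j|\le\tfrac{1}{A\sqrt{\log n}}$ enters each of the proofs of Theorems~\ref{thm:cont:1}, \ref{thm:cont:3/2}, \ref{thm:cont:5/2}, \ref{thm:cont:poly:1}, \ref{thm:comparison:2vs1}, \ref{thm:comparison:2vs1'} only through two consequences, namely that (a) the weight $\sum_{i,j}(w_i-w_j)^2=2n$ is carried by pairs for which $|t(w_i-w_j)|$ lies in the window where the wrap‑around lemmas (Corollary~\ref{cor:wraparound}, Lemma~\ref{lemma:wraparound:square}, Corollary~\ref{cor:bigt:square}) apply, and (b) each such pair has $(w_i-w_j)^2\le\tfrac{1}{A^2\log n}$. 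Since every characteristic‑function estimate in those proofs uses only a \emph{lower} bound on an exponent of the shape $\tfrac{c_\delta}{n^2}\sum_{i,j,r}\|t(w_i-w_j)r\|_{\R/\Z}^2$ (or its polynomial/two‑dimensional analogue), which is a sum of nonnegative terms, I would throughout restrict that sum to $\CG:=\{(i,j)\colon |w_i-w_j|\le\tfrac{1}{A\sqrt{\log n}}\}$, discarding the remaining pairs. On $\CG$ the hypothesis~\eqref{eqn:separation'} gives $\sum_{(i,j)\in\CG}(w_i-w_j)^2>\varepsilon n$ (and $\le 2n$ trivially, so $\asymp_\varepsilon n$), and property (b) still holds; these are precisely the two inputs the original arguments needed.

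With this in hand I would first rerun the ``large $|t|$'' analysis. Decompose $\CG=\bigcup_k\CG_k$ dyadically by $D_{k-1}<|w_i-w_j|\le D_k$, $D_k=2^k/\sqrt n$, now letting $k$ range over all $k$ with $D_k\le\tfrac{1}{A\sqrt{\log n}}$, so that $\sum_k D_k^2|\CG_k|\asymp_\varepsilon n$. For a fixed $t$ in the relevant window and each $k$, either $|t|D_k\le\tfrac{1}{Cn}$, in which case $|t(w_i-w_j)r|\le\tfrac1C<1$ and $\CG_k$ contributes $\asymp_\delta t^2n^3D_k^2|\CG_k|\ge A\log n\cdot nD_k^2|\CG_k|$ (using $t^2\ge A\log n/n^2$); or $|t|D_k>\tfrac{1}{Cn}$, in which case $\tfrac{1}{2Cn}<|t(w_i-w_j)|\le\tfrac{1}{A\sqrt{\log n}}<\tfrac1C$, so the appropriate wrap‑around lemma applies and $\CG_k$ contributes $\asymp_{\delta,C} n|\CG_k|\ge\tfrac{cA^2\log n}{4}nD_k^2|\CG_k|\ge A\log n\cdot nD_k^2|\CG_k|$ for $A$ large. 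Summing over $k$ yields $\gg_{\delta,\varepsilon,C}A\log n\cdot n\sum_kD_k^2|\CG_k|\gg An^2\log n$, hence $|\varphi_S(t)|\le n^{-2\sqrt A}$ exactly as before; the identical dichotomy handles the rescaled windows in Theorems~\ref{thm:cont:3/2} and \ref{thm:cont:5/2} and the polynomial window in Theorem~\ref{thm:cont:poly:1}. For the ``very large $|t|$'' regime $1\le|t|\le\sqrt{A\log n}$ (needed for the $L$‑dependent bounds) I would instead take $\CG:=\{(i,j)\colon \tfrac{1}{Cn}\le|w_i-w_j|\le\tfrac{1}{A\sqrt{\log n}}\}$: the pairs with $|w_i-w_j|<\tfrac{1}{Cn}$ contribute at most $\tfrac1{C^2}$ to $\sum(w_i-w_j)^2$, so $\sum_\CG(w_i-w_j)^2>\varepsilon n-o(1)$, while every $(i,j)\in\CG$ satisfies $\tfrac{1}{Cn}\le|t(w_i-w_j)|\le\tfrac1{\sqrt A}<\tfrac1C$, so $\sum_r\|t(w_i-w_j)r\|_{\R/\Z}^2\asymp_{\delta,C}n$ on each such pair and $|\CG|\gg A^2n\log n$, giving $|\varphi_S(t)|\le n^{-A}$. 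Finally, in the ``small $|t|$'' regime I would restrict the nonnegative exponent to $\CG$, on which $\|t(w_i-w_j)r\|_{\R/\Z}=|t(w_i-w_j)r|$ since $|t(w_i-w_j)r|\le\tfrac1{\sqrt A}<1$, and use $\sum_\CG(w_i-w_j)^2\asymp_\varepsilon n$ together with $\sum_{r\in\CR}r^2\asymp_\delta n^3$ to recover the Gaussian lower bound $\asymp_{\delta,\varepsilon}t^2n^2$ (respectively $t^2n^3$, etc.). All the remaining steps — Esseen's inequality, the contour passage to $\R-icL$, and the moment‑generating‑function bound~\eqref{eqn:mgf:1} — are untouched, since~\eqref{eqn:mgf:1} and Lemma~\ref{lemma:deviation} only require $\sigma(\BMw)=1$.

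For the two‑dimensional result (Theorem~\ref{thm:comparison:2vs1:d=2}) the same substitution is made inside Lemmas~\ref{lemma:quad:1} and \ref{lemma:quad:2}: the step in which the exponent is ``simplified using $\sum_{i,j}(w_i-w_j)^2=2n$'' is replaced by restricting to $(i,j)\in\CG$ and using $\sum_\CG(w_i-w_j)^2\asymp_\varepsilon n$, after which Claim~\ref{claim:span} applies verbatim; the auxiliary claim producing the sets $I_0,J_s$ does not involve the $w_i$ and is unchanged. Theorem~\ref{thm:comparison:2vs1'} then follows as before, since it uses Theorem~\ref{thm:comparison:2vs1} as a black box. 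For Theorems~\ref{thm:cont:3/2} and \ref{thm:cont:5/2}, which additionally assume that no interval of length $\varepsilon/\sqrt n$ holds $(1-\varepsilon)n$ of the $w_i$ (and, for \ref{thm:cont:5/2}, that there are $\gg An\log n$ pairs at scale $n^{-3/2}$), these spread hypotheses are independent of the degeneracy condition and are kept as is; one passes to $\CG=\{(i,j)\colon\tfrac{\varepsilon}{2\sqrt n}\le|w_i-w_j|\le\tfrac{1}{A\sqrt{\log n}}\}$, and here the spread hypothesis lets one discard the pairs with $|w_i-w_j|<\tfrac{\varepsilon}{2\sqrt n}$ cleanly — their total weight is $\le(1-\varepsilon)n^2\cdot\tfrac{\varepsilon^2}{4n}<\tfrac{\varepsilon^2}{4}n$ — so $\sum_\CG(w_i-w_j)^2>\varepsilon n-\tfrac{\varepsilon^2}{4}n\gg_\varepsilon n$ again from~\eqref{eqn:separation'}.

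The step I expect to be the main obstacle is purely bookkeeping: one must verify, window by window and theorem by theorem, that every pair retained in $\CG$ really does satisfy $|t(w_i-w_j)|\le\tfrac1C$ on the ``unfolded'' branch (so that the wrap‑around lemmas apply) and that the ``small‑$|t|$'' threshold is still placed so that $|t(w_i-w_j)r|<1$ on $\CG$ — this has to be re‑examined for each of the rescaled windows $|t|\lesssim n^{-1}$, $n^{-3/2}$, $n^{-5/2+\varepsilon}$ and for the two‑dimensional windows in Lemmas~\ref{lemma:quad:1}–\ref{lemma:quad:2}. Once this is checked, no new analytic idea is needed; the argument is exactly the substitution $\{|w_i-w_j|\le\tfrac1{\sqrt n}\}\rightsquigarrow\CG$ and $2n\rightsquigarrow\Theta_\varepsilon(n)$ carried through each proof.
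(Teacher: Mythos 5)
Your proposal is correct and carries out exactly the plan the paper gestures at in its one-paragraph proof: since every characteristic-function lower bound on the exponent is a sum of nonnegative terms, one restricts to the pairs $(i,j)$ with $|w_i-w_j|\le \tfrac{1}{A\sqrt{\log n}}$, and Condition~\eqref{eqn:separation'} guarantees these pairs still carry $\ell^2$-mass $\gg_\varepsilon n$, which (together with the per-pair bound $(w_i-w_j)^2\le 1/(A^2\log n)$) is the only input the large-$|t|$, small-$|t|$, and very-large-$|t|$ regimes actually used. Your window-by-window verification, including the adjusted cutoffs ($1/(Cn)$ instead of $1/\sqrt n$ in the very-large-$|t|$ regime, $\varepsilon/(2\sqrt n)$ in Theorems~\ref{thm:cont:3/2}--\ref{thm:cont:5/2}) and the two-dimensional case, supplies precisely the bookkeeping that the paper leaves implicit.
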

\begin{proof} Since in the proofs of all these theorems we focused only on pairs $w_i, w_j$ with 
$|w_i - w_j| \le \tfrac{1}{A \sqrt{\log n}}$, 
Condition~\eqref{eqn:separation'} ensures that the contribution from such pairs is significant. 
For example, in the proof of Theorem~\ref{thm:cont:1}, this condition was invoked in Case~1 of the analysis for ``large $|t|$'', in the treatment of ``small $|t|$'', and again in the treatment of ``very large $|t|$''.
\end{proof}

In the remainder of this section, we present several preparatory observations that will serve as useful ingredients for the proof of Theorem~\ref{thm:poly} in Section~\ref{sect:application}.

\begin{lemma}\label{lemma:mass}
Let $w_1,\dots,w_n \in \R$ satisfy $\overline{\BMw}=0$ and $\sigma(\BMw)=1$.  
Fix $\eps \in (0,1]$ and $K \ge \sqrt{2/\eps}$. 
Assume that
\[
\sum_{i: \, |w_i| \le K/\sqrt{n}} w_i^2 \;\ge\; \eps.
\]
Then there exist disjoint subsets $I, J \subset [n]$ with $|I| = |J| \ge \big\lfloor (\eps/32K^2)n \big\rfloor$,
such that 
\[
\sqrt{\frac{\eps}{2n}}\le |w_i - w_j| \le \frac{2K}{\sqrt{n}} 
\quad \text{for all } i \in I, \ j \in J.
\]
\end{lemma}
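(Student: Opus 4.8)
The plan is to produce $I$ and $J$ as two well-separated bunches of coordinates, obtained from opposite-sign medium-sized coordinates whenever that is possible, and otherwise from the mean-zero constraint. Set $M:=\{i:|w_i|\le K/\sqrt n\}$, $\delta_0:=\sqrt{\eps/(2n)}$ (so $\delta_0\le K/\sqrt n$ since $K^2\ge 2/\eps$), and $N_1:=\lfloor\eps n/(32K^2)\rfloor$; if $N_1=0$ there is nothing to prove. First I would discard from $M$ the coordinates with $|w_i|<\delta_0$: they contribute at most $n\delta_0^2=\eps/2$ to $\sum_{i\in M}w_i^2\ge\eps$, so the ``medium'' set $Q:=\{i:\delta_0\le|w_i|\le K/\sqrt n\}$ carries $\ell_2$-mass at least $\eps/2$, whence $|Q|\ge\eps n/(2K^2)\ge 16N_1$. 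Write $Q=Q^+\sqcup Q^-$ by the sign of $w_i$, and, negating $\BMw$ if necessary, assume $|Q^+|\ge|Q^-|$. If $|Q^-|\ge N_1$ we are done: for any $N_1$-subsets $I\subseteq Q^+$, $J\subseteq Q^-$ one has $|w_i-w_j|=w_i+|w_j|\in[2\delta_0,2K/\sqrt n]$, and $I,J$ are disjoint since their entries have opposite signs. Otherwise $|Q^-|<N_1$, hence $|Q^+|\ge 15\eps n/(32K^2)\ge 2N_1$; order $Q^+$ as $q_1\le\cdots\le q_p$. If $q_{p-N_1+1}-q_{N_1}\ge\delta_0$, take $I$ and $J$ to be the indices of the $N_1$ smallest, respectively largest, of the $q$'s: all cross-differences then lie in $[\delta_0,K/\sqrt n]$ and $I,J$ are disjoint because $p\ge 2N_1$.

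The remaining case is when $Q^+$ \emph{concentrates}: all but $2N_1$ of its values lie in a window $[\gamma,\gamma+\delta_0)$ with $\gamma\in[\delta_0,K/\sqrt n]$, yielding a cluster $C\subseteq Q^+$ of size $\ge 13\eps n/(32K^2)$. The key point is that every coordinate $w_j\in[-K/\sqrt n,0]$ is a legal partner for all of $C$, since then $w_i-w_j=w_i+|w_j|\in[\delta_0,2K/\sqrt n]$; so we may assume there are fewer than $N_1$ such coordinates. To close this subcase I would invoke mean-zero. As $\sum_{Q^-}w_i^2<N_1K^2/n\le\eps/32$, the cluster retains $\sum_C w_i^2\ge\tfrac{13\eps}{32}$, and since $w_i\le K/\sqrt n$ on $C$ this gives $\sum_C w_i\ge\tfrac{13\eps}{32}\,\tfrac{\sqrt n}{K}$. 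Then $\sum_{w_j<0}|w_j|=\sum_{w_j>0}w_j\ge\sum_C w_i$, whereas fewer than $N_1$ negative coordinates lie in $[-K/\sqrt n,0)$, and, using $\sum_{i\notin M}w_i^2\le 1-\eps$ together with $|w_j|\le\tfrac{\sqrt n}{2K}w_j^2$ for $|w_j|\ge 2K/\sqrt n$, at most $O\!\big((1-\eps)\sqrt n/K\big)$ of the negative $\ell_1$-mass can sit below $-2K/\sqrt n$. Hence a controlled amount of negative $\ell_1$-mass must occupy $(-2K/\sqrt n,-K/\sqrt n)$, and a single Cauchy--Schwarz against the $\ell_2$-mass there produces $\ge N_1$ coordinates, which serve as $J$ (up to a harmless case split on whether $\gamma$ is close to $K/\sqrt n$, which would make $C$ itself a large cluster near $K/\sqrt n$). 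If instead the forced negative $\ell_1$-mass has been pushed far below $-K/\sqrt n$, one iterates the same analysis at the larger scale of those coordinates, again using the $\ell_2$-budget $1-\eps$ to keep the relevant counts at least $N_1$.

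I expect this last subcase to be the main obstacle: one must control how far the $\ell_1$-mass that mean-zero forces onto the negative side can be displaced, so that the cluster $C$ always keeps a legal partner set inside the admissible window of width $2K/\sqrt n$. This is the only place where both hypotheses $K\ge\sqrt{2/\eps}$ and $\sum_{i\notin M}w_i^2\le 1-\eps$ are genuinely used, and it is what fixes the constant $32$ in the lower bound on $|I|=|J|$; everything else — Steps~1 and~2 above and the elementary ``legal partner'' verifications — is routine bookkeeping with the constants.
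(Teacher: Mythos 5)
Your proposal follows a genuinely different path from the paper's: instead of sorting the medium coordinates and choosing the $p$ smallest and $p$ largest (then deriving a contradiction from a variance count), you split by sign and run a case analysis. The first two cases ($Q^-$ already large, or $Q^+$ spread out) are fine, but the concentration subcase is where your argument has a real gap, and you identify it yourself. After establishing that the cluster $C$ carries $\ell_1$-mass $\ge \tfrac{13\eps}{32}\cdot\tfrac{\sqrt n}{K}$, your plan is to deduce that a usable amount of negative $\ell_1$-mass must land in $(-2K/\sqrt n, -K/\sqrt n)$. Running the numbers: at most $N_1 K/\sqrt n \le \tfrac{\eps}{32}\cdot\tfrac{\sqrt n}{K}$ of negative $\ell_1$-mass sits in $[-K/\sqrt n,0]$, and at most $\tfrac{1-\eps}{2}\cdot\tfrac{\sqrt n}{K}$ sits below $-2K/\sqrt n$. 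For the remaining band to be forced you need $\tfrac{13\eps}{32} > \tfrac{\eps}{32} + \tfrac{1-\eps}{2}$, i.e.\ $\eps > 4/7$. For $\eps \le 4/7$ the mean-zero constraint permits all the required negative mass to sit below $-2K/\sqrt n$, so the Cauchy--Schwarz step has nothing to act on and your case is not closed.

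Your proposed fallback, "iterate at the larger scale of those coordinates," is not a repair I can accept as written, and I think it is more than routine bookkeeping. Coordinates with $w_j < -2K/\sqrt n$ are not legal partners for $C$ (the difference exceeds $2K/\sqrt n$), so the only way to use them is to find \emph{two} large well-separated subsets inside that far-left tail — a structurally different problem from pairing against $C$, and there is no a priori reason they form a cluster of controlled diameter rather than a sparse set. The paper avoids all of this by sorting the medium coordinates $w_1\ge\cdots\ge w_m$, taking $I=\{1,\dots,p\}$, $J=\{m-p+1,\dots,m\}$ with $p=\lfloor \eps n/32K^2\rfloor$, and showing directly that if $w_p - w_{m-p+1} < \sqrt{\eps/2n}$ then $\sum_{i<j\in S}(w_i-w_j)^2 < \eps n/2$, contradicting the lower bound $\eps n/2$ obtained from $\overline{\BMw}=0$, $\sigma(\BMw)=1$, and the medium-mass hypothesis. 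That single two-sided sort sidesteps the sign/cluster case analysis entirely, and in particular avoids ever needing to track where the negative $\ell_1$-mass ends up. You should either adopt this approach or else supply a complete argument for your concentration subcase that handles all $\eps\in(0,1]$, including the regime $\eps\le 4/7$.
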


\begin{proof}(of Lemma \ref{lemma:mass})
Let $S$ denote the set of indices $i\in [n]$ with $|w_i|\le K/\sqrt{n}$. We begin by proving 
\[
\sum_{\substack{i<j \\ i,j\in S}} (w_i-w_j)^2 \;\ge\; \eps n/2.
\]
Indeed,
\[
\sum_{\substack{i<j \\ i,j\in S}} (w_i-w_j)^2
= |S|\sum_{i\in S} w_i^2 - \Big(\sum_{i\in S} w_i\Big)^2
= |S|\sum_{i\in S} w_i^2 - \Big(\sum_{i\in S^{c}} w_i\Big)^2.
\]
We bound the two terms separately. Since $|w_i| > K/\sqrt{n}$ for $i\in S^{c}$,  
\[
|S^{c}| \;\le\; n/K^2.
\]
As $K \ge \sqrt{2/\eps} \ge \sqrt{2}$, we obtain
\[
|S| \ge n-n/K^2 \ge n/2.
\]
Moreover, by Cauchy--Schwarz,
\[
\Big(\sum_{i\in S} w_i\Big)^2
= \Big(\sum_{i\in S^{c}} w_i\Big)^2
\le |S^{c}| \sum_{i\in S^{c}} w_i^2
\le |S^{c}|(1-\eps) 
\;\le\; (1-\eps)n/K^2.
\]
Combining these estimates and using $\sum_{i\in S} w_i^2 \ge \eps$, we obtain
\[
\sum_{\substack{i<j \\ i,j\in S}} (w_i-w_j)^2
\;\ge\; |S|\eps - (1-\eps)n/K^2
\;\ge\; (n-n/K^2)\eps - (1-\eps)n/K^2
\;\ge\; \eps n/2.
\]

\smallskip
We now proceed to construct $I$ and $J$. Without loss of generality, assume $S=[m]$ and $w_1 \ge w_2 \ge \cdots \ge w_m$.  
Let $p := \lfloor (\eps/32K^2)n \rfloor$, and define $I := \{1,2,\dots,p\}$, $J := \{m-p+1,\dots,m\}$.  
It suffices to show that
\[
w_p-w_{m-p+1}\;\ge\; \sqrt{\eps/2n}.
\]
Suppose instead that $w_p - w_{m-p+1} < \sqrt{\eps/2n}$.  
Then $|w_i-w_j| < \sqrt{\eps/n}$ whenever $p+1 \le i < j \le m-p$,  
and $|w_i-w_j| \le 2K/\sqrt{n}$ whenever $i \in I \cup J$, $j \in S$.  
Consequently, 
\[
\sum_{\substack{i<j \\ i,j\in S}} (w_i-w_j)^2
< |I\cup J||S| \, \Big(\frac{2K}{\sqrt{n}}\Big)^2
  + \binom{|S|}{2} \Big(\sqrt{\frac{\eps}{2n}}\Big)^2.
\]
Since $|I\cup J| \le (\delta/16K^2)n$ and $|S|\le n$, this gives
\[
\sum_{\substack{i<j \\ i,j\in S}} (w_i-w_j)^2
< (\eps/16K^2)n^2 \, \Big(\frac{2K}{\sqrt{n}}\Big)^2
   + \binom{n}{2} \Big(\sqrt{\frac{\eps}{n}}\Big)^2
< \eps n/2,
\]
contradicting the earlier bound. The lemma follows.
\end{proof}

\begin{lemma}\label{lemma:m/n} 
Let $w_1,\dots,w_n\in\R$ satisfy $\overline{\BMw}=0$ and $\sigma(\BMw)=1$. 
Suppose there exists $K\ge 1$ such that
\[
\frac{1}{n}\sum_{i=1}^n \big(n w_{i}^{2}\big)^{2} \le K .
\]
For $m\in[n]$, sample an ordered tuple $(w'_{1},\ldots,w'_{m})$ uniformly without replacement from $\{w_{1},\ldots,w_{n}\}$, and set
\[
S_m := \sum_{k=1}^m (w'_k)^2.
\]
Then for any $C>0$,
\[
\P\!\left( \,\Big|S_m-\frac{m}{n}\Big| \;\ge\; C\,\frac{m}{n} \right) \;\le\; \frac{K}{C^{2}m}.
\]
\end{lemma}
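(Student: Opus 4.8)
The plan is to apply Chebyshev's inequality to the random variable $S_m = \sum_{k=1}^m (w'_k)^2$, which is a sum-type statistic arising from sampling without replacement. First I would record the mean: since $(w'_1,\dots,w'_m)$ is a uniformly random ordered $m$-tuple drawn without replacement from $\{w_1,\dots,w_n\}$, each coordinate $w'_k$ is marginally uniform on $\{w_1,\dots,w_n\}$, so by linearity $\E S_m = m \cdot \frac1n \sum_{i=1}^n w_i^2 = m/n$, using $\sigma(\BMw)=1$. Thus the event in question is exactly $|S_m - \E S_m| \ge C\,(m/n)$, and Chebyshev gives $\P(|S_m - \E S_m| \ge C m/n) \le \Var(S_m)/(C m/n)^2$.

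The core of the argument is then the variance bound $\Var(S_m) \le (m/n^2) K$. The cleanest route is to write $S_m$ as a function of the random subset: setting $X_i = n w_i^2$, we have $S_m = \frac1n \sum_{i=1}^m X'_i$ where $(X'_1,\dots,X'_m)$ is the sample without replacement of $\{X_1,\dots,X_n\}$, hence $n^2\Var(S_m) = \Var\big(\sum_{k=1}^m X'_k\big)$. The standard formula for sampling without replacement gives $\Var\big(\sum_{k=1}^m X'_k\big) = m\cdot\frac{n-m}{n-1}\cdot\sigma_X^2$ where $\sigma_X^2 = \frac1n\sum_i X_i^2 - \big(\frac1n\sum_i X_i\big)^2 \le \frac1n\sum_i X_i^2 \le K$ by hypothesis. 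Since $\frac{n-m}{n-1} \le 1$, this yields $\Var\big(\sum_k X'_k\big) \le mK$, so $\Var(S_m) \le mK/n^2$. Plugging into Chebyshev:
\[
\P\!\left(\Big|S_m - \frac{m}{n}\Big| \ge C\frac{m}{n}\right) \le \frac{mK/n^2}{C^2 m^2/n^2} = \frac{K}{C^2 m},
\]
which is exactly the claimed bound.

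The only step requiring a little care is the variance-for-sampling-without-replacement identity; rather than cite it, one can derive it directly by computing $\E\big[(\sum_k X'_k)^2\big] = m\,\E[X_1^2] + m(m-1)\,\E[X_1 X_2]$ where the pair $(X_1,X_2)$ ranges over ordered distinct pairs, using $\E[X_1 X_2] = \frac{1}{n(n-1)}\big((\sum_i X_i)^2 - \sum_i X_i^2\big)$, and subtracting $(\E\sum_k X'_k)^2 = m^2(\frac1n\sum_i X_i)^2$. I do not expect any genuine obstacle here; the hypothesis $\frac1n\sum_i (nw_i^2)^2 \le K$ is precisely the fourth-moment control needed to bound $\sigma_X^2$, and dropping the subtracted square term (which only helps) plus the bound $\frac{n-m}{n-1}\le 1$ gives a clean inequality valid for all $m\in[n]$ and all $C>0$, with no largeness-of-$n$ assumption needed.
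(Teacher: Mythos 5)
Your proposal is correct and follows essentially the same route as the paper: compute $\E S_m = m/n$, invoke the variance formula for sampling without replacement, bound the population variance of the squares by the hypothesis $\frac1n\sum_i (nw_i^2)^2\le K$, drop the finite-population correction $\frac{n-m}{n-1}\le 1$, and finish with Chebyshev. The only cosmetic difference is that you rescale to $X_i = n w_i^2$ before computing, whereas the paper works with $Y_i = w_i^2$ directly.
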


\begin{proof}(of Lemma \ref{lemma:m/n})
Let $Y_i := w_i^2$ and note that $\mu := \frac{1}{n}\sum_{i=1}^n Y_i = \frac{1}{n}$, so 
$\mathbb{E}S_m = m\mu = m/n$. The variance of a without-replacement sum is
\[
\mathrm{Var}(S_m) 
= \frac{m(n-m)}{n-1}\,\sigma_Y^2,
\qquad
\sigma_Y^2 := \frac{1}{n}\sum_{i=1}^n (Y_i-\mu)^2.
\]
We bound $\sigma_Y^2$ using the moment condition: 
$$\frac{1}{n}\sum_{i} Y_{i}^{2}  = \frac{1}{n^{2}}\frac{1}{n}\sum_{i=1}^n  (n w_{i}^{2})^{\,2} \le \frac{K}{n^{2}}.$$
Hence
\(
\sigma_Y^2 
= \frac{1}{n}\sum Y_i^2 - \mu^2 
\le Kn^{-2}.
\)
Therefore
\[
\mathrm{Var}(S_m) \le \frac{Km}{n^2}.
\]
Applying Chebyshev inequality gives
\[
\P\Big(\Big|S_m - \frac{m}{n}\Big| \ge C\frac{m}{n}\Big)
\;\le\;
\frac{\mathrm{Var}(S_m)}{(Cm/n)^2} \le \frac{K}{C^{2}m}. \qedhere
\]
\end{proof}

\begin{corollary}\label{cor:sampling:non-degenerate}
Let $w_1,\dots,w_n\in\mathbb R$ satisfy Condition~\ref{cond:shift} for some $K>1$. That is the rescaled squares
$X_i := n\,(w_i - \overline{\BMw})^2/\sigma^{2}(\BMw)$ satisfy the $\ell_2$ moment bound
\[
\frac{1}{n}\sum_{i=1}^n X_i^{\,2} \;\le\; K.
\]
For $m\in[n]$, sample an ordered tuple $(w'_{1},\ldots,w'_{m})$ uniformly without replacement from $\{w_{1},\ldots,w_{n}\}$. Then, with probability at least $1-O_{K}(1/m)$,
\vskip .1in
\begin{itemize}
\item[\rm (i)] $(w'_{i})$ is not too degenerate; 
\vskip .1in
\item[\rm (ii)] $\sup\limits_{x}\#\{i : w'_i = x\} \le cm$, for some constant $c\in (0,1)$ depending only on $K$.
\end{itemize}
\end{corollary}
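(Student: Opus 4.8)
The plan is to extract from Condition~\ref{cond:shift} two structural features of $\BMw$ and then to show that a uniform size-$m$ sample inherits both of them off an event of probability $O_K(1/m)$. By the shift and scale invariance of Condition~\ref{cond:shift} we may assume $\overline{\BMw}=0$ and $\sigma(\BMw)=1$, so $\sum_i w_i^2=1$ and the hypothesis reads $\sum_i w_i^4\le K/n$. First, I claim every value has small multiplicity: for $\mu_a:=|\{i:w_i=a\}|$, Cauchy--Schwarz on the $n-\mu_a$ coordinates $\ne a$ (sum $-\mu_a a$, square-sum $1-\mu_a a^2$) gives $\mu_a^2a^2\le(n-\mu_a)(1-\mu_a a^2)$, hence $1-\mu_a a^2\ge\mu_a/n$; a second Cauchy--Schwarz on the same coordinates gives $\sum_i w_i^4\ge(1-\mu_a a^2)^2/(n-\mu_a)\ge(\mu_a/n)^2/(n-\mu_a)$, so $\sum_i w_i^4\le K/n$ forces $n-\mu_a\ge\mu_a^2/(Kn)$, whence $\mu_a\le(1-\tfrac1{4K})n$ (clear if $\mu_a\le n/2$; if $\mu_a>n/2$ then $n-\mu_a>n/(4K)$). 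Set $\eta_0:=1/(4K)$. Second, $\BMw$ has a spread-out bulk: $\sum_{i:\,|w_i|>\sqrt{2K/n}}w_i^2\le\tfrac{n}{2K}\sum_i w_i^4\le\tfrac12$, so $\sum_{i:\,|w_i|\le\sqrt{2K/n}}w_i^2\ge\tfrac12$, and Lemma~\ref{lemma:mass} (with $\eps=\tfrac12$, threshold constant $\sqrt{2K}$) produces disjoint $I,J\subset[n]$ with $|I|=|J|\gg_K n$ and $\tfrac1{2\sqrt n}\le|w_i-w_j|\le\tfrac{2\sqrt{2K}}{\sqrt n}$ for all $i\in I,\ j\in J$.

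Next I would sample the ordered tuple $(w'_1,\dots,w'_m)$, write $\iota_1,\dots,\iota_m$ for the sampled indices and $I'=\{k:\iota_k\in I\}$, $J'=\{k:\iota_k\in J\}$, and introduce the good event $\CE=\CE_1\cap\CE_2\cap\CE_3$ where $\CE_1=\{|I'|,|J'|\gg_K m\}$, $\CE_2=\{\sigma(\BMw')^2\in[\tfrac m{4n},\tfrac{3m}{2n}]\}$, and $\CE_3$ is the event in part~(ii). For $\CE_1$: $\E|I'|=|I|m/n\gg_K m$, so the Chernoff bound for sampling without replacement gives $\P(|I'|<\tfrac12\E|I'|)\le e^{-\Omega_K(m)}\le O_K(1/m)$, and similarly for $J'$. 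For $\CE_2$: Lemma~\ref{lemma:m/n} with $C=\tfrac12$ gives $\sum_k(w'_k)^2\in[\tfrac m{2n},\tfrac{3m}{2n}]$ off probability $\le 4K/m$, while $\E[(\sum_k w'_k)^2]\le m/n$ (mean $0$, without-replacement variance $\tfrac{m(n-m)}{n(n-1)}$), so by Markov $m\,\overline{\BMw'}^2=\tfrac1m(\sum_k w'_k)^2\le\tfrac m{4n}$ off probability $\le 4/m$; subtracting, $\sigma(\BMw')^2=\sum_k(w'_k)^2-m\,\overline{\BMw'}^2$ lies in the claimed interval.

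For $\CE_3$/(ii), set $c:=1-\eta_0/3\in(\tfrac12,1)$ and let $N_x=|\{k:w'_k=x\}|$. Since any value with $N_x\ge cm>m/2$ is unique, $\P(\exists x:N_x\ge cm)=\sum_x\P(N_x\ge cm)$. If $\mu_x>n/2$ (at most one such $x$), then $\E N_x=\mu_x m/n\le(1-\eta_0)m$, so Hoeffding for sampling without replacement gives $\P(N_x\ge cm)\le e^{-2(\eta_0/3)^2m}$. If $\mu_x\le n/2$ (and $\mu_x\ge cm$, else the probability is $0$), a union bound over the $\binom{m}{\lceil cm\rceil}$ choices of slots yields $\P(N_x\ge cm)\le\binom{m}{\lceil cm\rceil}(\mu_x/n)^{\lceil cm\rceil}\le 2^{mH(\eta_0/3)}(\mu_x/n)^{cm}$ (entropy bound plus $\mu_x/n<1$); summing and using $\sum_x(\mu_x/n)^{cm}\le(\tfrac12)^{cm-1}\sum_x\tfrac{\mu_x}n=2^{1-cm}$ gives $\sum_{\mu_x\le n/2}\P(N_x\ge cm)\le 2^{\,1+m(H(\eta_0/3)-c)}$. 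Since $\eta_0/3\le\tfrac1{12}$ one has $H(\eta_0/3)<1-\eta_0/3=c$, so this is $\le 2^{1-\Omega_K(m)}$, and altogether $\P(\text{(ii) fails})\le e^{-\Omega_K(m)}\le O_K(1/m)$; thus $\P(\CE^c)=O_K(1/m)$. On $\CE$, (ii) holds with $c=c(K)$; and (i) follows because for $k\in I'$, $l\in J'$ we have $|w'_k-w'_l|\in[\tfrac1{2\sqrt n},\tfrac{2\sqrt{2K}}{\sqrt n}]$, so (using $\sigma(\BMw')\ge\tfrac12\sqrt{m/n}$) the normalized distance $|w'_k-w'_l|/\sigma(\BMw')\le 4\sqrt{2K/m}$ is $\le 1/(A\sqrt{\log m})$ once $m$ exceeds a constant depending on $A,K$; hence all $|I'||J'|\gg_K m^2$ such pairs contribute to \eqref{eqn:separation'}, and $\sum_{k\in I',\,l\in J'}(w'_k-w'_l)^2\ge\tfrac{|I'||J'|}{4n}\gg_K\tfrac{m^2}{n}\ge\eps'\,m\,\sigma(\BMw')^2$ for a suitable $\eps'=\eps'(K)>0$, since $\sigma(\BMw')^2\le\tfrac{3m}{2n}$. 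Thus $(w'_i)$ satisfies Condition~\ref{cond:separation'} with $\eps=\eps'(K)$.

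The conceptual crux is the multiplicity bound $\sup_x\mu_x\le(1-\eta_0)n$: this is the only place where the full strength of the $\ell_2$-moment hypothesis of Condition~\ref{cond:shift} (beyond $\sigma(\BMw)=1$) is used, and it is unavoidable because the sample $\BMw'$ need not be $K'$-balanced for any bounded $K'$, so (ii) cannot be obtained from a moment bound on the sample alone. The rest is routine: the entropy-versus-$c$ inequality making the tail sum in (ii) geometrically small (so that no $\log n$ factor appears), and the bookkeeping reconciling $\overline{\BMw'}$, $\sigma(\BMw')^2\asymp m/n$, and the $\sqrt{\log m}$ in Condition~\ref{cond:separation'}.
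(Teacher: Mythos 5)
Your proof is correct. For part (i) you follow essentially the same path as the paper: reduce to $\overline{\BMw}=0$, $\sigma(\BMw)=1$, use the $\ell_2$-moment hypothesis to exhibit a dense bulk at scale $\asymp 1/\sqrt n$, invoke Lemma~\ref{lemma:mass} to produce disjoint $I,J$ with $|w_i-w_j|\asymp 1/\sqrt n$ for $i\in I$, $j\in J$, push this into the sample via Hoeffding for sampling without replacement, and use Lemma~\ref{lemma:m/n} to pin $\sigma^2(\BMw')\asymp m/n$. (You are actually a bit more careful than the paper here: you explicitly bound the correction $m\overline{\BMw'}^2$ via Markov, whereas the paper passes from $\sum_k(w'_k)^2$ to $\sigma^2(\BMw')$ without comment.) Where you diverge is part (ii). You establish a standalone population fact — that any $K$-balanced sequence has $\sup_x\mu_x\le(1-\tfrac1{4K})n$, via two applications of Cauchy--Schwarz — and then transfer it to the sample through a Hoeffding tail bound for the unique heavy atom and a union-over-slots/binary-entropy estimate for the light atoms. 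The paper instead gets (ii) for free out of the $I',J'$ it has already built: since $w'_i\neq w'_j$ for all $(i,j)\in I'\times J'$, no value is attained on both $I'$ and $J'$, so $\sup_x|\{i:w'_i=x\}|\le m-\min(|I'|,|J'|)=(1-\Omega_K(1))m$, no entropy computation needed. The paper's route is shorter and needs no separate population multiplicity bound; yours has the modest compensation of isolating a clean intermediate fact about $K$-balanced sequences and making the constant $c$ in (ii) explicit ($c=1-\tfrac1{12K}$). One small overstatement in your commentary: the population multiplicity bound is not ``the only place where the full strength of the $\ell_2$-moment hypothesis is used'' — it is also what drives Lemma~\ref{lemma:mass} and Lemma~\ref{lemma:m/n}, which both proofs rely on; but this is a remark, not a gap in the argument.
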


\begin{proof}(of Corollary \ref{cor:sampling:non-degenerate}) We first deduce from Condition~\ref{cond:shift} that
$$\sum_{|w_{i} - \overline{\BMw}|/\sigma(\BMw) \le  K /\sqrt{n}}  (w_i-\overline{\BMw})^2 \ge (1-1/K) \sigma^{2}(\BMw).$$
Indeed, by assumption
$$\sum_{|w_{i} - \overline{\BMw}|/\sigma(\BMw) > K /\sqrt{n}}  (K \sigma^{}(\BMw)/\sqrt{n})^{2} n(w_i-\overline{\BMw})^2/\sigma^{4}(\BMw)  \le \sum_{|w_{i} - \overline{\BMw}|/\sigma(\BMw) > K /\sqrt{n}}  n(w_i-\overline{\BMw})^4/\sigma^{4}(\BMw)  \le K.$$
So 
$$\sum_{|w_{i} - \overline{\BMw}|/\sigma(\BMw) > K /\sqrt{n}}  (w_i-\overline{\BMw})^2 \le \sigma^{2}(\BMw)/K,$$ 
and hence 
$$\sum_{|w_{i} - \overline{\BMw}|/\sigma(\BMw) \le K /\sqrt{n}}  (w_i-\overline{\BMw})^2 \ge (1-1/K)\sigma^{2}(\BMw).$$

Next, without loss of generality, assume $\overline{\BMw}=0$ and $\sigma(\BMw)=1$. By the above estimate, Lemma \ref{lemma:mass} applies to the sequence $(w_i)$ (with $\eps =1-1/K$) and yields two disjoint subsets $I,J \subset [n]$ with $|I|=|J|=\Omega(n)$ such that $\tfrac{2K}{\sqrt{n}}\ge |w_i - w_j| \ge \sqrt{\tfrac{\eps}{2n}}$
for all $i \in I, \ j \in J$. Then, by Hoeffding's inequality for sampling without replacement (see \cite{Hoeffding1963}), with probability at least $1-\exp(-\Theta(m))$, the sample contains two disjoint subsets $I', J'\subset [m]$, each of size $\Omega(m)$, satisfying 
\[
\sqrt{\frac{\eps}{2n}} \;\le\; |w'_i - w'_j| \;\le\; \frac{2K}{\sqrt{n}}
\quad \text{for all } i\in I',\ j\in J'.
\] 
On the intersection of this event with the concentration event from Lemma~\ref{lemma:m/n}, we see that
\[
\frac{m}{2n}\;\le\;\sigma^{2}(\BMw')\;\le\;\frac{3m}{2n}.
\]
For any $i\in I'$ and $j\in J'$, we have 
$\sqrt{\tfrac{\eps}{2n}} \le |w'_i-w'_j| \le \tfrac{2K}{\sqrt{n}} \le \tfrac{3K\sigma(\BMw')}{\sqrt{m}}$, and hence
\[
\sum_{\substack{i<j \\ |w'_i - w'_j|/\sigma(\BMw') \le 3K/\sqrt{m}}} (w'_{i} - w'_{j})^{2} 
\;\ge\; |I'|\,|J'| \,\Big(\sqrt{\frac{\eps}{2n}}\Big)^{2}
= \Omega \Big(\frac{m^2}{n}\Big)
= \Omega \big(m \,\sigma^{2}(\BMw')\big).
\]
Thus $(w'_i)$ is not too degenerate. Finally, 
\[
\sup_{x}\#\{i : w'_i = x\} \le \min(m-|I'|,m-|J'|) = (1-\Omega(1))m. \qedhere
\]
\end{proof}

\section{Application to random polynomials: proof of Theorem \ref{thm:poly}}\label{sect:application} 

We use Descartes’ rule of signs to relate the number of nonzero critical points to events involving the coefficients. For convenience, we prove Theorem~\ref{thm:poly} under the assumption $d\ge 1$, which we maintain throughout this section. A minor modification of the argument also handles the case $d=0$.

For a real polynomial $Q$ and an interval $I \subset \mathbb{R}$, let $N_I(Q)$ be the number of roots of $Q$ in $I$, counted with multiplicity. For $x \in \mathbb{R}$ and $d \in \mathbb{N}$, the notation $(x)_d$ denotes the falling factorial $x(x - 1)\cdots(x - d + 1)$.

\begin{lemma}\label{lemma:Descartes}
Let $t \ge 2$ be an integer, and let $\pi$ be a uniform permutation 
of $\{1,\ldots,n\}$. The expected number of real roots of $P_{\pi}^{(d)}$ in $\R \setminus \{-1,0,1\}$, counted with multiplicity, is bounded by 
\[
4(t-1) + \sum_{m=2}^{n-d+1}  \P\Big(\Big| \sum_{i=1}^{m} \binom{m-i+t-1}{t-1} a_{i} \Big|<\Big| \sum_{i=1}^{m} \binom{m-i+t-2}{t-2} a_{i} \Big| \Big),
\]
where the sum runs over $4(n-d)$ events with $2 \le m \le n-d+1$, and $ \bm{a}=(a_1,\ldots,a_{n-d+1})$ is one of the following four random vectors
\[
\begin{array}{cc}
((d)_{d}w_{\pi(d)}, \dots, (n)_{d} w_{\pi(n)}), 
& ((d)_{d}w_{\pi(d)}, -(d+1)_{d} w_{\pi(d+1)}, \ldots, (-1)^{n-d} (n)_{d} w_{\pi(n)}), \\[3pt]
((n)_{d} w_{\pi(n)}, \ldots, (d)_{d} w_{\pi(d)}), 
& ((-1)^{n-d}(n)_{d} w_{\pi(n)}, \ldots, -(d+1)_{d} w_{\pi(d+1)}, (d)_{d} w_{\pi(d)}).
\end{array}
\]
\end{lemma}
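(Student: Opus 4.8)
The plan is to reduce the counting of nonzero real critical points of $P_\pi^{(d)}$ to a collection of sign-change events via Descartes' rule of signs, then package those events as small-ball comparisons. First I would observe that a nonzero root $x\neq 0,\pm1$ of $P_\pi^{(d)}$ lies in exactly one of the four regions $(1,\infty)$, $(0,1)$, $(-1,0)$, $(-\infty,-1)$, so it suffices to bound $\E N_I(P_\pi^{(d)})$ for each such $I$ and sum. Writing $P_\pi^{(d)}(x)=\sum_{i=d}^n (i)_d\,w_{\pi(i)}\,x^{i-d}$, I would handle each region by an appropriate substitution that turns the region into $(0,1)$ or $(1,\infty)$ and turns "count roots'' into "count sign changes of a coefficient sequence,'' using the standard fact that the number of roots of a real power series (polynomial) in $(0,1)$ is bounded by the number of sign changes of its sequence of partial sums (equivalently, of $\left(\sum_{i\le m} c_i\right)_m$, which is why the binomial weights $\binom{m-i+t-1}{t-1}$ appear after $t-1$ successive summations; more generally one can iterate the partial-sum operator $t-1$ times, each iteration preserving the sign-change bound), and the number of roots in $(1,\infty)$ is bounded by sign changes of the reversed sequence, possibly after the alternating substitution $x\mapsto -x$ for the negative regions. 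This is exactly where the four random vectors $\bm a$ come from: forward vs.\ reversed ordering of $((i)_d w_{\pi(i)})_{i=d}^n$, each with or without the alternating signs $(-1)^{i-d}$.

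Next I would make the sign-change bound quantitative. For a fixed vector $\bm a=(a_1,\dots,a_{n-d+1})$, let $b_m^{(t)}:=\sum_{i=1}^m \binom{m-i+t-1}{t-1}a_i$ be the $t$-fold iterated partial sums (with the convention that one extra summation is cheap). Classical arguments (see \cite{Soze2,Vuetal}) show that if the sequence $\left(b_m^{(t)}\right)_{m}$ has $k$ sign changes then one can extract $\gtrsim k$ disjoint indices $m$ at which $|b_m^{(t)}|$ is strictly smaller than $|b_m^{(t-1)}|$ — intuitively, a sign change in the $t$-fold sum forces the $(t-1)$-fold sum to dominate it somewhere nearby. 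More precisely, the number of sign changes of $\left(b_m^{(t)}\right)_m$ is at most $2(t-1)$ plus the number of $m$ for which $\operatorname{sgn}\!\left(b_m^{(t)}\right)\neq\operatorname{sgn}\!\left(b_{m-1}^{(t)}\right)$ can be "charged'' to an event $\left|b_m^{(t)}\right|<\left|b_m^{(t-1)}\right|$; summing over $2\le m\le n-d+1$ and taking expectations yields the main term $\sum_m \P\!\left(\left|b_m^{(t)}\right|<\left|b_m^{(t-1)}\right|\right)$, while the "boundary'' contributions and the roots at the two extremes ($x\to 0^+$, $x\to\infty$, and the endpoints after substitution) account for the additive constant, which over the four vectors totals $4(t-1)$ (plus the earlier-removed points $\pm1,0$, which contribute $O(1)$ and can be absorbed). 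Here $\binom{m-i+t-1}{t-1}$ and $\binom{m-i+t-2}{t-2}$ are exactly $b_m^{(t)}$ and $b_m^{(t-1)}$ written explicitly, and the sum "runs over $4(n-d)$ events'' because there are four vectors $\bm a$ and $n-d$ values of $m$ in the relevant range.

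Concretely I would proceed as follows: \textbf{(i)} reduce to the four regions and record that roots at $0$ are excluded (we count in $\R\setminus\{-1,0,1\}$) so the polynomial $P_\pi^{(d)}$, after dividing by the top power of $x$ if needed, is a genuine polynomial whose constant and leading coefficients are $(d)_d w_{\pi(d)}$ and $(n)_d w_{\pi(n)}$ up to sign; \textbf{(ii)} for the region $(0,1)$ apply the iterated-partial-sum/sign-change bound to the vector $((d)_d w_{\pi(d)},\dots,(n)_d w_{\pi(n)})$; for $(-1,0)$ apply it after $x\mapsto -x$, giving the alternating-sign vector; for $(1,\infty)$ substitute $x\mapsto 1/x$, which reverses the coefficient order, giving the reversed vector; and for $(-\infty,-1)$ combine both, giving the reversed alternating vector; \textbf{(iii)} in each case bound the number of sign changes by $2(t-1)+\#\{m: |b_m^{(t)}|<|b_m^{(t-1)}|\}$, take expectations, and add the four contributions. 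The main obstacle I anticipate is getting the combinatorial sign-change lemma exactly right — in particular verifying that $t-1$ iterations of the partial-sum operator (rather than just one) still give a valid root bound for $(0,1)$ and extracting the comparison events $|b_m^{(t)}|<|b_m^{(t-1)}|$ cleanly, with the correct additive constant $2(t-1)$ per vector and no loss in the "charging'' step; the substitutions in step (ii) and the reduction in step (i) are routine, and crucially the $O(\log n)$ bound on $\E N_\R(P_\pi^{(d)})$ will only follow afterward by feeding each probability $\P\!\left(|b_m^{(t)}|<|b_m^{(t-1)}|\right)$ into the comparison estimate Theorem~\ref{thm:comparison:2vs1'} (with $v_i,v_i'$ the polynomial weights $\binom{m-i+t-1}{t-1}$, $\binom{m-i+t-2}{t-2}$ suitably normalized), which is the content of the next section and not part of this lemma.
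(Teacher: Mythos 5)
Your proposal matches the paper's proof in all essential respects: the four-region decomposition via $x\mapsto -x$ and $x\mapsto 1/x$, the division by $(1-x)^t$ (your iterated partial-sum operator), the power-series Descartes rule applied on $(0,1)$, the Pascal identity giving $b_m^{(t)}-b_{m-1}^{(t)}=b_m^{(t-1)}$ so that a sign change at $m$ forces $\lvert b_m^{(t)}\rvert<\lvert b_m^{(t-1)}\rvert$, and the polynomial tail for $m>n-d+1$. Your only slip is a bookkeeping inconsistency: you state $2(t-1)$ sign changes per vector but then a $4(t-1)$ total over four vectors; the correct per-vector constant is $t-1$, coming solely from the fact that for $m\geq n-d+1$ the coefficient $c_m=\sum_{i=1}^{n-d+1}\binom{m-i+t-1}{t-1}a_i$ is a polynomial of degree at most $t-1$ in $m$ (there is no separate ``boundary'' contribution to account for).
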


\begin{proof}(of Lemma \ref{lemma:Descartes}) For notational convenience, define 
\[
Q_{1}(x)=P_{\pi}^{(d)}(x), \quad Q_2(x)=P_{\pi}^{(d)}(-x), \quad Q_3(x)=x^{n-d}P_{\pi}^{(d)}(1/x), \quad Q_4(x)=x^{n-d}P_{\pi}^{(d)}(-1/x).
\]
Each $Q_{i}$ is a real polynomial of degree at most $n-d$, and  
$$
N_{(-1,0)}(Q_{1})=N_{(0,1)}(Q_{2}), \quad N_{(1,\infty)}(Q_{1})=N_{(0,1)}(Q_{3}), \quad N_{(-\infty,-1)}(Q_{1})=N_{(0,1)}(Q_{4}).
$$
Hence, the expected number of roots of $P_{\pi}^{(d)}(x)$ in $\R\setminus\{-1,0,1\}$ equals $\sum_{i=1}^{4}\E N_{(0,1)} (Q_{i})$.

Given $Q\in \{Q_{1},Q_{2},Q_{3},Q_{4}\}$, write $Q(x)=a_1+a_2x+\cdots+a_{n-d+1}x^{n-d}$ and set $F(x) = Q(x)/(1-x)^t$. Then $(a_1,\ldots,a_{n-d+1})$ is one of the four vectors described in Lemma \ref{lemma:Descartes}. 
Clearly, $F$ and $Q$ have the same number of roots in $(0,1)$. In this interval, $F$ admits an absolutely convergent power series expansion:
$$
F(x)=Q(x)\cdot\sum_{k=0}^{\infty} \binom{k+t-1}{t-1} x^{k}= \sum_{m=1}^{\infty} \Big(\sum_{i=1}^{m} \binom{m-i+t-1}{t-1} a_{i} \Big) x^{m-1},
$$
where $a_{i} = 0$ for $i > n-d+1$ by convention. 
By Descartes' rule of signs, the number of roots of $Q$ in $(0,1)$ is at most the number of sign changes in the sequence $c_{m}:=\sum_{i=1}^{m} \binom{m-i+t-1}{t-1} a_{i}$, $m\ge 1$.

If $m \ge n-d+1$, then $c_{m}=\sum_{i=1}^{n-d+1} \binom{m-i+t-1}{t-1} a_{i}$, which is a polynomial in $m$ of degree at most $t-1$. Thus, there are at most $t-1$ sign changes beyond this point. 

Now consider $2\le m \le n-d+1$. If $c_{m-1}$ and $c_{m}$ have different signs, then 
$|c_{m}|<|c_{m}-c_{m-1}|$. Hence, the sign change here requires
\[
\Big| \sum_{i=1}^{m} \binom{m-i+t-1}{t-1} a_{i} \Big|<\Big| \sum_{i=1}^{m} \binom{m-i+t-2}{t-2} a_{i} \Big|.
\]
Therefore, $\E N_{(0,1)}(Q)$ is bounded by
$(t-1)+\sum_{m=2}^{n-d+1} \P(| \sum_{i=1}^{m} \binom{m-i+t-1}{t-1} a_{i}|<| \sum_{i=1}^{m} \binom{m-i+t-2}{t-2} a_{i}|)$.
\end{proof}

\begin{proof}(of Theorem \ref{thm:poly}) It is clear that under the hypotheses of the theorem, the expected number of zero roots (critical points) is of order $O(1)$, $\E(\#\{\text{zeros (critical points) at }0\}\big)=O(1)$. We will treat roots (critical points) at $\pm 1$ and non-zero roots different from $\pm 1$ separately.
\vskip .05in
\textbf{Counting $\pm 1$ roots.} 
We aim to show $\E N_{\{\pm 1\}}(P_{\pi}^{(d)}) = O(1)$. Suppose that $1$ is a root of $P_{\pi}^{(d)}$.
Then
\[
P_{\pi}^{(d)}(1)=\sum_{k=d}^{n} (k)_{d}\, w_{\pi(k)}=0.
\]
To analyze this event, sample an ordered $(n-d+1)$-tuple \((w'_{1},\ldots,w'_{n-d+1})\) uniformly without replacement from the multiset $\{w_1,\ldots,w_n\}$, and let $\sigma$ be a uniform random permutation of $\{1,\ldots,n-d+1\}$. Then $(w'_{\sigma(1)},\ldots,w'_{\sigma(n-d+1)})$ is distributed as $(w_{\pi(d)},\ldots,w_{\pi(n)})$. Consequently,   
\[
\P\Big(P_{\pi}^{(d)}(1)=0\Big)
= \P\Big(\sum_{i=1}^{n-d+1}(i+d-1)_{d} w'_{\sigma(i)} = 0\Big).
\]  
By Corollary~\ref{cor:sampling:non-degenerate}, there exists a constant $c\in (0,1)$ such that the event $\sup_x \bigl|\{\,i : w'_i = x\,\}\bigr| \le c(n-d+1)$ holds with probability at least $1-O(1/n)$. Conditioning on this event, and noting that the coefficients $(d)_{d}, \dots, (n)_{d}$ are distinct for $n\ge d\ge 1$, Theorem~\ref{thm:discrete:3/2} implies  
\[
\P\Big(\sum_{i=1}^{n-d+1}(i+d-1)_{d} w'_{\sigma(i)} = 0\Big) = O\Big(\frac{1}{n^{3/2}}\Big).
\]  
Therefore, we can bound the probability that $1$ is a root of $P_{\pi}^{(d)}$ from above by $O(\frac{1}{n}+\frac{1}{n^{3/2}})=O(\frac{1}{n})$.  
Since the root at $1$ has multiplicity at most $n$, it follows that $\E N_{\{1\}}(P_{\pi}^{(d)}) = O(1)$. By an identical argument, we also have $\E N_{\{-1\}}(P_{\pi}^{(d)}) = O(1)$.

\medskip

\textbf{Counting roots in $\R\setminus \{-1,0,1\}$.} Let $t\ge d+2$ be a fixed integer.\footnote{In the cases where $\bm{a} = \big((d)_d w_{\pi(d)}, \dots, (n)_d w_{\pi(n)}\big)$ or $\bm{a} = \big((n)_d w_{\pi(n)}, \ldots, (d)_d w_{\pi(d)}\big)$, one can take $t = 2$. For the other two cases, our proof does require $t \geq d + 2$.
} For each $2\le m\le n-d+1$, and for $\bm{a}=(a_{1},\dots,a_{n-d+1})$ being one of the four sequences described in Lemma \ref{lemma:Descartes}, let $\CE_{m}$ be the event that
\[
\Big| \sum_{i=1}^{m} \binom{m-i+t-1}{t-1} a_{i} \Big|<\Big| \sum_{i=1}^{m} \binom{m-i+t-2}{t-2} a_{i} \Big|.
\]
We will show that $\P(\CE_{m}) = O(1/m)$ for $m \ge \log n$, which would then lead to 
\begin{equation}\label{eqn:sumE}
\sum_{m=2}^{n-d+1} \P(\CE_{m}) = O(\sum_{m=1}^{\log n} 1 + \sum_{m=\log n}^{n-d+1} \frac{1}{m}) = O(\log n).
\end{equation}
Now we focus on the regime $\log n\le m \le n-d+1$.

We first consider the (easier) case. 

\textit{Case 1:} $\bm{a}=((d)_{d}w_{\pi(d)}, \dots, (n)_{d} w_{\pi(n)})$.

We can rephrase the event $\CE_{m}$ as follows. For $1\le i \le m$, define 
$$v_{i} = \frac{\binom{m-i+t-1}{t-1} (i+d-1)_{d}}{m^{t+d-1}}, \qquad v'_{i} = \frac{\binom{m-i+t-2}{t-2} (i+d-1)_{d}}{m^{t+d-2}}.$$ 
We sample an ordered $(m+1)$-tuple $(w_{1}',\dots, w'_{m})$ uniformly at random from $\{w_{1},\dots, w_{n}\}$, and let $\sigma$ be a random permutation of $\{0,\dots,m\}$. Then
$$\P(\CE_{m})=\P\Big(\Big|\sum_{i=1}^{m} v_{i}w'_{\sigma(i)})\Big|< \Big|\sum_{i=1}^{m} v'_{i}w'_{\sigma(i)}\Big|\Big).$$
We observe that $\binom{m-i+t-1}{t-1} (i+d-1)_{d}$ and $\binom{m-i+t-2}{t-2} (i+d-1)_{d}$ are polynomials in $i$ of degree $t+d-1$ and $t+d-2$, respectively, with leading coefficients $\frac{(-1)^{t-1}}{(t-1)!}$ and $\frac{(-1)^{t-2}}{(t-2)!}$. It is straightforward to see that $|v_{i}|, |v'_{i}|=O_{t,d}(1)$, and that both $|\sum_{i=1}^{m}v_{i}|$ and $|\sum_{i=1}^{m}v'_{i}|$ are of order $\Theta_{t,d}(m)$.

Thus, the sequences $(v_i)$ and $(v'_i)$ satisfy the conditions of Theorem~\ref{thm:comparison:2vs1'}. Moreover, by Corollary~\ref{cor:sampling:non-degenerate}, with probability at least $1 - \Theta(1/m)$, the sequence $(w'_{i})$ is not too degenerate in the sense of Condition \ref{cond:separation'}. Therefore, by applying Theorem~\ref{thm:general'}, we can invoke Theorem~\ref{thm:comparison:2vs1'} to $I = \{1, \dots, m\}$ and the sequence $(w'_i)$, yielding the desired probability bound $O(1/m)$ for $\CE_m$.

We next deal with the (harder) case.

\textit{Case 2:} $\bm{a} = \left( (d)_d w_{\pi(d)}, -(d+1)_d w_{\pi(d+1)}, \ldots, (-1)^{n-d}(n)_d w_{\pi(n)} \right)$.

Define
\[
v_i = \frac{(-1)^{i-1} \binom{m-i+t-1}{t-1} (i+d-1)_d}{m^{t+d-1}}, \qquad v'_i = \frac{(-1)^{i-1} \binom{m-i+t-2}{t-2} (i+d-1)_d}{m^{t+d-2}} \quad \text{for all} \quad 1\le i\le m.
\]
The treatment of this case closely follows that of the previous one, with the key difference being the verification of the condition 
$$|\sum_{i=1}^{m} v'_i| \asymp |\sum_{i=1}^{m} v_i|.$$ 
While this step was straightforward in the previous case, the proof of the present estimate is more delicate, and we postpone its justification to Lemma~\ref{lem:alternating-sum} below.

Finally, the cases $\bm{a} = \left( (n)_d w_{\pi(n)}, \ldots, (d)_d w_{\pi(d)} \right)$ and $\bm{a} = \left( (-1)^{n-d}(n)_d w_{\pi(n)}, \ldots, (d)_d w_{\pi(d)} \right)$ can be handled as in Case 1 and Case 2, respectively. The details are left to the reader. This completes the proof of \eqref{eqn:sumE} (up to Lemma \ref{lem:alternating-sum}), and thereby establishes the theorem.
\end{proof}

We conclude this section with the statement and proof of a technical lemma used in the above proof.

\begin{lemma}\label{lem:alternating-sum}
Let $S_{t,d}(m) = \sum_{i=0}^{m} (-1)^i \binom{m-i+t}{t} (i+d)_d$. For fixed non-negative integers $t$ and $d$,
\[
|S_{t,d}(m)| = \Theta_{t,d}(m^{\max\{t,d\}}).
\]
\end{lemma}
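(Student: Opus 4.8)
The plan is to pass to generating functions. Since $S_{t,d}(m)=\sum_{i+j=m}\big[(-1)^i(i+d)_d\big]\,\big[\binom{j+t}{t}\big]$ is the Cauchy convolution at $m$ of the sequences $\big((-1)^i(i+d)_d\big)_{i\ge 0}$ and $\big(\binom{j+t}{t}\big)_{j\ge 0}$, and since $\sum_{i\ge 0}(i+d)_d z^i=d!(1-z)^{-(d+1)}$ and $\sum_{j\ge 0}\binom{j+t}{t}z^j=(1-z)^{-(t+1)}$, substituting $z=-y$ in the first series gives
\[
\sum_{m\ge 0}S_{t,d}(m)\,y^m \;=\; \frac{d!}{(1+y)^{d+1}}\cdot\frac{1}{(1-y)^{t+1}} .
\]
Thus it remains to estimate $d!\,[y^m]\,(1+y)^{-(d+1)}(1-y)^{-(t+1)}$ as $m\to\infty$.

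The key step is to cancel the smaller of the two factors against the other using $(1-y)(1+y)=1-y^2$, which turns the coefficient sequence into a nonnegative sum (up to an overall sign $(-1)^m$). If $t>d$ then $(1+y)^{-(d+1)}(1-y)^{-(t+1)}=(1-y^2)^{-(d+1)}(1-y)^{-(t-d)}$, so expanding both factors yields $S_{t,d}(m)=d!\sum_{2k+\ell=m}\binom{k+d}{d}\binom{\ell+t-d-1}{t-d-1}$, a sum of positive terms; if $t<d$ then similarly $S_{t,d}(m)=(-1)^m d!\sum_{2k+\ell=m}\binom{k+t}{t}\binom{\ell+d-t-1}{d-t-1}$; and if $t=d$ the product collapses to $d!(1-y^2)^{-(d+1)}$, so $S_{d,d}(m)=0$ for odd $m$ and $S_{d,d}(m)=d!\binom{m/2+d}{d}$ for even $m$. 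I would then record the elementary fact that a nonnegative convolution $\sum_{2k+\ell=m}\binom{k+a}{a}\binom{\ell+b}{b}$ has order $\Theta(m^{a+b+1})$: it has $\Theta(m)$ summands, each is $O(m^{a+b})$, and restricting to $m/8\le k\le m/4$ (so that $k$ and $\ell=m-2k$ are both $\Theta(m)$) leaves $\Theta(m)$ summands each of size $\Theta(m^{a})\Theta(m^{b})$. Applying this with $(a,b)=(d,\,t-d-1)$ gives $|S_{t,d}(m)|=\Theta(m^{t})$ when $t>d$, and with $(a,b)=(t,\,d-t-1)$ gives $|S_{t,d}(m)|=\Theta(m^{d})$ when $t<d$; in both cases this equals $\Theta_{t,d}(m^{\max\{t,d\}})$, and the matching upper bound $|S_{t,d}(m)|\ll_{t,d} m^{\max\{t,d\}}$ is immediate from the three closed forms for every $m$.

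The one case that genuinely needs care is $t=d$, where the two leading-order contributions cancel on the odd residue class and $S_{d,d}$ vanishes there, while on the even class it has the stated order $\Theta(m^{d})$. This does not affect the use of the lemma in Theorem~\ref{thm:poly}: for the sum $\sum_i v_i$ one invokes it with leading index $t-1>d$ — the strict case, where $S$ is a positive quantity of order $\Theta(m^{t-1})$ — while for $\sum_i v_i'$ only the upper bound $|S_{t-2,d}(m)|\ll m^{t-2}$ is required, and that holds for all $m$. The main obstacle is therefore not difficulty but bookkeeping: getting the sign in the generating function right and isolating the degenerate diagonal $t=d$.
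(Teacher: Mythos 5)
Your generating-function argument is correct and genuinely different from the paper's proof, which proceeds by induction on the second difference: the paper shows that on each residue class $m\bmod 2$ the sum is a polynomial $P^i_{t,d}(m)$, establishes the base cases $t=0$ and $d=0$ via explicit differences, and then uses the identity $S_{t,d}(m)-S_{t,d}(m-2)=d\,S_{t-1,d-1}(m)$ to reduce $\min\{t,d\}$. Your route — writing $\sum_m S_{t,d}(m)y^m=d!\,(1+y)^{-(d+1)}(1-y)^{-(t+1)}$ and cancelling the smaller factor through $(1-y^2)$ — buys something concrete: it produces a single-signed closed form $S_{t,d}(m)=(\pm 1)^m\,d!\sum_{2k+\ell=m}\binom{k+a}{a}\binom{\ell+b}{b}$, so the order of magnitude is read off without any concern about hidden cancellation, and both the lower bound (via the middle-range $k$) and the uniform upper bound are immediate.

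Your analysis also surfaces a genuine defect in the lemma \emph{as stated}: for $t=d$ one has $\sum_m S_{d,d}(m)y^m=d!(1-y^2)^{-(d+1)}$, hence $S_{d,d}(m)=0$ for every odd $m$, so $|S_{d,d}(m)|=\Theta(m^{d})$ is false. The paper's induction actually has the same blemish buried in its base case: when $t=d=0$ the difference $S_{0,0}(m)-S_{0,0}(m-2)$ vanishes, so $P^1_{0,0}\equiv 0$, and the inductive identity then forces $P^1_{d,d}\equiv 0$ for all $d$; the claim "$P^i_{0,d}$ has degree $d$" is thus wrong for $d=0$, $i=1$, and that error propagates along the diagonal $t=d$. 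As you correctly observe, this does not affect Theorem~\ref{thm:poly}: the lemma is invoked with first argument $t-1\ge d+1$ (lower bound needed, off the diagonal) and $t-2\ge d$ (only the upper bound is needed, and that holds unconditionally). It would be cleanest to restate the lemma for $t\ne d$, or to replace the two-sided $\Theta$ with what is actually used; your proof makes this precise and is, in that respect, an improvement over the one in the paper.
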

\begin{proof}
We begin by noting that there are two real polynomials, $P^{0}_{t,d}(m)$ and $P^{1}_{t,d}(m)$, each of degree at most $t + d$, with coefficients depending solely on $t$ and $d$, such that $S_{t,d}(m) = P^{i}_{t,d}(m)$ when $m \equiv i \pmod{2}$. 
To prove the lemma, it suffices to show that the polynomials $P^{i}_{t,d}(m)$ have degree exactly $\max\{t,d\}$. We prove this by induction on $\min\{t,d\}$, analyzing the discrete derivative $S_{t,d}(m) - S_{t,d}(m-2)$.

For $t = 0$, we have
\[
S_{0,d}(m) - S_{0,d}(m-2) = \sum_{i=m-1}^{m} (-1)^{i} (i+d)_{d} = (-1)^{m} d (m+1) \cdots (m+d-1).
\]
Thus, for each $i \in \{0,1\}$, we have $P^{i}_{0,d}(m) - P^{i}_{0,d}(m-2) = (-1)^{i} d (m+1) \cdots (m+d-1)$, which is a polynomial of degree $d-1$. It follows that $P^{i}_{0,d}(m)$ has degree $d$.

For $d = 0$, we have
\[
S_{t,0}(m) - S_{t,0}(m-2) = \sum_{i=0}^{1} (-1)^{i} \binom{m-i+t}{t} = \binom{m+t-1}{t-1},
\]
a polynomial of degree $t-1$. Therefore, each $P^{i}_{t,0}(m)$ has degree $t$.

Now we prove the claim for a pair $t,d\ge 1$, assuming that the hypothesis holds for all pairs $t',d'$ with $\min\{t',d'\}<\min\{t,d\}$. We start by expressing
\[
S_{t,d}(m) - S_{t,d}(m-1) = \sum_{i=0}^{m} (-1)^{i} \left[ \binom{m-i+t}{t} - \binom{m-1-i+t}{t} \right] (i+d)_d.
\]
Applying Pascal's identity $\binom{m-i+t}{t} - \binom{m-1-i+t}{t}=\binom{m-i+t-1}{t-1}$, we simplify this to
\[
S_{t,d}(m) - S_{t,d}(m-1)= \sum_{i=0}^{m} (-1)^{i} \binom{m-i+t-1}{t-1} (i+d)_d = S_{t-1,d}(m).
\]
It follows that
\begin{align*}
S_{t,d}(m) - S_{t,d}(m-2) &= S_{t-1,d}(m) + S_{t-1,d}(m-1)\\
&= \sum_{i=0}^{m} (-1)^i \binom{m-i+t}{t} \left[ (i+d)_d - (i-1+d)_d \right].
\end{align*}
Using the identity $(i+d)_d - (i-1+d)_d=d(i+d-1)_{d-1}$ , we obtain
\[
S_{t,d}(m) - S_{t,d}(m-2) = d \sum_{i=0}^{m} (-1)^i \binom{m-i+t}{t} (i+d-1)_{d-1} = d \cdot S_{t-1,d-1}(m).
\]
Thus, for each $i \in \{0,1\}$, we have $P^{i}_{t,d}(m) - P^{i}_{t,d}(m-2) = d \cdot P^{i}_{t-1,d-1}(m)$. By the induction hypothesis, $P^{i}_{t-1,d-1}(m)$ has degree $\max\{t-1,d-1\}$, so $P^{i}_{t,d}(m)$ has degree $\max\{t-1,d-1\} + 1 = \max\{t,d\}$.
\end{proof}

\section{Application to random row-permutation matrices: proof of Theorem \ref{thm:singularity}}\label{sect:singularity}

Without loss of generality, we may assume $0<\eps \le 1/4$. Let $\BMr_{1},\dots,\BMr_{n}$ denote the rows of $Q_{n\times n}$. For each
$1\le m\le n-1$, let $\CE_m$ be the event that $\BMr_{1},\dots,\BMr_{m}$ span
an $m$-dimensional subspace $H_m$, and that $\BMr_{m+1}$ belongs to $H_m$.
We write $Q_{m\times n}$ for the $m\times n$ matrix whose rows are
$\BMr_{1},\dots,\BMr_{m}$.

\begin{lemma}\label{lemma:small:m} For $m \le n-n^{\eps}$ we have 
$$\P(\CE_{m}) \le n^{-\omega(1)}.$$
\end{lemma}
\begin{proof}(of Lemma \ref{lemma:small:m}) Without loss of generality, assume that the first $m$ columns of $Q_{m\times n}$
span its column space. Once the first $m$ entries of $\BMr_{m+1}$ are fixed,
there exist $\alpha_1,\dots,\alpha_m$ such that $\BMr_{m+1}'=\al_{1}\BMr_{1}'+\dots + \al_{m}\BMr_{m}'$, where $\BMr_{i}'=(q_{i1},\dots, q_{im})$. Consequently, 
\[
\BMr_{m+1}''=\al_{1}\BMr_{1}''+\dots + \al_{m}\BMr_{m}'',
\]
where $\BMr_{i}''=(q_{i(m+1)},\dots, q_{in})$.

\begin{claim} Let $m' = n-m \ge n^{\eps}$. With probability at least $1-n^{-\omega(1)}$, the maximum multiplicity
of $\BMr_{m+1}''$ is at most $(1-\eps/2)m'$.
\end{claim}
\begin{proof}
Fix a value $x$, and let $a_x$ be the multiplicity of $x$ in $\BMv$. By assumption
$a_x\le (1-\eps)n$. The probability that $x$ appears more than $(1-\eps/2)m'$
times in $\BMr_{m+1}''$ is
\[
\sum_{t>(1-\eps/2)m'}
\frac{\binom{a_x}{t}\binom{n-a_x}{m'-t}}{\binom{n}{m'}}
\le
\sum_{t>(1-\eps/2)m'} \binom{m'}{t}(1-\eps)^t
\le e^{-\Omega_\eps(m')}.
\]
Taking a union bound over all $x$, the probability that some value appears more
than $(1-\eps/2)m'$ times in $\BMr_{m+1}''$ is at most
$n e^{-\Omega_\eps(m')}=e^{-\Omega_\eps(m')}$.
\end{proof}

\begin{claim}
Suppose $\BMu\in\R^{m'}$ has maximum multiplicity
$m_{\BMu}\le (1-\eps/2)m'$. Then for every $\BMx\in\R^{m'}$,
\[
\P(\pi(\BMu)=\BMx)\le e^{-\Omega_{\eps}(m')}.
\]
\end{claim}

\begin{proof}
If $\pi(\BMu)=\BMx$, then $\BMx$ must be a permutation of $\BMu$, so
$\P(\pi(\BMu)=\BMx)=\frac{\prod_i \mu_i!}{m'!}$, where $\mu_i$ are the multiplicities
of the distinct values of $\BMu$. Since $\sum_i \mu_i=m'$ and
$\mu_i\le (1-\eps/2)m'$ for all $i$, we have
\[
\frac{\prod_i \mu_i!}{m'!}\le
\frac{((1-\eps/2)m')!\,((\eps/2)m')!}{m'!}
= e^{-\Omega_\eps(m')}. \qedhere
\]
\end{proof}

Combining the above we obtain the lemma, noting that $m'\ge n^{\eps}$.
\end{proof}

From now on we assume $m \ge n - n^{\eps}$. Let $\BMw=(w_{1},\dots, w_{n})$ be a unit normal vector of $H_{m}$. We first need to rule out some degenerating properties of this vector. We follow the proof of \cite[Lemma 4.3]{Nguyen}. 

\begin{lemma}[Non-degeneracy of the normal vector]\label{lemma:multiplicity}
There exist positive constants $\delta,c$ depending only on $\eps$ such that,
with probability at least $1-\exp(-cn)$ over the randomness of
$\BMr_{1},\dots,\BMr_{m}$, we have $m_{\BMw}\le (1-\delta)n$.
\end{lemma}

\begin{proof}
For each $0\le n'\le \delta n$, consider the event that $m_{\BMw}=n-n'$. Losing a factor of $\binom{n}{n'}$ in probability, we may assume that
$w_1=\cdots=w_{n-n'}$. 
Consider the matrix $Q'_{m\times (n'+1)}$ whose first column is the sum of the first $n-n'$ columns of $Q_{m\times n}$, and whose remaining columns are the $(n-n'+1),\dots,n$ columns of $Q_{m\times n}$. The vector $\BMh=(w_1,w_{n-n'+1},\dots,w_n)$ is non-zero and orthogonal to all rows of this
matrix, and hence the matrix has rank at most $n'$. Losing another factor of $\binom{m}{n'}$ in probability, we may assume that the
first $n'$ rows span the row space of $Q'_{m\times (n'+1)}$. Then $\BMh$ is determined by the $\sigma$-algebra generated by these rows, and hence $\BMw$ is determined by the $\sigma$-algebra generated by $\BMr_1,\ldots,\BMr_{n'}$.

It remains to bound the probability that $\BMr_i\cdot\BMw=0$ for $n'+1\le i\le m$. 
We can write this as
\[
v_{\pi_i(1)}w_{1}+\cdots+ v_{\pi_i(n-n')}w_{1}
+ v_{\pi_i(n-n'+1)}w_{n-n'+1}+\cdots+ v_{\pi_i(n)}w_{n}=0.
\] 
Note that $n'\ge 1$ since $\sum_{i=1}^n v_i\neq 0$. 
Let $a=w_1$ and $b=w_{n-n'+1}$, so $a\neq b$. Observe that if $u\neq v$, then
$au+bv\neq av+bu$.
For any permutation $\pi$, since $m_{\BMv}\le (1-\eps)n$, we have
$v_{\pi(1)}\neq v_{\pi(n-n'+1)}$ with probability at least $\eps$. 
Conditioning on $\pi(i)$ for $i\neq 1,n-n'+1$, within this event there are two possible assignments of these two coordinates, and at most one makes the sum vanish. Hence $\P(\pi(\BMv) \cdot \BMw =0) \le 1/2$. Thus altogether $\P(\pi(\BMv) \cdot \BMw =0) \le 1-\eps/2$. 

Putting everything together,
\[
\P(m_{\BMw}\ge (1-\delta)n)\le \sum_{n'=1}^{\delta n}\binom{n}{n'} \binom{m}{n'} (1-\eps/2)^{m-n'} \le \exp(-cn),
\]
 provided that $\delta$ is sufficiently small given $\eps$, and $c>0$ is allowed to depend on $\delta$.
\end{proof}

Our next step concerns the rational commensurability of the normal vector; see
\cite[Section~3.4]{Nguyen} and \cite[Theorem~5.2]{TVJAMS}.
Roughly speaking, the commensurability lemma states that if $\BMw$ is a normal vector to a hyperplane spanned by vectors with bounded integer coordinates, and if all but a small fraction of the coordinates of $\BMw$ lie in a low-rank GAP, then in fact \emph{all} coordinates of $\BMw$ lie in a rank-$1$ GAP (that is, they are rationally commensurable), with controlled numerators and denominators.
For our application, we require a generalization in which the assumption of bounded integer coordinates is replaced by the condition that the coordinates lie in a given set of bounded size.

\begin{lemma}[Rational commensurability]\label{lemma:commensurability}
Let $0<\eps<1$ and $A>0$. Suppose all but at most $n^{\eps}$ coordinates of $\BMa=(a_1,\dots,a_n)\in\R^n$ lie in a proper GAP
$P=\{m_1g_1+\cdots+m_rg_r: m_i\in\Z,\ N_i'\le m_i\le N_i\}$ of rank $r=O_{A,\eps}(1)$ and size $n^{O_{A,\eps}(1)}$. Assume further that $\BMa$ is a normal vector to a hyperplane $H\subset\R^n$ spanned by vectors whose coordinates lie in a set $F\subset\R$ with $|F|=n^{O_{A,\eps}(n^{\eps})}$. Then there exists $i_0\in[n]$ such that
\[
\{a_1,\dots,a_n,g_1,\dots,g_r\}\subset \{(p/q)a_{i_0}: p,q\in G\}.
\]
Here
$G=\{\sum_{S\subset F\cup\{1\},\,|S|=r+n^{\eps}} c_S\prod_{x\in S}x:
\ c_S\in\Z,\ |c_S|\le n^{O_{A,\eps}(n^{\eps})}\}$
has size $n^{O_{A,\eps}(n^{2\eps})}$.
\end{lemma}

\begin{proof}
Assume that
$a_{n-n^{\eps}+1},\dots,a_n$ are the exceptional elements that may not belong to $P$.
Consider the map $\Phi:P\to\R^r$ defined by
$\Phi(m_1 g_1 + \cdots + m_r g_r)=(m_1,\ldots,m_r)$.
Since $P$ is proper, \cite[Lemma~3.2]{Nguyen} implies that there exists a proper GAP $Q$
containing $\{a_1,\ldots,a_{n-n^{\eps}}\}$ with $\rank(Q)\le r$ and $|Q|=O_r (n)$,
such that $\Phi(\{a_1,\ldots,a_{n-n^{\eps}}\})$ has full rank in $\R^{\rank(Q)}$.
Replacing $P$ by $Q$ if necessary, we may therefore assume that
$\Phi(\{a_1,\ldots,a_{n-n^{\eps}}\})$ has full rank in $\R^r$.

For each $i\le n-n^{\eps}$ there exist integers $a_{ij}$,
bounded by $n^{O_{A,\eps}(1)}$, such that
\[
a_i = a_{i1}g_1 + \cdots + a_{ir}g_r.
\]
Consider the $n\times (r+n^{\eps})$ matrix $M_{\BMa}$ whose $i$-th column is the vector
$(a_{i1},\dots,a_{ir},0,\dots,0)$ if $i\le n-n^{\eps}$, and
$(0,\dots,0,1,0,\dots,0)$ if $i> n-n^{\eps}$.
Then $M_{\BMa}$ has rank $r+n^{\eps}$.
Moreover,
\[
\BMa^{T}=M_{\BMa}\,\BMb^{T},
\]
where
$\BMb=(g_1,\dots,g_r,a_{n-n^{\eps}+1},\dots,a_n)$.

Next, let $\BMu_1,\dots,\BMu_{n-1}$ be vectors with entries in $F$ orthogonal to $\BMa$.
Form an $n\times n$ matrix $M_{\BMu}$ whose $i$-th row is $\BMu_i$ for $i\le n-1$,
and whose $n$-th row is $e_{i_0}$, the unit vector among the standard basis
$\{\BMe_1,\dots,\BMe_n\}$ that is linearly independent of $\BMu_1,\dots,\BMu_{n-1}$.

By definition,
\[
M_{\BMu}\BMa^{T}=(0,\dots,0,a_{i_0},0,\dots,0)^{T},
\]
and hence
\[
(M_{\BMu}M_{\BMa})\BMb^{T}=(0,\dots,0,a_{i_0},0,\dots,0)^{T}.
\]
This implies
\begin{equation}\label{eq:comm}
(M_{\BMu}M_{\BMa})(\tfrac{1}{a_{i_0}}\BMb)^{T}
=(0,\dots,0,1,0,\dots,0)^{T}.
\end{equation}

Let $M$ be a full-rank submatrix of size $(r+n^{\eps})\times (r+n^{\eps})$
of $M_{\BMu}M_{\BMa}$. Then
\begin{equation}\label{eq:comm2}
M(\tfrac{1}{a_{i_0}}\BMb)^{T}=\BMx,
\end{equation}
where $\BMx$ is the corresponding subvector of
$(0,\dots,0,1,0,\dots,0)$ in \eqref{eq:comm}.

Observe that the entries of $M$ and $\BMx$ are of the form
$\sum_{i=1}^r c_i f_i$, where the $c_i$ are integers bounded by
$n^{O_{C,\varepsilon}(1)}$ and $f_i\in F\cup\{1\}$.
Applying Cramer's rule to \eqref{eq:comm2} and expanding the determinants (which shows that they lie in $G$),
we conclude that each of $g_i/a_{i_0}$ and $a_j/a_{i_0}$
can be written as $p/q$ with $p,q\in G$.
\end{proof}

We now proceed to the final step in the proof of Theorem~\ref{thm:singularity}. 

\begin{lemma}\label{lemma:large:m}
Let $C>0$ be given. For $n-n^{\eps} \le m \le n-1$ we have
\[
\P(\CE_{m}) \le n^{-C}.
\]
\end{lemma}

\begin{proof}(of Lemma \ref{lemma:large:m}) Since $\BMr_1,\ldots,\BMr_m$ span an $m$-dimensional space $H_m$, there exists a nonzero vector
$\BMw=(w_1,\ldots,w_{m+1},0,\ldots,0)$ orthogonal to $H_m$ with $w_i=0$ for $i\ge m+2$.
Conditioning on $\BMr_1,\ldots,\BMr_m$, the probability that the next row
$\BMr_{m+1}=\pi_{m+1}(\BMv)$ lies in $H_m$ is therefore bounded by
\[
\rho = \P(\pi_{m+1}(\BMv) \cdot \BMw = 0).
\]
There is nothing to prove if $\rho\le n^{-C}$, so assume that
\[
\rho \ge n^{-C}.
\]

Since $m_{\BMv} \le (1-\eps)n$ by the assumption and $m_{\BMw}\le (1-\delta)n$ by conditioning on the event of Lemma~\ref{lemma:multiplicity}, we have 
\[
\rho = O(n^{-1/2}).
\]
Hence $2^{-s-1} <\rho \le 2^{-s}$ for some integer $1\le s\le O(\log n)$. Let $\CN_s$ denote the set of such $\BMw$, considered up to nonzero scalar multiples. We will estimate $|\CN_s|$ using Lemma~\ref{lemma:commensurability}.

Let $n'=(\eps/4) n^{\eps}$. 
By Theorem~\ref{thm:ILO:prod}, $(w_i-w_j)(v_k-v_l)$ belongs to a proper symmetric GAP $Q$ of rank $r=O_{C,\eps}(1)$ and size $O_{\eps}(2^{s}n^{-\eps r/2})$ for all but at most
$n'n^{3}$ quadruples $i,j,k,l$. Since $m_{\BMv}\le (1-\eps)n$, we have $v_k-v_l\neq 0$ for at least $\eps n^{2}/2$ pairs $k,l$. 
Hence among these pairs there exists $k_0,l_0$ such that $(w_i-w_j)(v_{k_0}-v_{l_0})\in Q$ for all but at most $2n'n/\eps$ pairs $i,j$. 
It follows that there exists $j_0$ such that $(w_i-w_{j_0})(v_{k_0}-v_{l_0})\in Q$ for all but at most $2n'/\eps \le n^{\eps}$ indices $i$. 
Letting $P=w_{j_0}+\frac{1}{v_{k_0}-v_{l_0}}\cdot Q$, we obtain

\begin{claim}\label{claim:AP}
There exists a proper GAP $P$ of rank $r=O_{C,\eps}(1)$ and size $O_{C,\eps}(2^{s}n^{-\eps r/2})$ such that $w_i\in P$ for all but 
$n^{\eps}$ indices $i\in[n]$.
\end{claim}

We apply Lemma~\ref{lemma:commensurability} to bound $|\CN_s|$. 
There are at most
$\binom{n}{n^{\eps}}(n^{O_{C,\eps}(n^{2\eps})})^{n^{\eps}}
= n^{O_{C,\eps}(n^{3\eps})}$
ways to choose the exceptional coordinates of $\BMw$.
There are only 
$(n^{O_{C,\eps}(n^{2\eps})})^{O_{C,\eps}(1)}
= n^{O_{C,\eps}(n^{2\eps})}$
ways to fix the generalized arithmetic progression $P$. 
Once $P$ is fixed, the remaining coordinates of $\BMw$ can be chosen in at most 
$|P|^{n}\le (2^{s}n^{-\eps/3})^{n}$ ways. 
Hence
\[
|\CN_s|
\le n^{O_{C,\eps}(n^{3\eps})}n^{O_{C,\eps}(n^{2\eps})}(2^{s}n^{-\eps/3})^{n}
= n^{O_{C,\eps}(n^{3\eps})}(2^{s}n^{-\eps/3})^{n}.
\]
Now fix $\BMw\in\CN_s$. The probability that $\BMr_1,\ldots,\BMr_m$
are all orthogonal to $\BMw$ is at most $(2^{-s})^m$. By the union bound,
the probability that $\BMr_1,\ldots,\BMr_m$ are orthogonal to some
$\BMw\in\CN_s$ is at most
\[
n^{O_{C,\eps}(n^{3\eps})}(2^{s}n^{-\eps/3})^{n}(2^{-s})^{m}
= n^{O_{C,\eps}(n^{3\eps})}(2^{s})^{n-m} n^{-\eps n/3}
\le n^{-\eps n/4},
\]
since $2^s\le n^{C}$, $n-m\le n^{\eps}$, and $0<\eps\le 1/4$.
Summing over the $O(\log n)$ possible values of $s$ completes the proof of the lemma.
\end{proof}

\section{Further comments}\label{section:comments} 

Beyond the applications presented in this note, Theorem~\ref{thm:ILO:prod1} and
Theorem~\ref{thm:LCD} also appear to be useful in a variety of counting problems.
For instance, Theorem~\ref{thm:LCD} is a key tool for establishing strong quantitative
invertibility estimates for matrices with fixed row sums and for adjacency matrices of
\(d\)-regular digraphs.

Indeed, in connection with Subsection~\ref{sub:sing}, consider the random \(0/1\) matrix
\(Q_{n,d}\) whose rows are independent vectors containing exactly \(d\) ones, where
\(\min(d,n-d)=\Omega(n)\). As in the proof of Theorem~\ref{thm:singularity}, we
construct \(Q_{n,d}\) row by row. Suppose the first \(n-1\) rows are independent and span
a hyperplane with normal vector \(\BMw=(w_1,\ldots,w_n)\). Conditioned on these rows,
the probability that \(Q_{n,d}\) is singular is
\[
\P(\BMw\cdot \BMv = 0) \;=\; \P(S_{\pi}=0),
\]
where \(\BMv=(v_1,\ldots,v_n)\) is the last row. If the LCD of the pair \((\BMw,\BMv)\)
is large, then Theorem~\ref{thm:LCD} implies that this probability is small. Tran~\cite{Tran}
used this approach to obtain the optimal bound \(\exp(-cn)\) for some constant \(c>0\).
We also note the related work of Jain, Sah, and Sawhney~\cite{JSS22}, where the authors
employed Theorem~\ref{thm:LCD} to give a nearly optimal bound for the same problem.

In general, it is expected that if either \(\BMw\) or \(\BMv\) arises from a random source,
then the LCD is large. We hope to return to this phenomenon for the matrices introduced
in Theorem~\ref{thm:singularity} in a different venue.

\subsection{Further problems} Directly related to our paper, we record below a few further interesting directions.

\begin{itemize}
\item Theorems \ref{thm:discrete:3/2} and \ref{thm:discrete:5/2} address the problem of determining when $\sup_{x}\P(S_{\pi}=x) \ge n^{-3/2}$ or $n^{-5/2}$. Our approach does not, however, yield a seemingly near-optimal inverse result--namely, that if $\sup_{x}\P(S_{\pi}=x)$ has order $n^{-1}$, then most of the $w_{i}$ must be zero.
\vskip 0.1in
\item It would be interesting to remove the $\log n$ factor from Theorem \ref{thm:discrete:5/2}.
\vskip 0.1in
\item While Theorem~\ref{thm:ILO:prod} is almost optimal in terms of the size of $Q$, it is interesting to deduce more structure on the sequences $(w_i)$ and $(v_i)$ separately. Similarly for Theorems \ref{thm:ILO:prod1}
and \ref{thm:LCD}.
\vskip 0.1in
\item While Remark \ref{rmk:optimal} shows that Condition \ref{eqn:separation} is nearly optimal if we rely on Theorem \ref{thm:Roo}, our Theorem \ref{thm:cont:1} may remain valid without this condition (whereas Theorems \ref{thm:cont:3/2} and \ref{thm:cont:5/2} would require additional assumptions on the $w_{i}$, as stated). It is therefore of interest to remove this condition from Theorem \ref{thm:cont:1}.
\vskip 0.1in
\item Similarly, we suspect that Theorem \ref{thm:poly} remains valid without the condition \eqref{eqn:separation}, though this appears to be a difficult problem.
\vskip 0.1in
\item It would be interesting to extend Theorems \ref{thm:cont:1}, \ref{thm:cont:3/2}, \ref{thm:cont:5/2}, and \ref{thm:cont:poly:1} beyond polynomial sequences.
\vskip 0.1in
\item While Theorem \ref{thm:poly} provides an optimal upper bound, it remains unclear under what conditions on $w_{1}, \dots, w_{n}$ the expected number of real roots $\mathbb{E}N_{\R}$ is truly of order $\log n$. 
Relatedly, we still need effective techniques to compute the asymptotics of the number of real roots and critical points of $P_{\pi}$ for natural choices of $w_1,\dots,w_n$. 
For instance, even in the simple case $w_i=i$, the asymptotic behavior of $\mathbb{E}N_{\R}(P_{\pi})$ is still unknown.
\end{itemize}

\appendix

\section{Proof of Theorem \ref{thm:cont:5/2}}\label{sec:proof-5/2}

Let $\CR\subset\{-n, \dots, n\}$ be a set such that $|\CR| = \Theta_{\delta}(n)$, as defined in Fact~\ref{fact:R}. Let $\Delta$ and $A$ be positive constants, chosen sufficiently large with respect to $\delta$ and $\eps$ (for instance, one may take $\Delta=A$).

To prove Theorem \ref{thm:cont:5/2}, it suffices to show $\sup_{x} \P\big(\big|\sum_{i} n^{3/2-\eps}w_{i} \pi(i) - x\big| \le \Delta\big)=O_{\Delta,A}\big( \frac{1}{n^{5/2 - \eps}} \big)$. Using Esseen's estimate together with Corollary \ref{cor:Roos}, we can write
\[
\sup_{x} \P\Big(\Big|\sum_{i} n^{3/2-\eps}w_{i} \pi(i) - x\Big| \le \Delta\Big)  
=O\Big(\int_{|t| \le 1} \exp \Big\{ - \frac{1}{2n^{3}} \sum_{i,j,k,l} \Big\| \frac{t n^{3/2-\eps}}{\Delta}(w_{i}-w_{j})(k-l)\Big\|_{\R/\Z}^{2}\Big\}\, dt\Big).
\]
The right-hand side can be reduced to 
\begin{equation}\label{eqn:int:5/2}
 \int_{|t| \le 1} \exp\Big\{- \frac{c_{\delta}}{n^{2}} \sum_{\substack{1\le i,j \le n \\r \in \CR}} \Big\| \frac{t n^{3/2-\eps}}{\Delta} (w_{i}-w_{j}) r\Big\|_{\R/\Z}^{2}\Big\}.
\end{equation}

Let $\CG$ denote the set of pairs $(i,j)\in [n]^2$ satisfying $|w_{i}-w_{j}| \ge \eps/2\sqrt{n}$.
Then we have 
\begin{equation}\label{eqn:5/2:L:starting}
|\CG|\ge \eps n^2, \qquad \sum_{(i,j)\in \CG} (w_{i}-w_{j})^{2} =\Theta_{\eps}(n).
\end{equation}

We divide our analysis into four cases.

\underline{\bf Intermediate \(|t|\), range 1.}  
Consider
\[
\frac{(\sqrt{A\log n})\Delta}{n^{5/2-\eps}} \le |t| \le \frac{\Delta}{n^{3/2-\eps}}.
\]
We can argue as in the proof of \eqref{eqn:1:u} for large $|t|$ (using \eqref{eqn:5/2:L:starting}), and conclude that
\[
\frac{c_{\delta}}{n^{2}} \sum_{\substack{1\le i,j \le n \\r \in \CR}} \Big\| \frac{t n^{3/2-\eps}}{\Delta} (w_{i}-w_{j}) r\Big\|_{\R/\Z}^{2} \ge 2\sqrt{A}\log n.
\]

\underline{\bf Intermediate \(|t|\), range 2.}  
Now take
\[
\frac{\Delta}{n^{3/2-\eps}} \le |t| \le \frac{1}{n^{1-\eps}}.
\]
Let $\CG_0 \subset \CG$ denote the set of pairs $(i,j)$ such that $|w_i - w_j| \le \frac{2}{\sqrt{\eps n}}$. Then $\CG_0$ contains at least $(\eps/2) n^2$ pairs $(i,j)$ satisfying $\frac{\eps}{2\sqrt{n}} \le |w_i - w_j| \le \frac{2}{\sqrt{\eps n}}$.
For $(i,j)\in \CG_{0}$, we find
\[
\frac{\eps}{2\sqrt{n}}\le \Big|\frac{t n^{3/2-\eps}}{\Delta} (w_{i}-w_{j})\Big|\le \frac{2}{\Delta\sqrt{\eps}}.
\]
Corollary \ref{cor:wraparound} then gives
\[
\frac{c_{\delta}}{n^{2}} \sum_{\substack{(i,j) \in \CG_{0}\\ r \in \CR}} 
\Big\| \frac{t n^{3/2-\eps}}{\Delta} (w_{i}-w_{j}) r \Big\|_{\R/\Z}^{2} 
=\Theta_{\delta}\Big(\frac{|\CG_{0}|}{n}\Big)= \Theta_{\delta,\eps}(n).
\]

\underline{\bf Large \(|t|\).} Consider $$\frac{1}{n^{1-\eps}} \le |t| \le 1.$$

By our assumption, there are \(A n \log n\) pairs \((i,j)\) with $\frac{1}{n^{3/2}} \le |w_{i} - w_{j}| \le \frac{1}{n^{3/2-\eps}}$. For each such pair,
\[
\frac{1}{\Delta n^{1-\eps}}\le \frac{t n^{3/2-\eps}}{\Delta} (w_{i}-w_{j})\le \frac{1}{\Delta}.
\]
Applying Corollary \ref{cor:wraparound} once again, we conclude
\[
\frac{c_{\delta}}{n^{2}} \sum_{\substack{1\le i,j\le n \\ r\in \CR}} 
\Big\| \frac{t n^{3/2-\eps}}{\Delta} (w_{i}-w_{j})r \Big\|_{\R/\Z}^{2} 
= \Omega_{\delta}(A\log n).
\]

\underline{\bf Small \(|t|\).}  
It remains to deal with
$$|t| \le \frac{(\sqrt{A\log n})\Delta}{n^{5/2-\eps}}.$$

Since $|w_{i}- w_{j}| \le 1/(A \sqrt{\log n})$,
\[
\Big| \frac{t n^{3/2-\eps}}{\Delta} (w_{i}-w_{j}) r\Big| \le \frac{1}{\sqrt{A}}<1.
\]
It follows that
\[
\sum_{\substack{1\le i,j \le n \\ r\in \CR}} 
\Big\| \frac{t n^{3/2-\eps}}{\Delta} (w_{i}-w_{j}) r\Big\|^2_{\R/\Z}= \sum_{\substack{1\le i,j \le n \\ r\in \CR}} 
\Big| \frac{t n^{3/2-\eps}}{\Delta} (w_{i}-w_{j}) r\Big|^2
=\Theta_{\delta,\Delta}(t^{2} n^{5-2\eps}).
\]
Therefore
\[
\int_{|t|\le \frac{(\sqrt{A\log n})\Delta}{n^{5/2-\eps}}} \exp\Big\{- \frac{c_{\delta}}{n^{2}} \sum_{\substack{1\le i,j \le n \\r \in \CR}} \Big\| \frac{t n^{3/2-\eps}}{\Delta} (w_{i}-w_{j}) r\Big\|_{\R/\Z}^{2}\Big\}\Big)\le \int_{\R} e^{-\Theta(t^{2} n^{5-2\eps})} \, dt 
= O\Big( \frac{1}{n^{5/2-\eps}} \Big).
\]

\section{Equidistribution of polynomial phases: proof of Lemma \ref{lemma:Weyl:inverse}}\label{sect:Dio}
Here we will follow \cite{Tao} closely with some modifications, that we are now dealing with a subset of positive density of the interval $\{-N,\dots,N\}$ and not with the entire interval.

\begin{lemma}\cite[Lemma 3]{Tao}\label{lemma:equi:1} 
Let $0<\delta<1$, $\eps\le 10^{-2}\delta$, and let $N$ be an integer with $N\ge 2/\delta$. 
Suppose that a real number $\theta$ satisfies $\|n\theta\|_{\R/\Z} \leq \eps$ for all $n\in I$, where $I$ is a subset of size at least $\delta N$ of $Q=\{-N,\dots, N\}$. 
Then there is a natural number $q \leq 2/\delta$ such that
\[
\| q \theta \|_{\R/\Z} \le \frac{3\eps}{\delta N}.
\]
\end{lemma}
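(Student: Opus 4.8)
The plan is to follow Tao's treatment of the full-density case \cite[Lemma~3]{Tao}, inserting the extra steps needed because $I$ is only a positive-density subset of $\{-N,\dots,N\}$ rather than the whole interval. (In the full-density case the least gap of $I$ is automatically $1$, which trivializes most of the difficulties below; the positive-density adaptation is exactly where the work lies.)

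First I would locate a small approximate denominator. Among the $\ge\delta N$ elements of $I$ inside the length-$2N$ window $\{-N,\dots,N\}$, a pigeonhole/averaging argument over the gaps between consecutive elements produces two elements $a<b$ of $I$ with $q_0:=b-a\le 2/\delta$ (this uses $N\ge 2/\delta$; the clean constant is comfortable once $N$ is large in terms of $1/\delta$, which is the regime of interest). By the triangle inequality for $\|\cdot\|_{\R/\Z}$ we get $\|q_0\theta\|_{\R/\Z}\le\|b\theta\|_{\R/\Z}+\|a\theta\|_{\R/\Z}\le 2\eps$. I then replace $q_0$ by the \emph{least} positive integer with $\|q_0\theta\|_{\R/\Z}\le 2\eps$: this keeps $q_0\le 2/\delta$ (it is then exactly the least gap of $I$), and, writing $q_0\theta=P_0+\beta_0$ with $P_0\in\Z$ and $|\beta_0|=\|q_0\theta\|_{\R/\Z}$, minimality forces $\gcd(P_0,q_0)=1$ — otherwise $(q_0/\gcd(P_0,q_0))\theta$ would also lie within $2\eps$ of $\Z$ but with a strictly smaller denominator.

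The core is then to bootstrap the crude bound $|\beta_0|\le 2\eps$ up to $|\beta_0|\le 3\eps/(\delta N)$, after which one takes $q=q_0$. For $n\in I$ decompose $n=q_0k+r$ with $0\le r<q_0$; since $q_0\theta\equiv\beta_0\pmod 1$ we get $n\theta\equiv r\theta+k\beta_0\pmod 1$, so every such $k$ lies in the ``good set'' $G_r:=\{k\in\Z:\ \|k\beta_0+r\theta\|_{\R/\Z}\le\eps\}$; for $\beta_0\ne 0$ this is a union of intervals of length $2\eps/|\beta_0|$ spaced $1/|\beta_0|$ apart, so a density count bounds $|G_r\cap W|$ on any window $W$, and summing $|I|=\sum_{r=0}^{q_0-1}\#\{k:q_0k+r\in I\}$ against $|I|\ge\delta N$ constrains $|\beta_0|$. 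The coprimality $\gcd(P_0,q_0)=1$ limits how many residues $r\pmod{q_0}$ can actually occur: if $q_0\nmid n$ then $\|nP_0/q_0\|_{\R/\Z}\ge 1/q_0$, and since $\eps q_0\le 2\cdot 10^{-2}$ this forces $|n|\,|\beta_0|\ge 0.98$. Hence, unless $|\beta_0|$ is already below the target, $I$ must sit in the single residue class $0\pmod{q_0}$; on the rescaled set $\CK:=\{n/q_0:n\in I\}\subseteq\{-\lfloor N/q_0\rfloor,\dots,\lfloor N/q_0\rfloor\}$, which still has $\ge\delta N$ elements and on which $\|k\beta_0\|_{\R/\Z}\le\eps$, one runs the full-density-style dichotomy: either $\lfloor N/q_0\rfloor\,|\beta_0|$ is large, making $G_0$ too sparse to contain $\CK$ (contradicting $|\CK|\ge\delta N$ and $\eps\le 10^{-2}\delta$), or $\lfloor N/q_0\rfloor\,|\beta_0|<1$, so $\CK$ lies inside the central good interval $[-\eps/|\beta_0|,\eps/|\beta_0|]$, whence $\delta N\le|\CK|\le 2\eps/|\beta_0|+1$ and $|\beta_0|\le 3\eps/(\delta N)$.

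The hard part will be precisely this bookkeeping — in particular ruling out, uniformly in $N$, the borderline regime where $|\beta_0|$ is of order $1/N$ but not of order $\eps/(\delta N)$ (equivalently, $\theta$ is close to $P_0/q_0$ but not close enough). I would handle it through the coprimality dichotomy above together with the observation that any $n\in I$ not divisible by $q_0$ must satisfy $|n|\gtrsim 1/|\beta_0|$, so the ``outer'' part of $I$ cannot make up the deficit; if a residual gap remains one passes to the next convergent of $\theta$ and iterates, checking that the constants (the $2$ in $q\le 2/\delta$ and the $3$ in $3\eps/(\delta N)$) survive. Everything else — the pigeonhole in the first step, the triangle-inequality bound, and the density estimate for $|G_r\cap W|$ — is routine, and the hypothesis $\eps\le 10^{-2}\delta$ is exactly what keeps all the $2\eps$-type error terms strictly below the main terms.
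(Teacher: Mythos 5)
You start the proof exactly as the paper does: among the $\ge\delta N$ elements of $I$ in a window of length $2N$, a pigeonhole produces two at distance $\le 2/\delta$, the triangle inequality then gives a natural number $q_0\le 2/\delta$ with $\|q_0\theta\|_{\R/\Z}\le 2\eps$, and minimality of $q_0$ forces $\gcd(P_0,q_0)=1$, where $q_0\theta=P_0+\beta_0$. The two proofs diverge at the bootstrap from $|\beta_0|\le 2\eps$ to $|\beta_0|\le 3\eps/(\delta N)$. The paper works with \emph{differences} $m-n$ of nearby elements of $I$ (within windows of length $1/(10|\beta_0|)$), uses coprimality to force any such difference to be an exact multiple $q_0k$ of $q_0$ with $|k\beta_0|\le 2\eps$, and then runs a greedy block decomposition. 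Because the block length adapts to $|\beta_0|$, the resulting inequality $\delta N \ll N\eps + \eps/|\beta_0|$ holds uniformly over $\beta_0\neq 0$ and directly yields the stated bound, incidentally ruling out the regime $|\beta_0|\gg 1/N$ as a byproduct.

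Your bootstrap instead reduces $n$ modulo $q_0$ and tries to confine $I$ to a single residue class. The inequality $q_0\nmid n\Rightarrow |n|\,|\beta_0|\ge 0.98$ is correct, but the conclusion you draw from it -- ``unless $|\beta_0|$ is already below the target, $I$ must sit in the single residue class $0\pmod{q_0}$'' -- conflates two very different thresholds. Combined with $|n|\le N$, the inequality yields $I\subset q_0\Z$ only when $|\beta_0|<0.98/N$, and that threshold is much larger than the target $3\eps/(\delta N)\le 3\cdot 10^{-2}/N$; so your dichotomy fails on the entire range $|\beta_0|\ge 0.98/N$. In that regime $I$ can spread over up to $q_0\asymp 1/\delta$ residue classes, and the density count summed over all of them only gives $|\beta_0|=O(\eps/(\delta^2 N))$ -- off by a factor of $\delta$. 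The coprimality restriction on which residues can actually occur (roughly $2N|\beta_0|+O(1)$ of them) does not rescue this, since it is no sharper than the trivial bound $q_0$ once $|\beta_0| \gg q_0/N$. The appeal to ``passing to the next convergent of $\theta$ and iterating'' is a placeholder rather than an argument; this is precisely the case that the paper's difference-based decomposition handles seamlessly and that your residue-class decomposition does not. To repair your route you would need a genuinely new idea for $|\beta_0|\ge 0.98/N$, whereas the paper's count simply shows, unconditionally, that this regime is impossible.
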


\begin{proof} As $|I| \ge \delta N$ and $\|n\theta\|_{\R/\Z} \leq \eps$ for all $n\in I$, we can find $n_1 < n_2$ in $I$ with $\|n_1 \theta \|_{\R/\Z}, \|n_2 \theta \|_{\R/\Z} \leq \eps$ and $n_2-n_1 \leq \frac{2}{\delta}$. By the triangle inequality, we conclude that there exists at least one natural number $q \leq \frac{2}{\delta}$ for which
\[
\| q \theta \|_{\R/\Z} \leq 2\eps.
\]
We take $q$ to be minimal amongst all such natural numbers, then we see that there exists $a$ coprime to $q$ and $|\kappa| \leq 2\eps$ such that
\begin{equation}\label{eq:kappa}
\theta = \frac{a}{q} + \frac{\kappa}{q}.
\end{equation}

If $\kappa=0$ then we are done, so suppose that $\kappa \neq 0$. Suppose that $n < m$ are elements of $I$ such that $\|n\theta \|_{\R/\Z}, \|m\theta \|_{\R/\Z} \leq \eps$ and $m-n \leq \frac{1}{10 \kappa}$. Writing $m-n = qk + r$ for some $0 \leq r < q$, we have
\[
\| (m-n) \theta \|_{\R/\Z} = \Big\| \frac{ra}{q} + (m-n) \frac{\kappa}{q} \Big\|_{\R/\Z} \leq 2\eps.
\]
By hypothesis, $(m-n) \frac{\kappa}{q} \leq \frac{1}{10 q}$; note that as $q \leq 2/\delta$ and $\eps \leq 10^{-2} \delta$ we also have $\eps \leq \frac{1}{10q}$. This implies that $\| \frac{ra}{q} \|_{\R/\Z} < \frac{1}{q}$ and thus $r=0$. We then have
\[
|k \kappa| \leq 2 \eps.
\]

We conclude that for fixed $n \in I$ with $\|n\theta \|_{\R/\Z} \leq \eps$, there are at most $\frac{2\eps}{|\kappa|}$ elements $m$ of $[n, n + \frac{1}{10 |\kappa|}]$ such that $\|m\theta \|_{\R/\Z} \leq \eps$. Iterating this with a greedy algorithm, we see that the number of $n \in I$ with $\|n\theta \|_{\R/\Z} \leq \eps$ is at most 
\[
\Big( \frac{N}{1/10|\kappa|} + 1 \Big) \frac{2\eps}{|\kappa|};
\] 
since $\eps \le 10^{-2} \delta$, this implies that
\[
\delta N \le \frac{3 \eps}{\kappa}
\]
and the claim follows.
\end{proof}

Note that one can give an alternative proof with somewhat implicit constants. Indeed, it is known that if $I \subset Q$ and $|I| \ge \delta |Q|$, then for some sufficiently large $k$ depending on $\delta$, the sumset $J = kI-kI$ contains a symmetric arithmetic progression $Q'=\{-Nd,-(N-1)d,\dots, (N-1)d,Nd\}$ with step $d=O_{\delta}(1)$ and length $2N+1$.  This is an elementary version of the so-called S\'ark\H{o}zy-type theorem in progression, for which much more is known (see, for instance, \cite[Lemma 4.4, 5.5]{SzV} and \cite[Lemma B3]{Taosol}). Next, by the triangle inequality, for each $n\in J$ we have 
\[
\|n \theta\|_{\R/\Z} \le 2k \eps.
\]
It follows in particular that $\|(l d) \theta\|_{\R/\Z} \le 2k \eps$ for all $|l|\le N$, from which we easily deduce
\[
\|d\theta\|_{\R/\Z} \le \frac{2k \eps}{N}.
\]

We now turn to polynomials. The following result is a variant of \cite[Proposition 4]{Tao}, in which we do not assume $I$ to be an interval.

\begin{proposition}\label{prop:Weyl:inverse}
Let $\delta>0$ be a given positive number and $d\ge 1$ be a given natural number. The following holds for sufficiently large $N$. Let $I$ be a subset of size at least $\delta N$ of the interval $Q=\{-N,\dots, N\}$. Let $P(n) = \sum_{i \leq d} \alpha_i n^i$ be a polynomial from $\Z \to \R/\Z$ of degree at most $d$. If
\[
\frac{1}{N} \Big|\sum_{n \in I} e(P(n))\Big| \geq \delta
\]
then there exists a subprogression $Q'$ of $Q$ with $|Q'|= \Omega_d(\delta^{O_d(1)} N)$ such that $P$ varies by at most $\delta$ on $Q'$.
\end{proposition}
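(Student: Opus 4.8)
The plan is to follow the degree‑reduction strategy of \cite{Tao}, except that the positive‑density hypothesis forces a \emph{windowed} van der Corput inequality at every stage. First I would record the easy reduction: it suffices to produce, for each $1\le i\le d$, a single modulus $q\ll_d\delta^{-O_d(1)}$ with $\|q\alpha_i\|_{\R/\Z}\ll_d\delta^{-O_d(1)}N^{-i}$. Granting this, the progression $Q'=\{\,n\in\{-N,\dots,N\}:q\mid n,\ |n|\le\eta N\,\}$ with $\eta$ a small power of $\delta$ works: writing $q\alpha_i=a_i+\kappa_i$ with $a_i\in\Z$, on $n=qm$ the term $\alpha_i n^i$ equals $a_iq^{i-1}m^i+\kappa_iq^{i-1}m^i$, the first part is an integer, and the second is $\le\|q\alpha_i\|\eta^iN^i/q\le\delta/d$. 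So $P$ varies by at most $\delta$ on $Q'$, and $|Q'|\asymp\eta N/q\gg_d\delta^{O_d(1)}N$. All the content is therefore the Diophantine control of the coefficients.

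To obtain that control I would descend on the degree. The case $d=0$ is trivial. For the step, apply van der Corput to $f(n)=\mathbf{1}_I(n)e(P(n))$ with a window $H\asymp\delta N$, small enough that the shifted sets $I_h=I\cap(I+h)$ keep density $\gg\delta$ for $|h|\le H$; this yields $\gg\delta^{O(1)}N$ shifts $h$ with $\frac1N|\sum_{n\in I_h}e(P_h(n))|\gg\delta^{O(1)}$, where $P_h(n)=P(n+h)-P(n)$ has degree $\le d-1$ and $n^{d-1}$‑coefficient $dh\alpha_d$. Iterating $d-1$ times brings us to \emph{linear} phases $P_{\mathbf h}$ indexed by tuples $\mathbf h=(h_1,\dots,h_{d-1})$ filling a $\delta^{O_d(1)}$‑dense subset of $\{-H,\dots,H\}^{d-1}$, each with a large exponential sum over a set of density $\gg\delta^{O_d(1)}$ and with slope $d!\,h_1\cdots h_{d-1}\alpha_d$. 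The linear case — which is exactly where Lemma~\ref{lemma:equi:1} is used — gives, for each such $\mathbf h$, a modulus $r_{\mathbf h}\ll\delta^{-O_d(1)}$ with $\|r_{\mathbf h}\,d!\,h_1\cdots h_{d-1}\alpha_d\|_{\R/\Z}\ll\delta^{-O_d(1)}/N$. Now pigeonhole the $r_{\mathbf h}$ to a common $r_0$, fix all but one coordinate of $\mathbf h$ (keeping density by Fubini), and apply Lemma~\ref{lemma:equi:1} in the free coordinate: this trades the factor $h_i$ for a bounded modulus and, crucially, inserts a further factor $\asymp 1/N$ coming from the $3\epsilon/(\delta N)$ in the conclusion of Lemma~\ref{lemma:equi:1}. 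Repeating once per coordinate, with a pigeonhole on the new moduli between rounds, after $d-1$ rounds one reaches $\|q\alpha_d\|_{\R/\Z}\ll_d\delta^{-O_d(1)}N^{-d}$ with $q\ll_d\delta^{-O_d(1)}$. Having pinned $\alpha_d$, restrict to the progression $q\Z$, on which $P$ coincides mod $1$ with a polynomial of strictly lower degree (the $\alpha_d$‑term becoming $O(\delta)$ after a bounded shrinking of the window), and repeat to control $\alpha_{d-1},\dots,\alpha_1$; the final modulus is the lcm of the ones produced.

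The crux I expect is the bookkeeping of the density parameters. Each van der Corput step costs a fixed power of $\delta$ (one factor from the window, one from the $\delta^2$‑scale threshold on the sub‑sum), each pigeonhole over moduli costs another, so the dense family of shift‑tuples ends up with density $\delta^{O_d(1)}$ for a fairly large exponent, while Lemma~\ref{lemma:equi:1} demands $\epsilon\le 10^{-2}\times(\text{density})$. This is not circular only because the linear case outputs an $\epsilon$ of size $\ll_d\delta^{-O_d(1)}/N$ — carrying a genuine $1/N$ — and each further use of Lemma~\ref{lemma:equi:1} multiplies $\epsilon$ by another $1/N$ (up to powers of $\delta$); hence for $N$ large in terms of $\delta$ the hypothesis is comfortably satisfied, and the output modulus and all implied constants stay $\ll_d\delta^{-O_d(1)}$. (If one prefers to avoid the iterated‑equidistribution gymnastics, the Sárközy‑type alternative noted after Lemma~\ref{lemma:equi:1} — that $kI-kI$ contains an arithmetic progression of length $\gg_\delta N$ and bounded step — offers a route to transfer to the full‑interval version of \cite{Tao}, at the cost of less explicit constants.) The remaining technical nuisance is making the ``quotient by $q\Z$'' step rigorous for the lower coefficients: one has to verify that the reduced polynomial still enjoys the relevant large‑sum property on a positive‑density subset of a progression whose length has only dropped by a bounded power of $\delta$ at each of the $d$ rounds, which follows by carefully propagating the shrinking of $Q'$.
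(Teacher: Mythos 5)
Your proposal is correct in outline, but it takes a genuinely different route from the paper's. The paper organizes the argument as an induction on $d$ \emph{at the level of Proposition~\ref{prop:Weyl:inverse} itself}: at degree $d$ it performs a \emph{single} Weyl differencing, invokes Lemma~\ref{lemma:Weyl:inverse} for degree $d-1$ (available as a corollary of the inductive hypothesis) to control $dh\alpha_d$ for a dense set of shifts $h$, applies Lemma~\ref{lemma:equi:1} once to land at $\|q\alpha_d\|\ll\delta^{-O(1)}N^{-d}$, then pigeonholes onto a short progression of spacing $q$ on which the top term is $O(\delta)$ and finishes by citing the Proposition for degree $d-1$. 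You instead merge the Proposition with its corollary: you aim directly at the Diophantine conclusion $\|q\alpha_i\|\ll_d\delta^{-O_d(1)}N^{-i}$, unroll the differencing all the way to linear phases ($d-1$ van der Corput steps rather than one), and then extract $\alpha_d$ by a multi-round pigeonhole on the shift-tuple coordinates, applying Lemma~\ref{lemma:equi:1} once per coordinate to pick up one factor of $1/N$ each time; the lower coefficients are then handled by iterated restriction to short progressions. Both routes are valid. The paper's has the advantage that the recursion is tidier (the density loss per level is controlled by a single inductive call, and the $d=1$ base case is isolated), while yours is more self-contained, requires no auxiliary statement at lower degree, and is closer to the classical Weyl-sum bookkeeping. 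The places you flag as ``technical nuisance'' -- namely that the restricted sum over $I\cap Q'$ must still be $\gg\delta|Q'|$ after each passage to a subprogression, and that the change of variables $n\mapsto qm+r$ must be inverted to recover bounds on the original $\alpha_i$ -- are precisely the pigeonhole-onto-a-progression and coefficient-comparison steps that the paper makes explicit, so your awareness of them is reassuring; carrying them out carefully would complete the proof, at the expected cost of a somewhat larger (but still $O_d(1)$) exponent of $\delta$ than the paper's single-step-per-level recursion produces.
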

Before proving the result, let us deduce Lemma \ref{lemma:Weyl:inverse} (which, in turn, will be used in the induction scheme of the proof of Proposition \ref{prop:Weyl:inverse}).

\begin{proof}(of Lemma \ref{lemma:Weyl:inverse})
To simplify notation, we allow the implied constants to depend on $d$. We may assume that $\delta \leq c$ some sufficiently small constant $c>0$ depending only on $d$, and that $N$ is sufficiently large.

Applying Proposition \ref{prop:Weyl:inverse}, we can find a natural number $q = O(\delta^{-O(1)})$ and an arithmetic subprogression $Q'$ of $Q$ such that $|Q'| = \Omega(\delta^{O(1)}N)$ and $P$ varies by at most $\delta$ on $Q'$. Writing 
\[
Q' = \{ qn+r: n \in I'\}
\]
for some interval $I' \subset Q$ of length $|I|=\Omega(\delta^{O(1)} N)$ and some $0 \leq r < q$, we conclude that the polynomial $n \mapsto P(qn+r)$ varies by at most $\delta$ on $I'$. 

Taking $d^{\text{th}}$ order differences, we see that the $d^{\text{th}}$ coefficient of this polynomial is $O(\delta^{-O(1)} / N^d)$. By the binomial theorem, this implies that $n \mapsto P(qn+r)$ differs by at most $O(\delta)$ on $I'$ from a polynomial of degree at most $d-1$. Iterating this argument, we deduce that the $i^{\text{th}}$ coefficient of $n \mapsto P(qn+r)$ is $O(\delta N^{-i})$ for $i=0,\dots,d$. The claim then follows by inverting the change of variables $n \mapsto qn+r$ (and replacing $q$ by a larger quantity such as $q^d$ if necessary).
\end{proof}

\begin{proof}(of Proposition \ref{prop:Weyl:inverse}) We argue by induction on $d$. The case $d=1$ follows immediately from Lemma \ref{lemma:equi:1}. 
Now suppose that $d \ge 2$, and that the claim has already been proven for $d-1$. From our assumption,
\[
\frac{1}{N^{2}} \sum_{n_{1},n_{2} \in I} e(P(n_{1}) - P(n_{2})) \geq \delta^{2}.
\]
For each $h\in 2Q = \{-2N,\dots, 2N\}$, let $I_{h}\subset Q$ denote the collection of $n\in I$ such that $n+h \in I$. We can rewrite the above as 
$$\frac{1}{N^{2}} \sum_{h} \sum_{n \in I_{h}} e(P(n+h) - P(n)) \ge \delta^{2}.$$
Since $| \sum_{n \in I_{h}} e(P(n+h) - P(n)) |\le 2N$ for every $h$, there must be $\Omega(\delta^{2} N)$ values of $h \in 2Q$ such that
\[
\frac{1}{N} \Big|\sum_{n \in I_h} e(P(n+h) - P(n))\Big| = \Omega(\delta^{2}).
\]
Note that $P(n+h)-P(n)$ is a polynomial of degree at most $d-1$, with leading term $h \alpha_d n^{d-1}$. By the induction hypothesis, namely Lemma \ref{lemma:Weyl:inverse}, it follows that for each such $h$ there exists a natural number $q_h =O(\delta^{-O(1)})$ such that 
\[
\|q_h h \alpha_d \|_{\R/\Z} = O(\delta^{-O(1)} / N^{d-1}).
\] 
Since there are $\Omega(\delta^{2} N)$ choices of $h \in 2Q$ for which this holds, we obtain $\Omega(\delta^{O(1)} N)$ integers $n$ in the interval $[-\delta^{-O(1)} N, \delta^{-O(1)}N]$ such that 
\[
\|n \alpha_d \|_{\R/\Z} = O(\delta^{-O(1)} / N^{d-1}).
\] 
Applying Lemma \ref{lemma:equi:1}, we conclude
\begin{equation}\label{eq:alphad}
\| q \alpha_d \|_{\R/\Z} = O(\delta^{-O(1)}/ N^d).
\end{equation}
Next, we partition $Q$ into arithmetic progressions $Q'$ of spacing $q$ and length comparable to $\delta^{C}N$, for a sufficiently large constant $C$ to be chosen. By the hypothesis and an application of the pigeonhole principle, we have
\[
\frac{1}{|Q'|} \Big|\sum_{n \in I \cap Q'} e(P(n))\Big| \geq \delta
\]
for at least one such progression $Q'$. Assume that $Q'=\{i_{0}, i_{0}+q, \dots, i_{0}+ n'q\}$ where $n'= \Theta(\delta^{C}N)$. On this progression, for each $0\le k\le n'$ we write
\[
\al_{d}(i_{0}+kq)^{d} = \al_{d} (kq)^{d} +R(k),
\]
where $R(k)$ is a polynomial of degree $d-1$ in $k$. Since $k\le n' = \Theta(\delta^{C}N)$ and $kq \le N$, \eqref{eq:alphad} implies
\[
|\al_{d} (kq)^{d}| = |k (kq)^{d-1}(q\al_{d})|= O(\delta^{C -O(1)}).
\]
Thus for $n \in Q'$ we may write 
\[
P(n) = R(n) + O(\delta^{C-O(1)})
\] 
for some polynomial $R$ of degree at most $d-1$. Choosing $C$ sufficiently large and applying the triangle inequality, we obtain
\[
\frac{1}{|Q'|} \Big|\sum_{n \in I \cap Q'} e(R(n))\Big|=\Omega(\delta).
\]
By the induction hypothesis, there exists a subprogression $Q''$ of $Q'$ of size $|Q''|=\Omega (\delta^{O(1)} N)$ on which $R$ varies by at most $\delta/2$. Taking $C$ sufficiently large, it follows that $P$ varies by at most $\delta$ on $Q''$.
\end{proof}

\section{Proof of Theorem \ref{thm:LCD} }\label{sect:LCD}

Let $\varphi(t):=\E e^{itS_{\pi}}$ denote the characteristic function of $S_{\pi}$.
By Esseen’s inequality and Corollary~\ref{cor:Roos}, we have
\[
\sup_{x\in\R}\P\big(|S_{\pi}-x|\le \delta\big)
= O\Big(\int_{|t|\le 1}\big|\varphi(t/\delta)\big|\,dt\Big)
=
O\Big(\int_{|t|\le 1}
\exp\Big(
-\frac{1}{2n^{3}}
\sum_{i,j,k,l}
\Big\|
\frac{t}{\delta}(w_i-w_j)(v_k-v_l)
\Big\|_{\R/\Z}^{2}
\Big)\,dt\Big).
\]
Recall that $\BMu\in\R^{n^{4}}$ is the vector whose $(i,j,k,l)$-th coordinate is
\[
(w_i-w_j)(v_k-v_l), \qquad 1\le i,j,k,l\le n.
\]
With this notation, the exponent may be rewritten as
\[
-\frac{1}{2n^{3}}\,
\dist^{2}\Big(\frac{t}{\delta}\BMu,\Z^{n^{4}}\Big).
\]
Since $1/\delta \le \LCD_{\gamma,\kappa}(\BMw,\BMv)$, the definition of
$\LCD_{\gamma,\kappa}(\BMw,\BMv)$ implies that for any $t \in [-1,1]$,
\[
\dist\Big(\frac{t}{\delta}\BMu,\Z^{n^4}\Big)
\ge
\min\Big\{\gamma \Big\|\frac{t}{\delta}\BMu\Big\|_{2},\, \kappa\Big\}
\ge
\min\Big\{\gamma n^{3/2}\frac{|t|}{\delta},\, \kappa\Big\},
\]
provided that $\|\BMu\|_{2} \ge n^{3/2}$.
Therefore,
\[
\sup_{x\in\R}\P\big(|S_{\pi}-x|\le \delta\big)
=O\Big(
\int_{|t|\le 1}
\Big(
\exp\Big(-\frac12\Big(\frac{\gamma t}{\delta}\Big)^2\Big)
+ \exp\Big(-\frac{\kappa^{2}}{2n^{3}}\Big)
\Big)\,dt \Big)
=O\Big(
\frac{\delta}{\gamma}+e^{-\kappa^{2}/2n^{3}}\Big).
\]

\nocite{*}

\end{document}